\DeclareSymbolFontAlphabet{\mathbb}{AMSb}
\DeclareSymbolFontAlphabet{\mathbbl}{bbold}
\setlist[enumerate]{itemsep=2pt,parsep=2pt,before={\parskip=2pt}}
\newcommand{\cosimp}[3]{\xymatrix@1{#1 \ar@<.4ex>[r] \ar@<-.4ex>[r] & {\ }#2 \ar@<0.8ex>[r] \ar[r] \ar@<-.8ex>[r] & {\ } #3 \ar@<1.2ex>[r] \ar@<.4ex>[r] \ar@<-.4ex>[r] \ar@<-1.2ex>[r] & \cdots }}
\newcommand{\adjunction}[4]{\xymatrix@1{#1{\ } \ar@<0.3ex>[r]^{ {\scriptstyle #2}} & {\ } #3 \ar@<0.3ex>[l]^{ {\scriptstyle #4}}}}
\numberwithin{equation}{section}
\DeclareMathOperator{\Shv}{Shv}
\DeclareMathOperator{\et}{\acute{e}t}
\DeclareMathOperator{\Zar}{Zar}
\DeclareMathOperator{\Hom}{Hom}
\DeclareMathOperator{\Spf}{Spf}
\DeclareMathOperator{\Spa}{Spa}
\DeclareMathOperator{\Spec}{Spec}
\DeclareMathOperator{\ad}{ad}
\DeclareMathOperator{\proet}{pro\acute{e}t}
\DeclareMathOperator{\cont}{cont}
\DeclareMathOperator{\Supp}{Supp}
\DeclareMathOperator{\Cone}{Cone}
\DeclareMathOperator{\qcoh}{qcoh}
\DeclareMathOperator{\Simpl}{Simpl}
\newtheorem{theorem}{Theorem}[section]
\newtheorem*{theorem*}{Theorem}
\newtheorem*{definition*}{Definition}
\newtheorem{proposition}[theorem]{Proposition}
\newtheorem{lemma}[theorem]{Lemma}
\newtheorem{corollary}[theorem]{Corollary}
\theoremstyle{definition}
\newtheorem{definition}[theorem]{Definition}
\newtheorem{remark}[theorem]{Remark}
\newtheorem{construction}[theorem]{Construction}
\crefname{assumption}{assumption}{assumptions}
\crefname{construction}{construction}{constructions}
\newcommand{\baseof}{\overset{\circ}}
\title[$p$-adic Cartier isomorphism for semistable formal schemes]{A $p$-adic Cartier isomorphism between the $A_{\inf}$-cohomology and de Rham-Witt complexes for semistable formal schemes}
\author{Kensuke Aoki}
\address{Department of Mathematics, Faculty of Science, Kyoto University}
\email{artmide13@gmail.com}
\begin{document}

\begin{abstract} 
\v{C}esnavi\v{c}ius-Koshikawa constructed the $A_{\inf}$-cohomology theory for semistable formal schemes over the ring of integers of $\mathbb{C}_p$. 
We prove the $p$-adic Cartier isomorphism between the $A_{\inf}$-cohomology and de Rham-Witt complexes for semistable formal schemes, extending the result of Bhatt-Morrow-Scholze in the smooth case. 
\end{abstract}

\maketitle

\tableofcontents

\section{Introduction}

We fix a prime number $p$. Let $C$ be the complete algebraic closure of $W(k)[ \tfrac{1}{p}]$ where $k$ is an algebraically closed field of characteristic $p > 0$ and let $\mathcal{O}_C$ be the ring of integral elements of $C$. 

Bhatt-Morrow-Scholze (\cite{BMS18}) have constructed the $A_{\inf}$-cohomology theory for smooth $p$-adic formal schemes over $\mathcal{O}_C$ which unifies various $p$-adic cohomology theories including \'etale, de Rham, and crystalline cohomology. It is defined on Fontaine's ring $A_{\inf} \coloneqq A_{\inf} (\mathcal{O}_C)$ and we may consider variants on the reduction for the de Rham specialization map $\theta \colon A_{\inf} \to \mathcal{O}_C$ and Hodge-Tate specialization map $\tilde{\theta} = \theta \varphi^{-1} \colon A_{\inf} \to \mathcal{O}_C$. The prismatic cohomology in \cite{BS19} is a more general concept and $A_{\inf}$-cohomology is seen as the prismatic cohomology on the base prism $(A_{\inf} , (\xi) \coloneqq \ker \theta)$ up to the Frobenius automorphism. 

There is another direction of generalization. \v{C}esnavi\v{c}ius and Koshikawa have constructed the variant of the $A_{\inf}$-cohomology theory for semistable log formal schemes. For a morphism $A \to B$ of $p$-adically complete rings, we call it $p$-adically \'etale if the maps $A/p^m \to B/p^m$ is \'etale for any $m \geq 1$. We let $R$ be a semistable $p$-adically complete algebra over $\mathcal{O}_C$, i.e., $R$ admits a $p$-adically \'etale morphism over 
\[
R^{\Box} \coloneqq \mathcal{O}_C\{ t_0,\ldots,t_r,t_{r+1}^{\pm 1},\ldots,t_d^{\pm 1}\}/(t_0\cdots t_r - p^q). 
\]
We let $\mathfrak{X}$ be a $p$-adic formal scheme that admits a Zariski covering which consist of formal spectra of semistable $p$-adic formal algebras and let $\mathfrak{X}_C^{\ad}$ be the adic generic fiber of $\mathfrak{X}$. We also let $\nu \colon (\mathfrak{X}_C^{\ad})_{\proet} \to \mathfrak{X}_{\Zar}$ be the natural morphism from the pro-\'etale site of $\mathfrak{X}_C^{\ad}$ to the Zariski site of $\mathfrak{X}$
(see \cite[Section 5.1]{BMS18} for details). Let $\widehat{\mathcal{O}}_{\Spf(R)_C^{\ad}}^+$, $\widehat{\mathcal{O}}_{\mathfrak{X}_C^{\ad}}^+$ be the completed integral structure sheaves on $(\Spf(R)_C^{\ad})_{\proet}$, $(\mathfrak{X}_C^{\ad})_{\proet}$, respectively (see \cite[Definition 5.4]{BMS18}). We define complexes
\begin{align*}
\widetilde{\Omega}_R &\coloneqq 
L\eta_{\zeta_p - 1} R\Gamma_{\proet} (\Spf(R)_C^{\ad}, \widehat{\mathcal{O}}_{\Spf(R)_C^{\ad}}^+), \\ 
\widetilde{\Omega}_{\mathfrak{X}} &\coloneqq 
L\eta_{\zeta_p - 1} R\nu_{\ast} \widehat{\mathcal{O}}_{\mathfrak{X}_C^{\ad}}^+,
\end{align*}
where $L\eta$ is the d\'ecalage functor, which is used in \cite[Section 6]{BMS18}. We also define complexes
\begin{align*}
\widetilde{W_n \Omega}_R &\coloneqq L\eta_{[\zeta_{p^n}] - 1} R\Gamma_{\proet} (\Spf(R)_C^{\ad}, W_n (\widehat{\mathcal{O}}_{\Spf(R)_C^{\ad}}^+)), \\ 
\widetilde{W_n \Omega}_{\mathfrak{X}} &\coloneqq L\eta_{[\zeta_{p^n}] - 1} R\nu_{\ast} W_n (\widehat{\mathcal{O}}_{\mathfrak{X}_C^{\ad}}^+).
\end{align*}
 
The aim of this paper is to show the following $p$-adic Cartier isomorphisms for the semistable formal schemes, which is the main theorem of this paper. We use the log de Rham-Witt complex $W_n \Omega^i$ of Matsuue \cite{Mat15} for the formulation of the theorem. We use the construction and \'etale base change of the log de Rham-Witt complexes. We mainly treat the local case, which is essential for overall arguments. The following comparison maps can be globally defined by the naturality of those of affine local claims. For a $p$-torsion free algebra $A$ on a topos, let $Q_A$ be the log structure $(A[1/p])^\ast \cap A \to A$ of $A$.

\begin{theorem}[Theorem \ref{O-logdRWC}]
\label{intro-O-logdrRWC}
There is a natural isomorphism
\[
H^i (\widetilde{W_n \Omega}_{\mathfrak{X}}) \cong 
\varprojlim_m W_n \Omega_{(\mathfrak{X}/p^m, Q_{\mathcal{O}_{\mathfrak{X}}})/(\mathcal{O}_C /p^m, Q_{\mathcal{O}_C})}^i \{ -i\} \eqqcolon W_n \Omega_{(\mathfrak{X}, Q_{\mathcal{O}_{\mathfrak{X}}})/(\mathcal{O_C}, Q_{\mathcal{O}_C})}^{i, \cont} \{ -i\}.
\]
Moreover, for $n = 1$, the map is compatible with the isomorphism in Theorem \ref{intro-CKisom}. 
\end{theorem}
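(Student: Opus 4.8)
The plan is to reduce to the local affine case and there build the comparison map from the universal property of Matsuue's log de Rham--Witt complex, then check it is an isomorphism by an explicit Koszul computation together with an induction on $n$. Both $\widetilde{W_n \Omega}_{\mathfrak{X}}$ and the right-hand side are obtained by gluing the objects attached to the formal spectra in the Zariski cover, and the map I construct will be natural in $R$; so it suffices to treat $\mathfrak{X} = \Spf R$ for a semistable $R$ equipped with a framing $R^{\Box} \to R$, the global statement then following by naturality and Zariski descent. The continuous ($\varprojlim_m$) formulation is handled separately by $p$-completeness and the vanishing of the relevant $\varprojlim^1$ terms, so I may work with a fixed truncation throughout the core argument.

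The first substantive step is to compute $R\Gamma_{\proet}(\Spf(R)_C^{\ad}, W_n(\widehat{\mathcal{O}}^+))$. For this I would pass to the pro-\'etale perfectoid cover $R_\infty$ obtained by adjoining compatible $p$-power roots of the coordinates $t_0, \ldots, t_d$, subject to the semistable relation $t_0 \cdots t_r = p^q$ (so that the product of the chosen roots is pinned to a fixed root of $p^q$). Its Galois group $\Gamma$ is a finite free $\mathbb{Z}_p(1)$-module of rank $d$, with $r$ topological generators from the semistable block and $d-r$ from the torus block. Almost purity and the (almost) acyclicity of $W_n(\widehat{\mathcal{O}}^+)$ along the tower reduce the pro-\'etale cohomology to the continuous $\Gamma$-cohomology of $W_n(\widehat{\mathcal{O}}^+)(R_\infty)$, which is computed by a Koszul complex on the commuting generators. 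Applying $L\eta_{[\zeta_{p^n}]-1}$ to this Koszul complex, using the formula for the d\'ecalage of a Koszul complex from \cite[Section 6]{BMS18} and the explicit action of the generators on the relevant $[\zeta_{p^n}]$-classes, produces a strictified complex whose terms and differentials I can write down explicitly.

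Next I would assemble the algebraic structure. Letting $n$ vary, the groups $H^i(\widetilde{W_n \Omega}_R)\{i\}$ carry a multiplication (from the ring structure of $W_n(\widehat{\mathcal{O}}^+)$ and the lax symmetric monoidality of $L\eta$), a differential (the Bockstein attached to the $L\eta$-construction), restriction, Frobenius, and Verschiebung (induced by the Witt-vector operations, using the compatibility of $\varphi$ applied to $[\zeta_{p^n}]-1$ with $[\zeta_{p^{n-1}}]-1$), and a $d\log$ of the log-structure elements coming from the classes of the $t_i$. I would verify that these satisfy the axioms of a log $F$-$V$-procomplex over $(R, Q_R)/(\mathcal{O}_C, Q_{\mathcal{O}_C})$ in Matsuue's sense; the universal property of $W_n \Omega^\bullet$ then produces a canonical map $W_n \Omega^i \{-i\} \to H^i(\widetilde{W_n \Omega}_R)$.

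Finally, to see this map is an isomorphism I would induct on $n$. The base case $n = 1$ is the $p$-adic Cartier isomorphism of Theorem \ref{intro-CKisom}, and the compatibility asserted in the last sentence of the theorem is exactly what pins down my map in that case. For the inductive step I would compare, on both sides, the exact sequences relating $W_n$ and $W_{n-1}$ through restriction and $V$, reducing to the previous case and to an identification of the lowest graded piece with the log de Rham complex. The main obstacle is precisely this identification in the genuinely semistable situation: one must match Matsuue's generators $d\log t_i$ and the single relation forced by $t_0 \cdots t_r = p^q$ with the boundary classes appearing in the $\Gamma$-cohomology Koszul complex, and track the Breuil--Kisin/Tate twist $\{-i\}$ faithfully through the Frobenius-semilinear identifications, since it is the log pole at $p$ that separates this computation from the smooth case of \cite{BMS18}.
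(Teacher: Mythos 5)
Your construction of the comparison map is essentially the paper's: reduce to the framed affine case, compute the pro-\'etale cohomology by the perfectoid Koszul complex, equip the cohomology of $\widetilde{W_n \Omega}_R$ with the structure of a log $F$-$V$-procomplex (this is Lemma \ref{lambdanast}, which itself needs the weight decomposition and $p$-torsion-freeness of Lemma \ref{WnD-decomp} to make sense of $d\log$ and of the procomplex structure), and invoke the universal property of Matsuue's complex. The gap is in your proof that the resulting map is an isomorphism. You propose induction on $n$, comparing ``the exact sequences relating $W_n$ and $W_{n-1}$ through restriction and $V$'' and reducing to ``an identification of the lowest graded piece with the log de Rham complex.'' That identification is false as stated: by Corollary \ref{FilWLambda} one has $\mathrm{Fil}^{n-1} W_n \Omega^i = V^{n-1}\Omega^i + dV^{n-1}\Omega^{i-1}$, and this graded piece is not $\Omega^i$ but a quotient of $\Omega^i \oplus \Omega^{i-1}$ by relations built from the submodules $B_m \Omega^i \subset Z_m \Omega^i$, which are \emph{defined} by iterating the inverse Cartier operator of the special fiber. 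So the inductive step cannot even be formulated without first developing the Cartier theory over the residue field $k_{\mathcal{O}}$, and the base case $n=1$ (Theorem \ref{CKisom}), which is a statement over $\mathcal{O}_C$, does not supply it. Symmetrically, on the cohomological side your induction needs surjectivity of $H^i(\widetilde{W_n \Omega}_R) \to H^i(\widetilde{W_{n-1} \Omega}_R)$ together with an identification of its kernel; the only available tool for that is the explicit weight decomposition of $\mathcal{W}_n^i(D_\infty)$ (Lemma \ref{WnD-decomp}) plus $p$-torsion-freeness --- and once those are in hand, the induction is superfluous, because one can compare both sides weight by weight directly.

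What the paper actually does at this point is different and is the real content of the theorem. Using the weight decompositions on both sides, compatible with $\lambda_n^i$ and with each weight piece finite free over $W_{n-\mu(k)}(\mathcal{O})$, it reduces by Nakayama's lemma to the base change along $W_n(\mathcal{O}) \to W_n(k_{\mathcal{O}})$ (Proposition \ref{lambdani-isom}, using the base-change Lemma \ref{WLambda-perfdbasechange}), i.e.\ to a Cartier isomorphism for the log de Rham--Witt complex of the semistable \emph{special fiber} (Proposition \ref{k-logdRWC}). That statement is proved by an Illusie--Raynaud style structure theory --- the computations of $\ker p^i$, $\ker d$, $\ker F^i$ in Lemmas \ref{kerpi}, \ref{kerd}, \ref{kerFi} and Proposition \ref{k-logdRW} --- which in turn rests on new explicit generators, the log basic Witt differentials of Propositions \ref{dRW-RBox+-Basis} and \ref{dRW-RBox-Basis}, precisely to handle the relation $t_0 \cdots t_r = p^q$ that you correctly single out as the main obstacle in your last paragraph. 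These two ingredients (the basis results for the semistable coordinate ring and the special-fiber Cartier isomorphism), together with the nontrivial identification of the $p$-adic completion of $W_n\Omega^i$ with the continuous complex $W_n\Omega^{i,\cont}$ carried out at the end of the paper's proof, are what your proposal would have to supply; no induction on $n$ starting from Theorem \ref{CKisom} avoids them.
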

We remark that the case of special fibers of the theorem is proved by Z.\ Yao (\cite{Yao18}) in the setting of the extension of the construction of the de Rham-Witt complexes of Bhatt-Lurie-Mathew (\cite{BLM21}) to the semistable case. In the special case for $n = 1$, \v{C}esnavi\v{c}ius-Koshikawa have shown the existence of the Hodge-Tate comparison map for $\widetilde{\Omega}_{\mathfrak{X}}$. 
\begin{theorem}[\v{C}esnavi\v{c}ius-Koshikawa]
\label{intro-CKisom}
There is a natural isomorphism 
\[
H^i (\widetilde{\Omega}_{\mathfrak{X}}) \cong 
\varprojlim_m \Omega_{(\mathfrak{X}/p^m, Q_{\mathcal{O}_{\mathfrak{X}}}) / (\mathcal{O}_C / p^m,
Q_{\mathcal{O}_C})}^i \{-i\} \eqqcolon
\Omega_{(\mathfrak{X}, Q_{\mathcal{O}_{\mathfrak{X}}}) / (\mathcal{O}_C,
Q_{\mathcal{O}_C})}^{i, \cont} \{-i\}.
\]
\end{theorem}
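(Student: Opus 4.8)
The plan is to adapt the method of Bhatt--Morrow--Scholze \cite{BMS18} from the smooth to the semistable logarithmic setting, reducing to an explicit Koszul computation on the standard chart $R^{\Box}$. Since the comparison map is natural and, by the remark above, glues from the affine local claims, it suffices to treat $\mathfrak{X} = \Spf(R)$ with a fixed $p$-adically \'etale framing $R^{\Box} \to R$ and to build a natural isomorphism $H^i\big(L\eta_{\zeta_p - 1} R\Gamma_{\proet}(\Spf(R)_C^{\ad}, \widehat{\mathcal{O}}^+)\big) \cong \Omega^{i,\cont}_{(R, Q_R)/(\mathcal{O}_C, Q_{\mathcal{O}_C})}\{-i\}$. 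Both sides satisfy \'etale base change along $R^{\Box} \to R$ --- the left-hand side because it is constructed by pro-\'etale (perfectoid) descent of $\widehat{\mathcal{O}}^+$, and the right-hand side because log differentials commute with the $p$-adically \'etale maps $R^{\Box}/p^m \to R/p^m$ --- so one reduces to the base case $R = R^{\Box}$.

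First I would form the affinoid perfectoid pro-(finite-)\'etale cover of $\Spf(R^{\Box})_C^{\ad}$ obtained by adjoining compatible systems of $p$-power roots $t_j^{1/p^m}$ of all the coordinates, where the roots of $t_0, \dots, t_r$ are constrained by $\prod_{j=0}^r t_j^{1/p^m} = p^{q/p^m}$ (the right-hand side lying in $\mathcal{O}_C$, which already contains all $p$-power roots of $p$). The Galois group is then $G \cong \mathbb{Z}_p(1)^d$, topologically generated by the coordinate rotations $\gamma_0, \dots, \gamma_d$ subject to the single relation $\gamma_0 \cdots \gamma_r = 1$ coming from $t_0 \cdots t_r = p^q$. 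By almost purity the higher pro-\'etale cohomology of $\widehat{\mathcal{O}}^+$ on this cover is almost zero, so $R\Gamma_{\proet}(\Spf(R^{\Box})_C^{\ad}, \widehat{\mathcal{O}}^+)$ is computed, up to almost isomorphism, by the continuous $G$-cohomology of the integral perfectoid cover $R_\infty^+$, that is, by the Koszul complex of the commuting operators $\gamma_0 - 1, \dots, \gamma_d - 1$; on the relevant generators each of these acts as a unit multiple of $\zeta_p - 1$.

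Next I would apply $L\eta_{\zeta_p - 1}$ and pass to cohomology. By the d\'ecalage computation for Koszul complexes whose differentials are divisible by $\zeta_p - 1$ (\cite{BMS18}), the outcome is the complex with differentials the corresponding quotients and with an extra factor $(\zeta_p - 1)^i$ in degree $i$; thus $H^i$ of the d\'ecal\'ee is generated by the $i$-fold wedges of the classes attached to $\gamma_0, \dots, \gamma_d$ modulo the relation, carrying the twist $(\zeta_p - 1)^i$, which I would identify with the Breuil--Kisin twist to produce the $\{-i\}$. Matching these classes with $d\log t_0, \dots, d\log t_d$ and the induced differential with the log de Rham differential of $(R^{\Box}, Q_{R^{\Box}})/(\mathcal{O}_C, Q_{\mathcal{O}_C})$, and passing to the limit over $m$, would then yield the $p$-completed, ``continuous'', complex on the right.

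The step I expect to be the main obstacle is exactly the feature that distinguishes the semistable from the smooth case, namely the relation $t_0 \cdots t_r = p^q$. On the cohomological side it produces the single relation $\sum_{j=0}^r (\gamma_j - 1) = 0$ among the Koszul operators, while on the de Rham side it is reflected in $\sum_{j=0}^r d\log t_j = q\, d\log p = 0$ relative to the base log structure $Q_{\mathcal{O}_C}$ (which contains $p$). Verifying that these two relations correspond --- so that the $d\log p$ contribution is absorbed consistently with the normalization of the twist and with the almost-mathematics estimates --- is the delicate point; once the log relation is matched, the identification of generators and differentials is essentially formal and yields the claimed isomorphism.
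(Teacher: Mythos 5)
Your computational outline (the affinoid perfectoid cover, the description of $R\Gamma_{\proet}$ via continuous $\Delta$-cohomology through almost purity, the d\'ecalage of the resulting Koszul complex, and the matching of the group-theoretic relation $\gamma_0\cdots\gamma_r=1$ with the log relation $\sum_{j=0}^{r} d\log t_j = d\log p^q = 0$) is sound as far as it goes: it is essentially the verification that \v{C}esnavi\v{c}ius--Koshikawa and this paper carry out on the framed model. But there is a genuine gap at your very first step. You reduce to the framed affine case by invoking naturality and gluing of ``the comparison map,'' yet the map your argument actually produces is defined only \emph{after} choosing the framing $\Box$: it is the isomorphism obtained by matching Koszul cohomology classes with $d\log t_0,\ldots,d\log t_d$. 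Nothing in the proposal shows that this identification is independent of the framing or functorial in $R$, so there is no natural map available to glue, and the opening reduction is circular. This is precisely why, already in the smooth case, \cite{BMS18} do not define the comparison by matching generators: they construct a canonical, coordinate-free map via the cotangent complex (using that $\widehat{\mathbb{L}}_{R_\infty/\mathcal{O}}$ vanishes for the perfectoid ring $R_\infty$) and use the framed Koszul computation only to check that this canonical map is an isomorphism.

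The paper fills exactly this gap in the semistable setting with logarithmic cotangent complexes. The key input is the vanishing $\widehat{\mathbb{L}}^G_{(R_\infty, Q_{R_\infty})/(\mathcal{O}_C, Q_{\mathcal{O}_C})} \cong 0$ of Proposition \ref{Rinfty-LG} (proved via Olsson's comparison of the two log cotangent complexes, the stack $\mathcal{L}og$, and faithfully flat descent); feeding this into the transitivity triangle (\ref{log-triangle}), together with the identification $\widehat{\mathbb{L}}^G_{(\mathcal{O}_C,Q_{\mathcal{O}_C})/(\mathbb{Z}_p,0)} \cong \mathcal{O}_C\{1\}[1]$ from Lemma \ref{BKF-fund}, yields the coordinate-free map (\ref{CKcomp-map}). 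Only then does an explicit computation of the sort you describe (degrees $0$ and $1$ on $R^{\Box}$, matching $d\log t_i$ with the cocycles $f_i$, plus the exterior-power structure of Proposition \ref{galg-tildeLambda}) enter, as the check that this natural map is an isomorphism. (The original proof in \cite{CK19} instead obtains naturality by formal GAGA and the Fujiwara--Kato Grothendieck existence theorem, reducing to the smooth case.) To complete your proposal you would need either to add such a coordinate-free construction of the map, or to prove directly that your framing-defined isomorphism is independent of the choice of $\Box$ and compatible with \'etale localization; neither is addressed, and the second is in practice no easier than the first.
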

The theorem is shown from the smooth case of \cite{BMS18} and the Grothendieck existence theorem generalized by Fujiwara-Kato in a non-noetherian setting in \cite{CK19}. However, the isomorphism is not explicitly constructed in the proof. We also give another proof of the theorem by constructing the map explicitly using the log cotangent complexes in the article. We begin to explain the explicit proof of Theorem \ref{intro-CKisom}. The log cotangent complexes are defined in two ways: Olsson's complex is defined for morphisms of fine log schemes and Gabber's complex is for ones of prelog rings on a topos. We consider the log cotangent complex in the sense of Gabber for the morphism $(\mathcal{O}_C, Q_{\mathcal{O}_C}) \to (\widehat{\mathcal{O}}_{\mathfrak{X}_C^{\ad}}^+, Q_{\widehat{\mathcal{O}}_{\mathfrak{X}_C^{\ad}}^+})$ on the pro-\'etale topos $\Shv((\mathfrak{X}_C^{\ad})_{\proet})$. Let $\widehat{\mathbb{L}} \coloneqq R\varprojlim_n \mathbb{L}/p^n$ be the derived $p$-adic completion of the log cotangent complex $\mathbb{L}$. Let $R_{\infty}$ be a naturally constructed pro-\'etale perfectoid cover of $R$. The essential part of our proof is the following vanishing.  
\begin{proposition}[Proposition \ref{Rinfty-LG}]
We have the vanishing 
\[
\widehat{\mathbb{L}}_{(R_{\infty}, Q_{R_{\infty}})/(\mathcal{O}_C, Q_{\mathcal{O}_C})}^G \cong 0. 
\]
\end{proposition}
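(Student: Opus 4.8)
The plan is to reduce the vanishing to a statement in characteristic $p$ and then to exploit that $R_{\infty}$ is perfectoid together with the fact that its log structure is generated by elements admitting all of their $p$-power roots. Since $\widehat{\mathbb{L}}^G$ is derived $p$-complete, it suffices to prove that $\mathbb{L}^G_{(R_{\infty}, Q_{R_{\infty}})/(\mathcal{O}_C, Q_{\mathcal{O}_C})} \otimes^L_{R_{\infty}} R_{\infty}/p$ vanishes. Using the base-change property of Gabber's log cotangent complex along the cocartesian square obtained by reducing the ring modulo $p$ (here $R_{\infty}$ is $\mathcal{O}_C$-flat, so $R_{\infty} \otimes^L_{\mathcal{O}_C} \mathcal{O}_C/p = R_{\infty}/p$ is concentrated in degree $0$), this complex is identified with $\mathbb{L}^G_{(R_{\infty}/p,\, Q_{R_{\infty}})/(\mathcal{O}_C/p,\, Q_{\mathcal{O}_C})}$. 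Thus I would reduce to showing that this last log cotangent complex, for a map of prelog $\mathbb{F}_p$-algebras, is zero.

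In characteristic $p$ I would prove this by showing that the map $(\mathcal{O}_C/p, Q_{\mathcal{O}_C}) \to (R_{\infty}/p, Q_{R_{\infty}})$ is relatively perfect, i.e.\ that its log relative Frobenius is an isomorphism. The standard fact that a relatively perfect morphism has vanishing cotangent complex, extended to the logarithmic setting, then gives the result: the relative Frobenius, being an isomorphism, induces an isomorphism on $\mathbb{L}^G$, while any Frobenius induces the zero map because $\varphi$ annihilates both $d$ and $d\log$ in characteristic $p$, and a map that is simultaneously an isomorphism and zero forces $\mathbb{L}^G = 0$. The log relative Frobenius is an isomorphism precisely when (a) the underlying ring map is relatively perfect and (b) the induced map on characteristic monoids is uniquely $p$-divisible. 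For (a) I would use that $R_{\infty}$ is a perfectoid $\mathcal{O}_C$-algebra: on the standard chart the relative Frobenius over $\mathcal{O}_C/p$ is bijective because all coordinates $t_i$ have been adjoined their $p$-power roots $t_i^{1/p^n}$, and relative perfectness is preserved by the $p$-adically \'etale base change $R^{\Box} \to R$ and by passing to $R_{\infty}/p$. For (b) I would use that the characteristic monoid $\overline{Q}_{R_{\infty}}$ is the divisorial monoid generated over $\overline{Q}_{\mathcal{O}_C}$ by the images of $t_0, \dots, t_r$, each acquiring all of its $p$-power roots in $R_{\infty}$, so that $\overline{Q}^{gp}_{R_{\infty}}/\overline{Q}^{gp}_{\mathcal{O}_C}$ is uniquely $p$-divisible and the $p$-th power map on it is bijective.

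Combining (a) and (b) yields that the log relative Frobenius is an isomorphism, whence $\mathbb{L}^G_{(R_{\infty}/p,\, Q_{R_{\infty}})/(\mathcal{O}_C/p,\, Q_{\mathcal{O}_C})} = 0$ and therefore $\widehat{\mathbb{L}}^G_{(R_{\infty}, Q_{R_{\infty}})/(\mathcal{O}_C, Q_{\mathcal{O}_C})} \cong 0$. The main obstacle I anticipate is step (b) together with the interface between the monoid and the ring inside Gabber's formalism: the intrinsic monoid $Q_{R_{\infty}} = (R_{\infty}[1/p])^{\ast} \cap R_{\infty}$ is large, so one must verify that its characteristic quotient really is the expected $p$-divisible toric monoid and that the unit part contributes only through the (already relatively perfect) ring. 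A secondary technical point, which I would settle first, is the functoriality of Gabber's log cotangent complex under reduction modulo $p$ and under the Frobenius twist, so that both the base-change identification and the relative-perfectness argument are legitimate; alternatively one can bypass relative perfectness by splitting off the ordinary cotangent complex $\widehat{\mathbb{L}}_{R_{\infty}/\mathcal{O}_C} \simeq 0$ (perfectoid) via the transitivity triangle and checking directly that the remaining log part is killed by the elements $t_i^{1/p^n}$, whose valuations tend to $0$.
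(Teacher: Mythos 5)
Your proposal is correct in substance, but it follows a genuinely different route from the paper's proof: it is essentially the argument that the paper itself attributes to experts (the introduction, right after this proposition, cites \cite[Corollary 7.11]{Bha12} and \cite[Corollary 2.29]{KY22} for exactly this Frobenius-style proof). The paper instead realizes $(\mathcal{O}_C, Q_{\mathcal{O}_C}) \to (R_{\infty}, Q_{R_{\infty}})$ as the $p$-completion of a filtered colimit of \emph{integral} morphisms of fine log rings over discrete valuation rings $\mathcal{O}_m$, invokes Olsson's comparison theorem (Proposition \ref{comp-LG-L}) to replace Gabber's complex by the stack-theoretic one, computes the latter \'etale-locally on the algebraic space $X_m \times_{\mathcal{L}og} Y_m$ built from an explicit \'etale equivalence relation, and obtains the vanishing because the transition maps of the tower send $d\log t_i \mapsto p\, d\log t_i$, so the colimit dies modulo $p^n$; faithfully flat descent then concludes. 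Your reduction (derived Nakayama plus flat base change to $\mathcal{O}_C/p$, legitimate since $R_{\infty}$ is $\mathcal{O}_C$-flat and Gabber's canonical resolutions commute with such base change) is valid, as is the mechanism that a relative Frobenius which is an isomorphism yet induces zero on $\mathbb{L}^G$ forces vanishing. The one point requiring real care is the one you flag yourself: the \emph{prelog} relative Frobenius of $(\mathcal{O}_C/p, Q_{\mathcal{O}_C}) \to (R_{\infty}/p, Q_{R_{\infty}})$ is not an isomorphism (a unit of $R_{\infty}$ need not lie in $(R_{\infty}^{\ast})^p \mathcal{O}_C^{\ast}$), so one must either work with the log relative Frobenius through characteristic monoids, as in your condition (b), or---more simply---replace $Q_{R_{\infty}}$ by the uniquely $p$-divisible chart obtained as the colimit of the charts of Lemma \ref{chart-ssalg}, using strictness under completion (Lemma \ref{chart-ssfalg}) and the invariance of $\mathbb{L}^G$ under passage to associated log structures (\cite[Theorem 8.20]{Ols05}); note that this chart input is equally indispensable in the paper's own proof, so your route does not avoid it. In exchange, your approach is shorter, stays entirely within Gabber's formalism (no algebraic stacks, no comparison theorem), and explains conceptually why the vanishing holds; the paper's argument is Frobenius-free and, as the Remark following Proposition \ref{Rinfty-LG} indicates, is deliberately presented as a more general method, at the cost of the stack-theoretic machinery. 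Your closing alternative (splitting off $\widehat{\mathbb{L}}_{R_{\infty}/\mathcal{O}_C} \simeq 0$ and killing ``the remaining log part'' directly) is vaguer---the transitivity triangle does not split---so the relative-perfectness route should be regarded as the actual proof.
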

After the first draft of this paper was written, the author learned from T.\ Koshikawa this proposition is well-known to the experts (see \cite[Corollary 7.11]{Bha12}, \cite[Corollary 2.29]{KY22}). 

For our proof, we construct a direct system of integral morphisms of fine log rings whose colimit is the morphism in the proposition. Then, the comparison theorem of the two log cotangent complexes (\cite[Corollary 8.34]{Ols05}) enables us to use the definition of Olsson for computing the complex. The fiber product of the log semistable $p$-adically complete algebras on the algebraic stack $\mathcal{L}og_{(\mathcal{O}, Q_{\mathcal{O}})}$, where $\mathcal{O}$ is a discrete valuation ring, is an algebraic space which is realized as a quotient of a smooth scheme for an \'etale equivalence relation. Then the \'etale invariance of the cotangent complex assures that we may compute complex for the \'etale extension and the colimit of them turns out to be $0$. Therefore, the proposition follows by faithfully flat descent.

We come back to Theorem \ref{intro-O-logdrRWC}. In the smooth case, the theorem is proved in \cite[Section 11]{BMS18}. For the proof of the theorem, we may assume $R$ is the ring $R^{\Box}$ by the assumption of $R$ and the \'etale invariance of the log de Rham-Witt complexes (\cite[Proposition 3.7]{Mat15}, Proposition \ref{et-bc-ldRW}). We give a set of explicit generators of the log de Rham-Witt complex for a ring $R_+^{\Box}$, whose Zariski localization is $R^{\Box}$, as in \cite[Proposition 2.17]{LZ03}. Then we have a set of generators for $R^{\Box}$ by localization and we show the map of the theorem in the case of special fiber is an isomorphism. So the method of \cite[Section 11]{BMS18} works in the remaining argument. We remark that we may have another proof of log-crystalline comparison theorem \cite[Corollary 5.43]{CK19} of $A_{\inf}$-cohomology by the log-crystalline comparison of log de Rham-Witt complexes \cite[Theorem 7.9]{Mat15} and the proof of \cite[Theorem 14.1]{BMS18}.

\subsection{Notation} 
Throughout the paper, we fix a prime number $p$. 
Without any assumptions, rings are always commutative. 
For a ring $A$, let $W(A)$ (resp.\ $W_n (A), (n \geq 1)$) denote the ring of Witt vectors of $A$ (resp.\ the $n$-th truncation of the ring of Witt vectors of $A$). 
We denote by $D(A)$ the derived category of the abelian category of $A$-modules. We let $K \in D(A)$. For $n \in \mathbb{Z}$, we denote by $\tau_{\geq n} K, \tau_{\leq n} K$ the canonical truncations of $K$. In the case $A = \mathbb{Z}$, the \emph{derived} $p$-\emph{adic completion of} $K$ is the derived limit $R\varprojlim_n (K \otimes_{\mathbb{Z}}^L \mathbb{Z}/p^n)$. We often use the commutativity of $R\varprojlim$ and $Rf_{\ast}$ where $f$ is a morphism of ringed topoi (\cite[Tag 0A07]{SP}), the projection formula (\cite[Tag 0944]{SP}).
Let $\mathcal{A}$ be an abelian category, $M$ be an object in $A$ and $f_i \colon M \to M$ for $i = 1, \ldots, d$ be commuting endomorphisms.  The Koszul complex $K_M (f_1, \ldots, f_d)$ is defined as 
\begin{equation}
M \xrightarrow{(f_1, \ldots, f_d)} \bigoplus_{1 \leq i \leq d} M \to \bigoplus_{1 \leq i_1 < i_2 \leq d} M \to \cdots \to \bigoplus_{1 \leq i_1 < \ldots < i_k \leq d} M \to \cdots
\end{equation}
where $M_I = M_{\{i_1, \ldots, i_k\}} \to M_{\{j_1, \ldots, j_{k+1}\}} = M_J$ is given by $(-1)^{m-1} f_{j_m}$ if $I \subset J, J \backslash I = \{ j_m\}$, otherwise $0$. If $\mathcal{A}$ is the category of $A$-modules for a fixed ring $A$, we have the presentation 
\begin{equation}
K_M (f_1, \ldots, f_d) = M \otimes_{A[f_1, \ldots, f_d]} \bigotimes_{i=1}^d (\mathbb{Z}[f_1, \ldots, f_d] \xrightarrow{f_i} \mathbb{Z} [f_1, \ldots, f_d]).
\end{equation}
We use the notation of log schemes in \cite{Kat88}. For a log scheme $X$, we denote by $\baseof{X}$ the underlying scheme of $X$ and by $\mathcal{M}_X \to \mathcal{O}_X$ the log structure of $X$. We also set $\overline{\mathcal{M}}_X \coloneqq \mathcal{M}_X / \mathcal{O}_X^\ast$. 

\subsection*{Acknowledgements}
The author would like to thank Osamu Fujino, Tetsushi Ito and Teruhisa Koshikawa for advice on preliminary versions of this paper.

\section{Logarithmic cotangent complexes}
\label{sec;LogCotComp}

\subsection{Logarithmic derivation}

Let $\mathcal{R}$ be a topos. 

\begin{definition}\label{def:PrelogRingedStronTopos}
A \emph{prelog ring in $\mathcal{R}$} or a \emph{prelog ringed structure} is a triple $(A, M_1, \alpha)$ where $A$ is a commutative ring with unity in $\mathcal{R}$, $M_1$ is a commutative monoid with unity in $\mathcal{R}$, and $\alpha \colon M_1 \to A$ is a morphism of monoids. A prelog ringed structure $(A, M_1, \alpha)$ is a \emph{log ring} if the induced morphism $\alpha^{-1} (A^\ast) \to A$ is an isomorphism.
\end{definition}

Let $\theta \colon (A, M_1) \to (B, M_2)$ be a morphism of prelog rings. 

\begin{definition}
Let $\mathcal{R}$ be a punctual topos $\{ \ast \}$ and $H$ be an $B$-module. \emph{A log derivation on $(B, M_2)/(A, M_1)$} is a pair
\[
(D \colon B \to H, \ \delta \colon M_2 \to H)
\]
consisting of an $A$-linear derivation $D$ and a monoid homomorphism $\delta$ which satisfies the following conditions

\begin{enumerate}
\item $D(\beta (m)) = \beta (m) \delta (m)$ for any $m \in M_2$.
\item $\delta (\theta (n)) = 0$ for any $n \in M_1$. 
\end{enumerate}

We have the universal log derivation, which we denote by 
\[
(d \colon B \to \Omega_{(B, M_2)/(A, M_1)}^1, \ \delta \colon M_2 \to \Omega_{(B, M_2)/(A, M_1)}^1).
\]
In the case of general $\mathcal{R}$, the universal log derivation can be defined as the sheafification of the presheaf of those for $\mathcal{R} = \{ \ast \}$. We call them the \emph{module of logarithmic differentials}. 
\end{definition}

\subsection{Logarithmic cotangent complex of Olsson}

Let $f \colon X \to Y$ be a morphism of log schemes. Let $\Omega_{X/Y}^1$ denote the sheaf of logarithmic differentials of $f$. If $f$ locally has a chart, the notion of the module of logarithmic differentials naturally sheafified as in \cite[1.7]{Kat88} by \cite[Lemma IV.1.1.10]{Ogus}. 

For a fine log scheme $X$, we denote by $\mathcal{L}og_X$ the fppf-stack over $\overset{\circ}{X}$ whose objects are fine log schemes over $X$ and whose morphisms are strict morphism of $X$-log schemes \cite[Introduction]{Ols03}.
It is an algebraic stack locally of finite presentation over $\baseof{X}$;
see \cite[Theorem 1.1]{Ols03}.
For a morphism of fine log schemes $f \colon T \to X$, we denote by $\mathcal{L}_f \colon \overset{\circ}{T} \to \mathcal{L}og_X$ the associated $1$-morphism. To introduce logarithmic cotangent complexes of Olsson, we explain an equivalence of categories of projective systems in the derived categories of the lisse-\'etale topos and the \'etale topos on $X$. For a ringed topos $T$, we denote by $D'(T)$ the category of projective systems 
\[
K = (\cdots \to K^{\geq -n-1} \to K^{\geq -n} \to \cdots \to K^{\geq 0}), 
\]
in $D^{+} (T)$ in which the maps
\[
K^{\geq -n} \to \tau_{\geq -n} K^{\geq -n}, \quad 
\tau_{\geq -n} K^{\geq -n-1} \to \tau_{\geq -n} K^{\geq -n}
\]
are isomorphisms \cite[(1.8.5)]{Ols05}. Let $D'_{\qcoh} (T)$ be the full subcategory of $D'(T)$ consisting of complexes with quasi-coherent cohomology sheaves. For an algebraic stack $\mathcal{X}$, let $\mathcal{X}_{\textnormal{lis-\'et}}$ denote the lisse-\'etale topos of $\mathcal{X}$ \cite{LMB}, \cite{Ols05b}.
In \cite[17.3]{LMB}, for a morphism $\mathcal{X} \to \mathcal{Y}$ of algebraic stacks, the cotangent complex $\mathbb{L}_{\mathcal{X}/\mathcal{Y}}$ is defined to be an object in $D'_{\qcoh} (\mathcal{X}_{\textnormal{lis-\'et}})$.
As in \cite[Remark 3.3]{Ols05}, for a scheme $X$, the restriction functor 
\[
D'_{\qcoh} (X_{\textnormal{lis-\'et}}) \to D'_{\qcoh} (X_{\et})
\]
is an equivalence of categories, which is compatible with pullbacks. We now introduce Olsson's log cotangent complex.

\begin{definition}[see {\cite[Definition 3.2]{Ols05}}]
For a morphism $f \colon X \to Y$ of fine log schemes, the \emph{log cotangent complex of Olsson} $\mathbb{L}_{X/Y}$ of $f$ is defined to be the restriction of $\mathbb{L}_{\overset{\circ}{X}/\mathcal{L}og_Y}$ to the \'etale site of $\baseof{X}$. 
\end{definition}

\subsection{Logarithmic cotangent complex of Gabber}

Let $\mathcal{R}$ be a topos with enough points. 
We will only deal with cases where the assumption is satisfied (cf.\ \cite[p.2, (2)]{Sch13C}). 
For a category $\mathcal{C}$, let $\Simpl(\mathcal{C})$ denote the category of simplicial objects in $\mathcal{C}$.

For a fixed prelog ring $(A, M_1, \alpha)$, let $\mathcal{C}$ denote the category of morphisms $(A, M_1, \alpha) \to (B, M_2, \beta)$ of prelog ringed structures in $\mathcal{R}$, and $\mathcal{D}$ be the category $\mathcal{R} \times \mathcal{R}$ of pairs of objects of $\mathcal{R}$. 
There is an adjoint pair $(T, U)$ between $\mathcal{C}$ and $\mathcal{D}$ such that 
\[
U \colon \mathcal{C} \to \mathcal{D}, \ (M_2 \to B) \mapsto (M_2, B)
\]
and
\[
T \colon \mathcal{D} \to \mathcal{C}, \ (X, Y) \mapsto (\alpha \colon M_1 \oplus \mathbb{N}^X \to A\{ X \sqcup Y\})
\]
where for a set $S$, we write $A\{ S \}$ for the free algebra on $S$.

For any object $(B, M_2, \beta)$, its canonical free resolution as in \cite[I.1.5]{Ill71} using the functors $(T, U)$ gives the resulting simplicial object $P_{(A, M_1, \alpha)} (B, M_2, \beta) \in \Simpl(\mathcal{C})$
satisfying $P_{(A, M_1, \alpha)}^{n} (B, M_2, \beta) = (TU)^{n} (B, M_2, \beta)$.
Let $\overset{\circ}{P}_{(A, M_1, \alpha)} (B, M_2, \beta)$ be the underlying simplicial ring of $P_{(A, M_1, \alpha)} (B, M_2, \beta) \in \Simpl(\mathcal{C})$. Gabber's log cotangent complexes are defined as follows.
\begin{definition}[see {\cite[Definition 8.5]{Ols05}}]
For a morphism of prelog rings $(A, M_1, \alpha) \to (B, M_2, \beta)$,
the \emph{log cotangent complex of Gabber} is defined to be the chain complex of $B$-modules associated to the simplicial $B$-module 
\[
\mathbb{L}_{(B, M_2, \beta)/(A, M_1, \alpha)}^G
\coloneqq
\Omega_{P_{(A, M_1, \alpha)} (B, M_2, \beta)/(A, M_1, \alpha)}^1 \otimes_{\overset{\circ}{P}_{(A, M_1, \alpha)} (B, M_2, \beta)} B.
\]
For morphism of prelog ringed topoi with enough points $f \colon X \to Y$, we define $\mathbb{L}_{X/Y}^G$ to be the complex of $\mathcal{O}_X$-modules
\[
\mathbb{L}_{X/Y}^G \coloneqq \mathbb{L}_{(\mathcal{O}_X, M_X)/(f^{-1} \mathcal{O}_Y, f^{-1} M_Y)}^G
\]
on the underlying scheme of $X$. 
\end{definition}

\begin{lemma}
\label{comm-LG-fcolim}
Let $((A_i, M_i, \alpha_i) \to (B_i, M'_i, \beta_i))$ be a filtered system of prelog rings. Then we have 
\[
\varinjlim_i \mathbb{L}_{((A_i, M_i, \alpha_i) \to (B_i, M'_i, \beta_i))}^G \cong \mathbb{L}_{(\varinjlim_i ((A_i, M_i, \alpha_i) \to (B_i, M'_i, \beta_i)))}^G.
\]
\end{lemma}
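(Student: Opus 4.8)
The plan is to exhibit Gabber's construction as a composite of functors on the category of arrows of prelog rings, each commuting with filtered colimits, and then to use the exactness of filtered colimits to descend from the simplicial level to homology. Write $(A, M) \to (B, M')$ for the colimit arrow $\varinjlim_i ((A_i, M_i, \alpha_i) \to (B_i, M'_i, \beta_i))$. Recall that $\mathbb{L}^G$ is assembled in three stages: the canonical free resolution $P_i \coloneqq P_{(A_i, M_i)}(B_i, M'_i)$, the degreewise module of log differentials $\Omega^1_{P_i/(A_i, M_i)}$, and the base change $-\otimes_{\overset{\circ}{P}_i} B_i$. Since sheaves of modules over a ring object in a (Grothendieck) topos form a Grothendieck abelian category, filtered colimits are exact there; in particular they commute with the passage from a simplicial module to its associated chain complex and with homology. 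It therefore suffices to show that each of the three stages commutes with $\varinjlim_i$ in every simplicial degree.

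First, the resolution. Unwinding the definitions, the comonad $TU$ sends $(B', M'')$ to the free prelog ring $(M_1 \oplus \mathbb{N}^{M''} \to A\{M'' \sqcup B'\})$ on the underlying sheaves of sets of $M''$ and $B'$. The underlying-set functors commute with filtered colimits, as filtered colimits of rings and of monoids are computed set-theoretically, while the free-monoid functor $\mathbb{N}^{(-)}$ and the free-algebra functor $A\{-\}$, being left adjoints, commute with all colimits; moreover $A\{-\} = A \otimes_{\mathbb{Z}} \mathbb{Z}\{-\}$ and $M_1 \oplus (-)$ also pass correctly to the colimit when the base $(A_i, M_i)$ varies, since $\otimes$ and coproducts commute with filtered colimits. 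Hence $TU$, and by iteration each $(TU)^n$, commutes with $\varinjlim_i$; naturality of the face and degeneracy maps then gives $\varinjlim_i P_i \cong P \coloneqq P_{(A, M)}(B, M')$ as simplicial prelog rings.

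Second, the differentials and base change. The universal log derivation over a general topos is the sheafification of the punctual presheaf-level construction; sheafification is a left adjoint and so commutes with all colimits, while filtered colimits of presheaves are computed objectwise. Since the punctual $\Omega^1$ is a cokernel of a map of free modules on explicit generators, it commutes with filtered colimits, and therefore so does $\Omega^1$ on the topos. Concretely, in each simplicial degree $P_i^n$ is a free prelog ring, so $\Omega^1_{P_i^n/(A_i, M_i)}$ is the free $\overset{\circ}{P}_i^n$-module on the symbols $dy$ for the ring variables and $d\log x$ for the monoid generators, a presentation that manifestly commutes with $\varinjlim_i$. Finally $-\otimes_{\overset{\circ}{P}_i^n} B_i$ commutes with filtered colimits because the tensor product preserves colimits in each variable and both $\overset{\circ}{P}_i^n$ and $B_i$ pass to their colimits. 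Combining the three stages shows that the degree-$n$ term of $\mathbb{L}^G$ commutes with $\varinjlim_i$ for each $n$, whence, by exactness of filtered colimits, the canonical map $\varinjlim_i \mathbb{L}^G_{(B_i, M'_i)/(A_i, M_i)} \to \mathbb{L}^G_{(B, M')/(A, M)}$ is an isomorphism of complexes of $B$-modules.

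The step requiring the most care is the simultaneous variation of the base $(A_i, M_i)$ and of the free generators, together with the sheafification hidden in $\Omega^1$: one must confirm that forming the universal log derivation over a general topos still commutes with filtered colimits, and not merely over a point. This is exactly what the exactness of filtered colimits in a Grothendieck topos, combined with sheafification being a left adjoint, secures; once this is in hand the remaining verifications are the routine colimit-compatibilities recorded above.
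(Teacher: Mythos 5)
Your proof is correct and follows essentially the same route as the paper, whose own proof is just the one-line observation that the functors $T$, $U$, and $\Omega^1$ commute with filtered colimits; you have simply spelled out the verification of that observation (free functors as left adjoints, objectwise computation of filtered colimits, sheafification, exactness of filtered colimits to pass to homology) in full detail.
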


\begin{proof}
The functors $T, U$ above and $((A, M_1, \alpha) \to (B, M_2, \beta)) \mapsto \Omega_{(B, M_2, \beta)/(A, M_1, \alpha)}^1$ commute with filtered colimits. Then the assertion follows. 
\end{proof}

\begin{lemma}
\label{comm-LG-Hi}
Let $\mathrm{Shv}(\mathcal{C})$ be a topos with enough points with site of definition $\mathcal{C}$ and $(A, M_1, \alpha) \to (B, M_2, \beta)$ be a homomorphism of sheaves of prelog rings on $\mathrm{Shv}(\mathcal{C})$. Then $H^i (\mathbb{L}_{(B, M_2, \beta)/(A, M_1, \alpha)}^G)$ is the sheaf associated to the presheaf
\[ U \mapsto H^i (\mathbb{L}_{(B(U), M_2 (U), \beta(U))/(A(U), M_1 (U), \alpha(U))}^G) \]
for $i \in \mathbb{Z}$. 
\end{lemma}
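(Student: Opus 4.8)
The plan is to exploit that $\mathrm{Shv}(\mathcal{C})$ has enough points and to compare both sides after passing to stalks. Write $\underline{H}^i$ for the presheaf $U \mapsto H^i(\mathbb{L}^G_{(B(U), M_2(U), \beta(U))/(A(U), M_1(U), \alpha(U))})$ on $\mathcal{C}$, and let $a$ denote sheafification; the claim is that $H^i(\mathbb{L}^G_{(B, M_2, \beta)/(A, M_1, \alpha)}) \cong a\underline{H}^i$. First I would record the general fact that for any complex of sheaves $K^\bullet$ the cohomology sheaf $H^i(K^\bullet)$ is the sheafification of the presheaf $U \mapsto H^i(\Gamma(U, K^\bullet))$, which holds because $a$ is exact and so commutes with kernels and cokernels. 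Applying the sheafification units that relate the sectionwise free resolution and the sectionwise module of differentials to their sheaf-theoretic counterparts produces, for every $U$, a morphism of complexes $\mathbb{L}^G_{(B(U),\dots)/(A(U),\dots)} \to \Gamma(U, \mathbb{L}^G_{(B, M_2, \beta)/(A, M_1, \alpha)})$, natural in $U$. Taking $H^i$ and sheafifying yields a natural morphism $a\underline{H}^i \to H^i(\mathbb{L}^G_{(B, M_2, \beta)/(A, M_1, \alpha)})$; as source and target are sheaves and the topos has enough points, it suffices to check that this morphism is an isomorphism on stalks.

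Next I would compute the stalk of the left-hand target. Fix a point $x$ with fibre functor $(-)_x = x^{-1}$. Every operation entering the construction of $\mathbb{L}^G$---the free algebra $A\{S\}$, the free monoid, the relevant direct sums and tensor products, the module of logarithmic differentials $\Omega^1$ (defined as the sheafification of the sectionwise construction), and the passage from a simplicial module to its associated chain complex---is built from colimits, and is therefore preserved by $x^{-1}$, which is exact and cocontinuous and for which sheafification becomes invisible. Consequently $x^{-1}$ commutes with the whole construction, giving $(\mathbb{L}^G_{(B, M_2, \beta)/(A, M_1, \alpha)})_x \cong \mathbb{L}^G_{(B_x, M_{2,x}, \beta_x)/(A_x, M_{1,x}, \alpha_x)}$, and, $x^{-1}$ being exact, also $H^i(\mathbb{L}^G_{(B,\dots)/(A,\dots)})_x \cong H^i(\mathbb{L}^G_{(B_x,\dots)/(A_x,\dots)})$.

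Finally I would compute the stalk of $a\underline{H}^i$. The stalk of a sheafification agrees with the stalk of the presheaf, so $(a\underline{H}^i)_x \cong \varinjlim_{U \ni x} H^i(\mathbb{L}^G_{(B(U),\dots)/(A(U),\dots)})$, the colimit running over a cofinal system of neighbourhoods of $x$. Filtered colimits are exact and hence commute with $H^i$, and by Lemma \ref{comm-LG-fcolim} the functor $\mathbb{L}^G$ commutes with filtered colimits of prelog rings; since the stalk prelog ring $(B_x, M_{2,x}, \beta_x)$ is precisely $\varinjlim_{U \ni x}(B(U), M_2(U), \beta(U))$, we obtain $(a\underline{H}^i)_x \cong H^i(\mathbb{L}^G_{(B_x,\dots)/(A_x,\dots)})$. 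Thus both stalks are canonically $H^i$ of the Gabber complex of the stalk prelog ring, the comparison morphism of the first step realises this identification, and so it is a stalkwise isomorphism and therefore an isomorphism. The main obstacle is the second step: one must be sure that the fibre functor genuinely commutes with the entire sheaf-level construction of $\mathbb{L}^G$---in particular with the sheafified $\Omega^1$ and with the infinitely iterated free resolution---which rests on $x^{-1}$ being exact and preserving all colimits, the same colimit-compatibility that underlies Lemma \ref{comm-LG-fcolim}.
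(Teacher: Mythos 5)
Your proof is correct, but it takes a genuinely different route from the paper's. The paper avoids stalks altogether: it introduces the site $\mathcal{C}'$ given by the chaotic topology on the underlying category of $\mathcal{C}$, observes that on $\mathrm{Shv}(\mathcal{C}')$ the Gabber construction is computed sectionwise, so that $\mathbb{L}^{\mathrm{pre},G}_{(B, M_2, \beta)/(A, M_1, \alpha)}(U) = \mathbb{L}^G_{(B(U), M_2(U), \beta(U))/(A(U), M_1(U), \alpha(U))}$, and then uses the morphism of topoi $(f_{\ast}, f^{-1}) \colon \mathrm{Shv}(\mathcal{C}) \to \mathrm{Shv}(\mathcal{C}')$, whose inverse image $f^{-1}$ is precisely sheafification: since $f^{-1}$ commutes with the functors $T$, $U$ (hence with the canonical free resolution and with $\Omega^1$), one gets $\mathbb{L}^G = f^{-1}\mathbb{L}^{\mathrm{pre},G}$, and exactness of $f^{-1}$ then gives $H^i(\mathbb{L}^G) = f^{-1}H^i(\mathbb{L}^{\mathrm{pre},G})$, which is exactly the asserted sheafification --- no points, no filtered colimits, no stalkwise comparison. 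Your argument instead builds the comparison map from sheafification units and checks it on stalks, which requires the enough-points hypothesis, Lemma \ref{comm-LG-fcolim}, and the commutation of each fibre functor $x^{-1}$ with the whole construction; note that this last point is the same commutation principle the paper applies to $f^{-1}$, just invoked for a different exact, cocontinuous inverse-image functor. Both arguments are valid: the paper's is shorter and point-free, while yours has the side benefit of explicitly establishing the stalk formula $(\mathbb{L}^G_{(B, M_2, \beta)/(A, M_1, \alpha)})_x \cong \mathbb{L}^G_{(B_x, M_{2,x}, \beta_x)/(A_x, M_{1,x}, \alpha_x)}$, which is precisely what the paper later invokes (citing the argument of this lemma) in the proof of Lemma \ref{LG-ring-asch}.
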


\begin{proof}
Let $\mathcal{C}'$ be the site which is endowed with the chaotic topology on the underlying category of $\mathcal{C}$, and let $\mathbb{L}^{\mathrm{pre}, G}, \mathbb{L}^G$ be the Gabber's log cotangent complex functors on $\mathcal{C}', \mathcal{C}$, respectively. If $\mathcal{C} = \mathcal{C}'$, the log cotangent complexes $\mathbb{L}^G$ commute with the functors taking sections by the construction of $T, U$, i.e., we have 
\[
\mathbb{L}_{(B, M_2, \beta)/(A, M_1, \alpha)}^{\mathrm{pre}, G} (U) = \mathbb{L}_{(B(U), M_2 (U), \beta(U))/(A(U), M_1 (U), \alpha(U))}^{\mathrm{pre}, G}
\]
on $\mathrm{Shv}(\mathcal{C}')$. Let $(f_{\ast}, f^{-1}) \colon \mathrm{Shv}(\mathcal{C}) \to \mathrm{Shv}(\mathcal{C}')$ be the natural morphism of topoi. It follows that $f^{-1}$ also commutes with $\mathbb{L}^G$, i.e., there is a natural quasi-isomorphism
\[
\mathbb{L}_{(B, M_2, \beta)/(A, M_1, \alpha)}^{G} = f^{-1} \mathbb{L}_{(B, M_2, \beta)/(A, M_1, \alpha)}^{\mathrm{pre}, G}
\]
since $f^{-1}$ and the functors $T$, $U$ commutes. As $f^{-1}$ is the sheafification functor, we have 
\[
H^i (\mathbb{L}_{(B, M_2, \beta)/(A, M_1, \alpha)}^G) = 
H^i (f^{-1} \mathbb{L}_{(B, M_2, \beta)/(A, M_1, \alpha)}^{\mathrm{pre}, G}) = 
f^{-1} H^i (\mathbb{L}_{(B, M_2, \beta)/(A, M_1, \alpha)}^{\mathrm{pre}, G})
\]
and the assertion follows.
\end{proof}

By the construction of the log cotangent complex of Gabber, the simplicial module $\mathbb{L}_{(B, M_2, \beta)/(A, M_1, \alpha)}^G$ is the sheafification of the association \[
U \mapsto \mathbb{L}_{(B(U), M_2 (U), \beta(U))/(A(U), M_1 (U), \alpha(U))}^G
\]
and we have the natural map
\[
\mathbb{L}_{(B(U), M_2 (U), \beta(U))/(A(U), M_1 (U), \alpha(U))}^G \to \mathbb{L}_{(B, M_2, \beta)/(A, M_1, \alpha)}^G (U).
\]
For a prelog ring $(A, M_1, \alpha)$, we let $\Spec(A, M_1, \alpha)$ denote the log scheme with the underlying scheme $\Spec A$ and the log structure associated with $M_1 \to A = \Gamma(\Spec A, \mathcal{O}_{\Spec A})$. 

\begin{lemma}
\label{LG-ring-asch}
Let $(A, M_1, \alpha) \to (B, M_2, \beta)$ be a morphism of fine prelog rings. We let $f \colon \Spec(B, M_2, \beta) \to \Spec(A, M_1, \alpha)$ be the corresponding morphism of fine log schemes. Then the canonical map 
\[
(\mathbb{L}_{(B, M_2, \beta)/(A, M_1, \alpha)}^G)^{\verb|~|} \to \mathbb{L}_{\Spec (B, M_2, \beta)/\Spec (A, M_1, \alpha)}^G
\]
is a quasi-isomorphism in $D(\Spec B)$, where $\verb|~|$ is the functor associating (complexes of) $B$-modules to (complexes of) quasi-coherent $\mathcal{O}_{\Spec B}$-modules as in \cite[Tag 01I7]{SP}.
\end{lemma}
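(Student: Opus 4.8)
The plan is to reduce the assertion to a statement about stalks, and from there to a localization-and-logification property of Gabber's complex. Since the functor $(-)^{\sim}$ from $B$-modules to quasi-coherent $\mathcal{O}_{\Spec B}$-modules is exact, it commutes with the formation of cohomology objects, so it suffices to prove that for every $i$ the induced map
\[
\bigl(H^i(\mathbb{L}_{(B, M_2, \beta)/(A, M_1, \alpha)}^G)\bigr)^{\sim} \longrightarrow H^i\bigl(\mathbb{L}_{\Spec(B, M_2, \beta)/\Spec(A, M_1, \alpha)}^G\bigr)
\]
is an isomorphism of $\mathcal{O}_{\Spec B}$-modules. Both sides are sheaves on the small Zariski site of $\Spec B$, and that topos has enough points given by the localizations at the primes $\mathfrak{p} \subset B$; I would therefore verify the isomorphism on stalks.

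The stalk of the source at $\mathfrak{p}$ is $H^i(\mathbb{L}_{(B, M_2, \beta)/(A, M_1, \alpha)}^G) \otimes_B B_{\mathfrak{p}}$, because $(-)^{\sim}$ has stalk the localization and localization is exact. For the target I would apply Lemma \ref{comm-LG-Hi}, which presents $H^i(\mathbb{L}_{X/Y}^G)$ as the sheafification of the presheaf $U \mapsto H^i(\mathbb{L}^G_{(\mathcal{O}_X(U), M_X(U))/(f^{-1}\mathcal{O}_Y(U), f^{-1}M_Y(U))})$. Sheafification does not change stalks, so the stalk at $\mathfrak{p}$ is the filtered colimit of these groups over open neighbourhoods of $\mathfrak{p}$; since filtered colimits are exact they commute with $H^i$, and by Lemma \ref{comm-LG-fcolim} the complex $\mathbb{L}^G$ commutes with the filtered colimits computing the stalks of $\mathcal{O}_X$ and of the log structures. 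The stalk of the target is thus $H^i(\mathbb{L}^G_{(B_{\mathfrak p}, M_{X, \mathfrak p})/(A_{\mathfrak q}, M_{Y, \mathfrak q})})$, where $\mathfrak q$ is the preimage of $\mathfrak p$ in $A$ and $M_{X,\mathfrak p}, M_{Y,\mathfrak q}$ denote the stalks of the associated log structures. Consequently the lemma reduces to the ring-level quasi-isomorphism
\[
\mathbb{L}_{(B, M_2, \beta)/(A, M_1, \alpha)}^G \otimes_B B_{\mathfrak p} \;\simeq\; \mathbb{L}^G_{(B_{\mathfrak p}, M_{X, \mathfrak p})/(A_{\mathfrak q}, M_{Y, \mathfrak q})}.
\]

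The main obstacle is to establish this last quasi-isomorphism, which I would split into two standard compatibilities of the logarithmic cotangent complex. The first is localization base change: since $A \to A_{\mathfrak q}$ and $B \to B_{\mathfrak p}$ are localizations, they are strict and formally \'etale, so the transitivity triangles together with the vanishing of $\mathbb{L}^G$ for such morphisms give
\[
\mathbb{L}_{(B, M_2, \beta)/(A, M_1, \alpha)}^G \otimes_B B_{\mathfrak p} \;\simeq\; \mathbb{L}^G_{(B_{\mathfrak p}, M_2)/(A_{\mathfrak q}, M_1)}.
\]
The second is invariance under logification: the stalk $M_{X,\mathfrak p}$ is the associated log structure $M_2 \oplus_{\beta^{-1}(B_{\mathfrak p}^{\ast})} B_{\mathfrak p}^{\ast}$ (and similarly for $M_{Y,\mathfrak q}$), and one must check that replacing a prelog monoid by its associated log structure leaves $\mathbb{L}^G$ unchanged, so that $\mathbb{L}^G_{(B_{\mathfrak p}, M_2)/(A_{\mathfrak q}, M_1)} \simeq \mathbb{L}^G_{(B_{\mathfrak p}, M_{X,\mathfrak p})/(A_{\mathfrak q}, M_{Y,\mathfrak q})}$. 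Granting these two facts, composing the identifications yields the required ring-level quasi-isomorphism, hence the isomorphism on stalks, and therefore the lemma; as a by-product one sees that the cohomology sheaves of $\mathbb{L}^G_{\Spec(B,M_2,\beta)/\Spec(A,M_1,\alpha)}$ are quasi-coherent, being the tildes of the $B$-modules $H^i(\mathbb{L}^G_{(B,M_2,\beta)/(A,M_1,\alpha)})$.
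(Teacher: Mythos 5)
Your proposal is correct and takes essentially the same route as the paper's proof: reduce to stalks, identify the stalk of the sheaf-level Gabber complex with $\mathbb{L}^G$ of the localized prelog rings using commutation of $\mathbb{L}^G$ with filtered colimits/sheafification (Lemmas \ref{comm-LG-fcolim}, \ref{comm-LG-Hi}) together with invariance under passing to the associated log structure (\cite[Theorem 8.20]{Ols05}), and establish the localization compatibility via the transitivity triangles, whose third terms vanish because the underlying ring maps are localizations with unchanged monoids. The only cosmetic difference is that you first pass to cohomology sheaves via exactness of $(-)^{\verb|~|}$, whereas the paper argues with the complexes directly.
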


\begin{proof}
We put $(X, M_X) \coloneqq \Spec (B, M_2, \beta)$ and $(Y, M_Y) \coloneqq \Spec (A, M_1, \alpha)$ respectively. We have the natural map
\[
\mathbb{L}_{(B, M_2, \beta)/(A, M_1, \alpha)}^G \to \mathbb{L}_{(\mathcal{O}_X (X), M_X (X)) / (f^{-1} \mathcal{O}_Y (X), f^{-1} M_Y (X))}^G.
\]
We also have the natural map
\[
\mathbb{L}_{(\mathcal{O}_X (X), M_X (X)) / (f^{-1} \mathcal{O}_Y (X), f^{-1} M_Y (X))}^G \to \mathbb{L}_{(X, M_X)/(Y, M_Y)}^G (X)
\]
by the argument before the lemma. The composition of the two natural maps 
\[
\mathbb{L}_{(B, M_2, \beta)/(A, M_1, \alpha)}^G \to \mathbb{L}_{(X, M_X)/(Y, M_Y)}^G (X)
\]
gives the morphism of complexes of $\mathcal{O}_{\Spec(B)}$-modules
\[
(\mathbb{L}_{(B, M_2, \beta)/(A, M_1, \alpha)}^G)^{\verb|~|} \to \mathbb{L}_{(X, M_X)/(Y, M_Y)}
\]
by \cite[Tag 01I7]{SP}. Then it suffices to show that the map is a quasi-isomorphism at each stalk. We take a point $\mathfrak{p} \in \Spec (B)$. We have to show the induced map at $\mathfrak{p}$ : 
\[
(\mathbb{L}_{(B, M_2, \beta)/(A, M_1, \alpha)}^G)_{\mathfrak{p}}^{\verb|~|} \to (\mathbb{L}_{(X, M_X)/(Y, M_Y)}^G)_{\mathfrak{p}}
\]
is a quasi-isomorphism. 
Let $\beta_{\mathfrak{p}}$ (resp.\ $\alpha_{\mathfrak{q}}$) denotes the map $M_2 \to B_{\mathfrak{p}}$ (resp.\ $M_1 \to A_{\mathfrak{q}}$) where $\mathfrak{q}$ is the prime ideal which is the inverse image of $\mathfrak{p}$ under the map $A \to B$. 
The right term $(\mathbb{L}_{(X, M_X)/(Y, M_Y)}^G)_{\mathfrak{p}}$ is naturally isomorphic to $\mathbb{L}_{(B_{\mathfrak{p}}, (M_X)_{\mathfrak{p}})/(A_{\mathfrak{q}}, (M_Y)_{\mathfrak{q}})}^G$
since taking stalks commutes with $\mathbb{L}^G$, which is seen by the argument of the commutativity of $f^{-1}$ and $\mathbb{L}^G$ in Lemma \ref{comm-LG-Hi}. Moreover, it is quasi-isomorphic to $\mathbb{L}_{(B_{\mathfrak{p}}, M_2, \beta_{\mathfrak{p}})/(A_{\mathfrak{q}}, M_1, \alpha_{\mathfrak{q}})}^G$ by the commutativity of $\mathbb{L}^G$ and taking associated log ringed structure up to the natural quasi-isomorphisms (\cite[Theorem 8.20]{Ols05}) in the case that $\mathcal{R}$ is the punctual topos $\{ \ast \}$.

We have to show that the left term $(\mathbb{L}_{(B, M_2, \beta)/(A, M_1, \alpha)}^G)_{\mathfrak{p}}^{\verb|~|}$ is naturally quasi-isomorphic to $(\mathbb{L}_{(B_{\mathfrak{p}}, M_2, \beta_{\mathfrak{p}})/(A_{\mathfrak{q}}, M_1, \alpha_{\mathfrak{q}})}^G)$, so it suffices to show that there is a natural quasi-isomorphism 
\[
\mathbb{L}_{(B_{\mathfrak{p}}, M_2, \beta_{\mathfrak{p}})/(A_{\mathfrak{q}}, M_1, \alpha_{\mathfrak{q}})}^G \cong \mathbb{L}_{(B, M_2, \beta)/(A, M_1, \alpha)}^G \otimes_B B_{\mathfrak{p}}.
\]
It is seen by the following two distinguished triangles
\begin{align*}
&\mathbb{L}_{(B, M_2, \beta)/(A, M_1, \alpha)}^G \otimes_B^L B_{\mathfrak{p}} \to 
\mathbb{L}_{(B_{\mathfrak{p}}, M_2, \beta_{\mathfrak{p}})/(A, M_1, \alpha)}^G \to 
\mathbb{L}_{(B_{\mathfrak{p}}, M_2, \beta_{\mathfrak{p}})/(B, M_2, \beta)}^G \to, \\ 
&\mathbb{L}_{(A_{\mathfrak{q}}, M_1, \alpha_{\mathfrak{q}})/(A, M_1, \alpha)}^G \otimes_{A_{\mathfrak{q}}}^L B_{\mathfrak{p}} \to 
\mathbb{L}_{(B_{\mathfrak{p}}, M_2, \beta_{\mathfrak{p}})/(A, M_1, \alpha)}^G \to 
\mathbb{L}_{(B_{\mathfrak{p}}, M_2, \beta_{\mathfrak{p}})/(A_{\mathfrak{q}}, M_1, \alpha_{\mathfrak{q}})}^G \to.
\end{align*}
\end{proof}

Let $f \colon X \to Y$ be a morphism of fine log schemes. The natural comparison morphism
\[
c_{X/Y} \colon \tau_{\geq \ast} \mathbb{L}_{X/Y}^G \to \mathbb{L}_{X/Y}
\]
between the two log cotangent complexes is constructed in \cite[8.31]{Ols05}. 

\begin{construction}[{\cite[8.31]{Ols05}}]
Since $\mathcal{L}og_Y$ is an algebraic stack over $Y$, there exists a smooth surjection $u \colon U \to \mathcal{L}og_Y$ from a scheme $U$. Let $U_{\ast}$ denote the $0$-coskeleton of $u$. We denote by $(U_{\ast}, M_{U_{\ast}})$ the associated simplicial log scheme for $U_{\ast} \to \mathcal{L}og_Y$, and let $(V_{\ast}, M_{V_{\ast}})$ be the log scheme 
\[
(X \times_{\mathcal{L}og_Y} U_{\ast}, \pi^{-1} M_X) \cong (X \times_{\mathcal{L}og_Y} U_{\ast}, j^\ast M_{U_{\ast}}),
\]
where $\pi \colon (V_{\ast}, M_{V_{\ast}}) \to X$ and $j \colon V_{\ast} \to U_{\ast}$ are the pullback morphisms. Then there is a commutative diagram
\[\xymatrix{
(V_{\ast}, M_{V_{\ast}})\ar[r]^-{j} \ar[d]^\pi & (U_{\ast}, M_{U_{\ast}})\ar[d]\\
X\ar[r] & Y,
}\]
which induces a morphism
\[
\pi^\ast \mathbb{L}_{X/Y}^G \to 
\mathbb{L}_{(V_{\ast}, M_{V_{\ast}})/(U_{\ast}, M_{U_{\ast}})}^G.
\]
Here $\mathbb{L}_{(V_{\ast}, M_{V_{\ast}})/(U_{\ast}, M_{U_{\ast}})}^G$ is isomorphic to $\mathbb{L}_{V_{\ast}/U_{\ast}}$ because $j$ is strict.
So we obtain a morphism $c_{X/Y} \colon \tau_{\geq \ast} \mathbb{L}_{X/Y}^G \to \mathbb{L}_{X/Y}$ in $D'_{\mathrm{qcoh}} (X)$, which is independent of the choice of a smooth covering $u \colon U \to \mathcal{L}og_Y$. 
The morphism $c_{X/Y} \colon \tau_{\geq \ast} \mathbb{L}_{X/Y}^G \to \mathbb{L}_{X/Y}$ is functorial in $X \to Y$. 
\end{construction}

The morphism is an isomorphism in special cases. 

\begin{proposition}[{\cite[Corollary 8.34]{Ols05}}]
\label{comp-LG-L}
Let $f \colon X \to Y$ be a morphism of fine log schemes. If at least one of following conditions are satisfied, the morphism $c_{X/Y} \colon \tau_{\geq \ast} \mathbb{L}_{X/Y}^G \to \mathbb{L}_{X/Y}$ is a quasi-isomorphism.
\begin{enumerate}
\item $Y$ is log flat over $\Spec \mathbb{Z}$ with the trivial log structure.
\item $f$ is integral.
\end{enumerate}
\end{proposition}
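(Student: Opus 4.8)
The plan is to reduce, by localization and functoriality, to an explicit computation with charts, and then to match the two complexes through their transitivity triangles, with each hypothesis supplying exactly the flatness needed to collapse the derived monoid contribution of Gabber's complex onto the underived part seen by Olsson's. First I would note that $c_{X/Y}$ is functorial and that both $\tau_{\geq\ast}\mathbb{L}^G_{X/Y}$ and $\mathbb{L}_{X/Y}$ are compatible with \'etale localization on $\baseof{X}$ (Lemmas \ref{comm-LG-Hi} and \ref{LG-ring-asch} for the Gabber side; the equivalence $D'_{\qcoh}(X_{\textnormal{lis-\'et}})\simeq D'_{\qcoh}(X_{\et})$ together with flat base change for the cotangent complex of $\baseof{X}\to\mathcal{L}og_Y$ for the Olsson side). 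It therefore suffices to treat the case in which $f$ admits a chart $\theta\colon P\to Q$ by fine monoids, inducing charts $P\to\Gamma(Y,M_Y)$ and $Q\to\Gamma(X,M_X)$, such that the induced morphism $\baseof{X}\to\baseof{X}'\coloneqq\baseof{Y}\times_{\mathbb{A}_P}\mathbb{A}_Q$ is strict, where $\mathbb{A}_P\coloneqq\Spec\mathbb{Z}[P]$; write $X'$ for $\baseof{X}'$ with its pulled-back log structure. Under hypothesis (2) the chart can be taken with $\theta$ integral, and under hypothesis (1) with $P$ trivial and $\baseof{Y}$ flat over $\Spec\mathbb{Z}$.

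Since $\baseof{X}\to\baseof{X}'$ is strict, both log cotangent complexes of $X\to X'$ agree with the classical $\mathbb{L}_{\baseof{X}/\baseof{X}'}$, so $c_{X/X'}$ is tautologically a quasi-isomorphism. Feeding this into the transitivity triangles for $X\to X'\to Y$ on the Gabber side and for $\baseof{X}\to\baseof{X}'\to\mathcal{L}og_Y$ on the Olsson side, both compatible with $c$ by functoriality, reduces the problem to showing that $c_{X'/Y}$ is a quasi-isomorphism; that is, to the morphism of fine log schemes $\mathbb{A}_Q\to\mathbb{A}_P$ base changed along $\baseof{Y}\to\mathbb{A}_P$. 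On Olsson's side the complex $\mathbb{L}_{X'/Y}=\mathbb{L}_{\baseof{X}'/\mathcal{L}og_Y}|_{X'_{\et}}$ is computed from the smooth presentation of $\mathcal{L}og_Y$ attached to $\theta$; it is short, depending only on the group-completed map $\theta^{\mathrm{gp}}\colon P^{\mathrm{gp}}\to Q^{\mathrm{gp}}$, with $H^0$ equal to the log differentials $\mathcal{O}_{X'}\otimes_{\mathbb{Z}}(Q^{\mathrm{gp}}/P^{\mathrm{gp}})$.

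The decisive step, and the main obstacle, is the computation of Gabber's complex $\mathbb{L}^G_{X'/Y}$ and its identification with this short model through $c_{X'/Y}$. Writing out the canonical free resolution, $\mathbb{L}^G_{X'/Y}$ splits into an algebra part $\mathbb{L}_{\mathbb{Z}[Q]/\mathbb{Z}[P]}$ and a logarithmic part governed by the derived cotangent complex of the monoid homomorphism $\theta$; a priori the latter contributes homology in arbitrarily negative degrees and involves the groups $\mathrm{Tor}^{\mathbb{Z}[P]}_{\bullet}(\mathbb{Z}[Q],-)$. The map $c_{X'/Y}$ is a quasi-isomorphism precisely when these higher contributions vanish, and this is exactly what each hypothesis secures. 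Under (2), Kato's criterion that an integral homomorphism of fine monoids induces a flat map $\mathbb{Z}[P]\to\mathbb{Z}[Q]$ forces the $\mathrm{Tor}$-terms to vanish, so $\mathbb{L}^G_{X'/Y}$ collapses onto its truncation and matches Olsson's complex. Under (1), triviality of the log structure on $Y$ together with flatness of $\baseof{Y}$ over $\Spec\mathbb{Z}$ plays the same role, the relevant map to $\mathcal{L}og_Y$ being smooth so that no higher homology survives.

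Matching the two transitivity triangles then yields that $c_{X/Y}$ is a quasi-isomorphism, as claimed. The technical heart is therefore the explicit free-resolution computation for monoid algebras, isolating the logarithmic part of $\mathbb{L}^G$, together with the flatness input furnished by the integrality hypothesis; I expect the bookkeeping of the transitivity triangles to be routine once the monoid-algebra computation and the comparison with the group-completed model on Olsson's side are in place.
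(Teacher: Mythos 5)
First, note that the paper contains no proof of this proposition at all: it is quoted verbatim from \cite[Corollary 8.34]{Ols05}, so your attempt can only be measured against Olsson's own argument. Against that benchmark there are two genuine gaps. The first is a misreading of hypothesis (1): the phrase ``with the trivial log structure'' qualifies $\Spec \mathbb{Z}$, not $Y$. The hypothesis is that $Y$, carrying an \emph{arbitrary} fine log structure, is log flat over the trivially-logarithmic $\Spec \mathbb{Z}$; it does not say that $M_Y$ is trivial. Your reduction ``under hypothesis (1) with $P$ trivial and $\baseof{Y}$ flat over $\Spec \mathbb{Z}$'' therefore treats only the near-degenerate case where $Y$ has trivial log structure, and never touches the real content of (1) --- for instance $Y = \Spec \mathbb{Z}[P]$ with its canonical log structure and $f$ non-integral, which is exactly the configuration that any chart-wise reduction (yours included) ultimately has to face.

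The second and more serious gap is that your ``decisive step'' is asserted rather than proved, and the mechanism you invoke is not a valid inference. Flatness of $\mathbb{Z}[P] \to \mathbb{Z}[Q]$ (Kato's criterion for integral $\theta$) does not bound the homology of a cotangent-complex-type invariant: by Avramov's theorem, a flat, finitely presented, non-l.c.i.\ ring map has classical cotangent complex with homology in arbitrarily negative degrees, so ``flatness forces the Tor-terms to vanish, hence $\mathbb{L}^G_{X'/Y}$ collapses onto its truncation'' is not an argument. In Olsson's proof, flatness plays a different role: it provides the Tor-independence needed for the base-change theorem for $\mathbb{L}^G$ in \cite[Section 8]{Ols05}, which reduces the comparison --- along the smooth simplicial cover of $\mathcal{L}og_Y$ used to define $c_{X/Y}$, or equivalently along your chart --- to the universal chart morphism $\Spec \mathbb{Z}[Q] \to \Spec \mathbb{Z}[P]$. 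That universal case is then the actual theorem: there $\mathbb{Z}[P] \to \mathbb{Z}[Q]$ need not be flat (hypothesis (1) permits non-integral $\theta$, e.g.\ $\mathbb{N}^2 \to \mathbb{N}$, $(a,b) \mapsto a+b$), the only flatness available is over $\mathbb{Z}$, and one must genuinely analyze Gabber's canonical resolution of the pair (monoid algebra, monoid) to identify $\mathbb{L}^G_{(\mathbb{Z}[Q],Q)/(\mathbb{Z}[P],P)}$ with $\mathbb{Z}[Q] \otimes_{\mathbb{Z}}^L (Q^{\mathrm{gp}}/P^{\mathrm{gp}})$. This computation is precisely what you defer as ``routine bookkeeping,'' but it is the heart of Olsson's Section 8 and nothing in your sketch supplies it. (A further, comparatively minor, point: your reduction silently uses that $c$ is compatible with the transitivity triangles for $X \to X' \to Y$ --- a triangle that exists on Olsson's side only because $X \to X'$ is strict --- and this compatibility also requires verification from the construction of $c$.)
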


\section{Integral \texorpdfstring{$p$}{p}-adic cohomology of semistable formal schemes}
\label{sec;AinfCoh}

Let $S$ be a ring which is $\pi$-adically complete for some element $\pi \in S$ dividing $p$, and $\varphi \colon S/p \to S/p$ be the absolute Frobenius. We let
\[
S^{\flat} \coloneqq \varprojlim_{\varphi} (S/p)
\]
be the tilt of $S$. Then, there is the following natural isomorphisms of multiplicative monoids,
\[
\varprojlim_{x \mapsto x^p} S \cong \varprojlim_{\varphi} (S/p) = S^\flat.
\]
The period ring $A_{\inf} (S)$ is defined by $A_{\inf} (S) \coloneqq W(S^\flat)$ and $\varphi \colon A_{\inf} (S) \to A_{\inf} (S)$ also denotes the Witt vector Frobenius. We equip the ring $A_{\inf} (S)$ with the product topology $W(S^\flat) \cong \prod_{n=1}^\infty S^{\flat}$. Then the topology in $A_{\inf} (S)$ is complete and agree with the $(p, [x])$-adic topology for any nonzero nonunit $x = (\ldots,x^{(1)}, x^{(0)}) \in S^\flat$, where $x^{(i)}$ is an element of $S$ and $(x^{(i+1)})^p = x^{(i)}$. The \emph{de Rham specialization map} 
\[
\theta \colon A_{\inf} (S) \to S
\]
of $A_{\inf} (S)$ is defined to be the unique extension of the assignment $[x] \mapsto x^{(0)}$. The \emph{Hodge-Tate specialization map} is the composition
\[
\theta \varphi^{-1} \colon A_{\inf} (S) \to S.
\]
By the canonical isomorphism $A_{\inf} (S) \cong \varprojlim_F W_r (S)$ of \cite[Lemma 3.2]{BMS18}, we define the homomorphism 
\[
\tilde{\theta}_n \colon A_{\inf} (S) \to W_n (S)
\]
for each $n \geq 1$ to be the composition of the isomorphism above and the canonical projection $\varprojlim_F W_r (S) \to W_n (S)$. We also define the map 
\[
\theta_n \coloneqq \tilde{\theta}_n \varphi^n \colon A_{\inf} (S) \to W_n (S)
\]
for each $n \geq 1$. We mainly consider the case where $S$ is a perfectoid ring.

\begin{definition}[{\cite[Definition 3.5]{BMS18}}]
A ring $S$ is \emph{perfectoid} if and only if the following conditions are satisfied.
\begin{enumerate}
\item It is $\pi$-adically complete for some element $\pi \in S$ such that $\pi^p$ divides $p$.
\item The Frobenius map $\varphi \colon S/p \to S/p$ is surjective.
\item The kernel of $\theta \colon A_{\inf} (S) \to S$ is principal.
\end{enumerate}
\end{definition}

In the following, $S$ is assumed to be a perfectoid ring which contains a compatible system $(\zeta_{p^n})_{n \geq 1}$ of $p$-power roots of unity. We fix a compatible system $\epsilon = (\ldots, \zeta_{p^2}, \zeta_p, 1) \in S^\flat$ of $p$-power roots of unity in $S$. We put $\epsilon^{1/p^n} \coloneqq (\ldots,\zeta_{p^{n+1}}, \zeta_{p^n})$ and we define the elements $\mu$, $\xi_n$, $\tilde{\xi}_n$, $\xi$, $\tilde{\xi}$ of $A_{\inf} (S)$ by
\begin{align*}
\mu &\coloneqq [\epsilon] - 1, \\ 
\xi_n &\coloneqq \mu/\varphi^{-n} (\mu) = ([\epsilon] - 1)/([\epsilon^{1/p^n}] - 1) = \xi \varphi^{-1} (\xi) \cdots \varphi^{-(n-1)} (\xi), \\ 
\tilde{\xi}_n &\coloneqq \varphi^n (\mu) / \mu = \varphi^n (\xi_n)= \varphi (\xi) \varphi^2 (\xi) \cdots \varphi^n (\xi), \\ 
\xi &\coloneqq \xi_1, \\ 
\tilde{\xi} &\coloneqq \tilde{\xi_1}.
\end{align*}
Then the specialization map $\theta$ (resp.\ $\theta \varphi^{-1}$) has the kernel generated by the element $\xi$ (resp.\ $\tilde{\xi}$) by \cite[Example 3.16]{BMS18}. Moreover, for each $n \geq 1$, the map $\theta_n$ has the kernel generated by $\xi_n$ and the map $\tilde{\theta}_n$ has the kernel generated by $\tilde{\xi}_n$ by \cite[Lemma 3.12]{BMS18}. The ideal $(\mu) \subset A_{\inf} (S)$ does not depend on the choice of $\epsilon$ and the topology of $A_{\inf} (S)$ is $(p, \mu)$-adic.  

Let $K$ be a perfectoid field (in the sense of \cite[Definition 1.2]{Sch12}) which contains a compatible system of $p$-power roots of unity and $p^q$ for any $q \in \mathbb{Q}$. Let $\mathcal{O}$ be the ring of integers of $K$ for the associated rank-$1$-valuation, and $\mathfrak{m}$ be the maximal ideal of $\mathcal{O}$. We fix a system of compatible $p^n$-power roots $(\zeta_{p^n})_n$ of unity in $\mathcal{O}$. We also fix a system of compatible $p^n$-power roots $p^{\flat} \coloneqq (\ldots,p^{1/p},p) \in \mathcal{O^{\flat}}$ of $p$ in $\mathcal{O}$. 
We fix a non-negative rational number $q \in \mathbb{Q}_{\geq 0}$ and $p^q \in \mathcal{O}$. We denote by $A_{\inf}$ the period ring $A_{\inf} (\mathcal{O})$.

Let $\mathfrak{X}$ be a $p$-adic formal scheme over $\mathcal{O}$ that may be covered by open affines $\mathfrak{U}$ which admit $p$-adically \'etale $\mathcal{O}$-morphisms, which are called \emph{\'etale coordinate map} 
\begin{equation}
\label{EtCoordinateMap}
\Box \colon \mathfrak{U} = \Spf(R) \to \Spf(R^{\Box}) \ \mathrm{with} \ 
R^{\Box} \coloneqq \mathcal{O}\{ t_0,\ldots,t_r,t_{r+1}^{\pm 1},\ldots,t_d^{\pm 1}\}/(t_0\cdots t_r - p^q),
\end{equation}
for some $d \geq 0$ and $0 \leq r \leq d$. For example, the case where $\mathfrak{X}$ is the base change of a strictly semistable formal scheme over a discrete valuation subring $\mathcal{O}' \subset \mathcal{O}$ \cite[Section 1.5]{CK19}. The formal scheme $\mathfrak{X}$ is equipped with the log structure given by the subsheaf associated to the subpresheaf $Q_{\mathcal{O}_{\mathfrak{X}}} \coloneqq \mathcal{O}_{\mathfrak{X}_{\et}} \cap (\mathcal{O}_{\mathfrak{X}_{\et}} [\tfrac{1}{p}]) \subset \mathcal{O}_{\mathfrak{X}_{\et}}$.
Let $C$ be the complete algebraic closure of $W(k)[ \tfrac{1}{p}]$ where $k$ is an algebraically closed field of characteristic $p > 0$. 
In \cite{CK19}, $\mathcal{O}$ is assumed to be the ring $\mathcal{O}_C$ of integers of $C$. This assumption is mainly used for applying the results of \cite{Bei13b} to the log-crystalline (in addition, \'etale, log-de Rham) comparison theorem of $A_{\inf}$-cohomology. By the discussion of \cite[Section 1.5]{CK19}, the coordinate maps above admit models over some discrete valuation subring $\mathcal{O}' \subset \mathcal{O}_C$, in other words, there is an \'etale $\mathcal{O}'$-morphism
\begin{equation}
\label{OModel-CM}
U \to \Spec(\mathcal{O}'[t_0,\ldots t_r,t_{r+1}^{\pm 1},\ldots t_d^{\pm 1}]/(t_0,\ldots t_r - p^q)),
\end{equation}
whose $p$-adic completion of the $\mathcal{O}_C$-base change is the \'etale coordinate map (\ref{EtCoordinateMap}) in which $\mathcal{O} = \mathcal{O}_C$. In the following, we note some results about the charts of the formal schemes $\mathfrak{X}$ discussed in \cite[Section 1.6]{CK19}. we fix a valuation subring $\mathcal{O}' \subset \mathcal{O}$ and $\mathcal{O}$-model (\ref{OModel-CM}) of the coordinate morphism. 
Let $Q_U$ (resp.\ $Q_{\mathfrak{U}}$) be the log structure associated to $\mathcal{O}_{U_{\et}} \cap (\mathcal{O}_{U_{\et}} [\tfrac{1}{p}])^\ast$ (resp.\ $\mathcal{O}_{\mathfrak{U}_{\et}} \cap (\mathcal{O}_{\mathfrak{U}_{\et}} [\tfrac{1}{p}])^\ast$), and $V$ be the monoid of $p$-adic valuations of $\mathcal{O}' \backslash \{ 0\}$. Let $V \to (\mathcal{O}' \backslash \{ 0\})$ be the map that is constructed by a fixed system $(p^v)_{v \in V}$ of elements in $\mathcal{O}'$.

\begin{lemma}[{\cite[Claim 1.6.1]{CK19}}]
\label{chart-ssalg}
For a valuation subring $\mathcal{O}' \subset \overline{W(k)}$, the log scheme $(U, Q_U)$ has the chart
\[
\mathbb{N}^{r+1} \sqcup_{\mathbb{N}} V \to \Gamma(U, \mathcal{O}_U)
\]
given by $(a_i)_{0 \leq i \leq r} \mapsto \prod_{0 \leq i \leq r} t_i^{a_i}$ on $\mathbb{N}^{r+1}$, the diagonal $\mathbb{N} \to \mathbb{N}^{r+1}$ and $\mathbb{N} \xrightarrow{1 \mapsto q} V$ on $\mathbb{N}$, and the structure map $V \to (\mathcal{O}' \backslash \{ 0\}) \to \Gamma(U, \mathcal{O}_U)$ on $V$.
\end{lemma}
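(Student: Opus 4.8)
The plan is to verify directly that the prelog structure $\beta\colon P\coloneqq\mathbb{N}^{r+1}\sqcup_{\mathbb{N}}V\to\Gamma(U,\mathcal{O}_U)$ described in the statement is a chart, i.e.\ that the log structure $P^a$ it generates is isomorphic to $Q_U$. Since the coordinate morphism $U\to U_0\coloneqq\Spec(\mathcal{O}'[t_0,\dots,t_r,t_{r+1}^{\pm1},\dots,t_d^{\pm1}]/(t_0\cdots t_r-p^q))$ is \'etale and the formation of the log structure $\mathcal{O}\cap(\mathcal{O}[1/p])^{\ast}$ commutes with \'etale base change (being a unit after inverting $p$ is an \'etale-local condition, and \'etale morphisms are strict for these canonical log structures), it suffices to treat the model $U_0$ and then pull the chart back along the strict \'etale map. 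A morphism of log structures over a fixed structure sheaf is an isomorphism as soon as it is one on all strict-henselian stalks, so the whole statement reduces to a stalkwise comparison of $P^a$ and $Q_{U_0}$.

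First I would record the easy verifications. Each $t_i$ ($0\le i\le r$) becomes invertible in $\mathcal{O}_{U_0}[1/p]$ because $t_0\cdots t_r=p^q$ is a unit there, and each $p^v$ ($v\in V$) is likewise a unit after inverting $p$; hence $\beta$ lands in $Q_{U_0}=\mathcal{O}_{U_0}\cap(\mathcal{O}_{U_0}[1/p])^{\ast}$. The two structure maps $\mathbb{N}\to\mathbb{N}^{r+1}$ (diagonal) and $\mathbb{N}\xrightarrow{1\mapsto q}V$ send the generator to $t_0\cdots t_r$ and to $p^q$ respectively, and these agree in $\Gamma(U_0,\mathcal{O}_{U_0})$; this is exactly the relation that makes $\beta$ factor through the pushout $P$. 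At a geometric point $\bar x$ lying in the generic fiber, $p$ is invertible, so $Q_{U_0,\bar x}=\mathcal{O}^{\ast}_{\bar x}$ and every element of $P$ maps to a unit, whence $P^a_{\bar x}=\mathcal{O}^{\ast}_{\bar x}=Q_{U_0,\bar x}$; thus the generic fiber is immediate.

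The substance is the computation at a geometric point $\bar x$ of the special fiber. Put $S\coloneqq\{\,i\in\{0,\dots,r\}:t_i(\bar x)=0\,\}$. Using $t_0\cdots t_r=p^q$, the $t_i$ with $i\notin S$ (in particular all $i>r$) are units near $\bar x$, so \'etale-locally the strict-henselian neighborhood of $\bar x$ takes the form $R_0[\,t_i:i\in S\,]/(\prod_{i\in S}t_i-u\,p^q)$ for a unit $u$, with $R_0$ built from the strict henselization of $\mathcal{O}'$ and the invertible coordinates. For such a standard semistable local ring the compactifying log structure $Q=\mathcal{O}\cap(\mathcal{O}[1/p])^{\ast}$ has characteristic monoid $\overline{Q}_{\bar x}\cong\mathbb{N}^{S}\sqcup_{\mathbb{N}}V$: the orders of vanishing along the branches $\{t_i=0\}_{i\in S}$ together with the valuation monoid $V$ of the base, glued along $\sum_{i\in S}e_i=q$. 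On the other hand $\beta^{-1}(\mathcal{O}^{\ast}_{\bar x})$ is generated by the $e_i$ with $i\notin S$, so $\overline{P^a}_{\bar x}=P/\langle e_i:i\notin S\rangle\cong\mathbb{N}^{S}\sqcup_{\mathbb{N}}V$, again with the diagonal $\mathbb{N}\to\mathbb{N}^S$ and $\mathbb{N}\xrightarrow{1\mapsto q}V$. The induced map $\overline{P^a}_{\bar x}\to\overline{Q}_{\bar x}$ is precisely the identification of these two presentations; since both log structures are integral and share the unit part $\mathcal{O}^{\ast}_{\bar x}$, an isomorphism of characteristic monoids upgrades to an isomorphism $P^a_{\bar x}\xrightarrow{\sim}Q_{U_0,\bar x}$, as required.

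The main obstacle is exactly this special-fiber stalk computation, namely the identification $\overline{Q}_{\bar x}\cong\mathbb{N}^{S}\sqcup_{\mathbb{N}}V$ over the valuation ring $\mathcal{O}'$, whose value monoid $V$ need not be discrete. I would have to check that inverting $p$ and taking orders of vanishing interact correctly with the base monoid $V$, i.e.\ that the only relations among the $\overline{t_i}$ ($i\in S$) and the $\overline{p^v}$ come from $\prod_{i\in S}t_i=u\,p^q$, and that $\overline{P^a}_{\bar x}\to\overline{Q}_{\bar x}$ is injective as well as surjective; the surjectivity amounts to the assertion that every function which is a unit away from $p=0$ is, up to a unit, a monomial in the $t_i$ times a base element $p^v$, which is where the normal-crossings geometry of the model and the structure of $\mathcal{O}'$ genuinely enter. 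Away from this point everything is formal: the reduction to $U_0$, the factorization through the pushout, and the passage from characteristic monoids back to log structures are all routine.
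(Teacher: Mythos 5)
Your formal skeleton is correct: $Q_U$ is by definition a sheaf on the small \'etale site, so the \'etale coordinate map $U \to U_0$ is automatically strict and it suffices to produce the chart on the framed model $U_0$; whether $P^a \to Q_{U_0}$ is an isomorphism can be checked on strict-henselian stalks; the generic-fibre stalks are trivial; and your identification $\overline{P^a}_{\bar x} \cong \mathbb{N}^{S}\sqcup_{\mathbb{N}}V$ is right (modulo the integrality of the pushout $\mathbb{N}^{r+1}\sqcup_{\mathbb{N}}V$, which you use silently when upgrading an isomorphism of characteristic monoids to one of log structures). But the proposal stops exactly where the lemma begins. The statement you defer --- that at a special-fibre point every germ $f \in \mathcal{O}_{U_0,\bar x}$ which becomes invertible after inverting $p$ is, up to a unit, of the form $\prod_{i\in S}t_i^{a_i}\,p^{v}$, and that the only relations among such expressions are those forced by $\prod_{i\in S}t_i = u\,p^{q}$ --- \emph{is} the content of the lemma; everything you actually carry out is bookkeeping. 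A text ending with ``I would have to check [the key identification], which is where the geometry genuinely enters'' is an outline, not a proof. Nor is the deferred step routine: $\mathcal{O}'$ may have dense value monoid $V$, so $\mathcal{O}_{U_0,\bar x}$ is non-noetherian and far from factorial, and your proposed language of ``orders of vanishing along the branches'' has no off-the-shelf meaning there; both the surjectivity and the injectivity of $\overline{P^a}_{\bar x}\to\overline{Q}_{\bar x}$ require genuine input.

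For comparison: the paper supplies no argument either --- Lemma \ref{chart-ssalg} is quoted verbatim as Claim 1.6.1 of \cite{CK19}, where the substance of the proof is precisely the stalk computation you postpone. If you want a self-contained proof, the cleanest route, and one consistent with techniques used elsewhere in this paper (compare the proof of Proposition \ref{Rinfty-LG}), is a limit argument: write $\mathcal{O}'$ as a filtered union of discrete valuation subrings $\mathcal{O}_m$ containing $p^{q}$, so that $U_0$ and its strict local rings are filtered colimits of those of the noetherian semistable models over $\mathcal{O}_m$; the formation of $Q$, of $P^a$, and of characteristic monoids all commute with filtered colimits, as do the monoids ($V=\varinjlim_m V_m$ with $V_m\cong\mathbb{N}$). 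At each finite level the strict local ring is an excellent normal domain, the special fibre decomposes into the Weil divisors $D_i=\{t_i=0\}$, $i\in S$, with $\mathrm{div}(t_i)=e\,D_i$ and $\mathrm{div}(\pi_m)=\sum_{i\in S}D_i$ (where $p^{q}=u\pi_m^{e}$), and the identification $\overline{Q}_{\bar x}\cong\mathbb{N}^{S}\sqcup_{\mathbb{N}}V_m$ follows from the classical (toric) computation of the divisor class group of such a singularity: an element invertible off the special fibre has divisor $\sum a_iD_i$, and such a divisor is principal exactly when the $a_i$ are congruent modulo $e$, i.e.\ exactly when it is the divisor of a monomial $\pi_m^{c}\prod t_i^{b_i}$. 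Passing to the colimit over $m$ then gives the claim over $\mathcal{O}'$. Without this step (or the argument of \cite{CK19}) being supplied, the proposal cannot be accepted as a proof.
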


\begin{lemma}[{\cite[Claim 1.6.3]{CK19}}]
\label{chart-ssfalg}
For a valuation subring $\mathcal{O}' \subset \overline{W(k)}$, the formal $p$-adic completion morphism 
\[
j \colon (\mathfrak{U}_{\et}, Q_{\mathfrak{U}}) \to (U_{\et}, Q_U)
\]
of log ringed \'etale sites is strict.
\end{lemma}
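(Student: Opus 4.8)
The plan is to unwind strictness and reduce it to a chart computation that is already essentially carried out in Lemma~\ref{chart-ssalg}. Recall that $j$ is \emph{strict} exactly when the adjunction morphism $j^{\ast} Q_U \to Q_{\mathfrak{U}}$ from the pullback log structure to $Q_{\mathfrak{U}}$ is an isomorphism. Since both are log structures over the sheaf of rings $\mathcal{O}_{\mathfrak{U}_{\et}}$, and a morphism of log structures over a fixed ringed topos is an isomorphism as soon as it induces an isomorphism on the associated ghost sheaves $\overline{\mathcal{M}} = \mathcal{M}/\mathcal{O}^{\ast}$, it suffices to show that the induced map $j^{-1}\overline{Q}_U \to \overline{Q}_{\mathfrak{U}}$ on ghost sheaves is an isomorphism of \'etale sheaves, which may be tested on stalks at geometric points. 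The basic geometric input is that $\mathcal{O}$ is the $p$-adic completion of $\mathcal{O}'$, so $\mathcal{O}/p^m \cong \mathcal{O}'/p^m$ for every $m$ and the reductions $\mathfrak{U} \bmod p$ and $U \bmod p$ agree. As the underlying space of the $p$-adic formal scheme $\mathfrak{U}$ is that of $\mathfrak{U} \bmod p$, the morphism $j$ induces an equivalence of $\mathfrak{U}_{\et}$ with the \'etale topos of the closed subscheme $U \bmod p \hookrightarrow U$, so geometric points of $\mathfrak{U}$ correspond to geometric points of $U$ lying in the special fibre.

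I would then pass to charts. By Lemma~\ref{chart-ssalg} the monoid $P \coloneqq \mathbb{N}^{r+1} \sqcup_{\mathbb{N}} V$ is a chart for $Q_U$, via $(a_i)_i \mapsto \prod_{0 \le i \le r} t_i^{a_i}$ on $\mathbb{N}^{r+1}$ and $v \mapsto p^v$ on $V$. Because the pullback of a chart is a chart for the pullback log structure (see \cite[1.7]{Kat88}), $j^{\ast} Q_U$ is the log structure associated to the composite prelog structure $P \to \Gamma(U, \mathcal{O}_U) \to \Gamma(\mathfrak{U}, \mathcal{O}_{\mathfrak{U}})$. Consequently, strictness of $j$ is equivalent to the assertion that this \emph{same} composite is a chart for $Q_{\mathfrak{U}} = \mathcal{O}_{\mathfrak{U}_{\et}} \cap (\mathcal{O}_{\mathfrak{U}_{\et}}[\tfrac{1}{p}])^{\ast}$; in other words, it is enough to prove the exact analogue of Lemma~\ref{chart-ssalg} with $(U, Q_U)$ replaced by $(\mathfrak{U}, Q_{\mathfrak{U}})$.

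For this I would argue on ghost stalks. Each generator $t_i$ and $p^v$ lands in $\mathcal{O}_{\mathfrak{U}} \cap (\mathcal{O}_{\mathfrak{U}}[\tfrac{1}{p}])^{\ast}$, giving a morphism from the chart log structure to $Q_{\mathfrak{U}}$, and it remains to check it is an isomorphism on $\overline{\mathcal{M}}$ at each geometric point $\bar{x}$. The ghost stalk of the chart is $P/F_{\bar{x}}$, where $F_{\bar{x}}$ is the face of those elements mapping to units in $\mathcal{O}_{\mathfrak{U}, \bar{x}}$, i.e.\ the face generated by the $t_i$ with $t_i(\bar{x}) \ne 0$; while $\overline{Q}_{\mathfrak{U}, \bar{x}}$ is the monoid of nonzero elements of the strictly henselian local ring $\mathcal{O}_{\mathfrak{U}, \bar{x}}$ that become invertible after inverting $p$, taken modulo units. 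The content is thus a \emph{monomiality} statement: every such element is a unit times $\prod_i t_i^{a_i} \cdot p^v$ for a unique class $(a_i, v) \in P/F_{\bar{x}}$. Since \'etale maps are strict, I may first reduce to the standard pair $R^{\Box}$ by the commutative square relating $U$, $\mathfrak{U}$ and their models, and there the computation is the one already behind Lemma~\ref{chart-ssalg}. It transports from $U$ to $\mathfrak{U}$ verbatim because the corresponding strict henselizations have identical reductions modulo every power of $p$, and because $\mathcal{O}'$ and its completion $\mathcal{O}$ have the same value monoid $V$, so the relation $t_0 \cdots t_r = p^q$ and the combinatorics of the special fibre $\{p = 0\}$ are literally the same on both sides.

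I expect the genuine obstacle to be exactly this monomiality computation: controlling \emph{all} functions invertible after inverting $p$ in the strictly henselian local rings of the semistable algebra, and verifying that neither the $p$-adic completion nor the base change $\mathcal{O}' \to \mathcal{O}$ produces new such functions or enlarges the value monoid. Once strictness has been rephrased through ghost sheaves and charts, every other step is formal; the real work, and the place where the hypotheses that $\mathcal{O}$ is the completion of $\mathcal{O}'$ and that the two share a common value group enter, is the local structure of $R^{\Box}$ along its special fibre.
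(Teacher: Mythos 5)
The paper itself gives no proof of this lemma --- it is imported verbatim from \cite[Claim 1.6.3]{CK19} --- so your attempt can only be measured against the argument in that source, whose proof rests on the same two ingredients your outline isolates: the chart computation for $(U, Q_U)$ (Lemma \ref{chart-ssalg}) and a transfer of that computation along the $p$-adic completion. Your formal reductions are all correct: a morphism of log structures on a fixed ringed topos is an isomorphism iff it is one on $\overline{\mathcal{M}} = \mathcal{M}/\mathcal{O}^{\ast}$; the pullback of a chart charts the pullback log structure, so $j^{\ast}Q_U$ is the log structure associated to $\mathbb{N}^{r+1}\sqcup_{\mathbb{N}} V \to \Gamma(\mathfrak{U}, \mathcal{O}_{\mathfrak{U}})$; and $\mathfrak{U}_{\et} \cong (U \bmod p)_{\et}$, so everything reduces to the stalkwise claim that every $f \in \mathcal{O}_{\mathfrak{U},\bar{x}}$ that becomes invertible after inverting $p$ is a unit times a monomial $\prod_i t_i^{a_i}\, p^{v}$.

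The genuine gap is the transfer itself. You assert that this monomiality statement ``transports verbatim'' from $U$ to $\mathfrak{U}$ because the two sides have identical reductions modulo every power of $p$, and in your closing paragraph you re-flag exactly this step as ``the genuine obstacle'' without resolving it. It is not verbatim: an element $f$ of $B \coloneqq \mathcal{O}_{\mathfrak{U},\bar{x}}$ is not an element of $A \coloneqq \mathcal{O}_{U,\bar{x}}^{\mathrm{sh}}$, so Lemma \ref{chart-ssalg} cannot be applied to it directly, and divisibility relations $fg = p^N$ are not mod-$p^m$ data. Once Lemma \ref{chart-ssalg} is granted, the entire content of the present lemma is the following henselian approximation argument, which is what must be supplied: given $fg = p^N$ in $B$, choose $f_0, g_0 \in A$ with $f_0 \equiv f$ and $g_0 \equiv g$ modulo $p^{N+1}$ (possible since $A/p^{N+1} \cong B/p^{N+1}$); then $f_0 g_0 - p^N \in p^{N+1}A$, so $f_0 g_0 = p^N(1+pa)$ with $a \in A$ by $p$-torsion-freeness, and $1+pa \in A^{\ast}$ because $(A, pA)$ is a henselian pair; hence $f_0$ divides $p^N$ in $A$ and Lemma \ref{chart-ssalg} writes $f_0$ as a unit times a monomial; finally $f - f_0 \in p^{N+1}B \subset p f_0 B$ (as $p^N/f_0 \in A$), so $f = f_0 \cdot u$ with $u \in B^{\ast}$. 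Together with the comparison $mB \cap A = mA$ for monomials $m$ (which gives injectivity on ghost stalks), this closes the proof; without it, the decisive step of the lemma is merely asserted. A minor further point: the worry about the base change $\mathcal{O}' \to \mathcal{O}$ is subsumed in the completion, since $\mathfrak{U}$ is by definition the $p$-adic completion of $U$ and a rank-one valuation ring and its completion have the same value monoid, so no separate verification is needed there.
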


Let $R$ be a $p$-adically algebra which admit an \'etale coordinate map $\Box \colon R^{\Box} \to R$. For each $m \geq 0$, we consider the $R^{\Box}$-algebras
\begin{align*}
R_m^{\Box} &\coloneqq \mathcal{O}\{ t_0^{1/p^m},\ldots, t_r^{1/p^m}, t_{r+1}^{\pm 1/p^m},\ldots, t_d^{\pm 1/p^m}\} /(t_0^{1/p^m}\cdots t_r^{1/p^m} - p^{q/p^m}), \\ 
R_{\infty}^{\Box} &\coloneqq (\varinjlim_m R_m^{\Box})^\wedge, \\ 
R_m &\coloneqq R \otimes_{R^{\Box}} R_m^{\Box}, \\ 
R_{\infty} &\coloneqq (\varinjlim_m R_m)^\wedge \cong (R \otimes_{R^{\Box}} R_{\infty}^{\circ})^\wedge.
\end{align*}
The rings $R_{\infty}^{\Box}, R_{\infty}$ are perfectoid, and the rings $R_m^{\Box}, R_m$ are $p$-adically complete by \cite[Lemma 7.1.6, (ii)]{GR03}.
The rings $R_m^{\Box}, R_m, R_{\infty}^{\Box}, R_{\infty}$ have the continuous and $R^{\Box}$-linear actions of the profinite group 
\[
\Delta \coloneqq \{ (\epsilon_0,\ldots ,\epsilon_d) \in (\varprojlim_{m \geq 0} \mu_{p^m}(\mathcal{O}))^{\oplus(d+1)} \mid \epsilon_0 \cdots \epsilon_r = 1\} \cong \mathbb{Z}_p^{\oplus d}
\]
defined by 
\[
(\epsilon_0, \ldots, \epsilon_d) \colon t_j^{1/p^m} \mapsto \epsilon_j^{(m)} t_j^{1/p^m} 
\]
for $(\epsilon_0, \ldots, \epsilon_d) \in \Delta$, where $\epsilon_j^{(m)} \in \mu_{p^m} (\mathcal{O})$ is the $m$-th component of $\epsilon_j$. The profinite group $\Delta$ is 
topologically generated by the elements 
\[
\delta_i \coloneqq (\epsilon^{-1}, 1, \ldots, \epsilon, \ldots, 1, \ldots, 1) \qquad (i = 1, \ldots, r),
\]
where the $0$-th and $i$-th entries are nonidentity and the elements
\[
\delta_i \coloneqq (1, \ldots, 1, \epsilon, 1, \ldots, 1) \qquad (i = r+1, \ldots, d),
\]
where the $i$-th entry is nonidentity. Then we have the explicit ($p$-adically completed) direct sum decompositions of $R_m^{\Box}$ and $R_m^{\Box} [\tfrac{1}{p}]$. 
\begin{lemma}
\label{RmBox-cdsdecomp}
$R_m^{\Box}$ and $R_m^{\Box} [\tfrac{1}{p}]$ have the following $p$-adically completed direct sum decompositions. For a function
\[ k \colon \{0,\cdots,d\} = [0, d] \to \mathbb{Z}_{\geq 0} [1/p], \]
let $\Supp k \coloneqq [0, d] \backslash (k^{-1} (\{0\}))$, $k_i \coloneqq k(i)$ and
\begin{align*}
R_m^{\Box} &\cong \widehat{\bigoplus_{\substack{k \colon [0, d] \to \tfrac{1}{p^m} \mathbb{Z} \\ [0, r] \not\subset \Supp k}}} \mathcal{O} \cdot t_0^{k_0} \cdots t_d^{k_d} \\ 
R_m^{\Box} [\tfrac{1}{p}] &\cong \bigoplus_{k_1, \ldots, k_d \in \{ 0, 1/p^m, \ldots, {p^m - 1}/{p^m} \} } R^{\Box} [\tfrac{1}{p}] \cdot t_1^{k_1} \cdots t_d^{k_d}.
\end{align*}
Furthermore, $R_{\infty}^{\Box}$ and $R_{\infty}^{\Box} [\tfrac{1}{p}]$ have similar ones so that the canonical decompositions
\[
R_{\infty}^{\Box} \cong R^{\Box} \oplus M_{\infty}^{\Box},
\quad 
R_{\infty}^{\Box} [\tfrac{1}{p}] \cong R^{\Box} [\tfrac{1}{p}] \oplus M_{\infty}^{'\Box}
\]
exist. 
\end{lemma}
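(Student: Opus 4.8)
The plan is to reduce the computation to an uncompleted polynomial model, prove freeness there by a standard-monomial argument, and then pass to $p$-adic completions, to $[\tfrac1p]$, and to the $\infty$-level by a filtered colimit. Setting $s_i \coloneqq t_i^{1/p^m}$ and $\pi \coloneqq p^{q/p^m}$ (which lies in $\mathcal{O}$ since $K$ contains $p^{q'}$ for all $q'\in\mathbb{Q}$), the ring $R_m^{\Box}$ is the $p$-adic completion of $A_+ \coloneqq \mathcal{O}[s_0,\dots,s_r,s_{r+1}^{\pm1},\dots,s_d^{\pm1}]/(s_0\cdots s_r-\pi)$, so it suffices to treat $A_+$ first. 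I would prove that the \emph{reduced monomials} $s_0^{a_0}\cdots s_d^{a_d}$ — those with $a_i\ge0$ for $i\le r$, $a_i\in\mathbb{Z}$ for $i>r$, and at least one of $a_0,\dots,a_r$ equal to zero (i.e. $[0,r]\not\subset\Supp$) — form an $\mathcal{O}$-basis of $A_+$.

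The heart of the matter is a division/standard-monomial argument. I would fix a monomial order in which the leading term of the sole relation $s_0\cdots s_r-\pi$ is $s_0\cdots s_r$. Because this leading term is \emph{monic}, division by the relation is well defined over the base ring $\mathcal{O}$ without inverting any coefficient, it terminates, and it produces a unique remainder supported on monomials not divisible by $s_0\cdots s_r$ — precisely the reduced monomials, since for $i\le r$ non-divisibility means some exponent vanishes. Spanning follows because every monomial reduces in finitely many steps to $\pi^N$ times a reduced monomial, and uniqueness of the remainder furnishes the $\mathcal{O}$-linear independence.

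Passing to completions, $A_+$ is $p$-torsion free and free on the reduced monomials, so its $p$-adic completion — which is $R_m^{\Box}$ — is the $p$-adically completed direct sum $\widehat{\bigoplus}\,\mathcal{O}\cdot(\text{reduced monomials})$; rewriting $s_i^{a_i}=t_i^{a_i/p^m}$ with $k_i=a_i/p^m\in\tfrac1{p^m}\mathbb{Z}$ yields the first decomposition (the reductions introduce increasing powers of $\pi$, hence of $p$, so passing to the completed sum is harmless). For $R_m^{\Box}[\tfrac1p]$ I would invert $p$, under which $\pi$ becomes a unit, so $s_0=\pi(s_1\cdots s_r)^{-1}$ is eliminated and $R_m^{\Box}[\tfrac1p]\cong R^{\Box}[\tfrac1p][s_1,\dots,s_d]/(s_i^{p^m}-t_i)_{1\le i\le d}$ is a finite Kummer extension, free with basis $t_1^{k_1}\cdots t_d^{k_d}$, $k_i\in\{0,1/p^m,\dots,(p^m-1)/p^m\}$. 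Finally I would take the filtered colimit over $m$ and complete to obtain the analogous decompositions of $R_\infty^{\Box}$ and $R_\infty^{\Box}[\tfrac1p]$, with exponents in $\mathbb{Z}_{\ge0}[1/p]$ resp. $\mathbb{Z}[1/p]$; separating the integer-exponent sub-basis (which reassembles $R^{\Box}$, resp. $R^{\Box}[\tfrac1p]$) from its complement gives the canonical splittings $R_\infty^{\Box}\cong R^{\Box}\oplus M_\infty^{\Box}$ and $R_\infty^{\Box}[\tfrac1p]\cong R^{\Box}[\tfrac1p]\oplus M_\infty^{\prime\Box}$.

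The main obstacle is making the standard-monomial/division argument rigorous over the non-Noetherian valuation ring $\mathcal{O}$ — where the monic leading coefficient of the relation is exactly what rescues the division over a base that is neither a field nor Noetherian — and confirming that $p$-adic completion commutes with the quotient by $(s_0\cdots s_r-\pi)$, so that the identification $R_m^{\Box}=(A_+)^{\wedge}_p$, and hence the completed direct-sum decomposition, genuinely holds.
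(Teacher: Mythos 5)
The paper states this lemma without proof: the decompositions are treated as standard (the smooth-case analogues appear in \cite[Section 8]{BMS18}), and the only justification in the surrounding text is the citation of \cite[Lemma 7.1.6, (ii)]{GR03}, just before the statement, for the $p$-adic completeness of $R_m^{\Box}$ and $R_m$. Your division/standard-monomial argument is therefore a genuinely self-contained alternative, and its core is correct: the relation $s_0\cdots s_r-\pi$ has monic leading term, so division with unique remainder works over the non-Noetherian base $\mathcal{O}$ (and over every $\mathcal{O}/p^n$); the generic-fibre statement follows from Kummer theory; and the passage to level $\infty$ is unproblematic because modulo $p^n$ all completed sums are honest direct sums and reducedness of a weight does not depend on the level $m$, so the level-$m$ bases are compatible.

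The step you flag --- that $R_m^{\Box}=\mathcal{O}\{s_0,\dots,s_r,s_{r+1}^{\pm1},\dots,s_d^{\pm1}\}/(s_0\cdots s_r-\pi)$ coincides with $(A_+)^{\wedge}$ --- is not a fatal gap, because your own technique closes it, but it does need to be written out rather than asserted. Since the coefficients of a restricted power series tend to $0$, one has $\mathcal{O}\{s\}/p^n=(\mathcal{O}/p^n)[s]$, hence $R_m^{\Box}/p^n=A_+/p^n$ is free over $\mathcal{O}/p^n$ on the reduced monomials by your division argument, and therefore $\varprojlim_n R_m^{\Box}/p^n=(A_+)^{\wedge}$ is the completed free module. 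What remains is that $R_m^{\Box}$ is itself $p$-adically separated and complete, i.e.\ that $\bigcap_n\,(s_0\cdots s_r-\pi,\,p^n)=(s_0\cdots s_r-\pi)$ inside $\mathcal{O}\{s\}$. For this, run your division directly on restricted power series: each monomial reduces in finitely many steps, and the remainder and the cofactor of $s_0\cdots s_r-\pi$ have coefficients given by $p$-adically convergent sums of the original coefficients, hence again tend to $0$; then uniqueness of the remainder modulo each $p^n$ (where one is back to polynomials over $\mathcal{O}/p^n$ and the monic-leading-term argument applies) together with $\bigcap_n p^n\mathcal{O}=0$ gives the claim. Alternatively, quoting \cite[Lemma 7.1.6, (ii)]{GR03} as the paper does disposes of this point. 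Finally, in the generic-fibre step, ``finite Kummer extension, free with basis $t_1^{k_1}\cdots t_d^{k_d}$'' deserves one more line: linear independence over $R^{\Box}[\tfrac{1}{p}]$ follows because these monomials lie in distinct isotypic components for the $\Delta$-action (all $p$-power roots of unity lie in $\mathcal{O}$), which simultaneously yields the finite \'etaleness of $R^{\Box}[\tfrac{1}{p}]\to R_m^{\Box}[\tfrac{1}{p}]$ that the paper records after the lemma.
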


In particular, $R_m^{\Box} [\tfrac{1}{p}]$ is generated by the $p^m$-th roots of $t_1, \ldots, t_d \in R^{\Box} [\tfrac{1}{p}]$ over $R^{\Box} [\tfrac{1}{p}]$, and $R_m^{\Box} [\tfrac{1}{p}]$ is finite \'etale over $R^{\Box} [\tfrac{1}{p}]$. Moreover, 
\[
(\Spf(R^{\Box}))_{K, \infty}^{\ad} \coloneqq \varprojlim_m \Spa(R_m^{\Box} [\tfrac{1}{p}], R_m^{\Box}) \cong \Spa(R_{\infty}^{\Box} [\tfrac{1}{p}], R_{\infty}^{\Box})
\]
is an affinoid perfectoid pro-(finite \'etale) cover of $(\Spa(R^{\Box} [\tfrac{1}{p}], R^{\Box}))_K^{\ad}$. In the same way, 
\[
(\Spf(R))_{K, \infty}^{\ad} \coloneqq \varprojlim_m \Spa(R_m [\tfrac{1}{p}], R_m) \cong \Spa(R_{\infty} [\tfrac{1}{p}], R_{\infty})
\]
is an affinoid perfectoid pro-(finite \'etale) cover of $(\Spa(R [\tfrac{1}{p}], R))_K^{\ad}$.

We construct the canonical lifts $A(R^{\Box}), A_{\inf} (R_{\infty}^{\Box})$ of $R^{\Box}, R_{\infty}^{\Box}$ over $A_{\inf}$ and the unique $(p, \mu)$-adically \'etale algebras $A(R)$ over $A(R^{\Box})$. We can explicitly write down $A_{\inf} (R_{\infty}^{\Box})$ as follows.
\begin{align*}
&\quad A_{\inf} (R_{\infty}^{\Box}) \\
&\cong 
\left( \varinjlim_m A_{\inf} [X_0^{1/p^m}, \ldots, X_r^{1/p^m}, X_{r+1}^{\pm 1/p^m}, \ldots, X_d^{\pm 1/p^m}]/(\prod_{i=0}^r X_i^{1/p^m} - [(p^{\flat})^{q/p^m}]) \right)^\wedge.
\end{align*}
Let $A(R^{\Box})$ be the direct summand of the elements that is topologically generated by the monomials which have integral exponents
\[
A(R^{\Box}) \cong A_{\inf} \{ X_0, \ldots, X_r, X_{r+1}^{\pm 1}, \ldots, X_d^{\pm 1} \} /(X_0 \cdots X_r - [(p^{\flat})^q]) \subset A_{\inf} (R_{\infty}^{\Box}).
\]
We define the action of $(\epsilon_0, \ldots, \epsilon_d) \in \Delta$ on $A_{\inf} (R_{\infty}^{\Box})$ by 
\[
(\epsilon_0, \ldots, \epsilon_d) \colon X_j^{1/p^m} \mapsto [\epsilon_j^{1/p^m}] X_j^{1/p^m} 
\]
where $\epsilon_j^{1/p^m}$ is defined to be the element $(\ldots,\epsilon_j^{(m+1)},\epsilon_j^{(m)}) \in R^{\flat}$ and $[\epsilon_j^{1/p^m}] \in A_{\inf}$.
We use the surjection
\[
\theta \colon A(R^{\Box}) \twoheadrightarrow R^{\Box}
\]
defined by $\theta \colon A_{\inf} \twoheadrightarrow \mathcal{O}$ and $X_i \mapsto t_i$. The map uniquely lifts the \'etale $R^{\Box} /p$-algebra $R/p$ to the $(p, \mu)$-adically complete, $p$-adically \'etale $A(R^{\Box})$-algebra $A(R)$. We have the identification 
\[
A_{\inf} (R_{\infty}) \cong A_{\inf} (R_{\infty}^{\Box}) \widehat{\otimes}_{A(R^{\Box})} A(R),
\]
where the completion is $(p, \mu)$-adic, and the $(p, \mu)$-adically complete direct sum decompositions as in Lemma \ref{RmBox-cdsdecomp}. So the following canonical decompositions exist. 

\begin{lemma}
\label{AinfRinfty-cdsdecomp}
We have the canonical direct sum decompositions
\[
A_{\inf} (R_{\infty}^{\Box}) \cong A(R^{\Box}) \oplus N_{\infty}^{\Box},
\quad 
A_{\inf} (R_{\infty}) \cong A(R) \oplus N_{\infty}.
\]
\end{lemma}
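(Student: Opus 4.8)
The plan is to obtain both splittings directly from the explicit monomial description of $A_{\inf}(R_\infty^\Box)$ recalled before the lemma, replaying the proof of Lemma \ref{RmBox-cdsdecomp} one level up on the $A_{\inf}$-lifts, and then to transport the first splitting along the flat base change $A(R^\Box) \to A(R)$. For the first decomposition I would start from the presentation
\[
A_{\inf}(R_\infty^\Box) \cong \Bigl( \varinjlim_m A_{\inf}[X_0^{1/p^m}, \ldots, X_r^{1/p^m}, X_{r+1}^{\pm 1/p^m}, \ldots, X_d^{\pm 1/p^m}]/(\textstyle\prod_{i=0}^r X_i^{1/p^m} - [(p^\flat)^{q/p^m}]) \Bigr)^\wedge,
\]
in which each finite stage is $A_{\inf}$-free on the monomials $X_0^{k_0}\cdots X_d^{k_d}$ indexed by functions $k \colon [0,d] \to \tfrac{1}{p^m}\mathbb{Z}$ with $k_i \geq 0$ for $i \leq r$ and $[0,r] \not\subset \Supp k$, the support condition encoding the relation $X_0\cdots X_r = [(p^\flat)^q]$ exactly as in Lemma \ref{RmBox-cdsdecomp}. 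Passing to the $(p,\mu)$-adically completed colimit then yields a completed direct sum decomposition of $A_{\inf}(R_\infty^\Box)$ over $A_{\inf}$ indexed by such $k$ with values in $\mathbb{Z}[1/p]$.

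I would then split this index set according to whether the exponents are integral. By the description of $A(R^\Box)$ as the summand topologically generated by the monomials with integral exponents, the completed sub-sum over the $k$ valued in $\mathbb{Z}$ is exactly $A(R^\Box)$; setting $N_\infty^\Box$ to be the completed $A_{\inf}$-span of the remaining monomials gives $A_{\inf}(R_\infty^\Box) \cong A(R^\Box) \oplus N_\infty^\Box$. This is the canonical splitting attached to the chart: each $(\epsilon_0, \ldots, \epsilon_d) \in \Delta$ scales the line of $X_0^{k_0}\cdots X_d^{k_d}$ by the Teichmüller unit $\prod_j [\epsilon_j^{k_j}]$, so the monomial grading, and with it the integral summand $A(R^\Box)$ together with its complement $N_\infty^\Box$, is $\Delta$-stable.

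For the second decomposition I would apply $- \widehat{\otimes}_{A(R^\Box)} A(R)$ to the first, using the identification $A_{\inf}(R_\infty) \cong A_{\inf}(R_\infty^\Box) \widehat{\otimes}_{A(R^\Box)} A(R)$. Since $A(R)$ is already $(p,\mu)$-adically complete one has $A(R^\Box) \widehat{\otimes}_{A(R^\Box)} A(R) = A(R)$, so with $N_\infty \coloneqq N_\infty^\Box \widehat{\otimes}_{A(R^\Box)} A(R)$ one gets $A_{\inf}(R_\infty) \cong A(R) \oplus N_\infty$. I expect the main obstacle to be precisely this step, because a completed tensor product need not commute with an infinite completed direct sum; to handle it I would reduce modulo $(p,\mu)^n$, where the completed direct sum becomes an honest direct sum and the completed tensor product becomes the ordinary tensor product over $A(R^\Box)/(p,\mu)^n$. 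The essential input is that $A(R)$ is $(p,\mu)$-adically étale, hence flat, over $A(R^\Box)$, so that at each finite stage the ordinary tensor product preserves the direct sum and introduces no new relations; taking the inverse limit over $n$ then recovers the completed splitting $A(R) \oplus N_\infty$.
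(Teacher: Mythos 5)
Your proposal is correct and matches the paper's (largely implicit) argument: the paper treats the lemma as an immediate consequence of the explicit monomial presentation of $A_{\inf}(R_{\infty}^{\Box})$ (the $A_{\inf}$-lift of Lemma \ref{RmBox-cdsdecomp}), the definition of $A(R^{\Box})$ as the integral-exponent summand, and the identification $A_{\inf}(R_{\infty}) \cong A_{\inf}(R_{\infty}^{\Box}) \widehat{\otimes}_{A(R^{\Box})} A(R)$, which is exactly what you spell out. Your worry about completed tensor products versus infinite completed direct sums is in fact unnecessary for the splitting itself --- it is a two-term direct sum, and finite direct sums commute with both tensor products and completions --- but your mod-$(p,\mu)^n$ resolution is valid and harmless.
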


We follow the construction of \cite{Sch13}, \cite{Sch13C} and \cite{BMS18}. We let $\mathfrak{X}_K^{\ad}$ denote the adic generic fiber of $\mathfrak{X}$. We have sheaves of rings on the pro-\'etale site $(\mathfrak{X}_K^{\ad})_{\proet}$ defined in \cite[Section 5.1]{BMS18},
which are 
\begin{align*}
\widehat{\mathcal{O}}_{\mathfrak{X}_K^{\ad}}^+ &\coloneqq \varprojlim_n (\mathcal{O}_{\mathfrak{X}_K^{\ad}, \proet} / p^n) \\ 
\widehat{\mathcal{O}}_{\mathfrak{X}_K^{\ad}}^{+, \flat} &\coloneqq \varprojlim_{x \mapsto x^p} (\mathcal{O}_{\mathfrak{X}_K^{\ad}, \proet} / p) \\ 
\mathbb{A}_{\inf, \mathfrak{X}_K^{\ad}} &\coloneqq (A_{\inf} (\widehat{\mathcal{O}}_{\mathfrak{X}_K^{\ad}}^+))^{\wedge} = 
(W(\widehat{\mathcal{O}}_{\mathfrak{X}_K^{\ad}}^{+, \flat}))^{\wedge}
\end{align*}
where $(-)^{\wedge}$ denotes the derived $p$-adic completion. We remark that the definition of $\mathbb{A}_{\inf, \mathfrak{X}_K^{\ad}}$ in \cite[2.2]{CK19} erroneously lacks the derived $p$-adic completion of $W(\widehat{\mathcal{O}}_{\mathfrak{X}_K^{\ad}}^{+, \flat})$. If the adic space $\mathfrak{X}_K^{\ad}$ is obvious from the context, we omit it and we denote by $\widehat{\mathcal{O}}^{+}$ the sheaf of rings $\widehat{\mathcal{O}}_{\mathfrak{X}_K^{\ad}}^{+}$. The category of pro-(finite \'etale) covers of connected locally noetherian schemes or connected locally noetherian adic space $X$ is canonically equivalent to the category of finite sets with continuous $\pi_1 (X, \bar{x})$-action for a fixed geometric point $\bar{x} \in X$ \cite[Proposition 3.5]{Sch13}. Then the translation of the pro-(finite \'etale) cohomology theory into the continuous cohomology theory exists as in \cite[Proposition 3.7, Corollary 6.6]{Sch13}. Thus the \v{C}ech complex of the sheaf $\widehat{\mathcal{O}}_{K^{\ad}}^+$ with respect to the pro-(finite \'etale) affinoid perfectoid cover 
\[
(\Spf(R))_{K, \infty}^{\ad} \to (\Spf(R))_K^{\ad}
\]
is identified with the continuous cochain complex $R\Gamma_{\cont} (\Delta, R_{\infty})$. 

\begin{definition}
\emph{The edge map} is the map from the continuous cohomology to the pro-\'etale cohomology 
\[
e \colon R\Gamma_{\cont} (\Delta, R_{\infty}) \to R\Gamma_{\proet} ((\Spf(R))_K^{\ad}, \widehat{\mathcal{O}}^+)
\]
obtained by the affinoid perfectoid pro-\'etale cover above and using \cite[Tag 01GY]{SP}. 
\end{definition}

A map $e \colon D \to D'$ of complexes of $\mathcal{O}$ (resp. $W_n (\mathcal{O})$, $A_{\inf}$ -modules is called an \emph{almost quasi-isomorphism} if each cohomology of the cone $\Cone(e)$ of $e$ is annihilated by $\mathfrak{m} \subset \mathcal{O}$ (resp. $W_n (\mathfrak{m}) \subset W_n (\mathcal{O})$, $W(\mathfrak{m^{\flat}}) \subset A_{\inf} (\mathcal{O})$). By the almost purity theorem \cite[Lemma 4.10 (v)]{Sch13} and Cartan-Leray spectral sequence, the edge map $e$ turns out to be an almost quasi-isomorphism. The following lemma follows by \cite[Lemma 6.4]{BMS18}. 

\begin{lemma}
\label{Letae-O}
For the edge map $e$ above, we have the almost quasi-isomorphism
\[
L\eta_{\zeta_p - 1} R\Gamma_{\cont} (\Delta, R_{\infty}) \to L\eta_{\zeta_p - 1} R\Gamma_{\proet} ((\Spf(R))_K^{\ad}, \widehat{\mathcal{O}}^+).
\]
\end{lemma}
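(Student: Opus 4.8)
The plan is to deduce the statement from \cite[Lemma 6.4]{BMS18}, which controls the behaviour of the d\'ecalage functor $L\eta_f$ on cohomology, applied here to $f = \zeta_p - 1$. The preceding discussion already gives that the edge map $e$ is itself an $\mathfrak{m}$-almost quasi-isomorphism, so that for every $i$ the induced map $H^i(e) \colon H^i R\Gamma_{\cont}(\Delta, R_{\infty}) \to H^i R\Gamma_{\proet}((\Spf(R))_K^{\ad}, \widehat{\mathcal{O}}^+)$ has kernel and cokernel annihilated by $\mathfrak{m} \subset \mathcal{O}$. The only remaining point is that this property is inherited after applying $L\eta_{\zeta_p - 1}$. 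Crucially, this cannot be read off from the cone of $e$ directly, because $L\eta$ is neither exact nor compatible with the formation of cones, and it is exactly this subtlety that \cite[Lemma 6.4]{BMS18} resolves.

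First I would recall the functorial description of the cohomology of $L\eta_f$: for a non-zero-divisor $f$ and a complex $C$ there is a natural isomorphism $H^i(L\eta_f C) \cong H^i(C)/H^i(C)[f]$, where $H^i(C)[f]$ denotes the $f$-torsion subobject. In our situation $f = \zeta_p - 1$ is a non-zero-divisor on $\mathcal{O}$, on $R_{\infty}$, and on $\widehat{\mathcal{O}}^+$, so the formula applies to both sides and is compatible with the map induced by $e$. It then suffices to check the following elementary statement: if $\phi \colon M \to N$ is a map of $\mathcal{O}$-modules whose kernel and cokernel are killed by $\mathfrak{m}$, then the induced map $\bar\phi \colon M/M[f] \to N/N[f]$ again has kernel and cokernel killed by $\mathfrak{m}$. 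The cokernel of $\bar\phi$ is a quotient of $\operatorname{coker}\phi$, hence killed by $\mathfrak{m}$. For the kernel, if $\phi(m) \in N[f]$ then $\phi(fm) = 0$, so $fm \in \ker\phi$ and $\mathfrak{m} f m = 0$; thus $\epsilon m \in M[f]$ for every $\epsilon \in \mathfrak{m}$, so $\epsilon \bar m = 0$, giving $\mathfrak{m}\ker\bar\phi = 0$. Applying this degreewise to $H^i(e)$ shows that $L\eta_{\zeta_p-1} e$ is an $\mathfrak{m}$-almost quasi-isomorphism.

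The main obstacle, and the reason a lemma is needed at all, is the failure of $L\eta$ to commute with cones, which forces the argument to proceed through the explicit cohomology formula rather than by any formal manipulation of almost-zero objects; one must genuinely work degree by degree on $H^i$. A secondary point I would treat with care is the sheaf-theoretic packaging: $R\Gamma_{\proet}$ lands in complexes of $\widehat{\mathcal{O}}^+$-modules on a topos, so I would either reduce the cohomology formula and the kernel/cokernel chase to stalks, using that the pro-\'etale topos has enough points, or invoke \cite[Lemma 6.4]{BMS18} in its form for ringed topoi directly. Finally I would record that the relevant almost setting is the standard one, in which $\mathfrak{m}^2 = \mathfrak{m}$ for $\mathcal{O}$ the ring of integers of a perfectoid field, so that ``$\mathfrak{m}$-almost zero'' behaves well, and that $\zeta_p - 1 \in \mathfrak{m}$, so the d\'ecalage element and the almost structure are compatible.
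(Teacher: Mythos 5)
Your proof is correct and takes essentially the same route as the paper: the paper's entire justification is the citation of \cite[Lemma 6.4]{BMS18} (the formula $H^i(L\eta_f C) \cong H^i(C)/H^i(C)[f]$), and your kernel/cokernel chase, together with the observations that $\zeta_p - 1 \in \mathfrak{m}$ and $\mathfrak{m}^2 = \mathfrak{m}$ (which lets one pass between the cone formulation of ``almost quasi-isomorphism'' used in the paper and the kernel/cokernel formulation you use), is exactly the argument that citation encodes. The only superfluous point is your worry about sheaf-theoretic packaging: both complexes are already global-sections complexes and the paper takes $L\eta_{\zeta_p - 1}$ in $D(\mathcal{O})$, so the module-level lemma applies directly.
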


In the same way, we have the following natural almost quasi-isomorphisms by \cite[Lemma 5.6, Theorem 5.7]{BMS18}.

\begin{lemma}
\label{Letae-Ainf}
Let $\mathfrak{X} = \Spf(R)$. For $n \geq 1$, let $e_{\inf}, e_n$ be the natural almost quasi-isomorphisms
\begin{align*}
e_n \colon R\Gamma_{\cont} (\Delta, W_n (R_{\infty})) &\to R\Gamma_{\proet} ((\Spf(R))_K^{\ad}, W_n (\widehat{\mathcal{O}}^+)), \\  
e_{\inf} \colon R\Gamma_{\cont} (\Delta, A_{\inf} (R_{\infty})) &\to R\Gamma_{\proet} ((\Spf(R))_K^{\ad}, \mathbb{A}_{\inf, \mathfrak{X}_K^{\ad}})
\end{align*}
constructed by the Cartan-Leray spectral sequence. Then we have the natural maps 
\begin{align*}
L\eta_{\mu} e_n \colon L\eta_{\mu} R\Gamma_{\cont} (\Delta, W_n (R_{\infty})) &\to L\eta_{\mu} R\Gamma_{\proet} ((\Spf(R))_K^{\ad}, W_n (\widehat{\mathcal{O}}^+)), \\ 
L\eta_{\mu} e_{\inf} \colon L\eta_{\mu} R\Gamma_{\cont} (\Delta, A_{\inf} (R_{\infty})) &\to L\eta_{\mu} R\Gamma_{\proet} ((\Spf(R))_K^{\ad}, \mathbb{A}_{\inf, \mathfrak{X}_K^{\ad}}),
\end{align*}
whose kernels and cokernels in each degree are annihilated by $W (\mathfrak{m}^{\flat})$ 
\end{lemma}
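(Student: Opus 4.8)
The plan is to treat this as the $W_n$- and $A_{\inf}$-coefficient version of Lemma \ref{Letae-O}, so the proof rests on two inputs: an almost-vanishing statement for the edge maps before d\'ecalage, and the compatibility of $L\eta$ with almost mathematics. First I would recall why $e_n$ and $e_{\inf}$ are almost quasi-isomorphisms. As in the $\mathcal{O}$-coefficient case, the Cartan--Leray spectral sequence for the affinoid perfectoid pro-(finite \'etale) cover $(\Spf R)_{K,\infty}^{\ad}\to (\Spf R)_K^{\ad}$ identifies $R\Gamma_{\cont}(\Delta,W_n(R_\infty))$ (resp.\ $R\Gamma_{\cont}(\Delta,A_{\inf}(R_\infty))$) with the \v{C}ech complex computing the target, and the higher terms almost vanish because the pro-\'etale cohomology of $W_n(\widehat{\mathcal{O}}^+)$ (resp.\ $\mathbb{A}_{\inf,\mathfrak{X}_K^{\ad}}$) on an affinoid perfectoid is almost concentrated in degree $0$; these are exactly \cite[Lemma 5.6]{BMS18} and \cite[Theorem 5.7]{BMS18}. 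The natural almost ideals are $W_n(\mathfrak{m})$ for the $W_n$-theory and $W(\mathfrak{m}^\flat)$ for the $A_{\inf}$-theory.

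With $e_n,e_{\inf}$ known to be almost quasi-isomorphisms, the main step is to apply \cite[Lemma 6.4]{BMS18} to each, which is the assertion that $L\eta_g$ carries an almost quasi-isomorphism to an almost quasi-isomorphism once the almost ideal is idempotent and $g$ acts as an almost non-zero-divisor. The mechanism is the standard description of the cohomology of the d\'ecalage, $H^i(L\eta_g C)\cong H^i(C)/H^i(C)[g]$ via multiplication by $g^i$, so a map $C\to D$ for which the maps $H^i(C)\to H^i(D)$ have kernels and cokernels killed by the almost ideal retains this property after passing to the $g$-torsion-free quotients and multiplying by $g^i$, the snake lemma together with idempotence of the ideal ensuring that the annihilator does not grow. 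Here $g=\mu$, which is a genuine non-zero-divisor in the domain $A_{\inf}$ and whose image under the appropriate specialization $A_{\inf}\to W_n(\mathcal{O})$ governs $L\eta_\mu$ on the $W_n$-complexes. Since that specialization carries $W(\mathfrak{m}^\flat)$ into $W_n(\mathfrak{m})$, a cone annihilated by $W_n(\mathfrak{m})$ is a fortiori annihilated by $W(\mathfrak{m}^\flat)$ for the induced $A_{\inf}$-module structure, so both conclusions can be stated uniformly with the single annihilator $W(\mathfrak{m}^\flat)$ appearing in the statement.

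The step I expect to require the most care is verifying the hypotheses of \cite[Lemma 6.4]{BMS18} in the $W_n$- and $A_{\inf}$-settings. For the $A_{\inf}$-theory one must keep track of the derived $p$-adic completion built into $\mathbb{A}_{\inf,\mathfrak{X}_K^{\ad}}$ and check that $L\eta_\mu$ together with the almost-isomorphism bookkeeping commutes with $R\varprojlim_m(-\otimes^L\mathbb{Z}/p^m)$; this rests on $\mu,p$ forming an almost regular sequence and on the good behaviour of $L\eta$ under the relevant derived limits. For the $W_n$-theory the delicate point is the idempotence of $W_n(\mathfrak{m})$, which is not immediate for truncated Witt vectors; as in \cite[Section 5]{BMS18} this is supplied by the perfectoid structure of $R_\infty$. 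Granting these two points, the conclusion follows verbatim from the argument proving Lemma \ref{Letae-O}.
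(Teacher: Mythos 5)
Your overall route is the same as the paper's (which is terse here): feed the Cartan--Leray spectral sequence and \cite[Lemma 5.6, Theorem 5.7]{BMS18} in as the input, then transfer the annihilation statement through the d\'ecalage exactly as in Lemma \ref{Letae-O}, using the cohomology formula $H^i(L\eta_g C)\cong H^i(C)/H^i(C)[g]$ that you correctly identify as the mechanism behind \cite[Lemma 6.4]{BMS18}. However, there is a genuine flaw in how you formulate the transfer step: you make it conditional on the almost ideal being idempotent (``the snake lemma together with idempotence of the ideal ensuring that the annihilator does not grow''), and you then only worry about verifying idempotence for $W_n(\mathfrak{m})$. This has the difficulty exactly backwards. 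The ideal $W_n(\mathfrak{m})$ \emph{is} idempotent ($W_n(\mathfrak{m})^2=W_n(\mathfrak{m})$ by \cite[Corollary 10.2]{BMS18}, used this way in Proposition \ref{comp-WrLambda}), whereas $W(\mathfrak{m}^\flat)$ is \emph{not}: $W(\mathfrak{m}^\flat)^2\neq W(\mathfrak{m}^\flat)$, which is precisely the warning recorded immediately after Lemma \ref{Letae-Ainf}. So, as written, your main step cannot be executed for $e_{\inf}$, i.e.\ for the half of the lemma involving $\mathbb{A}_{\inf,\mathfrak{X}_K^{\ad}}$ --- and this non-idempotence is also exactly why the conclusion is phrased as explicit annihilation of kernels and cokernels by $W(\mathfrak{m}^\flat)$ rather than as an ``almost quasi-isomorphism'' in the usual almost-mathematics sense.

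The repair is that idempotence is in fact not needed for this step, provided you argue with explicit annihilators throughout. Suppose $H^i(e)$ has kernel and cokernel killed by an ideal $I$ for all $i$ (no hypothesis on $I$), and let $\bar e_i$ denote the induced map $H^i(C)/H^i(C)[g]\to H^i(D)/H^i(D)[g]$. Its cokernel is a quotient of $\mathrm{coker}(H^i(e))$, hence killed by $I$; and if $\bar x\in\ker(\bar e_i)$ with lift $x$, then $g\,e(x)=0$, so $gx\in\ker(H^i(e))$, so $g(ax)=0$ for every $a\in I$, i.e.\ $ax\in H^i(C)[g]$ and $a\bar x=0$. Running this with $I=W(\mathfrak{m}^\flat)$ for $e_{\inf}$, and with $I=W_n(\mathfrak{m})$ for $e_n$ together with your (correct) observation that $\tilde{\theta}_n$ carries $W(\mathfrak{m}^\flat)$ into $W_n(\mathfrak{m})$, gives the lemma with no idempotence anywhere. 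Where non-idempotence genuinely demands care is upstream, in the inputs: the Cartan--Leray/almost-purity argument must be quoted in the annihilator form of \cite[Theorem 5.7]{BMS18}, and it is in the proof of that result that the derived $p$-adic completeness of $\mathbb{A}_{\inf,\mathfrak{X}_K^{\ad}}$ enters --- not, as you suggest, in commuting $L\eta_\mu$ with $R\varprojlim_m(-\otimes^L\mathbb{Z}/p^m)$ inside this lemma.
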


We notice that $W(\mathfrak{m}^{\flat})^2 \neq W(\mathfrak{m}^{\flat})$, so we should take care of the meanings of ``almost (quasi-)isomorphism''. This argument needs the derived $p$-adic completeness of $\mathbb{A}_{\inf, \mathfrak{X}_K^{\ad}}$ in its definition.

We define complexes $\widetilde{\Omega}_R, \widetilde{W_n \Omega}_R, A\Omega_R$ and so on. We remark that all of these complexes can be defined as the complexes of sheaves on the \'etale site $\mathfrak{X}_{\et}$ of a semistable $p$-adic formal scheme $\mathfrak{X}$ as in \cite{CK19}. However, we define those as $R$, $W_n (R)$, $W(R^{\flat})$-modules respectively. Nevertheless, most of the following argument may work as in \cite{CK19}. 

\begin{definition}
Let $\mathfrak{X}$ be the $p$-adic formal spectrum $\Spf (R)$ of $R$. We define the following complexes 
\begin{align*}
\widetilde{\Omega}_R^{\Box} &\coloneqq L\eta_{\zeta_p - 1} R\Gamma_{\cont} (\Delta, R_{\infty}), \\ 
\widetilde{\Omega}_R &\coloneqq L\eta_{\zeta_p - 1} R\Gamma_{\proet} (\mathfrak{X}_K^{\ad}, \widehat{\mathcal{O}}_X^+), \\ 
\widetilde{W_n \Omega}_R^{\Box} &\coloneqq L\eta_{[\zeta_{p^n}]-1} R\Gamma_{\cont} (\Delta, W_n (R_{\infty})), \\ 
\widetilde{W_n \Omega}_R &\coloneqq L\eta_{[\zeta_{p^n}] - 1} R\Gamma_{\proet} (\mathfrak{X}_K^{\ad}, W_n (\widehat{\mathcal{O}}_{\mathfrak{X}_K^{\ad}}^+)), \\ 
A\Omega_R^{\Box} &\coloneqq L\eta_{\mu} R\Gamma_{\cont} (\Delta, A_{\inf} (R_{\infty})), \\ 
A\Omega_R &\coloneqq L\eta_{\mu} R\Gamma_{\proet} (\mathfrak{X}_K^{\ad}, \mathbb{A}_{\inf, \mathfrak{X}_K^{\ad}}),
\end{align*}
where $L\eta_{\zeta_p -1}$ (resp.\ $L\eta_{[\zeta_{p^n}] - 1}$, $L\eta_{\mu}$) is taken in the derived category $D(\mathcal{O})$ (resp.\ $D(W_n (\mathcal{O}))$, $D(A_{\inf})$).
\end{definition}

The framing $\Box$ of $R$ induces the actions of $\Delta$ to the complexes $\widetilde{\Omega}_R^{\Box}$, $\widetilde{W_n \Omega}_R^{\Box}$, $A\Omega_R^{\Box}$ in the definition. 
We have following lemmas, which are proved in \cite[Section 8, Section 9]{BMS18} for the smooth case.  

\begin{proposition}[cf.\ {\cite[Corollary 8.13]{BMS18}, \cite[Proposition 4.4, Proposition 4.8]{CK19}}]
\label{galg-tildeLambda}
There is an isomorphism
\[
R^d \cong H^1 (\widetilde{\Omega}_R), 
\]
depending on the choice of framing. The exterior powers of the map induce isomorphisms
\[
\bigwedge^i H^1 (\widetilde{\Omega}_R) \cong H^i (\widetilde{\Omega}_R).
\]
\end{proposition}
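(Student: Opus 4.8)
The plan is to reduce the statement to an explicit computation with the Koszul complex for the $\Delta$-action, and then to analyze the effect of the d\'ecalage functor $L\eta_{\zeta_p-1}$ on it. First I would pass from $\widetilde{\Omega}_R$ to its framed version $\widetilde{\Omega}_R^{\Box} = L\eta_{\zeta_p-1} R\Gamma_{\cont}(\Delta, R_{\infty})$. By Lemma \ref{Letae-O} the edge map induces an almost quasi-isomorphism $\widetilde{\Omega}_R^{\Box} \to \widetilde{\Omega}_R$, so it suffices to compute $H^i(\widetilde{\Omega}_R^{\Box})$ explicitly and then to construct a genuine map $R^d \to H^1(\widetilde{\Omega}_R)$ (via the classes of $d\log t_i$) and check it is an isomorphism by comparison through the almost quasi-isomorphism, exactly as in the smooth case \cite[Corollary 8.13]{BMS18}. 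Since $\Delta \cong \mathbb{Z}_p^{\oplus d}$ is topologically generated by $\delta_1, \ldots, \delta_d$, continuous group cohomology is computed by the Koszul complex, giving
\[
R\Gamma_{\cont}(\Delta, R_{\infty}) \cong K_{R_{\infty}}(\delta_1 - 1, \ldots, \delta_d - 1).
\]

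Next I would compute $H^i(L\eta_{\zeta_p-1} K_{R_{\infty}}(\delta_1 - 1, \ldots, \delta_d - 1))$ using the canonical decomposition $R_{\infty} \cong R \oplus M_{\infty}$ obtained by base change along the \'etale coordinate map from Lemma \ref{RmBox-cdsdecomp} (and Lemma \ref{AinfRinfty-cdsdecomp}). The operators $\delta_i - 1$ preserve this splitting. On the summand $R$ they act as $0$, so that part of the Koszul complex has vanishing differentials and its cohomology is the exterior algebra $\bigwedge^{\bullet} R^d$, with $R^d$ in degree $1$. On $M_{\infty}$, which splits further into the monomial eigenspaces $R \cdot t^{k}$ with $k \neq 0$, the operator $\delta_i - 1$ acts by multiplication by a unit multiple of $[\epsilon^{c_i(k)}] - 1$, where $c_i(k) = k_i - k_0$ for $i \le r$ and $c_i(k) = k_i$ for $i > r$. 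Since each nonzero such eigenvalue divides $\zeta_p - 1$, the associated Koszul cohomology is $(\zeta_p-1)$-torsion, and the standard divisibility estimate for d\'ecalage then shows that $L\eta_{\zeta_p-1}$ annihilates the contribution of $M_{\infty}$. Hence $H^i(\widetilde{\Omega}_R^{\Box}) \cong \bigwedge^i R^d$, and transporting this through Lemma \ref{Letae-O} yields the isomorphism $R^d \cong H^1(\widetilde{\Omega}_R)$, which depends on the framing through the generators $\delta_i$ of $\Delta$.

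Finally, because $R\Gamma_{\cont}(\Delta, -)$ carries a cup-product algebra structure and $L\eta$ is lax symmetric monoidal, $\widetilde{\Omega}_R^{\Box}$ is a commutative algebra object whose cohomology, by the computation above, is the free graded-commutative algebra $\bigwedge^{\bullet} R^d$; this identifies the exterior-power map $\bigwedge^i H^1(\widetilde{\Omega}_R) \to H^i(\widetilde{\Omega}_R)$ with the identity on $\bigwedge^i R^d$ and proves it is an isomorphism. I expect the main obstacle to be the middle step in the semistable setting: unlike the smooth torus case, the relation $t_0 \cdots t_r = p^q$ forces the generators $\delta_i$ for $i \le r$ to act on $t_0$ and $t_i$ simultaneously, so the eigenvalues of $\delta_i - 1$ on $M_{\infty}$ couple the exponents $k_0$ and $k_i$. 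One must verify that this coupling does not destroy the uniform divisibility by $\zeta_p - 1$ needed to kill $M_{\infty}$ under d\'ecalage, and that no spurious cohomology survives on the eigenspaces where several of the coupled exponents vanish simultaneously.
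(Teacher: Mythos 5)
The paper does not actually prove Proposition \ref{galg-tildeLambda}: it imports it from \cite[Proposition 4.4, Proposition 4.8]{CK19} (and \cite[Corollary 8.13]{BMS18} in the smooth case), so there is no internal proof to compare against. Your plan is the same route those sources take, and it is also the route this paper follows for the Witt-vector analogues (Lemma \ref{ALambda-WnLambda}, Proposition \ref{comp-WrLambda}): Koszul presentation of $R\Gamma_{\cont}(\Delta,R_\infty)$, weight decomposition, and d\'ecalage killing the non-integral part. Moreover, the ``main obstacle'' you flag does close, and the mechanism is already built into Lemma \ref{RmBox-cdsdecomp}: every weight $k$ occurring in the decomposition of $R_\infty^{\Box}$ satisfies $[0,r]\not\subset\Supp k$. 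If $k$ is non-integral while all coupled exponents $c_i(k)$ are integers, then $k_{r+1},\ldots,k_d\in\mathbb{Z}$ and $k_0,\ldots,k_r$ all share the same nonzero fractional part; but some $k_j=0$ with $j\in[0,r]$, forcing that fractional part to be zero --- a contradiction. Hence on each non-integral eigenspace at least one $\delta_i-1$ acts by a nonzero element of the form $\zeta_{p^s}^u-1$ with $p\nmid u$, which divides $\zeta_p-1$ in the valuation ring $\mathcal{O}$; the Koszul cohomology of that eigenspace is therefore killed by $\zeta_p-1$, and \cite[Lemma 6.4]{BMS18}, together with the compatibility of $L\eta$ with the completed direct sum (\cite[Lemma 7.9]{BMS18}), annihilates $M_\infty$. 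This is the $\widetilde{\Omega}$-counterpart of the fact the paper quotes from \cite[Proposition 3.25]{CK19} in the proof of Lemma \ref{ALambda-WnLambda}.

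The one step you treat too lightly is ``check it is an isomorphism by comparison through the almost quasi-isomorphism.'' An almost quasi-isomorphism does not induce isomorphisms on cohomology, and $L\eta$ is not exact, so Lemma \ref{Letae-O} by itself does not transport your framed computation to $H^i(\widetilde{\Omega}_R)$: you must show that $\widetilde{\Omega}_R^{\Box}\to\widetilde{\Omega}_R$ is an honest quasi-isomorphism. This needs an input beyond almost purity, namely that $H^i_{\cont}(\Delta,R_\infty/(\zeta_p-1))$ has no nonzero $\mathfrak{m}$-torsion (the $n=1$ case of \cite[Proposition 3.19]{CK19}, proved by the same weight decomposition), which feeds into the Bockstein/d\'ecalage argument via \cite[Proposition 6.12]{BMS18}. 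The paper writes this argument out in full for general $n$ in Proposition \ref{comp-WrLambda}; specializing it to $n=1$ (where $W_1(\mathfrak{m})=\mathfrak{m}$ and $[\zeta_p]-1=\zeta_p-1$) is exactly what your outline needs, after which your computation of $H^i(\widetilde{\Omega}_R^{\Box})$ as the exterior algebra on $H^1$, and the multiplicativity argument identifying $\bigwedge^i H^1\to H^i$ with the identity, go through as you describe.
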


Now we define log $q$-de Rham complexes. We show that $A\Omega_R^{\Box}$ is naturally quasi-isomorphic to the log $q$-de Rham complex in Lemma \ref{ALambda-WnLambda}. 

\begin{definition}
\label{qlogdRcomp}
For $q = [\epsilon]$, we define the \emph{log $q$-de Rham complex} for $A(R)^{\Box} / A_{\inf}$ by 
\[
q\textrm{-}\Omega_{A(R^{\Box}) / A_{\inf}} \coloneqq 
K_{A(R^{\Box})} \left( \dfrac{\delta_1 - 1}{[\epsilon] - 1},\ldots,\dfrac{\delta_d - 1}{[\epsilon] - 1} \right).
\]
\end{definition}

There is an isomorphism
\[
q\textrm{-}\Omega_{A(R^{\Box}) / A_{\inf}}
\cong \bigotimes_{i=1}^d (A(R^{\Box}) \xrightarrow{\tfrac{\partial}{\partial t_i}} A(R^{\Box}) d\log t_i),
\]
where $\tfrac{\partial}{\partial t_i}$ is the induced map from $\tfrac{\delta_i - 1}{[\epsilon] - 1}$.

The $q$-de Rham complexes similarly exist over $W_n (\mathcal{O})$. Let $\Delta'$ denote the free abelian subgroup of $\Delta$ generated by $\delta_i, \ 1 \leq i \leq d$. 

\begin{lemma}
\label{ALambda-WnLambda}
The natural map in $D(W_n (\mathcal{O}))$
\[
A\Omega_R^{\Box} \otimes_{A_{\inf}, \tilde{\theta}_r}^L W_n (\mathcal{O}) \to \widetilde{W_n \Omega}_R^{\Box} 
\]
is a quasi-isomorphism. Moreover, we have the following natural quasi-isomorphisms : 
\begin{align*}
A\Omega_R^{\Box} &\to q\textrm{-}\Omega_{A(R) / A_{\inf}} \\ 
\widetilde{W_n \Omega}_R^{\Box} &\to q\textrm{-}\Omega_{(A(R)/ \tilde{\xi}_n) / W_n (\mathcal{O})}.
\end{align*}
\end{lemma}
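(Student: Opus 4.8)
The plan is to reduce all three assertions to one computation: that of $L\eta$ applied to the Koszul complex computing continuous $\Delta$-cohomology, using the explicit $\Delta$-equivariant decompositions of Lemmas \ref{RmBox-cdsdecomp} and \ref{AinfRinfty-cdsdecomp}. First I would record that, since $\Delta \cong \mathbb{Z}_p^{\oplus d}$ is topologically generated by $\delta_1, \ldots, \delta_d$, its continuous cohomology with coefficients in any $(p,\mu)$-complete module $M$ is computed by the Koszul complex $K_M(\delta_1 - 1, \ldots, \delta_d - 1)$; thus $R\Gamma_{\cont}(\Delta, A_{\inf}(R_{\infty})) \simeq K_{A_{\inf}(R_{\infty})}(\delta_1 - 1, \ldots, \delta_d - 1)$, and likewise with $W_n(R_{\infty})$-coefficients. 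Because the decompositions are $\Delta$-equivariant, these Koszul complexes split as direct sums along them.

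For the quasi-isomorphism $A\Omega_R^{\Box} \to q\textrm{-}\Omega_{A(R)/A_{\inf}}$, I would split the Koszul complex along $A_{\inf}(R_{\infty}) \cong A(R) \oplus N_{\infty}$. On the integral summand $A(R)$, the generator $\delta_i$ acts on a monomial with integral exponents $k$ by the scalar $[\epsilon]^{c_i(k)}$ with $c_i(k) = k_i - k_0$ for $1 \le i \le r$ and $c_i(k) = k_i$ for $i > r$; as $c_i(k) \in \mathbb{Z}$, each $\delta_i - 1$ maps $A(R)$ into $\mu A(R)$, so $\frac{\delta_i - 1}{\mu}$ is defined, and the rescaling behaviour of $L\eta_\mu$ on a Koszul complex with differentials divisible by $\mu$ (as in \cite[Lemma 7.9]{BMS18}) gives $L\eta_\mu K_{A(R)}(\delta_i - 1) \simeq K_{A(R)}\!\left(\frac{\delta_i - 1}{\mu}\right) = q\textrm{-}\Omega_{A(R)/A_{\inf}}$. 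On $N_{\infty}$, topologically spanned by monomials with a fractional exponent, I would use that for $a_i = b/p^m$ with $p \nmid b$ and $m \ge 1$ the element $[\epsilon^{a_i}] - 1 = [\epsilon^{1/p^m}]^{b} - 1$ is a unit multiple of $\varphi^{-m}(\mu)$, hence a proper divisor of $\mu$; thus multiplication by $\mu$ is null-homotopic on the Koszul factor $[A_{\inf} \xrightarrow{[\epsilon^{a_i}]-1} A_{\inf}]$ and so on the whole Koszul complex of that monomial line, whence its cohomology is killed by $\mu$ and $L\eta_\mu$ annihilates it (using $H^i(L\eta_\mu C) \cong H^i(C)/H^i(C)[\mu]$). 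This vanishing on $N_{\infty}$ is the main obstacle, and the point where the semistable relation enters: the indexing condition $[0,r] \not\subset \Supp k$ of Lemma \ref{RmBox-cdsdecomp} forces some $k_j$ with $0 \le j \le r$ to vanish, and a short case-check then shows that at least one relevant eigenvalue-exponent $c_i(k)$ is genuinely fractional, so that at least one Koszul differential is a proper divisor of $\mu$.

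For $\widetilde{W_n \Omega}_R^{\Box} \to q\textrm{-}\Omega_{(A(R)/\tilde\xi_n)/W_n(\mathcal{O})}$ I would run the identical argument over $W_n(\mathcal{O})$ with $[\zeta_{p^n}] - 1$ in place of $\mu$. The perfectoid identification $A_{\inf}(R_{\infty})/\tilde\xi_n \cong W_n(R_{\infty})$ and Lemma \ref{AinfRinfty-cdsdecomp} give a $\Delta$-equivariant decomposition $W_n(R_{\infty}) \cong A(R)/\tilde\xi_n \oplus N_{\infty}/\tilde\xi_n$; since $\tilde\theta_n(\mu) = [\zeta_{p^n}] - 1$, the integral summand again produces the rescaled Koszul complex $q\textrm{-}\Omega_{(A(R)/\tilde\xi_n)/W_n(\mathcal{O})}$, while on the fractional part the divisibility $[\epsilon^{a_i}]-1 \mid \mu$ reduces modulo $\tilde\xi_n$ to $\overline{[\epsilon^{a_i}]-1} \mid [\zeta_{p^n}]-1$, giving the same null-homotopy and the vanishing of $L\eta_{[\zeta_{p^n}]-1}$.

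Finally, the base-change isomorphism $A\Omega_R^{\Box} \otimes_{A_{\inf}, \tilde\theta_n}^L W_n(\mathcal{O}) \to \widetilde{W_n \Omega}_R^{\Box}$ I would obtain by combining the two computations, rather than by commuting $L\eta$ past $\otimes^L$. Since $q\textrm{-}\Omega_{A(R)/A_{\inf}}$ is a bounded complex of finite free $A(R)$-modules and $\tilde\xi_n$ is a nonzerodivisor on $A(R)$, one has $q\textrm{-}\Omega_{A(R)/A_{\inf}} \otimes_{A_{\inf}, \tilde\theta_n}^L W_n(\mathcal{O}) \cong q\textrm{-}\Omega_{(A(R)/\tilde\xi_n)/W_n(\mathcal{O})}$, the operators $\frac{\delta_i-1}{\mu}$ reducing to their $W_n$-analogues; chaining this with the two quasi-isomorphisms above yields the claim. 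The only inputs requiring care are the compatibilities $\tilde\theta_n(\mu) = [\zeta_{p^n}] - 1$ and $\ker(\tilde\theta_n) = (\tilde\xi_n)$, which I would read off from the relations among $\mu, \xi_n, \tilde\xi_n$ recorded before Definition \ref{qlogdRcomp} together with the étale, $(p,\mu)$-adically complete construction of $A(R)$.
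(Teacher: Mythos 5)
Your strategy is sound, and its skeleton coincides with the paper's: split $R\Gamma_{\cont} (\Delta, A_{\inf} (R_{\infty}))$ along the $\Delta$-stable decomposition of Lemma \ref{AinfRinfty-cdsdecomp}, show that $L\eta$ kills the non-integral summand, identify the integral summand with the $q$-de Rham complex via the rescaling lemma \cite[Lemma 7.9]{BMS18}, and deduce the base-change assertion by termwise reduction of the $q$-de Rham complex modulo the nonzerodivisor $\tilde{\xi}_n$ (this last step is exactly the paper's). The genuine differences lie in the two technical inputs. First, where the paper simply cites \cite[Proposition 3.25]{CK19} for the fact that $\mu$ kills $H_{\cont}^i (\Delta, N_{\infty})$, you re-prove it by per-monomial null-homotopies; your case-check is correct (if every $c_i(k)$ were integral, the condition $[0,r] \not\subset \Supp k$ would force $k_0 \in \mathbb{Z}$ and hence $k$ integral), as is the factorization $[\epsilon^{b/p^m}] - 1 = (\mathrm{unit}) \cdot \varphi^{-m}(\mu)$ with $\varphi^{-m}(\mu) \mid \mu$. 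Second, for the $\widetilde{W_n \Omega}$-statement the paper commutes $L\eta$ with derived reduction modulo $\tilde{\xi}_n$ via \cite[5.16]{Bha18}, which requires the $p^n$-torsion-freeness input \cite[Proposition 3.19]{CK19} and the congruence $p^n \equiv \tilde{\xi}_n \bmod \mu$; you instead rerun the monomial argument directly over $W_n (\mathcal{O})$ using $\tilde{\theta}_n (\mu) = [\zeta_{p^n}] - 1$, which is more self-contained and avoids both citations. One point to add in either of your vanishing arguments: cohomology does not commute with the completed direct sums of Lemma \ref{RmBox-cdsdecomp}, so you should note that your homotopies, being multiplication by elements of $A_{\inf}$ on each summand, assemble to a null-homotopy on the completed direct sum; your formulation via null-homotopies (rather than summandwise cohomology computations) is precisely what makes this step harmless.

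There is, however, one genuine gap: your monomial computations are literally valid only for $R = R^{\Box}$. For general $R$, Lemma \ref{AinfRinfty-cdsdecomp} gives $N_{\infty} \cong N_{\infty}^{\Box} \widehat{\otimes}_{A(R^{\Box})} A(R)$ with $\Delta$ acting diagonally: on a line $A(R) \cdot X^k$ one has $\delta_i (a X^k) = [\epsilon]^{c_i(k)} \delta_i(a) X^k$, where $\delta_i$ is a nontrivial automorphism of $A(R)$. Thus the Koszul differentials are the twisted operators $[\epsilon]^{c_i(k)} \delta_i - 1$, not multiplication by the scalars $[\epsilon]^{c_i(k)} - 1$, and your homotopy $h = \mu / ([\epsilon]^{c_i(k)} - 1)$ no longer satisfies $dh + hd = \mu$; similarly, the assertion that $\delta_i - 1$ maps $A(R)$ into $\mu A(R)$ (needed even to make sense of $q\textrm{-}\Omega_{A(R)/A_{\inf}}$, cf.\ Definition \ref{qlogdRcomp}) is a monomial computation only on $A(R^{\Box})$. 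The missing ingredient is the rigidity statement that $\Delta$ acts trivially on $A(R)/\mu$, which follows from uniqueness of $(p, \mu)$-adically \'etale lifts. Granted this, write $\delta_i - 1 = \mu E_i$ and factor $[\epsilon]^{c_i(k)} \delta_i - 1 = \varphi^{-m}(\mu) (u \delta_i + \xi_m E_i)$ with $u$ a unit and $\mu = \xi_m \varphi^{-m}(\mu)$; the operator $u \delta_i + \xi_m E_i$ is invertible (as $\xi_m$ is topologically nilpotent) and commutes with the other differentials, so your null-homotopy argument then goes through verbatim. Alternatively one can treat the framed case and descend, or quote \cite[Proposition 3.25]{CK19} as the paper does; but as written, your key step fails for $R \neq R^{\Box}$.
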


\begin{proof}
We have a decomposition of cohomology modules 
\[
H_{\cont}^i (\Delta, A_{\inf} (R_{\infty})) \cong 
H_{\cont}^i (\Delta, A(R)) \oplus H_{\cont}^i (\Delta, N_{\infty}) 
\]
for $i \in \mathbb{Z}$ by Lemma \ref{AinfRinfty-cdsdecomp}. The element $\mu$ kills the direct summand $H_{\cont}^i (\Delta, N_{\infty})$ by \cite[Proposition 3.25]{CK19}. So we have $L\eta_{\mu} R\Gamma_{\cont} (\Delta, N_{\infty}) = 0$ by \cite[Lemma 6.4]{BMS18}. Since the equalities
\[
R\Gamma_{\cont} (\Delta, A(R)) = 
R\Gamma (\Delta', A(R)) = 
K_{A(R)} (\delta_1 - 1,\ldots, \delta_d - 1) 
\]
follow by \cite[Lemma 7.3, Lemma 7.5]{BMS18}, we obtain quasi-isomorphisms 
\begin{align*}
A\Omega_R^{\Box} &= L\eta_{\mu} R\Gamma_{\cont} (\Delta, A_{\inf} (R_{\infty})) \\ 
&\to L\eta_{\mu} R\Gamma_{\cont} (\Delta, A(R)) \\ 
&\to K_{A(R)} \left( \dfrac{\delta_1 - 1}{[\epsilon] - 1},\ldots, \dfrac{\delta_d - 1}{[\epsilon] - 1} \right) \\ 
&= q\textrm{-}\Omega_{A(R) / A_{\inf}}
\end{align*}
by \cite[Corollary 6.5, Lemma 7.9]{BMS18}. 

Similarly, we shall give a quasi-isomorphism 
\[
\widetilde{W_n \Omega}_R^{\Box} \to q\textrm{-}\Omega_{(A(R)/ \tilde{\xi}_n) / W_n (\mathcal{O})}.
\]
Arguing as before, it is enough to prove
\[
L\eta_{[\zeta_{p^n}] - 1} (K_{N_{\infty}/ \tilde{\xi}_n} (\delta_1 - 1,\ldots, \delta_d - 1)) = 0.
\]
Since $L\eta_{\mu} (K_{N_{\infty}} (\delta_1 - 1,\ldots, \delta_d - 1)) = 0$ as above, it is enough to prove 
\[
L\eta_{[\zeta_{p^n}] - 1} (K_{N_{\infty}/ \tilde{\xi}_n} (\delta_1 - 1,\ldots, \delta_d - 1)) \cong 
L\eta_{\mu} (K_{N_{\infty}} (\delta_1 - 1,\ldots, \delta_d - 1)) \otimes_{A_{\inf}}^L A_{\inf}/ \tilde{\xi}_n.
\]
This follows from \cite[5.16]{Bha18} since for $i \in \mathbb{Z}$, we have the following natural injection
\[
H^i (K_{N_{\infty}} (\delta_1 - 1,\ldots, \delta_d - 1) \otimes_{A_{\inf}}^L A_{\inf} / \mu) \to 
H_{\cont}^i (\Delta, A_{\inf} (R_{\infty}) / \mu),
\]
in which $H_{\cont}^i (\Delta, A_{\inf} (R_{\infty}) / \mu)$ has no nonzero $p^n$-torsion by \cite[Proposition 3.19]{CK19}, and $p^n \equiv \tilde{\xi}_n$ modulo $\mu$ by \cite[Proposition 3.17]{BMS18}. 

Finally, the first assertion of the lemma follows by the quasi-isomorphism 
\[
q\textrm{-}\Omega_{A(R) / A_{\inf}} \otimes_{A_{\inf}, \tilde{\theta}_r}^L W_n (\mathcal{O}) \to q\textrm{-}\Omega_{(A(R)/ \tilde{\xi}_n) / W_n (\mathcal{O})}.
\]
\end{proof}

\begin{lemma}[cf.\ {\cite[Lemma 9.9]{BMS18}}]
\label{WnLambda-et-basechange}
Let $R$ be as above, and let $R \to R'$ be a $p$-adically \'etale map of $p$-adically complete algebras. We equip $R'$ with the induced framing $\Box'$ from that of $R$. Then the natural map 
\[
\widetilde{W_n \Omega}_R^{\Box} \widehat{\otimes}_{W_n (R)}^L W_n (R') \to 
\widetilde{W_n \Omega}_{R'}^{\Box'}
\]
is a quasi-isomorphism. 
\end{lemma}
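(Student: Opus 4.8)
The plan is to follow the strategy of \cite[Lemma 9.9]{BMS18}, reducing the assertion to the flat base change of continuous group cohomology together with the compatibility of the d\'ecalage functor with flat base change. The point is that both the perfectoid tower and the truncated Witt vectors behave well under the $p$-adically \'etale map $R \to R'$, so that the coefficient sheaf $W_n(R_\infty)$ transforms into $W_n(R_\infty')$ after base change along $W_n(R) \to W_n(R')$.

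First I would record that, since $R'$ carries the framing induced from that of $R$ (so that $R^{\Box'} = R^{\Box}$), one has $R_m' = R' \otimes_R R_m$ for every $m$ and hence $R_\infty' \cong R' \widehat{\otimes}_R R_\infty$; in particular $R_\infty \to R_\infty'$ is the base change of the $p$-adically \'etale map $R \to R'$ and is again $p$-adically \'etale, and the group $\Delta$ acting on $R_\infty'$ is the same one attached to the framing. Next I would invoke the \'etale base change property of truncated Witt vectors (cf.\ \cite{BMS18}): applying $W_n(-)$ to the \'etale map $R \to R'$ yields a $p$-adically \'etale, hence flat, map $W_n(R) \to W_n(R')$, and there is a canonical, $\Delta$-equivariant isomorphism
\[
W_n(R_\infty') \cong W_n(R_\infty) \widehat{\otimes}_{W_n(R)} W_n(R'),
\]
where $\Delta$ acts through the first factor and trivially on $W_n(R')$.

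With these identifications in hand, I would make the base change explicit by representing the continuous cohomology as a Koszul complex. By \cite[Lemma 7.3]{BMS18}, one has $R\Gamma_{\cont}(\Delta, W_n(R_\infty)) \cong K_{W_n(R_\infty)}(\delta_1 - 1, \ldots, \delta_d - 1)$, a finite complex whose terms are finite direct sums of copies of $W_n(R_\infty)$, and similarly for $R_\infty'$. Since $W_n(R')$ is flat over $W_n(R)$, the derived $p$-complete tensor product $-\widehat{\otimes}_{W_n(R)}^L W_n(R')$ is computed termwise, and the $\Delta$-equivariant isomorphism above identifies
\[
K_{W_n(R_\infty)}(\delta_i - 1) \widehat{\otimes}_{W_n(R)}^L W_n(R') \cong K_{W_n(R_\infty')}(\delta_i - 1),
\]
that is, $R\Gamma_{\cont}(\Delta, W_n(R_\infty)) \widehat{\otimes}_{W_n(R)}^L W_n(R') \cong R\Gamma_{\cont}(\Delta, W_n(R_\infty'))$. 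Applying the d\'ecalage functor and using that $L\eta_{[\zeta_{p^n}] - 1}$ commutes with flat base change along $W_n(R) \to W_n(R')$ (here $[\zeta_{p^n}] - 1$ lies in the base $W_n(\mathcal{O})$ and is a non-zero-divisor there; see \cite[Section 6]{BMS18}) then yields the desired quasi-isomorphism $\widetilde{W_n \Omega}_R^{\Box} \widehat{\otimes}_{W_n(R)}^L W_n(R') \to \widetilde{W_n \Omega}_{R'}^{\Box'}$.

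The main obstacle is the \'etale base change of truncated Witt vectors: one must know both that $W_n(R) \to W_n(R')$ is flat and that $W_n(-)$ intertwines the completed base change $R_\infty \mapsto R' \widehat{\otimes}_R R_\infty$ with the base change $W_n(R_\infty) \mapsto W_n(R_\infty) \widehat{\otimes}_{W_n(R)} W_n(R')$, compatibly with the $\Delta$-action and with the derived $p$-adic completions. A secondary technical point is to ensure that the completed derived tensor products and the d\'ecalage functor interact as stated, which is where the flatness of $W_n(R')/W_n(R)$ and the non-zero-divisor property of $[\zeta_{p^n}] - 1$ are used. Alternatively, one could argue through the identification $\widetilde{W_n \Omega}_R^{\Box} \cong q\textrm{-}\Omega_{(A(R)/\tilde{\xi}_n)/W_n(\mathcal{O})}$ of Lemma \ref{ALambda-WnLambda}, using that $A(R) \to A(R')$ is the unique $(p, \mu)$-adically \'etale lift of $R/p \to R'/p$ and that the operators $\tfrac{\delta_i - 1}{[\epsilon] - 1}$ extend uniquely along this \'etale map; this reduces the statement to the compatibility of the lift $A(R)/\tilde{\xi}_n$ with the Witt-vector base change, which is again governed by the \'etale base change of Witt vectors.
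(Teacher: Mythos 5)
Your proposal is correct and is essentially the intended argument: the paper gives no proof of Lemma \ref{WnLambda-et-basechange}, deferring to the smooth case \cite[Lemma 9.9]{BMS18}, and both of your routes --- the Koszul description of $R\Gamma_{\cont}(\Delta, W_n(R_{\infty}))$ combined with \'etale base change for Witt vectors (\cite[Theorem 10.4]{BMS18} together with the completion lemmas of \cite[Section 10]{BMS18}) and base change for $L\eta$, or alternatively the $q$-de Rham description of Lemma \ref{ALambda-WnLambda} --- are exactly the adaptation of that argument to the semistable setting. One caution on wording: $p$-adic \'etaleness of $R \to R'$ only gives that $W_n(R) \to W_n(R')$ is flat modulo every power of $p$ rather than flat on the nose, so the commutation of $L\eta_{[\zeta_{p^n}]-1}$ with the completed base change has to be run $p$-completely (using a compatibility of $L\eta$ with derived $p$-adic completion as in \cite[Lemma 6.20]{BMS18}); your alternative route sidesteps this point, since the terms of $q\textrm{-}\Omega_{(A(R)/\tilde{\xi}_n)/W_n(\mathcal{O})}$ are finite free over $A(R)/\tilde{\xi}_n$, which is identified with $W_n(R)$ as in the smooth case.
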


We can naturally define \'etale sheafified version $\widetilde{\Omega}_{\mathfrak{X}}, \widetilde{W_n \Omega}_{\mathfrak{X}}, A\Omega_{\mathfrak{X}}$ on $\mathfrak{X}_{\et}$ for a $p$-adic formal scheme $\mathfrak{X}$ over $\mathcal{O}$ as in \cite[Section 4]{CK19}.  

\begin{proposition}
\label{comp-WrLambda}
Let $\mathfrak{X}$ be the $p$-adic formal spectrum $\Spf (R)$ of $R$. The natural maps in $D(W_n (\mathcal{O}))$
\[
\widetilde{W_n \Omega}_R^{\Box} \to \widetilde{W_n \Omega}_R \to 
R\Gamma (\mathfrak{X}_{\et}, \widetilde{W_n \Omega}_{\mathfrak{X}})
\]
are quasi-isomorphisms. 
\end{proposition}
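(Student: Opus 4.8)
The plan is to prove the two maps separately, following the smooth case of \cite[Sections 8--9]{BMS18}, feeding in the explicit local computation of Lemma \ref{ALambda-WnLambda} at both steps. For the first map $\widetilde{W_n \Omega}_R^{\Box} \to \widetilde{W_n \Omega}_R$, I would begin by identifying it with the map produced in Lemma \ref{Letae-Ainf}. The modules $W_n(R_\infty)$ and $W_n(\widehat{\mathcal{O}}^+)$ acquire their $A_{\inf}$-structure through $\tilde\theta_n$, and a direct computation on Teichm\"uller representatives gives $\tilde\theta_n(\mu) = [\zeta_{p^n}] - 1$; hence $L\eta_\mu$ taken over $A_{\inf}$ coincides with $L\eta_{[\zeta_{p^n}]-1}$ taken over $W_n(\mathcal{O})$, and the map $L\eta_\mu e_n$ of Lemma \ref{Letae-Ainf} is exactly the first comparison map. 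In particular its cone is annihilated in each degree by $W(\mathfrak{m}^\flat)$, so the map is an almost quasi-isomorphism; it remains to upgrade this to a genuine one.

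For the upgrade I would use the explicit cohomology. By Lemma \ref{ALambda-WnLambda} the source is quasi-isomorphic to the Koszul complex $q\textrm{-}\Omega_{(A(R)/\tilde\xi_n)/W_n(\mathcal{O})}$ over $A(R)/\tilde\xi_n \cong W_n(R)$, and a set of explicit generators as in \cite[Proposition 2.17]{LZ03} (used in the proof of the main theorem) shows that its cohomology modules are finitely presented over $W_n(R)$ and contain no nonzero element killed by $W_n(\mathfrak{m})$; the latter holds because each Teichm\"uller lift $[m]$ with $m \in \mathfrak{m} \setminus \{0\}$ acts as a non-zero-divisor on $W_n(R)$. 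This already forces the kernel of the induced map on each $H^i$ to vanish. For the cokernel I would exploit that the valuation on $\mathcal{O}$ is non-discrete, so that $\inf_{0 \neq m \in \mathfrak{m}} v(m) = 0$; combined with the finiteness of the cohomology (reducing to $\mathcal{O}$-coefficients through the Verschiebung filtration if necessary) this rules out almost-zero cokernels, exactly as in \cite[Section 9]{BMS18}. Thus the first map is a genuine quasi-isomorphism.

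For the second map $\widetilde{W_n \Omega}_R \to R\Gamma(\mathfrak{X}_{\et}, \widetilde{W_n \Omega}_{\mathfrak{X}})$ with $\mathfrak{X} = \Spf(R)$ affine, I would avoid any abstract statement about $L\eta$ commuting with $R\Gamma$ and instead compute through cohomology sheaves. By the \'etale base change of Lemma \ref{WnLambda-et-basechange} the cohomology sheaves $H^i(\widetilde{W_n \Omega}_{\mathfrak{X}})$ are the quasi-coherent $W_n(\mathcal{O}_{\mathfrak{X}})$-modules associated to the $W_n(R)$-modules $H^i(\widetilde{W_n \Omega}_R)$. On the affine $\mathfrak{X}$ these sheaves have no higher cohomology, so the hypercohomology spectral sequence degenerates and yields $H^i(R\Gamma(\mathfrak{X}_{\et}, \widetilde{W_n \Omega}_{\mathfrak{X}})) \cong \Gamma(\mathfrak{X}, H^i(\widetilde{W_n \Omega}_{\mathfrak{X}})) \cong H^i(\widetilde{W_n \Omega}_R)$, the natural comparison map realizing this identification. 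Hence the second map is a quasi-isomorphism as well.

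The step I expect to be the main obstacle is the almost-to-genuine upgrade in the first map. Because $L\eta_{[\zeta_{p^n}]-1}$ does not commute with the formation of cones, the almost quasi-isomorphism of Lemma \ref{Letae-Ainf} does not automatically become an honest one, and the cokernel in particular cannot be controlled by the almost structure alone. The decisive input is therefore the explicit finitely presented description of the cohomology coming from Lemma \ref{ALambda-WnLambda}, together with the non-discreteness of the valuation, which forces the elementary divisors of an almost-isomorphism between such modules to be units. By contrast, verifying the quasi-coherence of the cohomology sheaves globally, which is all that the second map requires, is comparatively routine and is precisely what Lemma \ref{WnLambda-et-basechange} supplies.
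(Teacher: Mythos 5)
Your handling of the second map is essentially the paper's own argument: quasi-coherence of the cohomology sheaves of $\widetilde{W_n \Omega}_{\mathfrak{X}}$ via the \'etale base change of Lemma \ref{WnLambda-et-basechange} together with commutation of $L\eta$ with filtered colimits, then vanishing of higher quasi-coherent cohomology on the affine $\mathfrak{X}$. Your kernel argument for the first map is also sound: an almost-zero kernel inside a source whose cohomology has no $W_n(\mathfrak{m})$-torsion must vanish --- though that torsion-freeness should be justified by the explicit decomposition of the cohomology (as in Lemma \ref{WnD-decomp}, via \cite[Proposition 3.19]{CK19}), not by the remark that Teichm\"uller lifts are non-zero-divisors on $W_n(R)$, since the cohomology is only a subquotient of a free module.

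The genuine gap is your cokernel step for the first map. The cokernel of $H^i(\widetilde{W_n \Omega}_R^{\Box}) \to H^i(\widetilde{W_n \Omega}_R)$ is a quotient of the pro-\'etale cohomology, and at this stage nothing is known about finiteness of $H^i(\widetilde{W_n \Omega}_R)$: such finiteness is a \emph{consequence} of the proposition, not an available input. The principle you invoke (finitely presented and almost zero implies zero, which is indeed valid over the non-discrete $\mathcal{O}$) therefore does not apply, and an elementary-divisor argument would need \emph{both} sides of the almost isomorphism to be finitely presented, which is circular. Indeed, an injective almost isomorphism out of a finitely presented module can have nonzero cokernel: $W_n(\mathcal{O}) \hookrightarrow W_n(\mathcal{O}) \oplus W_n(k)$ has cokernel $W_n(k)$, which is killed by $W_n(\mathfrak{m})$, and nothing in your argument excludes that $H^i(\widetilde{W_n \Omega}_R)$ contains such an almost-zero summand. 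The paper (following \cite[Proposition 3.18]{CK19}) obtains surjectivity structurally rather than by finiteness: by \cite[Proposition 6.12]{BMS18}, the reduction $L\eta_{[\zeta_{p^n}]-1}(e) \otimes^L W_n(R_\infty)/([\zeta_{p^n}]-1)$ is the map of Bockstein complexes built from $H^{\ast}_{\cont}(\Delta, W_n(R_\infty)/([\zeta_{p^n}]-1))$ and its pro-\'etale counterpart; the no-torsion statement makes the connecting maps vanish, so the cokernel terms assemble into the Bockstein complex of the cone of the map \emph{before} $L\eta$, and that complex is acyclic because almost purity says this cone has cohomology killed by $[\zeta_{p^n}]-1$, whence $L\eta_{[\zeta_{p^n}]-1}$ of it vanishes (\cite[Lemma 6.4]{BMS18}). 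This gives $\Cone(L\eta_{[\zeta_{p^n}]-1}(e)) \otimes^L W_n(R_\infty)/([\zeta_{p^n}]-1) = 0$, so $[\zeta_{p^n}]-1$ acts invertibly on the cohomology of the cone; since moreover the cone dies after inverting $[\zeta_{p^n}]-1$ (localization kills the almost-zero cone and does not see $L\eta$), the cone is zero. Some mechanism of this kind --- vanishing both after reduction modulo $[\zeta_{p^n}]-1$ and after inverting it --- is what your proof is missing; without it, the surjectivity does not close.
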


\begin{proof}
We prove that the first map is a quasi-isomorphism at first. We follow the proof of \cite[Proposition 3.18]{CK19}. The cohomology group $H_{\cont}^i (\Delta, W_n (R_{\infty}) / ([\zeta_{p^n}] - 1))$ has no nonzero $W_n (\mathfrak{m})$-torsion by the equation $\tilde{\theta}_n ((\mu, \tilde{\xi}_n)) = \tilde{\theta}_n ((\mu, p^n))$, which follows from \cite[Proposition 3.17]{BMS18}, of ideals in $W_n (R_{\infty})$ and \cite[Proposition 3.19]{CK19}. We put 
\[
B = L\eta_{[\zeta_{p^n}]-1} R\Gamma_{\cont} (\Delta, W_n (R_{\infty})), \qquad B' = L\eta_{[\zeta_{p^n}] - 1} R\Gamma_{\proet} (\mathfrak{X}_C^{\ad}, W_n (\widehat{\mathcal{O}}_{\mathfrak{X}_K^{\ad}}^+)),
\]
and let $e \colon B \to B'$ be the natural morphism constructed by the Cartan-Leray spectral sequence. The ideal $W_n (\mathfrak{m})^2 = W_n (\mathfrak{m})$, which is seen by \cite[Corollary 10.2]{BMS18}, kills the cohomology of 
\[
\Cone(e) \otimes_{W_n (R_{\infty})}^L W_n (R_{\infty}) / ([\zeta_{p^n}] - 1).
\]
Then we have the following exact sequence
\begin{align*}
0 &\to H^i (B \otimes_{W_n (R_{\infty})}^L W_n (R_{\infty}) / ([\zeta_{p^n}] - 1)) \to 
H^i (B' \otimes_{W_n (R_{\infty})}^L W_n (R_{\infty}) / ([\zeta_{p^n}] - 1)) \\ &\to 
H^i (\Cone(e) \otimes_{W_n (R_{\infty})}^L W_n (R_{\infty}) / ([\zeta_{p^n}] - 1)) \to 0.
\end{align*}
For $K \in D(W_n (R_{\infty}))$, we have the natural quasi-isomorphism
\[
L\eta_{[\zeta_{p^n}] - 1} K \otimes_{W_n (R_{\infty})}^L W_n (R_{\infty}) / ([\zeta_{p^n}] - 1) \cong H^{\ast} (K \otimes_{W_n (R_{\infty})}^L W_n (R_{\infty}) / ([\zeta_{p^n}] - 1))
\]
by \cite[Proposition 6.12]{BMS18}. The differential of the right hand side is induced by the Bockstein differential. We have $L\eta_{[\zeta_{p^n}] - 1} (\Cone(e)) = 0$ by the almost purity theorem \cite[Theorem 6.5 (ii)]{Sch13}, \cite[Lemma 5.6, Theorem 5.7]{BMS18}. So the map 
\[
L\eta_{[\zeta_{p^n}] - 1} (e) \otimes_{W_n (R_{\infty})}^L W_n (R_{\infty}) / ([\zeta_{p^n}] - 1)
\]
is a quasi-isomorphism. So it follows that 
\[
\Cone(L\eta_{[\zeta_{p^n}] - 1)} (e)) \otimes_{W_n (R_{\infty})}^L W_n (R_{\infty}) / ([\zeta_{p^n}] - 1) \cong 0
\]
and so $[\zeta_{p^n}] - 1$ acts isomorphically on each $H^i (\Cone(L\eta_{[\zeta_{p^n}] - 1} (e)))$. Therefore, it suffices to show that each $H^i (\Cone(L\eta_{[\zeta_{p^n}] - 1} (e))) \otimes_{A_{\inf}}^L A_{\inf} [1/\mu]$ vanishes. This is seen by the equation 
\[
H^i (L\eta_{[\zeta_{p^n}] - 1} B) \left[ \dfrac{1}{[\zeta_{p^n}] - 1} \right] \cong H^i (B) \left[ \dfrac{1}{[\zeta_{p^n}] - 1} \right].
\]
For the second map, we can similarly show that it is a quasi-isomorphism as \cite[Corollary 9.11]{BMS18}. We have the following natural map
\[
\widetilde{W_n \Omega}_R \otimes_{W_n (R)} W_n (\mathcal{O}_{\mathfrak{X}}) \to \widetilde{W_n \Omega}_{\mathfrak{X}}.
\]
We shall show the map is a quasi-isomorphism on each stalk at each geometric point $\overline{x} \to \mathfrak{X}$. For the left term, we have $R\Gamma (\mathfrak{U}, \widetilde{W_n \Omega}_R \otimes_{W_n (R)} W_n (\mathcal{O}_{\mathfrak{X}})) = \widetilde{W_n \Omega}_{R'}$, where $\mathfrak{U} = \Spf R'$ is \'etale over $\mathfrak{X}$. So it follows that 
\begin{equation}
\label{comp-WrLambda-leftstalk}
\varinjlim_{\mathfrak{U}} R\Gamma (\mathfrak{U}, \widetilde{W_n \Omega}_R \otimes_{W_n (R)} W_n (\mathcal{O}_{\mathfrak{X}})) = 
\varinjlim_{\mathfrak{U}} L\eta_{[\zeta_{p^n}] - 1} R\Gamma_{\proet} (\mathfrak{U}_K^{\ad}, W_n (\widehat{\mathcal{O}}_{\mathfrak{X}_K^{\ad}})),
\end{equation}
where $\mathfrak{U}$ runs over affine \'etale neighborhood of $\overline{x}$. For the right term, we have 
\[ 
(\widetilde{W_n \Omega}_{\mathfrak{X}})_{\overline{x}} = 
L\eta_{[\zeta_{p^n}] - 1} \varinjlim_{\mathfrak{U}}R\Gamma_{\proet} (\mathfrak{U}_K^{\ad}, W_n (\widehat{\mathcal{O}}_{\mathfrak{X}_K^{\ad}}))
\]
by \cite[Lemma 6.14]{BMS18}. 
The assertion follows from the commutativity of $L\eta$ and filtered colimits. 
\end{proof}

\begin{proposition}[cf.\ {\cite[Theorem 3.20]{CK19}}]
\label{comp-ALambda}
Let $\mathfrak{X}$ be the $p$-adic formal spectrum $\Spf (R)$ of $R$. The natural maps in $D(A_{\inf})$ 
\[
A\Omega_R^{\Box} \to A\Omega_R \to R\Gamma (\mathfrak{X}, A\Omega_{\mathfrak{X}})
\]
are quasi-isomorphisms. 
\end{proposition}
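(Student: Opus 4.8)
The plan is to prove the two maps separately, mirroring the proof of Proposition \ref{comp-WrLambda}, and the crucial difference to keep in mind throughout is that the almost ideal $W(\mathfrak{m}^\flat) \subset A_{\inf}$ is \emph{not} idempotent (as noted after Lemma \ref{Letae-Ainf}). Because of this, the almost quasi-isomorphism $e_{\inf}$ of Lemma \ref{Letae-Ainf} does not give $L\eta_\mu(\Cone(e_{\inf})) = 0$, so the ``$L\eta$ of an almost-zero complex vanishes'' step that drives the truncated case is unavailable. I would therefore not argue modulo $\mu$ directly; instead I would descend the already-proven Hodge--Tate ($n = 1$) comparison up to $A_{\inf}$ by derived Nakayama, exploiting that the almost ideal $\mathfrak{m} \subset \mathcal{O}$ \emph{is} idempotent.

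For the first map $A\Omega_R^\Box \to A\Omega_R$, write $C$ for its cone. Both sides are derived $(p,\mu)$-complete: the source is quasi-isomorphic by Lemma \ref{ALambda-WnLambda} to the finite Koszul complex $q\textrm{-}\Omega_{A(R)/A_{\inf}}$ over the $(p,\mu)$-adically complete ring $A(R)$, and the target is likewise derived $(p,\mu)$-complete, being $L\eta_\mu$ of $R\Gamma_{\proet}(\mathfrak{X}_K^{\ad}, \mathbb{A}_{\inf,\mathfrak{X}_K^{\ad}})$ with $\mathbb{A}_{\inf,\mathfrak{X}_K^{\ad}}$ derived $(p,\mu)$-complete and both $L\eta_\mu$ and $R\Gamma_{\proet}$ preserving derived $(p,\mu)$-completeness. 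Hence $C$ is derived $(p,\mu)$-complete, and by derived Nakayama it suffices to show $C \otimes_{A_{\inf}}^L A_{\inf}/(p,\mu) \cong 0$. Since $p \equiv \tilde{\xi} \pmod{\mu}$ by \cite[Proposition 3.17]{BMS18}, we have $(p,\mu) = (\tilde{\xi},\mu)$, and as $(\tilde{\xi},\mu)$ is a regular sequence with $A_{\inf}/\tilde{\xi} \cong \mathcal{O}$ via $\tilde{\theta}_1$ and $\tilde{\theta}_1(\mu) = \zeta_p - 1$, the reduction $C \otimes_{A_{\inf}}^L A_{\inf}/(p,\mu)$ is computed from $C \otimes_{A_{\inf},\tilde{\theta}_1}^L \mathcal{O}$. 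It thus suffices to prove the single vanishing $C \otimes_{A_{\inf},\tilde{\theta}_1}^L \mathcal{O} \cong 0$.

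To identify this reduction I would use the $n = 1$ case of Lemma \ref{ALambda-WnLambda} and its pro-étale counterpart. The framed side gives $A\Omega_R^\Box \otimes_{A_{\inf},\tilde{\theta}_1}^L \mathcal{O} \cong \widetilde{\Omega}_R^\Box$ directly from Lemma \ref{ALambda-WnLambda}. For the pro-étale side, one checks by the argument of Lemma \ref{ALambda-WnLambda} that $\mathbb{A}_{\inf,\mathfrak{X}_K^{\ad}}/\tilde{\xi} \cong \widehat{\mathcal{O}}^+$ (the element $\tilde{\xi}$ being a nonzerodivisor cutting out the Hodge--Tate specialization sectionwise on affinoid perfectoids) and that $L\eta_\mu$ commutes with this reduction, using $\tilde{\theta}_1(\mu) = \zeta_p - 1$ together with the absence of the relevant torsion via \cite[5.16]{Bha18} and \cite[Proposition 3.19]{CK19}; this yields $A\Omega_R \otimes_{A_{\inf},\tilde{\theta}_1}^L \mathcal{O} \cong \widetilde{\Omega}_R$, compatibly with the framed identification. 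Hence $C \otimes_{A_{\inf},\tilde{\theta}_1}^L \mathcal{O} \cong \Cone(\widetilde{\Omega}_R^\Box \to \widetilde{\Omega}_R)$, which vanishes by Proposition \ref{comp-WrLambda} with $n = 1$. Therefore $C \cong 0$.

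For the second map $A\Omega_R \to R\Gamma(\mathfrak{X}, A\Omega_{\mathfrak{X}})$ I would argue exactly as in the second half of the proof of Proposition \ref{comp-WrLambda}: sheafify and check a quasi-isomorphism on the stalk at each geometric point $\overline{x} \to \mathfrak{X}$, computing both stalks as filtered colimits of the affine complexes $A\Omega_{R'}$ over affine étale neighborhoods $\mathfrak{U} = \Spf R'$ of $\overline{x}$, using the $A_{\inf}$-analogue of the étale base change of Lemma \ref{WnLambda-et-basechange} and the commutation of $L\eta_\mu$ with filtered colimits (\cite[Lemma 6.14]{BMS18}). The main obstacle is the non-idempotence of $W(\mathfrak{m}^\flat)$ in the first map: it is precisely what forces the detour through derived $(p,\mu)$-completeness and the identity $(p,\mu) = (\tilde{\xi},\mu)$, reducing everything to the $n = 1$ statement of Proposition \ref{comp-WrLambda}, where the idempotence of $\mathfrak{m} \subset \mathcal{O}$ makes the almost-mathematics argument go through.
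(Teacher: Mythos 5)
The paper itself contains no proof of Proposition \ref{comp-ALambda}: it is stated with a reference to \cite[Theorem 3.20]{CK19}, so your proposal must be judged against the argument it would actually need to contain. Your skeleton is the natural one and matches the expected strategy: both sides are derived $(p,\mu)$-complete, $(p,\mu)=(\tilde{\xi},\mu)$ because $\tilde{\xi}\equiv p \pmod{\mu}$, and derived Nakayama reduces the first quasi-isomorphism to a vanishing after $\otimes_{A_{\inf},\tilde{\theta}_1}^L\mathcal{O}$; your handling of the second map, mirroring the stalkwise colimit argument in the second half of the proof of Proposition \ref{comp-WrLambda}, is also fine.

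The genuine gap is the pro-\'etale half of your key identification, $A\Omega_R \otimes_{A_{\inf},\tilde{\theta}_1}^L \mathcal{O} \cong \widetilde{\Omega}_R$. Commuting $L\eta_\mu$ with $-\otimes_{A_{\inf}}^L A_{\inf}/\tilde{\xi}$ via \cite[5.16]{Bha18} requires its torsion hypothesis for the complex to which it is applied: here one needs $H^\ast(R\Gamma_{\proet}(\mathfrak{X}_K^{\ad},\mathbb{A}_{\inf,\mathfrak{X}_K^{\ad}}) \otimes_{A_{\inf}}^L A_{\inf}/\mu)$ to have no nonzero $\tilde{\xi}$-torsion (equivalently $p$-torsion). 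But \cite[Proposition 3.19]{CK19} supplies this only for the continuous group cohomology $H^\ast_{\cont}(\Delta, A_{\inf}(R_\infty)/\mu)$. The two complexes differ by the cone of the edge map $e_{\inf}$, which is only almost zero with respect to $W(\mathfrak{m}^\flat)$ (Lemma \ref{Letae-Ainf}), and because $W(\mathfrak{m}^\flat)$ is not idempotent --- the very obstruction you flag at the outset --- torsion-freeness does not transfer across an almost quasi-isomorphism: the almost-zero contributions to pro-\'etale cohomology can perfectly well produce $p$-torsion classes mod $\mu$. Note moreover that, given Lemma \ref{ALambda-WnLambda} and Proposition \ref{comp-WrLambda} for $n=1$, the identification you assume is \emph{equivalent} to the quasi-isomorphism $A\Omega_R^\Box \to A\Omega_R$ being proved, so the proposal is circular precisely at its one nontrivial step. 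A correct proof has to obtain the vanishing of the cone without ever commuting $L\eta_\mu$ past the Hodge--Tate reduction on the pro-\'etale side: for instance, one can use the explicit decomposition of $H^\ast(A\Omega_R^\Box)$ into completed sums of free modules and $W_r(\mathcal{O})$'s (as in Lemma \ref{WnD-decomp}) to check that it has no torsion for the \emph{idempotent} ideal generated by the Teichm\"uller lifts $[m]$, $m\in\mathfrak{m}^\flat$ (idempotent since $[m]=[m^{1/2}]^2$), deduce that $H^\ast(A\Omega_R^\Box)\to H^\ast(A\Omega_R)$ is injective with cokernel killed by that ideal, and only then run a Bockstein-plus-completeness argument in the style of the proof of Proposition \ref{comp-WrLambda}; this is the kind of extra care taken in \cite[Theorem 3.20]{CK19} and in \cite[Section 9]{BMS18}.
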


We define the BKF twist $\mathcal{O} \{ 1\}$ of $\mathcal{O}$.
Here ``BKF'' stands for ``Breuil-Kisin-Fargues.'' 

\begin{definition}
We set 
\[
\mathcal{O} \{ 1\} \coloneqq T_p (\Omega_{\mathcal{O}/\mathbb{Z}_p}^1) = \varprojlim_n (\Omega_{\mathcal{O}/\mathbb{Z}_p}^1 [p^n]).
\]
Here $\mathcal{O} \{ 1\}$ is called the \emph{BKF-twist} of $\mathcal{O}$. 
\end{definition}

BKF-twist can be written in terms of the $p$-complete cotangent complex or the subquotient of $A_{\inf}$. 

\begin{lemma}
\label{BKF-fund}
There is a natural isomorphisms 
\[
\mathcal{O} \{ 1\} = \varprojlim_n (\Omega_{\mathcal{O}/\mathbb{Z}_p}^1 [p^n]) \cong 
(\mathbb{L}_{\mathcal{O}/\mathbb{Z}_p} [-1])^{\wedge} \cong 
(\mathbb{L}_{\mathcal{O}/A_{\inf}} [-1])^{\wedge} \cong 
\tilde{\xi} A_{\inf} / \tilde{\xi}^2 A_{\inf},
\]
where $(-)^\wedge$ denotes the derived $p$-adic completion. 
\end{lemma}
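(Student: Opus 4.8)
The plan is to prove the chain from right to left, reducing everything to the cotangent complex of the Hodge--Tate quotient $\tilde{\theta}\colon A_{\inf}\twoheadrightarrow\mathcal{O}$, whose kernel $(\tilde{\xi})$ is principal and generated by a nonzerodivisor. For the rightmost isomorphism I would observe that $\tilde{\theta}$ presents $\mathcal{O}$ as $A_{\inf}/\tilde{\xi}$ with $\tilde{\xi}$ a nonzerodivisor, so $\mathbb{L}_{\mathcal{O}/A_{\inf}}$ is concentrated in cohomological degree $-1$ and is canonically $\tilde{\xi} A_{\inf}/\tilde{\xi}^2 A_{\inf}$ placed there. Since $\tilde{\xi} A_{\inf}/\tilde{\xi}^2 A_{\inf}\cong\mathcal{O}$ is already $p$-adically complete and $p$-torsion free, its derived $p$-completion is itself, and shifting by $[-1]$ gives $(\mathbb{L}_{\mathcal{O}/A_{\inf}}[-1])^\wedge\cong\tilde{\xi} A_{\inf}/\tilde{\xi}^2 A_{\inf}$.

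For the middle isomorphism I would use the transitivity triangle for $\mathbb{Z}_p\to A_{\inf}\to\mathcal{O}$,
\[
\mathbb{L}_{A_{\inf}/\mathbb{Z}_p}\otimes_{A_{\inf}}^L\mathcal{O} \to \mathbb{L}_{\mathcal{O}/\mathbb{Z}_p} \to \mathbb{L}_{\mathcal{O}/A_{\inf}} \to,
\]
and show the left term has vanishing derived $p$-completion. This reduces to $(\mathbb{L}_{A_{\inf}/\mathbb{Z}_p})^\wedge\cong 0$: the ring $A_{\inf}=W(\mathcal{O}^{\flat})$ is $p$-torsion free, hence flat over $\mathbb{Z}_p$, and $A_{\inf}/p=\mathcal{O}^{\flat}$ is perfect, so by flat base change $\mathbb{L}_{A_{\inf}/\mathbb{Z}_p}\otimes^L\mathbb{Z}/p\cong\mathbb{L}_{\mathcal{O}^{\flat}/\mathbb{F}_p}\cong 0$. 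A complex whose derived mod-$p$ reduction vanishes has trivial derived $p$-completion (by dévissage from $\mathbb{Z}/p$ to $\mathbb{Z}/p^n$), and the same then holds after $\otimes_{A_{\inf}}^L\mathcal{O}$. Taking derived $p$-completions of the triangle therefore yields $(\mathbb{L}_{\mathcal{O}/\mathbb{Z}_p})^\wedge\cong(\mathbb{L}_{\mathcal{O}/A_{\inf}})^\wedge$, and shifting by $[-1]$ gives the desired isomorphism.

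Finally, for the leftmost isomorphism I would identify the derived $p$-completion with the Tate module. The two previous steps already show $(\mathbb{L}_{\mathcal{O}/\mathbb{Z}_p})^\wedge$ is concentrated in degree $-1$ with $H^{-1}\cong\mathcal{O}$; it remains to match this with $T_p\Omega_{\mathcal{O}/\mathbb{Z}_p}^1$. I would invoke the standard fact that for the perfectoid ring $\mathcal{O}$ the complex $\mathbb{L}_{\mathcal{O}/\mathbb{Z}_p}$ is concentrated in degree $0$, equal to $\Omega_{\mathcal{O}/\mathbb{Z}_p}^1$, and that this module is $p$-divisible and $p$-power torsion. For such an $M$ one has $M/p^nM=0$, so $M\otimes^L\mathbb{Z}/p^n\cong M[p^n][1]$, whence the derived $p$-completion is $R\varprojlim_n M[p^n][1]=(T_pM)[1]$, the transition maps being multiplication by $p$ and the inverse system Mittag--Leffler so that $R^1\varprojlim$ vanishes. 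Applying this to $M=\Omega_{\mathcal{O}/\mathbb{Z}_p}^1$ and shifting by $[-1]$ identifies $(\mathbb{L}_{\mathcal{O}/\mathbb{Z}_p}[-1])^\wedge$ with $T_p\Omega_{\mathcal{O}/\mathbb{Z}_p}^1=\mathcal{O}\{1\}$. The main obstacle is precisely this last input: one must supply (from the theory of perfectoid rings / rank-one valuation rings) that $\mathbb{L}_{\mathcal{O}/\mathbb{Z}_p}$ sits in degree $0$ with $p$-divisible $\Omega^1$, and then keep careful track of the degree shift produced by derived $p$-completion, which is what makes the $p$-divisible module $\Omega^1$ land in degree $1$ and reappear, after the $[-1]$ shift, as a degree-$0$ module matching $\tilde{\xi} A_{\inf}/\tilde{\xi}^2 A_{\inf}$.
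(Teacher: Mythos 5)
Your proof is correct and follows essentially the same route as the paper: Illusie's computation of the cotangent complex of a quotient by a nonzerodivisor for the identification with $\tilde{\xi}A_{\inf}/\tilde{\xi}^2 A_{\inf}$, the transitivity triangle for $\mathbb{Z}_p \to A_{\inf} \to \mathcal{O}$ together with the vanishing of the derived $p$-completion of $\mathbb{L}_{A_{\inf}/\mathbb{Z}_p}$ (perfectness of $\mathcal{O}^\flat$ plus derived Nakayama/d\'evissage), and the concentration of $\mathbb{L}_{\mathcal{O}/\mathbb{Z}_p}$ in degree $0$ (this is the Gabber--Ramero input the paper cites) combined with the Tate-module computation of the derived $p$-completion of a $p$-divisible module. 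One cosmetic remark: your ``$p$-power torsion'' hypothesis on $\Omega_{\mathcal{O}/\mathbb{Z}_p}^1$ is never actually used, since $p$-divisibility alone gives $M/p^n M = 0$, so the argument stands without it.
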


\begin{proof}
The first isomorphism is seen by \cite[Theorem 6.5.12]{GR03} and the definitions of the derived completion and $R\varprojlim$. For the second one, \cite[Lemma 3.14]{BMS18} and derived Nakayama lemma shows that $\mathbb{L}_{A_{\inf}/\mathbb{Z}_p}$ vanishes. So the second isomorphism follows by the triangle of $p$-completed derived base change of the $p$-complete cotangent complexes. The third equivalence follows by \cite[Corollaire III.3.2.7]{Ill71}. 
\end{proof}

We note some properties of $\mathcal{O} \{ 1\}$. The canonical map $d\log \colon \mu_{p^\infty} \to \Omega_{\mathcal{O}/\mathbb{Z}_p}^1$ induces the embedding of the Tate twist $\mathcal{O} (1) = \mathbb{Z}_p (1) \otimes_{\mathbb{Z}_p} \mathcal{O}$ into $\mathcal{O} \{ 1\}$ as 
\[
d\log \colon \mathcal{O} (1) \hookrightarrow \mathcal{O} \{ 1\}, \ 
(\zeta_{p^i})_i \otimes a \mapsto a \cdot (d\log \zeta_{p^i})_i,
\]
where $a \in \mathcal{O}$. For any topological generator $(\zeta_{p^i})_i \in \mathbb{Z}_p (1)$,
the BKF-twist $\mathcal{O} \{ 1\}$ is generated by the element 
\[
\left( \dfrac{1}{\zeta_p - 1} d\log \zeta_{p^i} \right)_i \in T_p (\Omega_{\mathcal{O}/\mathbb{Z}_p}).
\]
We set $\mathcal{O} \{ -1\}$ the free $\mathcal{O}$-module $\mathrm{Hom}_{\mathcal{O}} (\mathcal{O} \{ 1\}, \mathcal{O})$ of rank $1$. Then for $n \in \mathbb{Z}$, we set $\mathcal{O} \{ n\}$ the free $\mathcal{O}$-module $(\mathcal{O} \{ \mathrm{sgn}(n)\})^{\otimes |n|}$ of rank $1$. For any $\mathcal{O}$-module $M$ and $n \in \mathbb{Z}$, we write $M\{ n\} \coloneqq M \otimes_{\mathcal{O}} \mathcal{O} \{ n\}$.

Let $(R, M_R) \to (S, M_S)$ be a morphism of $p$-torsion free rings with prelog structures.

\begin{definition}
The \emph{$p$-completed logarithmic cotangent complex} $\widehat{\mathbb{L}}_{(S, M_S)/(R, M_R)}^G$ is the derived limit of derived mod $p^n$ reductions of $\mathbb{L}_{(S, M_S)/(R, M_R)}$. In other words, 
\[
\widehat{\mathbb{L}}_{(S, M_S)/(R, M_R)}^G \coloneqq R\varprojlim_n \mathbb{L}_{(S/p^n, M_S)/(R/p^n, M_R)}^G.
\]
\end{definition}

For a $p$-torsion free ring $R$, $Q_R$ of $R$ denotes the (pre-)log structure $R \cap (R [\tfrac{1}{p}])^\ast \to R$. Let $\overline{W(k)}$ be as in Lemma \ref{chart-ssalg} and \ref{chart-ssfalg}. Fix a $\mathbb{Z}_p$-algebra $S$ with the prelog structure $\alpha_S \colon M_S \to S$ and an element $\pi \in M_S$. We mainly treat the case of special fibers of semistable formal schemes, but we treat more general case of $R_S^{\Box} / S$ and $R_{S, +}^{\Box} / S$ in which  
\begin{align*}
R_S^{\Box} &\coloneqq S[t_0^{\pm 1},\ldots,t_r^{\pm 1},t_{r+1},\ldots,t_d]/(t_0\cdots t_r - \alpha_S (\pi)), \\ 
R_{S,+}^{\Box} &\coloneqq S[t_0,\ldots,t_r,t_{r+1},\ldots,t_d]/(t_0\cdots t_r - \alpha_S (\pi))
\end{align*}
with the log structure of the chart $\mathbb{N}^{r+1} \sqcup_{\mathbb{N}} M_S$, where $\pi$ denotes its image in $S$ of $\alpha_S$. The algebras are equipped with log structures associated to $\mathbb{N}^{r+1} \sqcup_{\mathbb{N}} (M_S \backslash \{ 0\}) \to S$ given by $((a_i)_{0 \leq i \leq r}, c) \mapsto c\prod_{0 \leq i \leq r} t_i^{a_i}$ and they are denoted by $Q_S^{\Box}, Q_{S, +}^{\Box}$. The condition $S = \overline{W(k)}$ assures that the base changes for the \'etale map $R_{\overline{W(k)}}^{\Box} \to R$ takes the log structure $Q_{\overline{W(k)}}^{\Box} = Q_{R_{\overline{W(k)}}^{\Box}}$ to $Q_R$ by Lemma \ref{chart-ssalg}.

\begin{proposition}
\label{Rinfty-LG}
Let $K = C$. Let $(R_{\infty}, Q_{R_{\infty}})$ be the prelog ring previously described. Then we have
\begin{enumerate}
\item $\widehat{\mathbb{L}}_{(R_{\infty}, Q_{R_{\infty}})/(\mathcal{O}_C, Q_{\mathcal{O}_C})}^G \cong 0$, 
\item $\widehat{\mathbb{L}}_{(\mathcal{O}_C, Q_{\mathcal{O}_C})/(\mathcal{O}_C, 0)}^G \cong 0$. 
\end{enumerate}
\end{proposition}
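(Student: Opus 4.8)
The plan is to reduce everything to a filtered colimit of \emph{fine} log rings indexed by $p$-power-root refinements, compute each finite stage via Olsson's comparison, and then observe that the transition maps multiply the relevant $d\log$-classes by $p$, so that the derived mod-$p^n$ reductions of the colimit vanish.

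First I would exhibit the morphisms in the proposition as completed filtered colimits of morphisms of fine prelog rings. For (1), recall $R_{\infty} = (\varinjlim_m R_m)^{\wedge}$ and present both $Q_{\mathcal{O}_C}$ and $Q_{R_{\infty}}$ as the filtered unions of their finitely generated submonoids; combined with the semistable charts of Lemma \ref{chart-ssalg} (and their models over a discrete valuation subring as in Lemma \ref{chart-ssfalg}), this realizes $(\mathcal{O}_C, Q_{\mathcal{O}_C}) \to (R_{\infty}, Q_{R_{\infty}})$ as the colimit of a direct system of morphisms of fine prelog rings $(\mathcal{O}_C, M^{(j)}) \to (R_m, N^{(j)})$, each built from the diagonal chart $\mathbb{N} \to \mathbb{N}^{r+1}$ defining $t_0\cdots t_r = p^q$ together with its $p$-power-root refinements. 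Each such finite-stage morphism is integral. Since $\mathbb{L}^G$ commutes with filtered colimits by Lemma \ref{comm-LG-fcolim}, it suffices to control the finite stages and then pass to the colimit.

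Second, at each finite stage I would transfer from prelog rings to log schemes via Lemma \ref{LG-ring-asch} and invoke Proposition \ref{comp-LG-L}. For (1) the stage morphisms are integral, so the comparison map $\tau_{\geq \ast}\mathbb{L}^G \to \mathbb{L}$ is a quasi-isomorphism and $\mathbb{L}^G$ is computed by Olsson's complex $\mathbb{L}_{\baseof{X}/\mathcal{L}og_Y}$. Here the semistable morphisms are log smooth, so $\mathcal{L}_f\colon \baseof{X} \to \mathcal{L}og_Y$ is smooth; choosing a smooth chart $U \to \mathcal{L}og_Y$, the fibre product $\baseof{X}\times_{\mathcal{L}og_Y} U$ is an algebraic space presentable as an \'etale quotient of a smooth scheme, and by \'etale invariance of the cotangent complex the stage complex is concentrated in degree $0$, equal to the finite free module of log differentials $\Omega^1_{(R_m, N^{(j)})/(\mathcal{O}_C, M^{(j)})}$, generated by the classes $d\log t_i^{1/p^m}$ subject to $\sum_{i=0}^r d\log t_i = 0$ (since $t_0\cdots t_r = p^q \in Q_{\mathcal{O}_C}$ comes from the base), in accordance with the rank-$d$ count of Proposition \ref{galg-tildeLambda}. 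For (2) the base $(\mathcal{O}_C, 0)$ has trivial log structure and is flat over $\Spec\mathbb{Z}$, so Proposition \ref{comp-LG-L}(1) identifies $\mathbb{L}^G$ with Olsson's complex without any integrality hypothesis, and the finite stages are controlled in the same way, their $d\log$-generators now ranging over the (divisible) value group of $\mathcal{O}_C$.

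Third I would run the colimit against the derived $p$-completion. Because $t_i^{1/p^m} = (t_i^{1/p^{m+1}})^p$, the transition maps send $d\log t_i^{1/p^m} \mapsto p\, d\log t_i^{1/p^{m+1}}$. Each stage complex is $\mathbb{Z}$-flat (free over the $p$-torsion-free ring $R_m$), so $\mathbb{L}^G \otimes^L_{\mathbb{Z}} \mathbb{Z}/p^n = \varinjlim_m (\Omega^1/p^n)$, and after $n$ transition steps every generator is divisible by $p^n$, hence maps to $0$; thus $\mathbb{L}^G \otimes^L_{\mathbb{Z}} \mathbb{Z}/p^n \cong 0$ for all $n$, giving $\widehat{\mathbb{L}}^G = R\varprojlim_n (\mathbb{L}^G \otimes^L_{\mathbb{Z}} \mathbb{Z}/p^n) \cong 0$. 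The same divisibility applies in (2) because the value group of $\mathcal{O}_C$ is $p$-divisible; alternatively, once (2) is known one may feed it into the transitivity triangle for $(\mathcal{O}_C, 0) \to (\mathcal{O}_C, Q_{\mathcal{O}_C}) \to (R_{\infty}, Q_{R_{\infty}})$ to streamline (1). The sheafified formulation then follows by passing to stalks and faithfully flat descent, as in Lemma \ref{comm-LG-Hi}.

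The hard part will be the second step: analysing $\mathcal{L}og_Y$ over the valuation-ring base $(\mathcal{O}_C, Q_{\mathcal{O}_C})$, checking that each finite-stage semistable morphism is genuinely integral, and establishing that the fibre product with a smooth chart is an algebraic space realized as an \'etale quotient of a smooth scheme, so that Olsson's complex collapses to $\Omega^1_{\log}$ in degree $0$. Once this structural input is secured, the colimit and $p$-completion in the third step are essentially formal.
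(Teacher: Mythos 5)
Your overall strategy is the same as the paper's proof: realize $(\mathcal{O}_C, Q_{\mathcal{O}_C}) \to (R_{\infty}, Q_{R_{\infty}})$ (up to derived $p$-completion) as a filtered colimit of integral morphisms of fine prelog rings built from $p$-power-root refinements of the semistable chart, use Lemma \ref{comm-LG-fcolim} and Proposition \ref{comp-LG-L} to replace Gabber's complex by Olsson's at each stage, show via the fibre product with a smooth chart of $\mathcal{L}og_Y$ (an algebraic space presented as an \'etale quotient of smooth schemes, indexed in the paper by $S_{r+1}$) that each stage complex is the finite free module of log differentials in degree $0$, and conclude because the transitions send $d\log t_i^{1/p^m}$ to $p\, d\log t_i^{1/p^{m+1}}$, killing the derived mod-$p^n$ colimits; part (2) is likewise handled by $p$-divisibility in both treatments.

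There is, however, one concrete step that fails as written: you take the finite stages to be $(\mathcal{O}_C, M^{(j)}) \to (R_m, N^{(j)})$ with the $p$-adically complete rings $R_m$ over $\mathcal{O}_C$. These morphisms are not locally of finite presentation (already $R_m^{\Box}$ is a restricted power series ring over $\mathcal{O}_C$), hence not log smooth, so the claim ``the semistable morphisms are log smooth, so $\mathcal{L}_f$ is smooth'' does not apply to them; correspondingly, $\Omega^1_{(R_m, N^{(j)})/(\mathcal{O}_C, M^{(j)})}$ is \emph{not} finite free on the classes $d\log t_i^{1/p^m}$ --- only its $p$-completion is --- so the stage computation in your second step breaks down. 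The paper avoids this by building the direct system from genuinely \'etale, finite-type models over an increasing tower of discrete valuation rings $\mathcal{O}_1 \subset \mathcal{O}_2 \subset \cdots$ inside $\overline{W(k)}$ (with transition maps $t_i \mapsto t_i^p$), whose colimit has the same derived $p$-completion as the completed system; this replacement is exactly the first displayed quasi-isomorphism of the paper's proof. You invoke these models in passing (via Lemmas \ref{chart-ssalg} and \ref{chart-ssfalg}) but never make them the actual stages of your system. Once you substitute them in (or, alternatively, reduce the \'etale part modulo $p^n$ and use uncompleted, finitely presented algebras over $\mathcal{O}_C$), your argument coincides with the paper's, including the faithfully flat descent from the chart cover back down to $R_{\infty}^{\Box}$.
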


\begin{proof}
First, we prove the first vanishing. Let $\mathcal{O}' \coloneqq \overline{W(k)}$, whose $p$-adic completion is $\mathcal{O}_C$. We take a model $R_{\mathcal{O}_1}^{\Box} \to R_1$ of $R^{\Box} \to R$ over $\mathcal{O}_1 \subset \mathcal{O}'$. We may assume $\mathcal{O}_1$ is a finite extension of $W(k)$. Then we take an increasing system of finite extensions of $\mathcal{O}_1$
\[
\mathcal{O}_1 \subset \mathcal{O}_2 \subset \cdots \subset \mathcal{O}_m \subset \cdots
\]
such that $p^{q/p^m} \in \mathcal{O}_m$ and the monoids of $p$-adic valuation of $\varinjlim_m \mathcal{O}_m$ and $\mathcal{O}'$ agree. Then we can assume $\mathcal{O}' = \varinjlim_m \mathcal{O}_m$. For $m \geq 1$, we put 
\[
R_m^{\Box} \coloneqq \mathcal{O}_m [t_0,\ldots,t_r,t_{r+1}^{\pm 1},\ldots,t_d^{\pm 1}]/(t_0 \cdots t_r - p^{q/p^m}).
\]
We also put
\begin{align*}
\varphi_m \colon R_{\mathcal{O}_1}^{\Box} = R_1^{\Box} &= \mathcal{O}_1 [t_0,\ldots,t_r,t_{r+1}^{\pm 1},\ldots,t_d^{\pm 1}]/(t_0 \cdots t_r - p^q) \\ 
&\to \mathcal{O}_m [t_0,\ldots,t_r,t_{r+1}^{\pm 1},\ldots,t_d^{\pm 1}]/(t_0 \cdots t_r - p^{q/p^m}) = R_m^{\Box}
\end{align*}
be the morphism defined by $t_i \mapsto t_i^{p^m}$ for $0 \leq i \leq d$, and $R_m$ be the base change of $R_1$ along $\varphi_m$. Then we obtain an inductive system $(R_m)$ whose maps are defined to be the $p$-powers of the coordinates. The $p$-adic completion of $R_{\infty, \mathcal{O}'} \coloneqq \varinjlim_m R_m$ is isomorphic to the algebra $R_{\infty}$. We equip $R_m$ with the log structure associated to the chart $M_m \coloneqq \mathbb{N}^{r+1} \sqcup_{\mathbb{N}} V_m$ where $V_m$ is the monoid of $p$-adic values of $\mathcal{O}_m$. Taking the associated log structures do not change $\mathbb{L}^G$ up to the natural quasi-isomorphisms by \cite[Theorem 8.20]{Ols05} and we may consider $\mathbb{L}^G$ as that of the associated log rings. Then by Proposition \ref{comp-LG-L}, Lemma \ref{comm-LG-fcolim} and Lemma \ref{chart-ssalg}, we have quasi-isomorphisms of the complexes of $R_{\infty, \mathcal{O}'}$-modules 
\begin{align*}
\widehat{\mathbb{L}}_{(R_{\infty}, Q_{R_{\infty}})/(\mathcal{O}_C, Q_{\mathcal{O}_C})}^G
&\cong \widehat{\mathbb{L}}_{(R_{\infty, \mathcal{O}'}, Q_{R_{\infty, \mathcal{O}'}})/(\mathcal{O}', Q_{\mathcal{O}'})}^G \\ 
&\cong (\varinjlim_m \mathbb{L}_{(R_m, M_m)/(\mathcal{O}_m, V_m)}^G)^\wedge \\ 
&\cong (\varinjlim_m R\Gamma (\Spec(R_m, M_m), \mathbb{L}_{\Spec(R_m, M_m)/\Spec(\mathcal{O}_m, V_m)}))^\wedge,
\end{align*}
where $(-)^\wedge$ denotes the derived $p$-adic completion and $\varinjlim$ is taken in the category of complexes. Then it suffices to show
\[ \varinjlim_m (R\Gamma (\Spec(R_m, M_m), \mathbb{L}_{\Spec(R_m, M_m)/\Spec(\mathcal{O}_m, V_m)}) \otimes_{\mathbb{Z}}^L \mathbb{Z}/p^n) \cong 0 \]
for $n \geq 1$. We may assume $R_m = R_m^{\Box}$ by \'etale base change. Let 
\begin{align*}
X_m &\coloneqq \Spec (\mathcal{O}_m [t_0,\ldots,t_r,t_{r+1}^{\pm 1},\ldots,t_d^{\pm 1}]/(t_0 \cdots t_r - p^{q/p^m})), \\ 
Y_m &\coloneqq \Spec (\mathcal{O}_m [u_0,\ldots,u_r,u_{r+1}^{\pm 1},\ldots,u_d^{\pm 1}]/(u_0 \cdots u_r - p^{q/p^m})).
\end{align*}
We construct the fiber product of the diagram 

\begin{equation}
\label{RmBox2}
\xymatrix{
& X_m \ar[d]^{\mathcal{M}_{X_m}} \\
Y_m \ar[r]^-{\mathcal{M}_{Y_m}} & \mathcal{L}og_{(\Spec (\mathcal{O}_m), V_m)}
}
\end{equation}
where $\mathcal{M}_{X_m}, \mathcal{M}_{Y_m}$ are respectively the log structures on $X_m, Y_m$ corresponding to the log structure $M_m$ of the algebra $R_m^{\Box}$ along the argument of \cite[Example 3.11]{Ols03}. Let $S_n$ be the symmetric group of degree $n$ and $\mathrm{Aut}(M_m) \cong S_{r+1}$ be the natural isomorphisms of groups with respect to the permutation of $t_0,\ldots,t_r$. We define schemes 
\begin{align*}
I_{m, \sigma} &\coloneqq \Spec R_{m, \sigma} \\
&= \Spec (\mathcal{O}_m [t_0,v_0^{\pm 1},\ldots,t_r,v_r^{\pm 1},t_{r+1}^{\pm 1},u_{r+1}^{\pm 1},\ldots,t_d^{\pm 1},u_d^{\pm 1}]/(t_0 \cdots t_r - p^{q/p^m}, v_0 \cdots v_r - 1))
\end{align*}
for each $\sigma \in S_{r+1}$, morphisms $p_{X_m} \colon I_{m, \sigma} \to X_m$ to be the canonical projection, and morphisms $p_{Y_m} \colon I_{m, \sigma} \to Y_m$ as 
\[
\Gamma(Y_m, \mathcal{O}_{Y_m}) \to \Gamma(I_{m, \sigma}, \mathcal{O}_{I_{m, \sigma}}) \qquad u_{\sigma(i)} \mapsto (v_i)^{-1} t_i 
\]
for $0 \leq i \leq r$, $u_i \mapsto u_i$ for $r+1 \leq i \leq d$. Let $\tilde{I}_m \coloneqq \coprod_{\sigma \in S_{r+1}} I_{m, \sigma}$. Any morphism $g \colon Z \to \tilde{I}_m$ of schemes is locally corresponds to a pair $(\iota, \sigma)$ where $\iota \colon g^\ast p_{X_m}^\ast \mathcal{M}_{X_m} \to g^\ast p_{Y_m}^\ast \mathcal{M}_{Y_m}$ is an isomorphism and $\sigma \in S_{r+1}$ is a permutation. Then the fiber product $I_m \coloneqq X_m \times_L Y_m$, where $L = \mathcal{L}og_{(\Spec (\mathcal{O}_m), V_m)}$, of the diagram \ref{RmBox2} is the quotient of $\tilde{I}_m$ by the equivalence relation defined by the subfunctor $\Gamma_m \subset \tilde{I}_m \times_{X_m \times Y_m} \tilde{I}_m$ locally consisting of pairs $\{(\iota, \sigma), (\iota, \sigma')\}$. The commutativity of the diagrams for both cases $\tau = \sigma$ and $\tau = \sigma'$
\[
\xymatrix{
M_m \ar[r]^-{e_i \mapsto e_{\tau(i)}} \ar[d]^{\beta_{X_m}} & M_m \ar[d]^{\beta_{Y_m}} \\ 
\overline{g^\ast p_{X_m}^\ast \mathcal{M}}_{X_m} \ar[r]^-{\iota} & \overline{g^\ast p_{Y_m}^\ast \mathcal{M}}_{Y_m},
}
\]
where $e_i$ is the $i$-th element of the canonical basis of $\mathbb{N}^{r+1}$, $\beta_{X_m}, \beta_{Y_m}$ are the charts induced by those of $\mathcal{M}_{X_m}, \mathcal{M}_{Y_m}$ and $g : Z \to \tilde{I}_m$ is a morphism of schemes, is an open condition by \cite[Lemma 3.5 (ii), (iii)]{Ols03}. We let 
\[
\Gamma_{m, \sigma, \sigma'} \coloneqq \Gamma_m \times_{(\tilde{I}_m \times_{X_m \times Y_m} \tilde{I}_m)} (I_{m, \sigma} \times_{X_m \times Y_m} I_{m, \sigma'}).
\] 
Then the two projections $\Gamma_{m, \sigma, \sigma'} \rightrightarrows \tilde{I}_m$ are open immersions, so the projections $\Gamma_m \rightrightarrows \tilde{I}_m$ are \'etale. Therefore, $\Gamma_m$ is an \'etale equivalence relation so that $I_m = \tilde{I}_m / \Gamma$ is an algebraic space and the canonical projection $\pi_m \colon I_m \to \tilde{I}_m$ is an \'etale morphism by \cite[Tag 02WW]{SP}. By Lemma \ref{LG-ring-asch}, the cotangent complex $\mathbb{L}_{I_{m, \sigma} / X_m}$ is naturally quasi-isomorphic to the complex of quasi-coherent sheaves associated with the free module 
\[
\mathbb{L}_{R_{m, \sigma} / R_m^{\Box}} = 
R_{m, \sigma} \langle d\log u_1,\ldots,d\log u_r, d\log u_{r+1},\ldots,d\log u_d \rangle [0]\]
concentrated in degree $0$ by \cite[III.3.2.7]{Ill71}. Then the morphism
\[ \mathbb{L}_{R_{m, \sigma} / R_m^{\Box}} \otimes_{\mathbb{Z}}^L \mathbb{Z}/p^n \to \mathbb{L}_{R_{m+1, \sigma} / R_{m+1}^{\Box}} \otimes_{\mathbb{Z}}^L \mathbb{Z}/p^n \]
is represented by the diagram 
\[
\xymatrix{
\Omega_{R_{m, \sigma} / R_m^{\Box}}^1 \ar[r]^-{p^n} \ar[d] & \Omega_{R_{m, \sigma} / R_m^{\Box}}^1 \ar[d] \\ 
\Omega_{R_{m+1, \sigma} / R_{m+1}^{\Box}}^1 \ar[r]^-{p^n} & \Omega_{R_{m+1, \sigma} / R_{m+1}^{\Box}}^1,
}
\]
where the vertical maps are defined as $d\log t_i \mapsto d\log t_i^p = pd\log t_i$ for $1 \leq i \leq d$. Any element $x \in \Omega_{R_{m, \sigma} / R_m^{\Box}}^1$ in the diagram maps to $0$ via the composition of $n$ vertical maps. Hence the colimit of the system of vertical maps in the diagram is $0$. Then the complex $\varinjlim (\mathbb{L}_{R_{m, \sigma} / R_m^{\Box}} \otimes_{\mathbb{Z}}^L \mathbb{Z}/p^n)$ is also $0$. The map $\tilde{I}_m \to Y_m$ is a faithfully flat morphism by \cite[Corollary 3.15]{Ols03}. 
We have the following natural quasi-isomorphisms of sheaves of modules on $I_m$ 
\[
L(p_{Y_m} \pi_m)^{\ast} (\mathbb{L}_{(R_m, M_m)/(\mathcal{O}_m, V_m)}^G)^{\verb|~|} \cong 
\mathbb{L}_{I_m / X_m} \cong 
L\pi_m^{\ast} (\mathbb{L}_{R_{m, \sigma} / R_m^{\Box}})^{\verb|~|}
\]
by the \'etale invariance of cotangent complexes, Lemma \ref{LG-ring-asch} and Lemma \ref{comp-LG-L}. Since $\pi_m \colon I_m \to \tilde{I}_m$ is \'etale and the natural map 
\[
\beta \colon Lp_{Y_m}^{\ast} (\mathbb{L}_{(R_m, M_m)/(\mathcal{O}_m, V_m)}^G)^{\verb|~|} \to (\mathbb{L}_{R_{m, \sigma} / R_m^{\Box}})^{\verb|~|}
\]
induces the above map, $\beta$ is a quasi-isomorphism. It follows that the natural map 
\[
\mathbb{L}_{(R_m^{\Box}, Q_{R_m^{\Box}}) / (\mathcal{O}_m, Q_{\mathcal{O}_m})}^G \otimes_{R_m^{\Box}}^L R_{\infty, \sigma} \to 
\mathbb{L}_{R_{m, \sigma} / R_m^{\Box}}
\]
is a quasi-isomorphism. 

Let $R_{\infty, \sigma} \coloneqq \varinjlim R_{m, \sigma}$. Then we have 
\begin{align*}
&(\varinjlim_m (\mathbb{L}_{(R_m^{\Box}, Q_{R_m^{\Box}}) / (\mathcal{O}_m, Q_{\mathcal{O}_m})}^G \otimes_{\mathbb{Z}}^L \mathbb{Z}/p^n)) \otimes_{R_{\infty}^{\Box}}^L R_{\sigma}^{\Box}) \\ 
&\cong \varinjlim_m (\mathbb{L}_{(R_m^{\Box}, Q_{R_m^{\Box}}) / (\mathcal{O}_m, Q_{\mathcal{O}_m})}^G \otimes_{\mathbb{Z}}^L \mathbb{Z}/p^n) \otimes_{R_m^{\Box}}^L R_{\sigma}^{\Box}) \\ 
&\cong \varinjlim_m (\mathbb{L}_{(R_m^{\Box}, Q_{R_m^{\Box}}) / (\mathcal{O}_m, Q_{\mathcal{O}_m})}^G \otimes_{R_m^{\Box}}^L R_{\infty, \sigma} \otimes_{\mathbb{Z}}^L \mathbb{Z}/p^n) \\ 
&\cong \varinjlim_m (\mathbb{L}_{R_{m, \sigma} / R_m^{\Box}} \otimes_{R_{m, \sigma}}^L R_{\infty, \sigma} \otimes_{\mathbb{Z}}^L \mathbb{Z}/p^n) \\ 
&\cong \varinjlim_m ((\mathbb{L}_{R_{m, \sigma} / R_m^{\Box}} \otimes_{\mathbb{Z}}^L \mathbb{Z}/p^n) \otimes_{R_{m, \sigma}}^L R_{\infty, \sigma}) \\
&= 0.
\end{align*}
Since the morphism $R_{\infty}^{\Box} \to R_{\infty, \sigma}$ is faithfully flat, the claim
\[ \varinjlim_m (\mathbb{L}_{(R_m^{\Box}, Q_{R_m^{\Box}}) / (\mathcal{O}_m, Q_{\mathcal{O}_m})}^G \otimes_{\mathbb{Z}}^L \mathbb{Z}/p^n) \cong 0 \]
follows by faithfully flat descent. 

For the second vanishing, we take an increasing system of prelog rings
\[
(\mathcal{O}_1, \mathbb{N}) \subset (\mathcal{O}_2, \mathbb{N}) \subset \cdots \subset (\mathcal{O}_m, \mathbb{N}) \subset \cdots
\]
such that the prelog structures are defined by the uniformizers $\pi_m \in \mathcal{O}_m$ and their colimit is isomorphic to $(\mathcal{O}, Q_{\mathcal{O}})$. As in the proof of the first assertion, it is enough to show that the colimit of the directed system of the complexes 
\[
\mathbb{L}_{A_1 / \mathcal{O}_1} 
\to \cdots 
\mathbb{L}_{A_m / \mathcal{O}_m} 
\to 
\mathbb{L}_{A_{m+1} / \mathcal{O}_{m+1}} 
\to \cdots 
\]
for $n \geq 1$ where 
\[
A_m \coloneqq \mathcal{O}_m [v^{\pm 1}] / (\pi_m v - \pi_m) \cong \mathcal{O}_m / \pi_m
\]
vanishes. Then each arrow of the system is the natural maps 
\[
(\pi_m / \pi_m^2) [-1] \to (\pi_{m+1} / \pi_{m+1}^2) [-1].
\]
Then the assertion follows. 
\end{proof}

\begin{remark}
We only deal with $\mathbb{L}^G$ of the smooth morphisms in Proposition \ref{Rinfty-LG} (1), and so we can avoid using log cotangent complex of Olsson for showing the vanishing by $\mathbb{L}^G = \Omega^1$. It is pointed out by T.\ Koshikawa. However we present more general method in the proof of the proposition.
\end{remark}

Assume $K = C$ and let $\mathfrak{X} = \Spf R$ and $X = \Spa(R[\tfrac{1}{p}], R)$. We give another proof of the semistable Hodge-Tate comparison theorem for $\tilde{\Omega}_{\mathfrak{X}}$ of \v{C}esnavi\v{c}ius-Koshikawa \cite[Theorem 4.11]{CK19}. Consider the transitivity triangle
\begin{equation}
\label{log-triangle}
\widehat{\mathbb{L}}_{(\mathcal{O}_C, Q_{\mathcal{O}_C})/(\mathbb{Z}_p, 0)}^G [-1] \otimes_{\mathcal{O}_C} \widehat{\mathcal{O}}_X^+ \to 
\widehat{\mathbb{L}}_{(\widehat{\mathcal{O}}_X^+, Q_{\widehat{\mathcal{O}}_X^+}) / (\mathbb{Z}_p, 0)}^G [-1] \to 
\widehat{\mathbb{L}}_{(\widehat{\mathcal{O}}_X^+, Q_{\widehat{\mathcal{O}}_X^+}) /(\mathcal{O}_C, Q_{\mathcal{O}_C})}^G [-1]
\end{equation}
of $p$-completed cotangent complexes on $X_{\proet}$, where $\mathbb{L}^G$ is calculated over the pro-\'etale site $X_{\proet}$. The third term of the triangle is $0$ by Lemma \ref{chart-ssalg}, Lemma \ref{chart-ssfalg}, Lemma \ref{comm-LG-Hi} and Proposition \ref{Rinfty-LG}. We have $\widehat{\mathbb{L}}_{(\mathcal{O}_C, Q_{\mathcal{O}_C})/(\mathcal{O}_C, 0)} \cong 0$ by Proposition \ref{Rinfty-LG} and so that $\widehat{\mathbb{L}}_{(\mathcal{O}, Q_{\mathcal{O}})/(\mathbb{Z}_p, 0)} \cong \mathcal{O} \{1\}$ by Lemma \ref{BKF-fund} and the canonical distinguished triangles. So we obtain a map 

\begin{equation}
\label{CKcomp-map}
\begin{aligned}
\widehat{\mathbb{L}}_{(\mathfrak{X}, Q_{\mathfrak{X}})/(\mathbb{Z}_p, 0)}^G [-1]
&\to R\Gamma_{\proet} (X, \widehat{\mathbb{L}}_{(\widehat{\mathcal{O}}_X^+, Q_{\widehat{\mathcal{O}}_X^+}) / (\mathbb{Z}_p, 0)}^G [-1]) \\ 
&\cong R\Gamma_{\proet} (X, \widehat{\mathbb{L}}_{(\mathcal{O}_C, Q_{\mathcal{O}_C})/(\mathbb{Z}_p, 0)}^G [-1] \otimes_{\mathcal{O}_C} \widehat{\mathcal{O}}_X^+) \\ 
&\cong R\Gamma_{\proet} (X, \widehat{\mathcal{O}}_X^+) \{1\}.
\end{aligned}
\end{equation}
We set 
\[
\Omega_{(\mathfrak{X}, Q_{\mathfrak{X}})/({\mathcal{O}_C}, Q_{\mathcal{O}})}^i \coloneqq \bigwedge^i \varprojlim_n \Omega_{(\mathfrak{X}/p^n, Q_{\mathfrak{X}})/({\mathcal{O}_C /p^n, Q_{\mathcal{O}_C})}}^1,
\]
where $Q_{\mathcal{O}_C} \coloneqq Q_{\mathcal{O}_{\Spf \mathcal{O}_C}}$. 

\begin{theorem}
\label{CKisom}
Let $K = C$. Then there is a canonical isomorphism
\[
H^1 (\widetilde{\Omega}_R) \cong \Omega_{(\mathfrak{X}, Q_{\mathfrak{X}})/(\mathcal{O}_C, Q_{\mathcal{O}_C})}^1 \{-1\}.
\]
Moreover, the above map is naturally extends to an isomorphism
\[
H^i (\widetilde{\Omega}_R) \cong \Omega_{(\mathfrak{X}, Q_{\mathfrak{X}})/(\mathcal{O}_C, Q_{\mathcal{O}_C})}^i \{-i\}.
\]
\end{theorem}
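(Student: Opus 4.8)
The plan is to reduce the whole statement to the degree-one case and then to identify $H^1(\widetilde{\Omega}_R)$ canonically with $\Omega^1\{-1\}$ by means of the map \eqref{CKcomp-map}. For the ``moreover'' part, once a \emph{canonical} isomorphism $H^1(\widetilde{\Omega}_R)\cong\Omega^1_{(\mathfrak{X},Q_{\mathfrak{X}})/(\mathcal{O}_C,Q_{\mathcal{O}_C})}\{-1\}$ is available, Proposition \ref{galg-tildeLambda} furnishes $\bigwedge^i H^1(\widetilde{\Omega}_R)\cong H^i(\widetilde{\Omega}_R)$, while by definition $\Omega^i=\bigwedge^i\Omega^1$; taking $i$-th exterior powers of the degree-one map and invoking its naturality yields $H^i(\widetilde{\Omega}_R)\cong\Omega^i\{-i\}$. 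So all the content sits in degree one, and the two remaining tasks are (i) to build the canonical map and (ii) to show it is an isomorphism.

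For task (i) I would take $H^1$ of \eqref{CKcomp-map}. Writing $C=R\Gamma_{\proet}(X,\widehat{\mathcal{O}}_X^+)$, the transitivity triangle \eqref{log-triangle} together with the log smoothness (and integrality) of the semistable morphism, so that $\mathbb{L}^G\cong\Omega^{1,\cont}$ is concentrated in degree $0$ (Lemma \ref{chart-ssalg}, Proposition \ref{comp-LG-L}), identifies the degree-one cohomology of the source of \eqref{CKcomp-map} with $\Omega^{1,\cont}_{(\mathfrak{X},Q_{\mathfrak{X}})/(\mathcal{O}_C,Q_{\mathcal{O}_C})}$. Thus \eqref{CKcomp-map} gives a map $\Omega^1\{-1\}\to H^1(C)$. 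The essential point is that this map is divisible by $\zeta_p-1$: the cotangent complex naturally carries the Breuil--Kisin--Fargues twist $\mathcal{O}\{1\}$ (Lemma \ref{BKF-fund}), whereas the Hodge--Tate comparison for $H^1(C)$ is most naturally phrased with the Tate twist, and the two differ exactly by $\mathcal{O}(1)=(\zeta_p-1)\mathcal{O}\{1\}$ via $d\log\zeta_{p^i}=(\zeta_p-1)\cdot(\text{generator of }\mathcal{O}\{1\})$. By the computation of $L\eta$ on cohomology (\cite[Lemma 6.4, Corollary 6.5]{BMS18}), the image of $H^1(\widetilde{\Omega}_R)=H^1(L\eta_{\zeta_p-1}C)\to H^1(C)$ is precisely $\zeta_p-1$ times the $(\zeta_p-1)$-torsion-free quotient of $H^1(C)$; matching the two divisibilities produces the canonical map $c^1\colon\Omega^1\{-1\}\to H^1(\widetilde{\Omega}_R)$, the factor $\zeta_p-1$ absorbing the passage from the Tate twist to the BKF twist.

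For task (ii) I would prove that $c^1$ is an isomorphism after reducing to the framed model $R=R^\Box$. This reduction is legitimate because both sides are compatible with the $p$-adically \'etale base change $R^\Box\to R$: the left-hand side by \'etale invariance of the log differentials (\cite[Proposition 3.7]{Mat15}) and the right-hand side by Lemma \ref{WnLambda-et-basechange} together with Proposition \ref{comp-WrLambda}. In the framed case both $\Omega^1_{(\mathfrak{X},Q_{\mathfrak{X}})/(\mathcal{O}_C,Q_{\mathcal{O}_C})}\{-1\}$ and $H^1(\widetilde{\Omega}_R^\Box)$ are finite free of rank $d$ (the latter by Proposition \ref{galg-tildeLambda}), so it suffices to check that $c^1$ carries the basis $d\log t_1,\dots,d\log t_d$ to a basis. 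For this I would use Lemma \ref{ALambda-WnLambda} with $n=1$ to identify $\widetilde{\Omega}_R^\Box$ with the reduced log $q$-de Rham complex $q\text{-}\Omega_{(A(R)/\tilde{\xi})/\mathcal{O}}$; under the Hodge--Tate specialization $\tilde{\theta}$ the operators $\tfrac{\delta_i-1}{[\epsilon]-1}$ become logarithmic derivations, so that $H^1$ is freely generated by the classes of the $d\log t_i$, against which $c^1(d\log t_i)$ can be compared.

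The hard part will be exactly this last comparison: tracing the abstract $d\log$ coming from \eqref{CKcomp-map} through the identification of Lemma \ref{ALambda-WnLambda} and verifying that $c^1(d\log t_i)$ is a generator, with every Breuil--Kisin--Fargues twist and the $\zeta_p-1$ divisibility correctly accounted for. Equivalently, the delicate claim is the one in task (i), that the image of \eqref{CKcomp-map} in $H^1(C)$ agrees with the image of $H^1(\widetilde{\Omega}_R)$; once this is pinned down on the generators $d\log t_i$ the isomorphism follows, and its independence of the framing is guaranteed by the naturality of \eqref{CKcomp-map}.
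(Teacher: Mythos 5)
Your skeleton coincides with the paper's: reduce the higher-degree statement to degree one via Proposition \ref{galg-tildeLambda} and exterior powers, obtain the degree-one map from (\ref{CKcomp-map}) using the vanishing of the third term of (\ref{log-triangle}), reduce to $R = R^{\Box}$ by \'etale invariance of both sides, and then verify the map on the elements $d\log t_i$. The paper makes your informal $L\eta$-bookkeeping precise by invoking \cite[Lemma 8.16]{BMS18}: it suffices that the induced map be an isomorphism in degree $0$ onto $H^0_{\cont}(\Delta, R_\infty^{\Box})$ and, in degree $1$, an isomorphism of $\Omega^{1,\cont}_{(R^{\Box},Q_{R^{\Box}})/(\mathcal{O}_C,Q_{\mathcal{O}_C})}$ onto $(\zeta_p-1)H^1_{\cont}(\Delta,R_\infty^{\Box})$; this is the criterion that absorbs the Tate-twist versus BKF-twist discrepancy you discuss.

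However, there is a genuine gap: the step you defer as ``the hard part'' --- verifying that $c^1(d\log t_i)$ is a generator --- is not a routine check but is the entire substance of the paper's proof, and your proposal contains no argument for it. The paper carries it out as follows. First, by Proposition \ref{Rinfty-LG} there is a commutative square replacing $\widehat{\mathbb{L}}^G_{(R^{\Box},Q_{R^{\Box}})/(\mathbb{Z}_p,0)}$ by $R\Gamma_{\cont}(\Delta, \widehat{\mathbb{L}}^G_{(R_\infty^{\Box},Q_{R_\infty^{\Box}})/(\mathbb{Z}_p,0)}) \cong R\Gamma_{\cont}(\Delta, R_\infty^{\Box})[1]\{1\}$, so the comparison can be computed in group cohomology. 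Second, $R\Gamma_{\cont}(\Delta, \mathbb{L}^G \otimes^L_{\mathbb{Z}} \mathbb{Z}/p^n)$ is represented by an explicit double complex (the Koszul complex of the $\delta_j - 1$ tensored with multiplication by $p^n$), and via \cite[Proposition 3.8]{CK19} one gets that $(\zeta_p-1)H^1_{\cont}(\Delta, R_\infty^{\Box}\{1\}) = (\zeta_p-1)H^1_{\cont}(\Delta,\mathcal{O}_C\{1\}) \otimes_{\mathcal{O}_C} R^{\Box}$ is free of rank $d$ with explicit basis the homomorphisms $f_i$ sending $\delta_i \mapsto (d\log\zeta_{p^n})_n$ and $\delta_j \mapsto 0$ for $j \neq i$. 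Finally --- and this is the decisive point --- the cocycle computation $(\delta_i - 1)d\log(t_i^{1/p^n}) = d\log(\zeta_{p^n})$ and $(\delta_j - 1)d\log(t_i^{1/p^n}) = 0$ for $j\neq i$ shows that the class of $d\log t_i$ in bidegree $(0,0)$ of the double complex is cohomologous to the element of bidegree $(1,-1)$ corresponding to $f_i$, i.e.\ $c^1(d\log t_i) = f_i$, whence $c^1$ carries a basis to a basis. Your alternative route through Lemma \ref{ALambda-WnLambda} (identifying $\widetilde{\Omega}_R^{\Box}$ with the reduced log $q$-de Rham complex) does not avoid this: the generators of $H^1$ of that Koszul complex still have to be matched against the cotangent-complex $d\log$ classes coming from (\ref{CKcomp-map}), which is exactly the same computation. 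As written, your proposal sets up the correct framework but does not establish the isomorphism.
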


\begin{proof}
The last isomorphism follows by the first one and Lemma \ref{galg-tildeLambda}. For the first one, the proof is the same as in \cite[Section 8.2]{BMS18}. We show the morphism (\ref{CKcomp-map}) induces an isomorphism in degree $1$. The cohomology groups of $\widehat{\mathbb{L}}_{(R, Q_R)/(\mathbb{Z}_p, 0)}^G [-1]\{-1\}$ are given by $R$ in degree $0$ and $\Omega_{(R, Q_R)/(\mathcal{O}_C, Q_{\mathcal{O}_C})}^1 \{-1\}$ in degree $1$ by the triangle same as (\ref{log-triangle}). The \'etale invariance of functors $\widetilde{\Omega}$ (Lemma \ref{WnLambda-et-basechange}) and strict \'etale invariance of functors $\widehat{\mathbb{L}}^G$, which is seen by \cite[Lemma 8.22]{Ols05} and the \'etale invariance of cotangent complexes (\cite[Tag 08R2, Tag 08R3]{SP}), assure that we may assume $R = R^{\Box}$. By \cite[Lemma 8.16]{BMS18}, it suffices to show that

\begin{enumerate}

\item The induced morphism in degree $0$
\[
R^{\Box} = H^0 (\widehat{\mathbb{L}}_{(R^{\Box}, Q_{R^{\Box}})/(\mathbb{Z}_p, 0)}^G [-1]\{-1\}) \to 
H^0_{\cont} (\Delta, R_{\infty}^{\Box})
\]
is an isomorphism. 

\item The induced morphism in degree $1$
\begin{multline*}
\Omega_{(R^{\Box}, Q_{R^{\Box}})/(\mathcal{O}_C, Q_{\mathcal{O}_C})}^{1, \cont} \coloneqq 
\varprojlim_j \Omega_{(R^{\Box}/p^j, Q_{R^{\Box}})/(\mathcal{O}_C/p^j, Q_{\mathcal{O}_C})}^1 = 
H^1 (\widehat{\mathbb{L}}_{(R^{\Box}, Q_{R^{\Box}})/(\mathbb{Z}_p, 0)}^G [-1]\{-1\}) \\
\to H_{\cont}^1 (\Delta, R_{\infty}^{\Box})
\end{multline*}
is an isomorphism onto $(\zeta_p - 1)H_{\cont}^1 (\Delta, R_{\infty}^{\Box})$. 

\end{enumerate}

The first claim follows from the construction of (\ref{CKcomp-map}). For the second claim, we have a commutative diagram of natural maps 
\[
\xymatrix{
\widehat{\mathbb{L}}_{(R^{\Box}, Q_{R^{\Box}})/(\mathbb{Z}_p, 0)}^G \ar[r] \ar[d] & R\Gamma_{\cont} (\Delta, R_{\infty}^{\Box}) [1]\{1\} \ar[d]^{=} \\ 
R\Gamma_{\cont} (\Delta, \widehat{\mathbb{L}}_{(R_{\infty}^{\Box}, Q_{R_{\infty}^{\Box}})/(\mathbb{Z}_p, 0)}^G) \ar[r]^-{\cong} & R\Gamma_{\cont} (\Delta, R_{\infty}^{\Box}) [1]\{1\}
}
\]
by Proposition \ref{Rinfty-LG}. The $R$-module $(\zeta_p - 1) H_{\cont}^1 (\Delta, R_{\infty}^{\Box})$ is a free $R$-module of rank $d$ by \cite[Proposition 3.8]{CK19}. We describe a canonical basis of $(\zeta_p - 1) H_{\cont}^1 (\Delta, R_{\infty}^{\Box} \{1\})$. The map
\[ H_{\cont}^1 (\Delta, \mathcal{O}\{1\}) \otimes_{\mathcal{O}_C} R^{\Box} \to H_{\cont}^1 (\Delta, R_{\infty}^{\Box} \{1\}) \] induces an equality
\[
(\zeta_p - 1) H_{\cont}^1 (\Delta, \mathcal{O}_C \{1\}) \otimes_{\mathcal{O}_C} R^{\Box} = 
(\zeta_p - 1) H_{\cont}^1 (\Delta, \mathcal{O}_C \{1\} \otimes_{\mathcal{O}_C} R_{\infty}^{\Box})
\]
again by \cite[Proposition 3.8]{CK19}. We shall give some elements of
\[
(\zeta_p - 1) H_{\cont}^1 (\Delta, \mathcal{O}_C \{1\}) \cong 
(\zeta_p - 1) H_{\cont}^0 (\Delta, \widehat{\mathbb{L}}_{(\mathcal{O}_C, Q_{\mathcal{O}_C})/(\mathbb{Z}_p, 0)}^G).
\]
and show that they form a basis of $(\zeta_p - 1) H_{\cont}^1 (\Delta, \mathcal{O}_C \{1\}) \otimes_{\mathcal{O}_C} R^{\Box}$. 

By the natural quasi-isomorphism
\[
R\Gamma_{\cont} (\Delta, \widehat{\mathbb{L}}_{(\mathcal{O}_C, Q_{\mathcal{O}_C})/(\mathbb{Z}_p, 0)}^G) \to 
R\varprojlim_n R\Gamma_{\cont} (\Delta, \mathbb{L}_{(\mathcal{O}_C, Q_{\mathcal{O}_C})/(\mathbb{Z}_p, 0)}^G \otimes_{\mathbb{Z}}^L \mathbb{Z}/p^n),
\]
and the vanishing 
\[
R^1 \varprojlim H_{\cont}^{-1}(\Delta, \mathbb{L}_{(\mathcal{O}_C, Q_{\mathcal{O}_C})/(\mathbb{Z}_p, 0)}^G \otimes_{\mathbb{Z}}^L \mathbb{Z}/p^n) = 0,
\]
we shall take compatible systems of elements in each  
\[
H^0_{\cont} (\Delta, \mathbb{L}_{(\mathcal{O}_C, Q_{\mathcal{O}_C})/(\mathbb{Z}_p, 0)}^G \otimes_{\mathbb{Z}}^L \mathbb{Z}/p^n)
\]
for $n \geq 1$, and identify them with elements of $H^0 (\Delta, \widehat{\mathbb{L}}_{(\mathcal{O}_C, Q_{\mathcal{O}_C})/(\mathbb{Z}_p, 0)}^G)$. Here 
\[
R\Gamma_{\cont} (\Delta, \mathbb{L}_{(\mathcal{O}_C, Q_{\mathcal{O}_C})/(\mathbb{Z}_p, 0)}^G \otimes_{\mathbb{Z}}^L \mathbb{Z}/p^n)
\]
is represented by the total complex of the tensor product of $K_{\Omega_{\mathcal{O}_C / \mathbb{Z}_p}^1} (\delta_1 - 1,\ldots,\delta_d - 1)$ and the complex $\mathbb{Z} \overset{p^n}{\to} \mathbb{Z}$, where the right term is in degree $0$. The tensor product is written by
\[
\xymatrix{
\Omega_{\mathcal{O}_C / \mathbb{Z}_p}^1 \ar[rr]^-{\oplus_j (\delta_j - 1)} & & \bigoplus_j \Omega_{\mathcal{O}_C / \mathbb{Z}_p}^1 \ar[r] & \cdots \\ 
\Omega_{\mathcal{O}_C / \mathbb{Z}_p}^1 \ar[u]^{p^n} \ar[rr]^-{\oplus_j (\delta_j - 1)} & & \bigoplus_j \Omega_{\mathcal{O}_C / \mathbb{Z}_p}^1 \ar[u]^{p^n} \ar[r] & \cdots,
}
\]
where the top left term is in bidegree $(0, 0)$.
We remark that $\widehat{\mathbb{L}}_{(\mathcal{O}_C, Q_{\mathcal{O}_C}) / (\mathbb{Z}_p, 0)}^G$ and $\widehat{\mathbb{L}}_{(R_{\infty}^{\Box}, Q_{R_{\infty}^{\Box}}) / (\mathbb{Z}_p, 0)}^G$ are concentrated in degree $-1$ by the proof of Proposition \ref{Rinfty-LG}. We take the system \[
  ((0,\ldots,0,d\log \zeta_{p^n},0,\ldots,0))_n
  \in
  \varprojlim_n
  \ker \bigg(
  \bigoplus_j \Omega_{\mathcal{O}_C / \mathbb{Z}_p}^1 \overset{p^n}{\longrightarrow}
  \bigoplus_j \Omega_{\mathcal{O}_C / \mathbb{Z}_p}^1
  \bigg)
  \cong \bigoplus_j T_p (\Omega_{\mathcal{O}_C / \mathbb{Z}_p})
\]
where the $i$-th coordinate is nonzero, in the bottom right term in the diagram for $1 \leq  i \leq d$ and it is denoted by
\[
  d\log_i \in \bigoplus_j T_p (\Omega_{\mathcal{O}_C / \mathbb{Z}_p}).
\]
Here $d\log_i$ is sent to the element 
\[
f_i \in \Hom_{\cont} (\Delta, T_p (\Omega_{\mathcal{O}_C / \mathbb{Z}_p})) = H_{\cont}^1 (\Delta, \mathcal{O}_C \{1\}),
\]
where $f_i (\delta_i) = (d\log \zeta_{p^n})_n$ and $f_i (\delta_j) = 0$ for $i \neq j$, via the identification $\widehat{\mathbb{L}}_{\mathcal{O}_C / \mathbb{Z}_p}^G \cong \mathcal{O}_C \{1\}[1]$. Moreover, $\{f_i\}$ is a basis of the $\mathcal{O}_C$-module $(\zeta_p - 1) H_{\cont}^1 (\Delta, \mathcal{O}_C \{1\})$.

On the other hand,
\[ R\Gamma_{\cont} (\Delta, \mathbb{L}_{(R_{\infty}^{\Box}, Q_{R_{\infty}^{\Box}})/(\mathbb{Z}_p, 0)}^G \otimes_{\mathbb{Z}}^L \mathbb{Z}/p^n)\]
is represented by the total complex of the double complex
\[
\xymatrix{
\Omega_{R_{\infty}^{\Box} / \mathbb{Z}_p}^1 \ar[rr]^-{\oplus_j (\delta_j - 1)} & & \bigoplus_j \Omega_{R_{\infty}^{\Box} / \mathbb{Z}_p}^1
 \ar[r] & \cdots
\\ 
\Omega_{R_{\infty}^{\Box} / \mathbb{Z}_p}^1 \ar[u]^{p^n} \ar[rr]^-{\oplus_j (\delta_j - 1)} & & \bigoplus_j \Omega_{R_{\infty}^{\Box} / \mathbb{Z}_p}^1 \ar[u]^{p^n}
 \ar[r] & \cdots
}
\]
Here the top leftmost term
$\Omega_{R_{\infty}^{\Box} / \mathbb{Z}_p}^1$
is of degree $(0,0)$, and $d\log t_i \in \Omega_{R_{\infty}^{\Box} / \mathbb{Z}_p}^1$ of degree $(0,0)$ of the double complex.
So it is enough to show that $d\log t_i$ is equivalent to the element $d\log_i$ of degree $(1,-1)$.
The element $d\log t_i = p^n d\log (t_i^{1/p^n})$ of degree $(0,0)$ is equivalent to the element 
\[
((\delta_1 - 1)d\log (t_i^{1/p^n}),\ldots,(\delta_d - 1)d\log (t_i^{1/p^n}))
\]
of degree $(1, -1)$. For the $j$-th coordinate such that $i \neq j$, the claim follows by $(\delta_j - 1)d\log (t_i^{1/p^n}) = 0$. For the $i$-th coordinate, it follows from the following calculation
\begin{align*}
(\delta_i - 1)d\log (t_i^{1/p^n}) &= 
\delta_i d\log (t_i^{1/p^n}) - d\log (t_i^{1/p^n}) \\
&= d\log (\zeta_{p^n} t_i^{1/p^n}) - d\log (t_i^{1/p^n}) \\
&= d\log (\zeta_{p^n}).
\end{align*}
\end{proof}

\begin{remark}
In the smooth case, the isomorphism in Theorem \ref{CKisom} is the same as in \cite{BMS18} by construction.
In the general case of semistable formal schemes, it is the same as in \cite{CK19}, in which it is constructed by
the formal GAGA and Grothendieck existence theorems
(extended to non-noetherian settings by K.\ Fujiwara and F.\ Kato)
from the smooth case in \cite{BMS18}. 
\end{remark}

\section{Logarithmic \texorpdfstring{$p$}{p}-adic Cartier isomorphisms for \texorpdfstring{$\widetilde{W_n \Omega_R}$}{}}
\label{sec;LdRWCisom}

In this section, we prove a semistable analogue of the $p$-adic Cartier isomorphism of \cite[Theorem 9.2, (iii)]{BMS18}. 
As in the previous section, let $R$ be a $p$-adically complete $\mathcal{O}$-algebra which admits a fixed \'etale coordinate map to $R^{\Box}$, and let its variants ($R_{\infty}$, $A(R)$, and so on.) be the ones defined as in the previous section with fixed \'etale coordinate maps $\Box$. In the following arguments, we often use the notation in Lemma \ref{chart-ssalg}. In Section \ref{LdRWC-LdRWc}, we recall the definition of the log de Rham-Witt complexes in \cite{Mat15}. We show the main isomorphisms of special fibers using the method of Illusie-Raynaud \cite{Ill79}, \cite{IR83} in Section \ref{LdRWC-Cisom}. We prove the main theorem in Section \ref{LdRWC-padicCisom}. 

\begin{remark}
While there is a result of prismatic variant \cite{Mol20} of the smooth case and it may be formulated on the prismatic cohomology of a perfect base $(A, I, R^{\flat} \backslash \{ 0\})$ as in \cite[Example 3.4, (iii)]{Kos20}, we only deal with $A_{\inf}$-cohomology of \cite{BMS18}, \cite{CK19}. 
\end{remark}

\subsection{Log de Rham-Witt complexes}
\label{LdRWC-LdRWc}

In this section, the differential graded algebras are assumed to be commutative. For a prelog ring $(A, M_1)$ and an integer $n \geq 1$, we denote by $W_n (A, M_1)$ the Witt prelog ring $(W_n (A), M_1)$ where the structure morphism $M_1 \to W_n (A)$ is given by $m \mapsto [\alpha (m)]$. We recall the construction of the log de Rham-Witt complex of Matsuue \cite{Mat15}. 

\begin{definition}[{\cite[Section 3]{Mat15}, \cite[Definition 1.1.1, Definition 1.2.4]{HS18}}]
Let $\theta \colon (A, M_1, \alpha) \to (B, M_2, \beta)$ be a morphism of prelog rings. Assume that $B$ is equipped with a divided power structure $(I, \{ \gamma_n\})$. 

\begin{enumerate}

\item For a $B$-module $M$, an $A$-linear derivation $D \colon B \to M$ is called \emph{a pd-derivation} if we have the equality $D(\gamma_i (b)) = \gamma_{i-1} (b) D(b)$ for any $i \geq 1$ and $b \in I$. We have the universal pd-derivation, which we denote by $d \colon B \to \breve{\Omega}_{B/A}^1$. It can be constructed as a quotient module of the module of differentials $\Omega_{B/A}^1$. 

\item For a $B$-module $M$, a log derivation $(d \colon B \to H, \delta \colon M \to H)$ is called \emph{a log pd-derivation} if $d$ is a pd-derivation. we have the universal log pd-derivation, which is denoted by 
\[
(d \colon S \to \breve{\Omega}_{(B, M_2)/(A, M_1)}^1, \delta \colon M \to \breve{\Omega}_{(B, M_2)/(A, M_1)}^1).
\]
It can be constructed as a quotient module of the module of log differentials $\Omega_{(B, M_2)/(A, M_1)}^1$. 

\item  \emph{A log differential graded $(B, M_2)/(A, M_1)$-algebra} is a triple $(E^\ast, d, \partial)$, where $(E^\ast, d)$ is a differential graded $B/A$-algebra, $\partial \colon M_2 \to E^1$ is a morphism of monoids, and $(d, \partial)$ is a log derivation such that $d \partial = 0$. 

\item \emph{A log $F$-$V$-procomplex} for $(B, M_2)/(A, M_1)$ consists of the following data: $(\mathcal{W}_n^\ast, R, F, V)$ 

\begin{enumerate}
\item A log differential graded $W_n (B, M_2) / W_n (A, M_1)$ algebra $\mathcal{W}_n^\ast = \oplus_{i \geq 0} \mathcal{W}_n^i$ for each integer $n \geq 1$. 
\item Morphisms $R \colon \mathcal{W}_{n+1}^\ast \to R_* \mathcal{W}_n^\ast$ of differential graded $W_{n+1} (A)$-algebras for $n \geq 1$. 
\item Morphisms $F \colon \mathcal{W}_{n+1}^\ast \to F_{\ast} \mathcal{W}_n^\ast$ of graded $W_{n+1} (A)$-algebras for $n \geq 1$. 
\item Morphisms $V \colon F_{\ast} \mathcal{W}_n^\ast \to \mathcal{W}_{n+1}^\ast$ of graded abelian groups for $n \geq 1$. 
\end{enumerate}
such that the following conditions are satisfied.
\begin{enumerate}
\item For each $n \geq 1$, $\lambda_n$ commutes with $F$, $V$ (and $R$). Here we denote $\lambda_r \colon W_n (B) \to \mathcal{W}_r^0$ the base map of the differential graded $W_n (B, M_2) / W_n (A, M_1)$-algebra $\mathcal{W}_n$ for $r \geq 1$. 
\item $R$ commutes with both $F$ and $V$. 
\item $FV = p$.
\item $FdV = d$.
\item $V(F(x)y) = xV(y)$ for $x \in \mathcal{W}_{n+1}^\ast$, $y \in \mathcal{W}_{n}^\ast$ and $n \geq 1$. 
\item $Fd\lambda_{n+1} ([b]) = \lambda_n ([b])^{p-1} d\lambda_n ([b])$ for $b \in B$ and $n \geq 1$. 
\item $F\partial_{n+1} (m) = \partial_n (m)$ for $n \geq 1$ and $m \in M_2$. 
\end{enumerate}
We remark that the last condition, which is not contained in the non-log case in \cite{LZ03}, is newly added in \cite{Mat15}.

\end{enumerate}

\end{definition}

There is an initial object in the category of log $F$-$V$-procomplexes. It can be constructed as a quotient of the dg algebra of universal pd-derivation $\breve{\Omega}_{W_n (B, M_2) / W_n (A, M_1)}^\ast$, hence as a quotient of the dg algebra $\Omega_{W_n (B, M_2) / W_n (A, M_1)}^\ast$. 

\begin{theorem}[{\cite[Proposition 3.5]{Mat15}}]
The category of log $F$-$V$-procomplexes for $(B, M_2)/(A, M_1)$ has the initial object. We denote it by 
\[
\{((W_n \Omega_{(B, M_2)/(A, M_1)}^\ast, d, d\log)_n, R, F, V)\}
\]
and call it the \emph{$n$-truncated log de Rham-Witt complex for $(B, M_2)/(A, M_1)$}. 
\end{theorem}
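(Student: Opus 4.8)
The plan is to build the initial object explicitly, following the strategy of Langer--Zink in the non-logarithmic case and adapting it as in \cite{Mat15}; this has the advantage of producing the concrete quotient description of $W_n \Omega^\ast$ that is needed later. The candidate is assembled from the universal log pd-derivation differential graded algebras $\breve{\Omega}^\ast_{W_n(B, M_2)/W_n(A, M_1)}$ of the Witt prelog rings, taken with respect to the canonical divided power structure on the ideal $VW_{n-1}(B) \subset W_n(B)$. Each of these carries its differential $d$ and its $d\log \colon M_2 \to \breve{\Omega}^1$, and the whole family is equipped with restriction maps $R$ induced functorially from the Witt-vector restriction $W_{n+1}(B) \to W_n(B)$.

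The heart of the construction is to endow a quotient of this family with Frobenius and Verschiebung. I would define $F$ and $V$ on the generating elements by the formulas dictated by the axioms: on degree zero $V$ is the Witt-vector Verschiebung and $F$ is the Witt Frobenius (forced by axiom (i)), while in positive degree the rules $V(F(x)y) = xV(y)$, $FdV = d$, $Fd\lambda_{n+1}([b]) = \lambda_n([b])^{p-1}d\lambda_n([b])$, and $F\partial_{n+1}(m) = \partial_n(m)$ prescribe $F$ and $V$ on the $d$'s and $d\log$'s of generators. These prescriptions are not a priori compatible with the Leibniz rule, the relation $d^2 = 0$, or the monoid relations defining $\breve{\Omega}^\ast$, so one passes to the largest graded differential quotient procomplex of $\{\breve{\Omega}^\ast_{W_n}\}$, compatibly across $n$, on which $F$ and $V$ become well defined and all of axioms (i)--(vii) hold. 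I would denote the result $\{W_n \Omega^\ast_{(B, M_2)/(A, M_1)}\}$; by construction it is a log $F$-$V$-procomplex.

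For universality, let $(\mathcal{W}_n^\ast, R, F, V)$ be an arbitrary log $F$-$V$-procomplex. Its base map $\lambda_n \colon W_n(B) \to \mathcal{W}_n^0$ together with $\partial_n \colon M_2 \to \mathcal{W}_n^1$ constitutes a log pd-derivation of $W_n(B, M_2)/W_n(A, M_1)$, so the universal property of $\breve{\Omega}^\ast_{W_n}$ yields a unique morphism of log differential graded algebras $\breve{\Omega}^\ast_{W_n} \to \mathcal{W}_n^\ast$ compatible with $d$ and $d\log$. I would then check that this family is automatically compatible with $R$, $F$ and $V$ -- the axioms imposed on $\mathcal{W}^\ast$ are exactly what make this so -- and hence factors through the quotient $W_n \Omega^\ast$. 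Uniqueness of the factorization is clear because each $\mathcal{W}_n^\ast$ is generated as a graded algebra by the images of $\lambda$ and $\partial$ under iterated application of $d$, $F$ and $V$, so any procomplex morphism is determined by its values on these generators.

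The main obstacle is the quotient step: showing that the forced Frobenius descends, that is, that the relations it must satisfy are consistent with the de Rham relations and mutually compatible with $R$ and $V$ across all truncation levels at once. This consistency verification is the technical core of de Rham--Witt theory. In the logarithmic setting there is the additional burden, emphasized in \cite{Mat15}, of making the new axiom $F\partial_{n+1}(m) = \partial_n(m)$ cohere with the rest; this is the one genuinely new point relative to the classical Langer--Zink argument, and it is where I expect the verification to require the most care.
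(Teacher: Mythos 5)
Your proposal is correct and follows essentially the same route as the paper, which in fact gives no proof of its own: it cites Matsuue's Proposition 3.5 and only remarks that the initial object is constructed as a quotient of the universal log pd-derivation dg algebra $\breve{\Omega}^\ast_{W_n (B, M_2)/W_n (A, M_1)}$, and your sketch is precisely that Langer--Zink-style construction (forced $F$ and $V$ on generators, passage to the compatible quotient, universality via the universal property of $\breve{\Omega}$), including the one genuinely logarithmic point $F\partial_{n+1} = \partial_n$. One small correction: in your uniqueness step the generation claim must be made for the \emph{source} $W_n \Omega^\ast_{(B, M_2)/(A, M_1)}$ (which, as a quotient of $\breve{\Omega}^\ast_{W_n (B, M_2)/W_n (A, M_1)}$, is generated as a graded algebra by $W_n(B)$, $dW_n(B)$ and $d\log M_2$), not for the arbitrary target $\mathcal{W}_n^\ast$, which need not be generated by such elements.
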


We put 
\[
W\Omega_{(B, M_2)/(A, M_1)}^l \coloneqq \varprojlim_n W_n \Omega_{(B, M_2)/(A, M_1)}^l
\]
for $l \geq 0$ and call it the \emph{total log de Rham-Witt complex}.

\begin{definition}[{cf.\ \cite[Section 4]{Mat15}}]
\label{dRW-Box+i}

\begin{enumerate}
\item Fix integers $1 \leq r \leq d$. For a function $k \colon \{0,\cdots,d\} = [0, d] \to \mathbb{Z}_{\geq 0} [1/p] \sqcup \{ p^{-\infty}\}$, let $\Supp k \coloneqq [0, d] \backslash (k^{-1} (\{0\}))$ and $k_i \coloneqq k(i)$. \emph{A weight} is a function $k \colon [0, d] \to \mathbb{Z}_{\geq 0} [1/p] \sqcup \{ p^{-\infty}\}$ such that the following properties are satisfied.  

\begin{enumerate}
\item $k_i \neq p^{-\infty}$ for every $i$ with $r < i \leq d$. 
\item $[0, r] \not\subset \Supp k^+$ where $k^+$ is the associated weight without poles defined to be 
\[
(k^+)_i \coloneqq 
\begin{cases}
0 & (k_i = p^{-\infty}), \\ 
k_i & (k_i \neq p^{-\infty}).
\end{cases}
\]
\end{enumerate}

\item A weight $k$ is \emph{integral} if $k([0, d]) \subset \mathbb{Z}_{\geq 0} \sqcup \{ p^{-\infty}\}$. \emph{A weight without poles} is a weight $k$ with $k^{-1} (\{ p^{-\infty} \}) = \emptyset$. For each weight $k$, we fix a total order $\preceq_{k}$ on $\Supp k^+ = \{ i_1,\ldots,i_s\}$ such that the function $(\mathrm{ord}_p i_s)_s$ is weakly $\preceq_{k}$-increasing with respect to $s$ and assume that the sequence $(i_t)_t$ is $\preceq_{k}$-increasing. We set $\tau (k_{i_l}) \coloneqq -\mathrm{ord}_p k_{i_l}$ and $\mu (k_{i_l}) \coloneqq \mathrm{max} (0, \tau (k_{i_l}))$. We call subsets $\{i_t,\ldots,i_{t+m}\} \subset \{ i_1,\ldots,i_s\}$ for $0 \leq t \leq t+m \leq s$ \emph{intervals of $\Supp k^+$}. 
\item If $I = \{i_t,\ldots,i_{t+m}\}$ is an interval of $\Supp k$, let $k_I$ denote the restriction of $k$ to $I$. We set $\tau (I) = \tau (k_I) \coloneqq \tau (k_{i_t})$ and $\mu (I) = \mu (k_I) \coloneqq \mu (k_{i_t})$

\item We say $(I_{-\infty}, I_0,\ldots,I_l)$ is \emph{a partition of $\Supp k$} if the following conditions are satisfied. 

\begin{enumerate}
\item $I_{-\infty} = k^{-1} (\{p^{-\infty}\})$. 
\item There is a decomposition into intervals $\Supp k = I_{-\infty} \sqcup I_0 \sqcup \cdots \sqcup I_l$ such that the elements of $I_j$ are $\preceq_{k}$-smaller than those of $I_{j+1}$ for $j = 0, \ldots, l-1$. 
\item $I_1,\ldots,I_l$ are not empty ($I_0$ may be an empty set). 
\end{enumerate}

\end{enumerate}
\end{definition}

\subsection{Log $p$-basic elements}
In this section, we generalize the result of \cite{LZ03} and \cite{Mat15}, calculating generators of log de Rham-Witt complexes of prelog rings that is related to the formal schemes of \v{C}esnavi\v{c}ius-Koshikawa type. For a $\mathbb{Z}_p$-algebra $S$ with a prelog structure $\alpha_S \colon M_S \to S$ and an element $\pi \in M_S$, we define the rings $R_S^{\Box} / S$ and $R_{S, +}^{\Box} / S$ as 
\begin{align*}
R_S^{\Box} &\coloneqq S[t_0^{\pm 1},\ldots,t_r^{\pm 1},t_{r+1},\ldots,t_d]/(t_0\cdots t_r - \alpha_S ({\pi})), \\ 
R_{S,+}^{\Box} &\coloneqq S[t_0,\ldots,t_r,t_{r+1},\ldots,t_d]/(t_0\cdots t_r - \alpha_S ({\pi}))
\end{align*}
with the log structure of the chart $N_S \coloneqq \mathbb{N}^{r+1} \sqcup_{\mathbb{N}} M_S$. For a weight without poles $k$ and a partition $\mathcal{P} = (I_{-\infty} = \emptyset, I_0, \ldots, I_l)$ of $\Supp k$, we define $t^{k(I_s)} \coloneqq \prod_{i \in I_s} t_i^{k_i}$ for $s \in [0, l]$. We shall define some essential elements in log de Rham-Witt complexes. 

\begin{definition}[{cf.\ \cite[Section 4]{Mat15}}]
\label{dRW-Box+ii}
For an integral weight $k$ without poles and an interval $k$ in $\Supp(k)$, We define symbols $p^{\tau(k_I)} dt^{k_I}$ as elements \[
(t^{p^{\tau(k_I)} k_I})^{(p^{-\tau(k_I)} - 1)} dt^{p^{\tau(k_I)} k_I} = 
(t^{k_I - p^{\tau(k_I)} k_I}) dt^{p^{\tau(k_I)} k_I}
\]
in $\Omega_{R_{S,+}^{\Box} / S}^1$. For an interval $I$ in $\Supp k$, We also define symbols 
\[
F^{-\tau(I)} (d[t]^{p^{\tau(I)} k_I}) \coloneqq [t]^{k_I - p^{\tau(I) k_I} d[t]^{p^{\tau(I)} k_I}}.
\]
For a weight $k$ and a partition $\mathcal{P} = (I_{-\infty}, I_0,\ldots,I_l)$ of $\Supp(k)$, we put $\mathcal{P}^+$ the partition $(\emptyset, I_0,\ldots,I_l)$ of $\Supp(k^+)$.
\begin{enumerate}
\item If $k$ is an integral weight, let $(k, \mathcal{P})$ be a pair of the weight $k$ and a partition $\mathcal{P} = (I_{-\infty}, I_0,\ldots,I_l)$ of $\Supp k$. We associate the \emph{$p$-basic element}
\[
e(k^+, \mathcal{P}^+) \coloneqq t^{k_{I_0}} (p^{\tau(k_{I_1})} dt^{k_{I_1}}) \cdots (p^{\tau(k_{I_l})} dt^{k_{I_l}})
\]
of $\Omega_{R_{S,+}^{\Box} / S}^l$ to the pair $k^+, \mathcal{P}^+$ and the \emph{log $p$-basic element}
\[
\epsilon (k, \mathcal{P}) \coloneqq \left( \prod_{i \in I_{-\infty}} d\log t_i \right) \cdot e(k^+, \mathcal{P}^+)
\]
in $\Omega_{(R_{S,+}^{\Box}, N_S) / (S, M_S)}^{|I_{-\infty}| + l}$ to the pair $(k, \mathcal{P})$. 

\item Let $\rho_1 \in [0, l]$ denote the greatest integer such that $-\tau (k_{I_{\rho_1}}) = \mathrm{ord}_p k_{I_{\rho_1}} < 0$. For a triple $(\xi, k, \mathcal{P})$ which consists of a weight without poles $k$, an element $\xi = V^{\mu(k)} \eta \in V^{\mu(k)} W(R_{S, +}^{\Box})$ and a partition $\mathcal{P}$ of $\Supp k$, we associate a \emph{basic Witt differential} $e(\xi, k^+, \mathcal{P}^+)$ in $W \Omega_{R_{S,+}^{\Box} / S}^l$ as follows. 

\begin{enumerate}

\item If $I_0 \neq \emptyset$, 
\begin{align*}
\epsilon' \coloneqq &V^{\mu(I_0)} (\eta [t]^{p^{\mu(I_0)} k_{I_0}}) (d V^{\mu(I_1)} [t]^{p^{\mu(I_1)} k_{I_1}}) \cdots (d V^{\mu(I_{\rho_1})} [t]^{p^{\mu(I_{\rho_1})} k_{I_1}}) \\ 
\cdot \ &(F^{-\tau(I_{\rho_1 +1})} d[t]^{p^{\tau(I_{\rho_1 +1})} k_{I_{\rho_1 +1}}}) \cdots (F^{-\tau(I_{\rho_2})} d[t]^{p^{\tau(I_{\rho_2})} k_{I_{\rho_2}}}). 
\end{align*}
\item If $I_0 = \emptyset$ and $k$ is not integral, 
\begin{align*}
\epsilon' \coloneqq &(d V^{\mu(I_1)} (\eta [t]^{p^{\mu(I_1)} k_{I_1}})) (d V^{\mu(I_2)} [t]^{p^{\mu(I_2)} k_{I_1}}) \cdots (d V^{\mu(I_{\rho_1})} [t]^{p^{\mu(I_{\rho_1})} k_{I_1}}) \\ 
\cdot \ &(F^{-\tau(I_{\rho_1 +1})} d[t]^{p^{\tau(I_{\rho_1 +1})} k_{I_{\rho_1 +1}}}) \cdots (F^{-\tau(I_{\rho_2})} d[t]^{p^{\tau(I_{\rho_2})} k_{I_{\rho_2}}}). 
\end{align*}
\item If $I_0 = \emptyset$ and $k$ is integral, 
\[
\epsilon' \coloneqq \eta (F^{-\tau(I_1)} d[t]^{p^{\tau(I_1)} k_{I_1}}) \cdots (F^{-\tau(I_{\rho_2})} d[t]^{p^{\tau(I_{\rho_2})} k_{I_{\rho_2}}}). 
\]

\end{enumerate}

\item For a weight $k$, let $(\xi, k, \mathcal{P})$ be a triple such that $\xi \in V^{\mu (k^+)} W(\mathcal{O}_C)$ and $\mathcal{P}$ be a partition of $\Supp k$. We associate the \emph{log basic Witt differential}
\[
\epsilon (\xi, k, \mathcal{P}) \coloneqq \left( \prod_{i \in I_{-\infty}} d\log [t_i] \right) \cdot e(\xi, k^+, \mathcal{P}^+)
\]
in $W \Omega_{(R_{S,+}^{\Box}, N_S) / (S, M_S)}^{|I_{-\infty}| + l}$. 
\end{enumerate}

\end{definition}

We give an explicit generators of the total log de Rham-Witt complex $W\Omega_{(R_{S,+}^{\Box}, N_S) / (S, M_S)}$. The smooth case is proved by Langer-Zink \cite[Proposition 2.3]{LZ03}. Moreover, the SNCD case \cite[Proposition 4.3]{Mat15} and the semistable case where $\pi = 0$ \cite[Proposition 4.5]{Mat15} are proved by Matsuue.

\begin{proposition}
\label{dRW-RBox+-Basis}
Any element in $W\Omega_{(R_{S,+}^{\Box}, N_S) / (S, M_S)}$ has a unique expression as a convergent sum 
\[
\sum_{(k, \mathcal{P})} \epsilon (\xi_{k, \mathcal{P}}, k, \mathcal{P})
\]
of log basic Witt differentials, where $k$ runs over weights and $\mathcal{P}$ over all partitions of $\Supp k$. Here a \emph{convergent sum} means for any $m \geq 1$, we have $\xi_{k, \mathcal{P}} \in V^m W(R_{S, +}^{\Box})$ for all but finitely many weights $k$. 
\end{proposition}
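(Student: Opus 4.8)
The plan is to follow the method of Langer--Zink \cite{LZ03} and its logarithmic refinements by Matsuue \cite{Mat15}, carrying the combinatorics of basic Witt differentials through the extra data of the base prelog structure $(S, M_S)$ and the semistable relation $t_0\cdots t_r = \alpha_S(\pi)$. First I would set up the weight decomposition: the monomial grading on $R_{S,+}^{\Box}$ is compatible with $\lambda$, $d$, $d\log$, $F$, $V$ and $R$, so each $W_n\Omega_{(R_{S,+}^{\Box}, N_S)/(S, M_S)}$ splits as a sum indexed by weights $k \colon [0,d] \to \mathbb{Z}_{\geq 0}[1/p]\sqcup\{p^{-\infty}\}$, the fractional exponents arising from $V$ and the value $p^{-\infty}$ marking the directions in which a $d\log t_i$ occurs. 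The constraint $[0,r]\not\subset\Supp k^+$ imposed in Definition \ref{dRW-Box+i} records precisely that, modulo $t_0\cdots t_r = \alpha_S(\pi)$, any monomial $t_0^{a_0}\cdots t_d^{a_d}$ may be normalized so that $\min(a_0,\dots,a_r)=0$. It then suffices to prove that, in each fixed weight, the log basic Witt differentials $\epsilon(\xi,k,\mathcal{P})$ attached to the partitions $\mathcal{P}$ of $\Supp k$ form a basis.

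For existence I would show that every element is a convergent sum of the $\epsilon(\xi,k,\mathcal{P})$. The total complex is generated as a log differential graded algebra by $\lambda(W_n(R_{S,+}^{\Box}))$ together with $d$, $d\log$, $F$ and $V$, so using the defining identities of a log $F$-$V$-procomplex, namely $FV=p$, $FdV=d$, $V(F(x)y)=xV(y)$, $Fd\lambda_{n+1}([b])=\lambda_n([b])^{p-1}d\lambda_n([b])$ and $F\partial_{n+1}=\partial_n$, together with the Leibniz rule and the normal forms for $V^{\mu}[t]^{\bullet}$, $dV^{\mu}[t]^{\bullet}$ and $F^{-\tau}d[t]^{\bullet}$ recorded in Definition \ref{dRW-Box+ii}, one rewrites an arbitrary product of Witt differentials in the stated normal form. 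The factor $\prod_{i\in I_{-\infty}} d\log[t_i]$ absorbs exactly the pole directions $k_i=p^{-\infty}$ coming from $N_S$, and the convergence condition $\xi_{k,\mathcal{P}}\in V^m W(R_{S,+}^{\Box})$ for all but finitely many $k$ reflects that each truncation $W_n\Omega$ involves only finitely many weights.

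The uniqueness, i.e.\ linear independence of the $\epsilon(\xi,k,\mathcal{P})$, is the crux, and I would obtain it by the standard device of building the answer abstractly. Let $P$ be the $V$-adically completed free module on the symbols $\epsilon(\xi,k,\mathcal{P})$, equipped with operators $F$, $V$, $R$, $d$, $d\log$ and structure map $\lambda$ defined on symbols by the formulas of Definition \ref{dRW-Box+ii}. The main labor is to verify that $P$ is a log $F$-$V$-procomplex for $(R_{S,+}^{\Box}, N_S)/(S, M_S)$, which reduces to a finite list of identities among symbols checked weight-by-weight. Granting this, initiality yields a morphism $\Phi \colon W\Omega_{(R_{S,+}^{\Box}, N_S)/(S, M_S)} \to P$, while the tautological evaluation $\Psi \colon P \to W\Omega_{(R_{S,+}^{\Box}, N_S)/(S, M_S)}$ sending each symbol to the element it names is, by construction, a morphism in the same category. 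The composite $\Psi\Phi$ is an endomorphism of the initial object, hence the identity; and because $\Phi$ commutes with all the operators, it carries each basic Witt differential of $W\Omega$ to the corresponding free generator of $P$, so $\Phi\Psi$ is the identity as well. Thus $W\Omega$ is free on the symbols and the expression is unique.

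The main obstacle, and the only genuinely new point beyond \cite{Mat15}, is the case $\pi\neq 0$ with $\pi\in M_S$. When $\pi=0$ the relation degenerates to $t_0\cdots t_r=0$ and the weight components are honest direct sums of monomials; for $\pi\neq 0$ the identity $t_0\cdots t_r=\alpha_S(\pi)$ couples the coordinate log structure to the base and forces the relation $\sum_{i=0}^r d\log t_i = d\log\alpha_S(\pi)$ among the $d\log$ symbols. The delicate step is to confirm that this is the \emph{only} new relation, and that it is already absorbed by the weight constraint $[0,r]\not\subset\Supp k^+$ together with the role of $I_{-\infty}$ in $\epsilon(\xi,k,\mathcal{P})$, so that no two distinct basic Witt differentials collapse. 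Once this compatibility between the semistable relation and the weight bookkeeping is established, the verification of the procomplex axioms on $P$ and the remaining estimates proceed exactly as in the SNCD and $\pi=0$ cases of \cite{Mat15} and the smooth computation of \cite{LZ03}.
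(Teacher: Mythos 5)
Your existence argument follows essentially the paper's route: reduce via the degree-zero decomposition of $W(R_{S,+}^{\Box})$ (the paper's Lemma \ref{convsum-W}), use the relation $t_0\cdots t_r=\alpha_S(\pi)$ to normalize weights so that $[0,r]\not\subset\Supp k^+$, and then rewrite using the procomplex identities as in the proof of \cite[Proposition 4.3]{Mat15}. The gap is in your uniqueness argument. You propose to build a model $P$, the $V$-adically completed free module on the symbols $\epsilon(\xi,k,\mathcal{P})$, verify that it is a log $F$-$V$-procomplex, and conclude by initiality; but everything difficult is hidden in the sentence claiming this ``reduces to a finite list of identities among symbols checked weight-by-weight.'' To make $P$ a log differential graded algebra at all, you must first \emph{define} the product of two arbitrary symbols, i.e.\ exhibit the product of two basic Witt differentials as a convergent sum of basic Witt differentials in closed form; these structure constants (involving Witt-vector carries, $V(F(x)y)=xV(y)$, and the semistable relation) are not supplied by Definition \ref{dRW-Box+ii}, and verifying associativity, the Leibniz rule, $d\circ d=0$ and $FdV=d$ for them is the entire content of the theorem, not a routine check. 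Worse, well-definedness of such structure constants on the \emph{free} module is essentially the uniqueness statement you are trying to prove, so the sketch is circular unless the closed formulas are produced and verified independently. The same issue affects the step where $\Phi$ is said to carry each basic Witt differential to ``the corresponding free generator'': this holds only if, inside $P$, the defining expression $V^{\mu(I_0)}(\eta[t]^{p^{\mu(I_0)}k_{I_0}})\,dV^{\mu(I_1)}[t]^{p^{\mu(I_1)}k_{I_1}}\cdots$ evaluates to the symbol it names, which is again part of the deferred verification. Finally, your opening assertion that each $W_n\Omega_{(R_{S,+}^{\Box},N_S)/(S,M_S)}$ splits as a direct sum indexed by weights compatibly with all operators presupposes the proposition rather than proving it.

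The paper avoids all of this by a different device: uniqueness is reduced to the de Rham level. It first proves (Lemma \ref{Lambda-basis}) that the log $p$-basic elements form an $S$-basis of $\Omega_{(R_{S,+}^{\Box},N_S)/(S,M_S)}^{\ast}$ --- this is exactly where the extra relation $\sum_{i=0}^{r} d\log t_i = d\log \alpha_S(\pi)$ forced by the semistable equation is disposed of, by a rank count combined with a generation argument --- and then uses the log phantom components $\omega_m \colon W_m\Omega^{\ast} \to \Omega^{\ast}_{(R_{S,+}^{\Box},N_S)/(S,M_S),w_m}$ together with \cite[Proposition 4.2]{Mat15} and the formulas of \cite{Mat15} to deduce linear independence of the log basic Witt differentials, while also checking that $F$, $V$, $d$ preserve the condition $[0,r]\not\subset\Supp k$ so that the existence reduction is stable. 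If you want to salvage your initiality approach you would have to carry out the full explicit construction of the model with its structure constants; the phantom-component route is what keeps the paper's proof finite.
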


First, we prove the degree $0$ part of the Proposition \ref{dRW-RBox+-Basis}.

\begin{lemma}
\label{convsum-W}
Any element $\xi$ of $W(R_{S, +}^{\Box})$ may be uniquely written as a convergent sum
\[
\xi = \sum_k V^{\mu(k)} (\eta_k [t]^{p^{\mu(k)} k})
\]
in the sense of Proposition \ref{dRW-RBox+-Basis} i.e., for any integer $m \geq 1$, it suffices that $V^{\mu(k)} \eta_k \in V^m W(S)$ for almost all $k$. Here $k$ runs over all weights without poles. 
\end{lemma}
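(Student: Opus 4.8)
The plan is to adapt the strategy of Langer--Zink \cite[Proposition 2.3]{LZ03} and Matsuue \cite[Proposition 4.5]{Mat15}, organizing everything around the monomial basis of $R_{S,+}^{\Box}$ together with the behaviour of the Teichm\"uller and Verschiebung maps. First I would record that $R_{S,+}^{\Box}$ is free as an $S$-module on the monomials $t^{k} = t_0^{k_0}\cdots t_d^{k_d}$ indexed by the integral weights without poles, i.e.\ those $k$ with $[0,r]\not\subset\Supp k$, for which some $k_i$ with $i\le r$ vanishes. The relation $t_0\cdots t_r = \alpha_S(\pi)$ is homogeneous of degree $0$ for the grading of $R_{S,+}^{\Box}$ by the group $L \coloneqq \mathbb{Z}^{d+1}/\langle e_0+\cdots+e_r\rangle \cong \mathbb{Z}^{d}$, so that $R_{S,+}^{\Box}$ is an $L$-graded $S$-algebra whose graded pieces are free of rank one, generated by the reduced monomials. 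This homogeneity is the only genuinely new structural point compared with Matsuue's case $\pi = 0$: at the level of Witt vectors the relation reads $[t_0]\cdots[t_r] = [\alpha_S(\pi)] \in W(S)$, so a non-reduced monomial no longer contributes $0$ but reduces to $[\alpha_S(\pi)]$ times a reduced monomial, a factor which is absorbed into the coefficient $\eta_k$.

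Next I would transport the $L$-grading to $W(R_{S,+}^{\Box})$. Rescaling the coordinates defines a torus action on $R_{S,+}^{\Box}$, hence on $W(R_{S,+}^{\Box})$, and since Frobenius multiplies weights by $p$ while Verschiebung divides them by $p$, the resulting eigenspace decomposition has weights in $L[1/p]$, with the reduced weights without poles serving as representatives. I would identify the weight-$k$ eigenspace with $\{\, V^{\mu(k)}(\eta\,[t]^{p^{\mu(k)}k}) : \eta\in W(S)\,\}$: here $a \coloneqq p^{\mu(k)}k$ is a reduced integral weight, so $[t]^{a}$ is a genuine Teichm\"uller monomial of weight $a$, and $V^{\mu(k)}$ brings the weight down to $k$. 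The map $\eta\mapsto V^{\mu(k)}(\eta\,[t]^{a})$ is injective: the Witt components of $\eta\,[t]^{a}$ are $\eta_n\,t^{p^{n}a}$, each $p^{n}a$ is again reduced, so each component is a scalar multiple of a basis monomial of $R_{S,+}^{\Box}$ and vanishes only for $\eta_n = 0$. This yields the uniqueness of the coefficients $\eta_k$ once the decomposition is established.

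For existence I would argue by successive approximation along the $V$-adic filtration, using that $W(R_{S,+}^{\Box}) = \varprojlim_n W_n(R_{S,+}^{\Box})$ is $V$-adically complete. Reducing modulo $VW$, an element $\xi$ is seen through its first Witt component $\xi_0\in R_{S,+}^{\Box}$; expanding $\xi_0$ in the monomial basis and subtracting the corresponding integral-weight terms $[c_k]\,[t]^{k}$ produces an element of $V W(R_{S,+}^{\Box})$, and iterating one Witt component at a time produces the coefficients $\eta_k$, the weights with larger $\mu(k)$ entering from the deeper $V$-levels. Convergence is then automatic: the image of $\xi$ in each $W_m(R_{S,+}^{\Box})$ involves only finitely many monomials, so only finitely many $k$ contribute modulo $V^m$, i.e.\ $V^{\mu(k)}\eta_k\in V^m W(S)$ for all but finitely many $k$, which is precisely the required notion of convergent sum.

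The main obstacle is the non-additivity of the Teichm\"uller map, which blocks a direct reduction to the $S$-module monomial basis and forces the level-by-level bookkeeping. Concretely, when a weight is divisible by $p$ a term does not remain at its apparent $V$-level but collapses, via the identity $V^{j}(c\,t^{p\tilde k}) = V^{\,j-1}(V([c])\,[t]^{\tilde k})$, onto the weight $\tilde k/p^{\,j-1}$ at a strictly lower level; likewise a non-reduced monomial collapses onto a reduced one while emitting a factor $[\alpha_S(\pi)]$. Consequently the elements produced at each stage of the approximation are redistributed among several weights, and the delicate point is to check that this redistribution is compatible, that is, that the $\tau,\mu$-bookkeeping and the chosen total order $\preceq_k$ assign each contribution to a single weight without collision, equivalently that the torus eigenspaces are exactly the rank-one pieces above with no overlaps. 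I expect this compatibility check, carried out by Matsuue's combinatorial analysis with the relation $t_0\cdots t_r = 0$ replaced by $[t_0]\cdots[t_r] = [\alpha_S(\pi)]$, to be where most of the work lies.
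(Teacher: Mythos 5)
Your proposal is correct, and its core---existence by successive approximation along the $V$-adic filtration using the reduced-monomial basis, with terms whose weight is divisible by $p$ collapsed to lower $V$-levels via $V(xF(y))=V(x)y$---is exactly the paper's proof: there one expands the first Witt component as $\xi\equiv\sum_k a_k t^k$ modulo $VW(R_{S,+}^{\Box})$, subtracts $\sum_k[a_k][t]^k$, iterates to get $\xi=\sum_{m,k}V^m([a_{k,m}][t]^k)$, and regroups by the identity $V^m([a_{k,m}][t]^k)=V^{m-\rho_{k,m}}(V^{\rho_{k,m}}[a_{k,m}]\cdot[t]^{p^{-\rho_{k,m}}k})$. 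Where you differ is the packaging of uniqueness. In the paper it is essentially free: the contributions to a fixed fractional weight $k$ come precisely from the integral weights $p^ik$ at $V$-level $i\geq\mu(k)$, and $\eta_k=\sum_{i\geq\mu(k)}V^{i-\mu(k)}[a_{p^ik,i}]$ just lists the Witt components of $\eta_k$; since the projection $W(R_{S,+}^{\Box})\to R_{S,+}^{\Box}$ onto the first component is a ring homomorphism (so additive on convergent sums) and $V$ is injective, the coefficients $a_{k,m}$, hence the $\eta_k$, are determined by $\xi$. In other words, the ``collision/compatibility check'' that you expect to be most of the work reduces to the displayed one-line identity. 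By contrast, the torus-eigenspace scaffolding you propose is both unnecessary and delicate to make rigorous: $S$ is an arbitrary $\mathbb{Z}_p$-algebra, so the torus may have too few $S$-points to separate characters (e.g.\ $S=\mathbb{F}_p$), and a term $V^{\mu(k)}(\eta[t]^{p^{\mu(k)}k})$ with $\mu(k)>0$ is not an eigenvector of the (functorial) coaction in any literal sense---the character sits inside the $V$ and cannot be factored out---so ``the weight-$k$ eigenspace'' can only be defined through the very decomposition being proved, which makes that route circular as stated. Your injectivity computation (the Witt components of $\eta[t]^a$ are $\eta_n t^{p^n a}$, all reduced monomials) is correct and is the right ingredient; it should simply be run inside the induction, as above, rather than against a hypothetical eigenspace decomposition.
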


\begin{proof}
For any element $\xi \in W(R_{S, +}^{\Box})$, we may choose a unique element $a_k \in S$ for each integral weight without poles $k$ which satisfies 
\[
\xi - \sum_{\substack{\mathrm{integral} \ k \ \mathrm{without \ poles}}} [a_k][t]^k \in V W(R_{S, +}^{\Box}).
\]
By the induction, we can express
\[
\xi = \sum_{\substack{m \geq 0, \\ \mathrm{integral} \ k \ \mathrm{without \ poles}}} V^m ([a_{k,m}][t]^k).
\]
For the greatest nonnegative integer $\rho_{k, m}$ such that $p^{-\rho_{k, m}} k$ is integral and $\rho_{k, m} \leq m$, We have 
\[
V^m ([a_{k, m}] [t]^k) = V^{m-\rho_{k, m}} (V^{\rho_{k, m}} [a_{k, m}] \cdot [t]^{p^{-\rho_{k, m}} \cdot k}).
\]
For each weight without poles $k$, let 
\[
\eta_k \coloneqq \sum_{i \geq \mu(k)} V^{\rho_{p^i k, i}} [a_{p^i k, i}] = \sum_{i \geq \mu(k)} V^{i - \mu(k)} [a_{p^i k, i}] 
\]
be the convergent sum. Then we have the unique expression of the proposition. 
\end{proof}

Next, we consider the case of the log de Rham complex $\Omega_{(R_{S,+}^{\Box}, N_S) / (S, M_S)}^\ast$. 

\begin{lemma}[{cf. \cite[Proposition 2.1]{LZ03}, \cite[Lemma 4.1, Section 4.2]{Mat15}}]
\label{Lambda-basis}
The log $p$-basic elements in Definition \ref{dRW-Box+ii} form a basis of the log de Rham complex $\Omega_{(R_{S,+}^{\Box}, N_S) / (S, M_S)}^\ast$ as an $S$-module. 
\end{lemma}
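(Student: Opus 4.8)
The plan is to reduce the statement to an explicit change-of-basis computation, following the strategy of \cite[Proposition 2.1]{LZ03} and \cite[Lemma 4.1]{Mat15} and isolating the one genuinely new feature: the semistable relation $t_0\cdots t_r=\alpha_S(\pi)$ with $\alpha_S(\pi)$ possibly nonzero. First I would describe $\Omega^1:=\Omega_{(R_{S,+}^{\Box},N_S)/(S,M_S)}^1$ as an $R_{S,+}^{\Box}$-module. Using the chart $N_S=\mathbb{N}^{r+1}\sqcup_{\mathbb{N}}M_S$ and the universal property of the module of log differentials, $\Omega^1$ is generated by $d\log t_0,\ldots,d\log t_r$ and $dt_{r+1},\ldots,dt_d$ subject to the single relation $\sum_{i=0}^{r}d\log t_i=0$, which comes from condition (2) of the log derivation applied to $\pi\in M_S$ (since $\alpha_S(\pi)=t_0\cdots t_r$ and $dt_i=t_i\,d\log t_i$ for $i\le r$). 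Eliminating $d\log t_0$ exhibits $\Omega^1$ as a free $R_{S,+}^{\Box}$-module of rank $d$ on $d\log t_1,\ldots,d\log t_r,dt_{r+1},\ldots,dt_d$, so that each $\Omega^l=\bigwedge^l\Omega^1$ is free on the corresponding wedge monomials.

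Next I would fix an $S$-basis of $\Omega^l$ by combining this $R_{S,+}^{\Box}$-basis with an $S$-basis of $R_{S,+}^{\Box}$. For the latter, a standard-monomial argument shows that the monomials $t^k$ with $[0,r]\not\subset\Supp k$ form an $S$-basis: in a monomial order for which the constant $\alpha_S(\pi)\in S$ is lower, the relation $t_0\cdots t_r-\alpha_S(\pi)$ has leading term $t_0\cdots t_r$, and a monomial is divisible by $t_0\cdots t_r$ exactly when $[0,r]\subset\Supp k$. This is the \emph{only} place where $\alpha_S(\pi)\ne 0$ enters, and it is absorbed uniformly by this reduction; note that the weight condition $[0,r]\not\subset\Supp k^+$ guarantees that every monomial $t^{k^+}$ (and every $t^{k^+-e_i}$) occurring below is already standard, so the ring relation never needs to be applied inside a single $p$-basic element. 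Thus $\Omega^l$ has the $S$-basis $\{t^k\,\eta\}$, with $t^k$ standard and $\eta$ a wedge from the $R_{S,+}^{\Box}$-basis.

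Then I would expand each log $p$-basic element $\epsilon(k,\mathcal{P})$ of Definition \ref{dRW-Box+ii} in this $S$-basis. Unwinding the definition at the de Rham (non-divided-power) level, one computes that $p^{\tau(k_{I_j})}dt^{k_{I_j}}$ equals a combination of $d\log t_i$ (for $i\in I_j$ with $i\le r$) and $dt_i$ (for $i\in I_j$ with $i>r$) with monomial coefficients, in which the coefficient attached to the leading index $i_{t_j}$ of the interval is $p^{\tau(k_{I_j})}k_{i_{t_j}}$, a $p$-adic unit and hence a unit in the $\mathbb{Z}_p$-algebra $S$. Consequently $\epsilon(k,\mathcal{P})$ is $t^{k^+}$ times a wedge of the pure forms $\{d\log t_i\}_{i\in I_{-\infty}}$ and the interval forms $\{\omega_{I_j}\}_{1\le j\le l}$; selecting the leading index of each interval (and rewriting $d\log t_0$ through the relation when it occurs) produces a unit multiple of a standard basis vector, with all remaining terms strictly lower in the chosen order. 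Recording the combinatorial assignment $(k,\mathcal{P})\mapsto\bigl(t^{k^+},\ \text{wedge over }I_{-\infty}\text{ and the interval leading indices}\bigr)$ as a bijection onto the $S$-basis of $\Omega^{|I_{-\infty}|+l}$ then makes the transition matrix triangular with unit diagonal, giving spanning and linear independence simultaneously.

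The hard part will be this last step, namely verifying that after eliminating $d\log t_0$ via $\sum_{i\le r}d\log t_i=0$ the leading-term selection is genuinely triangular and that $(k,\mathcal{P})\mapsto$ leading vector is a bijection onto the standard $S$-basis. The delicate case is when $[0,r]\subset\Supp k$ with the index missing from $\Supp k^+$ absorbed into $I_{-\infty}$: there the naive leading wedge can involve all of $d\log t_0,\ldots,d\log t_r$ and vanish, so one must re-express it using the relation and check that the number of admissible pairs $(k,\mathcal{P})$ of each degree still matches the number of standard wedges. This bookkeeping is exactly the semistable refinement of \cite[Lemma 4.1]{Mat15}, and the interplay of the weight condition $[0,r]\not\subset\Supp k^+$ with the partition constraints of Definition \ref{dRW-Box+i} is what makes the count balance.
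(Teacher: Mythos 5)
Your preparatory steps are correct, and they are the same facts the paper's argument rests on: $\Omega^1_{(R_{S,+}^{\Box},N_S)/(S,M_S)}$ is free over $R_{S,+}^{\Box}$ on $d\log t_1,\ldots,d\log t_r,dt_{r+1},\ldots,dt_d$ with the single relation $\sum_{i=0}^{r}d\log t_i=0$ (which holds because $\theta(\pi)=e_0+\cdots+e_r$ in $N_S$, independently of whether $\alpha_S(\pi)$ vanishes), the monomials $t^{k}$ with $[0,r]\not\subset\Supp k$ are an $S$-basis of $R_{S,+}^{\Box}$, and the leading coefficient $p^{\tau(k_I)}k_{i_t}$ of each interval form is a unit of $S$. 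The genuine gap is exactly the step you postpone, and it is not bookkeeping: the asserted bijection ``$(k,\mathcal{P})\mapsto$ leading standard wedge'' with triangular, unit-diagonal transition matrix does not exist for the pairs admitted by Definitions \ref{dRW-Box+i} and \ref{dRW-Box+ii}, for any choice of order. Concretely, for each $i\in[0,r]$ the function $k^{(i)}$ with $k^{(i)}_i=p^{-\infty}$ and $k^{(i)}_j=0$ for $j\neq i$ is a weight (condition (b) constrains only $\Supp k^{+}$, which is empty here), its unique partition is $(I_{-\infty}=\{i\},\,I_0=\emptyset)$, and the corresponding log $p$-basic element is $d\log t_i$. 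These $r+1$ degree-one elements satisfy $\sum_{i=0}^{r}d\log t_i=0$; a family admitting an injective, triangular, unit-diagonal comparison with distinct members of a basis would be linearly independent, so no such assignment can exist. The over-count persists in every degree ($\binom{r+1}{m}$ pure $d\log$ wedges spanning a module of rank $\binom{r}{m}$), and when $r<d$ the weight with poles at all of $[0,r]$ even yields the zero element $d\log t_0\wedge\cdots\wedge d\log t_r$ as a log $p$-basic element. Note that these examples have $[0,r]\not\subset\Supp k$, i.e., they lie outside the one ``delicate case'' you flagged. So before any leading-term argument can be run, the set of admissible pairs has to be cut down so as to break the permutation symmetry of $t_0,\ldots,t_r$ (for instance, excluding pole data at one designated index), and the matching with the standard basis re-established; this selection is the real semistable content of the lemma, and your proposal does not supply it.

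For comparison, the paper never argues by triangularity. Its proof (i) asserts the decomposition $\Omega^{l}=\bigoplus_{(k,I)}\Omega^{l}(k,I)$ with the explicit $S$-basis (\ref{Lambda+-basis}) and observes that the rank $\binom{s}{l-m}$ of each summand equals the number of partitions $\mathcal{P}$ with $I_{-\infty}=I$, and then (ii) spends its effort on spanning: every element of (\ref{Lambda+-basis}) is an $S$-linear combination of log $p$-basic elements, proved by induction on the degree, using \cite[Lemma 2.2]{LZ03} for degree one and for multiplication by monomials, the ring relation $t_0\cdots t_r=\alpha_S(\pi)$ to reduce to the case $[0,r]\not\subset\Supp k^{+}\cup\Supp h^{+}$, and the operator $\alpha$ satisfying $d\alpha=p\alpha d$ to reduce products of degree-one basic elements to lower-degree data. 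Spanning plus the count then yields the basis statement, so the substantive work (the multiplicative induction) has no counterpart in your proposal. You should also be aware that the independence and counting issue on which your approach founders is precisely what the paper asserts rather than proves --- the list (\ref{Lambda+-basis}) contains the same dependent pure-$d\log$ wedges (already $d\log t_0$ and $d\log t_1$ for $r=1$ and $k=0$) --- so a corrected version of your triangularity argument, carried out after the symmetry-breaking restriction in the spirit of \cite[Section 4]{Mat15}, would genuinely add something; but as written, your proof defers exactly the point that has to be settled.
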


\begin{proof}
For an integer $j \in [r+1, d]$, let $d\log t_j$ denote a symbol which satisfies $t_j d\log t_j = dt_j$. For an integer $l \geq 1$, $\Omega_{(R_{S,+}^{\Box}, N_S) / (S, M_S)}^l$ has a basis

\begin{equation}
\label{Lambda+-basis}
d\log t_{i_1} \cdots d\log t_{i_m} \cdot \left( \prod_{i \in [0, d]} t_i^{k_i} \right) \cdot d\log t_{j_1} \cdots d\log t_{j_{l-m}}
\end{equation}
as an $S$-module, where $k$ runs over all integral weights without poles, $i_1,\ldots,i_m \in [0, r] \cap k^{-1} (\{0\})$ and $j_1,\ldots,j_{l-m} \in \Supp k^+$. Let $\Omega^l (k, I) \subset \Omega_{(R_{S,+}^{\Box}, N_S) / (S, M_S)}^l$ be the free $S$-submodule which has the basis of the form (\ref{Lambda+-basis}) for the integral weight without poles $k$ and the subset $I = \{i_1,\ldots,i_m \} \subset [0, r] \cap k^{-1} (\{0\})$. Then we have the decomposition 
\[
\Omega_{(R_{S,+}^{\Box}, N_S) / (S, M_S)}^l = \bigoplus_{(k, I)} \Omega^l (k, I)
\]
of free $S$-modules. The rank of the $S$-module $\Omega^l (k, I)$ is $\binom{s}{l-m}$ where $s \coloneqq |\Supp k|$. On the other hand, the number of log $p$-basic elements for pairs $(k, \mathcal{P})$ such that $I_{-\infty} = I$ is also $\binom{s}{l-m}$. Then it suffices to show that all of the elements of the form (\ref{Lambda+-basis}) can be written as $S$-linear combinations of the log $p$-basic elements. 

For $l = 1$, it follows from the proof of \cite[Lemma 2.2]{LZ03}. For the case of higher degree, it is enough to show that any product of log $p$-basic elements of degree $1$ is an $S$-linear combination of log $p$-basic elements. We use the induction on $l$ and assume the $l$-th degree case is true. We consider the elements of the form 
\begin{equation}
\label{basis-induction}
\epsilon (k, \mathcal{P}) \cdot \epsilon(h, \mathcal{P'} = (J_{-\infty}, J_0, J \coloneqq J_1))
\end{equation}
for all integral weights $k, h$ of degree $l, l'$. We show that they are $S$-linear combinations of log $p$-basic elements. It may be easily reduced to
the case $l' = 1$ and $J_{-\infty} = \emptyset$. We firstly reduce to the case $I_0 = J_0 = \emptyset$. To confirm this, it suffices to confirm the multiplication of any monomial $t_j^h$ in $R_{S, +}^{\Box}$ and any log $p$-basic element of degree $l+1$ is an $S$-linear combination of $p$-basic elements of degree $l+1$. To show this, we can inductively apply again \cite[Lemma 2.2]{LZ03} to $t_j^h p^{\tau(k_{I_i})} dt^{k_{I_i}}$ for $i = l,\ldots,1$. 

Moreover, we explain that we may also assume $[0, r] \not\subset \Supp k^+ \cup \Supp h^+$. To confirm this, the equation 
\[
p^{\tau(k_I)} dt^{k_I} = 
p^{\tau(k_I)} t_{\gamma}^{k_{\gamma}} (dt^{h_{J \backslash \{ \gamma \}}} + 
t^{h_{J}} (t_{\gamma}^{k_{\gamma}})^{-1} d\log t_{\gamma}^{h_{\gamma}}),
\]
where $\gamma \in [0, r]$ and $I = I_0,\ldots,I_l$ and $t_0 \ldots t_r = \pi$ reduces to the case of a multiplication of any monomial and any $l+1$ products of log $p$-basic elements of degree $1$, which comes down to the case $[0, r] \not\subset \Supp k^+ \cup \Supp h^+$ by induction on $l$ and the last argument. 

Then we assume that $I_0 = J_0 = \emptyset$. We define a morphism of $S$-modules
\[
\alpha \colon \Omega_{(R_{S,+}^{\Box}, N_S) / (S, M_S)}^l \to \Omega_{(R_{S,+}^{\Box}, N_S) / (S, M_S)}^l
\]
by
\[ (\prod t_s^{k_s}) (\prod d\log t_i) (\prod dt_j) \mapsto (\prod t_s^{p k_s}) (\prod t_j^{(p-1)}) (\prod d\log t_i) (\prod dt_j), \]
in which $k$ is an integral weight without poles. This map is well-defined, multiplicative as for elements associated to weights $k^{(1)},\ldots,k^{(u)}$ as long as $[0, r] \not\subset \cup_i \Supp k^{(i)}$ and satisfies
\[
d\alpha = p\alpha d.
\]
Let $\mu$ be the minimum number in $\tau(k_{I_1}),\ldots,\tau(k_{I_l}), \tau(h_J)$. Then the element (\ref{basis-induction}) with assumptions above is written as
\[
\alpha^{\mu} (p^{\mu + \tau(k_{I_1})} dt^{p^{-\mu} k_{I_1}} \cdots p^{\mu + \tau(k_{I_l})} dt^{p^{-\mu} k_{I_l}} \cdot p^{\mu + \tau(h_J)} dt^{p^{-\mu} h_J}).
\]
Since $\mathrm{min} (\mu + \tau(k_{I_1}),\ldots,\mu + \tau(k_{I_l}), \mu + \tau(h_J)) = 0$, the element in the bracket is a differential of $l$ multiplication of log $p$-basic elements of degree $1$, which is an $S$-linear combination of log $p$-basic elements of degree $l$ by induction on $l$. The claim follows from the fact that $\alpha$ and $d$ preserve the log $p$-basic elements. 
\end{proof}

Now we prove Proposition \ref{dRW-RBox+-Basis}. We use the morphisms 
\[
\omega_m \colon W_m \Omega_{(R_{S,+}^{\Box}, N_S) / (S, M_S)}^\ast \to \Omega_{(R_{S,+}^{\Box}, N_S) / (S, M_S), w_m}^\ast
\]
of log phantom components, where $\Omega_{(R_{S,+}^{\Box}, N_S) / (S, M_S), w_m}^\ast$ is the $W(R_{S, +}^{\Box})$-modules $\Omega_{(R_{S,+}^{\Box}, N_S) / (S, M_S), w_m}$ obtained by the restriction of scalars via the morphism of the $m$-th Witt polynomial $w_m \colon W(R_{S, +}^{\Box}) \to R_{S, +}^{\Box}$ defined in \cite[Section 3.8]{Mat15}.

\begin{proof}[Proof of Proposition \ref{dRW-RBox+-Basis}]
Any element in $W\Omega_{(R_{S,+}^{\Box}, N_S) / (S, M_S)}^\ast$ can be written as a convergent sum of the following form 

\begin{equation}
\label{simplebase-WLambda}
d\log [t]_{j_1} \cdots d\log [t]_{j_m} \cdot V^{\mu{k_0}} (\xi_0 
[t]^{p^{\mu(k_0)} k_0}) dV^{\mu{k_1}} (\xi_1 [t]^{p^{\mu(k_1)} k_1}) \cdots dV^{\mu{k_{s}}} (\xi_{s} [t]^{p^{\mu(k_{s})} k_{s}})
\end{equation}
by Lemma \ref{convsum-W}. We may assume that $[0, r] \not\subset \cup_{i=1}^{m} \Supp k$ by the argument of the proof of Lemma \ref{Lambda+-basis}. So we can see that the elements of the form (\ref{simplebase-WLambda}) can be written as sums of log $p$-basic Witt differentials by the proof of \cite[Proposition 4.3]{Mat15} because $F, V, d$ on log $p$-basic Witt differentials do not change the condition of $[0, r] \not\subset \Supp k$. The uniqueness of the expressions is seen by \cite[Proposition 4.2]{Mat15}, the formulas (1),(2),(3) in p.38 of \cite{Mat15} and Lemma \ref{Lambda-basis}. 
\end{proof}

Before the proof of the localized version of Lemma \ref{dRW-RBox+-Basis}, we give the \'etale invariance of the log de Rham-Witt complexes, which improves the result \cite[Proposition 3.7]{Mat15}, removing the assumption that a base ring $R$ is $F$-finite. 
\begin{proposition}
\label{et-bc-ldRW}
We fix an integer $n \geq 1$. Let $(R, P) \to (S, Q)$ be a morphism of prelog rings and $S \to S'$ be an \'etale morphism of rings. Then the natural map
\[
W_n \Omega_{(S', Q)/(R, P)}^{\ast} \to W_n (S') \otimes_{W_n (S)} W_n \Omega_{(S, Q)/(R, P)}^{\ast}
\]
is an isomorphism.
\end{proposition}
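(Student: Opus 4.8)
The plan is to identify the right-hand side as the $n$-truncated log de Rham--Witt complex of $(S', Q)/(R, P)$ by means of its universal property. Concretely, I will equip the graded $W_n(S')$-module
\[
M_n^{\ast} \coloneqq W_n(S') \otimes_{W_n(S)} W_n \Omega_{(S, Q)/(R, P)}^{\ast}
\]
with the structure of a log $F$-$V$-procomplex for $(S', Q)/(R, P)$, show that $\{M_n^{\ast}\}_n$ is an initial object of this category, and then invoke uniqueness of the initial object (\cite[Proposition 3.5]{Mat15}). The morphism of the statement is then the one produced by applying the universal property of $W_n\Omega_{(S',Q)/(R,P)}^{\ast}$ to $\{M_n^{\ast}\}_n$, and its inverse is the functorial map induced by $(S,Q) \to (S',Q)$ together with $W_n(S')$-linearity. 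A useful preliminary observation is that the prelog structure on $S'$ is the pullback of that on $S$, i.e.\ the monoid $Q$ is unchanged; consequently all of the genuinely logarithmic data (the Teichm\"uller map $Q \to W_n(S')$ and the $d\log$ summand of the log differentials) transports formally, and the entire content of the argument lies in the ring-theoretic and differential part.

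The key inputs are the \'etale base change properties of truncated Witt vectors, which hold \emph{without} any $F$-finiteness hypothesis and therefore let us drop that assumption from \cite[Proposition 3.7]{Mat15}. Namely, for $S \to S'$ \'etale the induced map $W_n(S) \to W_n(S')$ is again \'etale for every $n$, and the square
\[
\xymatrix{
W_{n+1}(S) \ar[r] \ar[d]^F & W_{n+1}(S') \ar[d]^F \\
W_n(S) \ar[r] & W_n(S')
}
\]
comparing the Frobenii is cocartesian, so that $W_n(S') \cong W_n(S) \otimes_{W_{n+1}(S)} W_{n+1}(S')$ along $F$. These facts follow from the general theory of Witt vectors of \'etale algebras (see \cite{LZ03} and the references therein) and rest only on the rigidity of \'etale morphisms under the reduction modulo the $V$-filtration, not on finiteness of Frobenius.

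Using these inputs I endow $\{M_n^{\ast}\}_n$ with its structure. Since $W_n(S) \to W_n(S')$ is \'etale, the module of log differentials base changes, $\Omega^1_{(W_n(S'),Q)/(W_n(R),P)} \cong W_n(S') \otimes_{W_n(S)} \Omega^1_{(W_n(S),Q)/(W_n(R),P)}$, and the differential $d$ extends uniquely to a $W_n(R)$-linear log derivation on $M_n^{\ast}$, so that $M_n^{\ast}$ becomes a log differential graded $W_n(S')/W_n(R)$-algebra with $\partial \coloneqq \mathrm{id} \otimes \partial$. I set $R \coloneqq R \otimes R$ and $F \coloneqq F \otimes F$, both well defined because $F$ is a ring homomorphism on the Witt parts and the target tensor product is taken over $W_n(S)$, and I define $V$ via the Frobenius-cocartesian presentation of $W_n(S')$: the relation $V(F(\lambda)x) = \lambda V(x)$ forces, and the cocartesian square permits, a unique additive lift of $\mathrm{id}\otimes V$. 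One then checks the defining relations $FV = p$, $FdV = d$, $V(F(x)y) = xV(y)$, and $Fd\lambda_{n+1}([b]) = \lambda_n([b])^{p-1} d\lambda_n([b])$; the additional logarithmic relation $F\partial_{n+1}(m) = \partial_n(m)$ is immediate from $F\,d\log[\alpha(m)] = d\log[\alpha(m)]$ and the triviality of the log part under base change.

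It remains to prove initiality. Given any log $F$-$V$-procomplex $\{E_n^{\ast}\}_n$ for $(S',Q)/(R,P)$, restriction of scalars along $(S,Q) \to (S',Q)$ makes it a log $F$-$V$-procomplex for $(S,Q)/(R,P)$; initiality of $W_n\Omega_{(S,Q)/(R,P)}^{\ast}$ produces a unique compatible family $W_n\Omega_{(S,Q)/(R,P)}^{\ast} \to E_n^{\ast}$, which is $W_n(S)$-linear and hence extends uniquely $W_n(S')$-linearly to a map $M_n^{\ast} \to E_n^{\ast}$. Compatibility of this extension with $d$, $R$, and $F$ is formal, and uniqueness is clear since $M_n^{\ast}$ is generated over $W_n(S')$ by the image of $W_n\Omega_{(S,Q)/(R,P)}^{\ast}$. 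I expect the main obstacle to be exactly the construction of $V$ on $M_n^{\ast}$ and the verification that the extension above commutes with $V$: unlike $d$, $R$, and $F$, the operator $V$ is only twisted-semilinear, so its good behaviour with respect to the tensor product over $W_n(S)$ rests precisely on the Frobenius-cocartesian property of Witt vectors of \'etale algebras. This is the single point where the \'etale hypothesis is genuinely used, and it is the ingredient that replaces, and dispenses with, the $F$-finiteness assumption of \cite[Proposition 3.7]{Mat15}.
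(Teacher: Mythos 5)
Your proposal is correct and follows essentially the same route as the paper: the paper's proof simply says to run the argument of \cite[Proposition 1.7]{LZ03} --- which is exactly the construction you spell out, endowing $W_n(S')\otimes_{W_n(S)} W_n\Omega^{\ast}_{(S,Q)/(R,P)}$ with a log $F$-$V$-procomplex structure (with $V$ defined through the Frobenius-cocartesian square) and concluding by universality --- observing that the $F$-finiteness hypothesis in loc.\ cit.\ was used only to ensure that $W_n(S)\to W_n(S')$ is \'etale. The two Witt-vector inputs you invoke (\'etaleness of $W_n(S)\to W_n(S')$ and the cocartesian $F$-square) are precisely the content of \cite[Theorem 10.4]{BMS18}, which is the reference the paper uses where you appeal somewhat vaguely to ``the general theory of Witt vectors of \'etale algebras.''
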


\begin{proof}
We can prove the proposition along the method of the proof of \cite[Proposition 1.7]{LZ03}. The assumption that the base ring is $F$-finite in \cite[Proposition 1.7]{LZ03} is only used to verify that $W_n (S) \to W_n (S')$ is \'etale, but the \'etaleness of the map is true for a general $S$ by \cite[Theorem 10.4]{BMS18}.
\end{proof}
Then we prove an $R_S^{\Box}$-analogue of Proposition \ref{dRW-RBox+-Basis}. In the smooth case, it is proved in \cite[Section 10.4]{BMS18}. We define \emph{Laurent weight} without poles $k \colon [0, d] \to \mathbb{Z}[1/p]$ as functions such that $k([r+1, d]) \subset \mathbb{Z}_{\geq 0} [1/p]$ and $[0, r] \not\subset \Supp k$. For each Laurent weight without poles $k$, we fix the total order $\preceq_k$ of $[0, d]$ such that $\mathrm{ord}_p k \colon [0, d] \to \mathbb{Z} \cup \{ \infty \}$ weakly preserves the order and define a partition $\mathcal{P} = (I_0,\ldots,I_l)$ of $k$ which satisfies $I_0 \sqcup \cdots \sqcup I_l = [0, d]$ and any element of $I_{j-1}$ is $\preceq_k$-smaller than that of $I_j$ and $I_1,\ldots,I_l \neq \emptyset$ as before. Let $\rho_1 \in [0, l]$ denote the greatest integer such that $-\tau (k_{I_{\rho_1}}) = \mathrm{ord}_p k_{I_{\rho_1}} < 0$, and $\rho_2 \in [0, l]$ denote the greatest integer such that $\mathrm{ord}_p k_{I_{\rho_2}} < \infty$. For a Laurent weight without poles $k$, $\xi = V^{\mu (k)} \eta \in V^{\mu (k)} W(R_S^{\Box})$ and a partition $\mathcal{P}$ of $[0, d]$, we define an element $\epsilon' = \epsilon'(\xi, k, \mathcal{P}) \in W\Omega_{(R_S^{\Box}, N_S) / (S, M_S)}^l$ as follows. 

\begin{enumerate}

\item If $I_0 \neq \emptyset$, 
\begin{align*}
\epsilon' \coloneqq &V^{\mu(I_0)} (\eta [t]^{p^{\mu(I_0)} k_{I_0}}) (d V^{\mu(I_1)} [t]^{p^{\mu(I_1)} k_{I_1}}) \cdots (d V^{\mu(I_{\rho_1})} [t]^{p^{\mu(I_{\rho_1})} k_{I_1}}) \\ 
\cdot \ &(F^{-\tau(I_{\rho_1 +1})} d[t]^{p^{\tau(I_{\rho_1 +1})} k_{I_{\rho_1 +1}}}) \cdots (F^{-\tau(I_{\rho_2})} d[t]^{p^{\tau(I_{\rho_2})} k_{I_{\rho_2}}}) \\ 
\cdot \ &(d\log \prod_{i \in I_{\rho_2 + 1}} [t_i]) \cdots (d\log \prod_{i \in I_l} [t_i]).
\end{align*}
\item If $I_0 = \emptyset$ and $k$ is not integral, 
\begin{align*}
\epsilon' \coloneqq &(d V^{\mu(I_1)} (\eta [t]^{p^{\mu(I_1)} k_{I_1}})) (d V^{\mu(I_2)} [t]^{p^{\mu(I_2)} k_{I_1}}) \cdots (d V^{\mu(I_{\rho_1})} [t]^{p^{\mu(I_{\rho_1})} k_{I_1}}) \\ 
\cdot \ &(F^{-\tau(I_{\rho_1 +1})} d[t]^{p^{\tau(I_{\rho_1 +1})} k_{I_{\rho_1 +1}}}) \cdots (F^{-\tau(I_{\rho_2})} d[t]^{p^{\tau(I_{\rho_2})} k_{I_{\rho_2}}}) \\ 
\cdot \ &(d\log \prod_{i \in I_{\rho_2 + 1}} [t_i]) \cdots (d\log \prod_{i \in I_l} [t_i]).
\end{align*}
\item If $I_0 = \emptyset$ and $k$ is integral, 
\[
\epsilon' \coloneqq \eta (F^{-\tau(I_1)} d[t]^{p^{\tau(I_1)} k_{I_1}}) \cdots (F^{-\tau(I_{\rho_2})} d[t]^{p^{\tau(I_{\rho_2})} k_{I_{\rho_2}}}) \cdot (d\log \prod_{i \in I_{\rho_2 + 1}} [t_i]) \cdots (d\log \prod_{i \in I_l} [t_i]).
\]

\end{enumerate}

\begin{proposition}
\label{dRW-RBox-Basis}
Any element in $W_n \Omega_{(R_S^{\Box}, N_S) / (S, M_S)}^l$ has a unique expression as a finite sum of elements of the form 
\[
\sum_{(k, \mathcal{P})} \epsilon'(\xi_{k, \mathcal{P}}, k, \mathcal{P}),
\]
where $k$ runs over Laurent weights without poles and $\mathcal{P}$ over all partitions $(I_0,\ldots,I_l)$ of $k$. Therefore, the map of $W_n (S)$-modules 
\[
e \colon \bigoplus_{\substack{k \colon \{1,\ldots,d\} \to p^{-n} \mathbb{Z}, \\ k([0, r]) \subset p^{-n}\mathbb{Z}_{\geq 0}, \\ [0, r] \not\subset \Supp k}} \bigoplus_{\substack{\mathrm{partition} \\ \mathcal{P} = (I_0,\ldots,I_n) \\ \mathrm{of} \ [0, d]}} 
V^{\mu(k)} W_{n - \mu(k)} (S) \to W_n \Omega_{(R_S^{\Box}, N_S) / (S, M_S)}^l,
\]
given by the sum of the maps 
\[
V^{\mu(k)} W_{n - \mu(k)} (S) \to W_n \Omega_{(R_S^{\Box}, N_S) / (S, M_S)}^l, \ V^{\mu(k)} (\xi) \mapsto \epsilon' (\xi, k, \mathcal{P}),
\]
is an isomorphism. 
\end{proposition}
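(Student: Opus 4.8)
The plan is to deduce the statement from its polynomial counterpart, Proposition \ref{dRW-RBox+-Basis}, by a localization argument, exactly as \cite{BMS18} pass from the polynomial algebra to the Laurent algebra in the smooth case. The first observation is that $R_S^{\Box}$ is obtained from $R_{S,+}^{\Box}$ by inverting the single element $\alpha_S(\pi) = t_0\cdots t_r$: since this is a product of the $t_i$ with $0\le i\le r$, inverting it forces each such $t_i$ to become a unit, and conversely; hence $R_S^{\Box} = R_{S,+}^{\Box}[1/\alpha_S(\pi)]$. In particular the structure map $R_{S,+}^{\Box} \to R_S^{\Box}$ is an open immersion, hence \'etale, and carries the chart $N_S$ of $R_{S,+}^{\Box}$ to that of $R_S^{\Box}$.

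First I would apply the \'etale invariance of the log de Rham--Witt complex (Proposition \ref{et-bc-ldRW}) to this localization, obtaining a canonical isomorphism
\[
W_n\Omega^l_{(R_S^{\Box}, N_S)/(S, M_S)} \cong W_n(R_S^{\Box})\otimes_{W_n(R_{S,+}^{\Box})} W_n\Omega^l_{(R_{S,+}^{\Box}, N_S)/(S, M_S)}.
\]
Combining this with the fact that truncated Witt vectors commute with localization, namely $W_n(R_{S,+}^{\Box}[1/\alpha_S(\pi)]) \cong W_n(R_{S,+}^{\Box})[1/[\alpha_S(\pi)]]$ for the Teichm\"uller lift $[\alpha_S(\pi)]=[t_0]\cdots[t_r]$ (a consequence of the \'etaleness of $W_n$ on \'etale maps, \cite[Theorem 10.4]{BMS18}, already used in Proposition \ref{et-bc-ldRW}), this identifies the target with the localization of the polynomial complex at $[\alpha_S(\pi)]$:
\[
W_n\Omega^l_{(R_S^{\Box}, N_S)/(S, M_S)} \cong \Big(W_n\Omega^l_{(R_{S,+}^{\Box}, N_S)/(S, M_S)}\Big)\big[1/[\alpha_S(\pi)]\big].
\]

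Then I would feed in the (truncated form of the) basis of Proposition \ref{dRW-RBox+-Basis}, which presents the polynomial complex as the direct sum over pairs $(k,\mathcal{P})$ of a weight $k$ (allowing $p^{-\infty}$-poles) with $[0,r]\not\subset\Supp k^+$ and a partition $\mathcal{P}$ of $\Supp k$, of the pieces $V^{\mu(k)}W_{n-\mu(k)}(S)\cdot\epsilon(\xi,k,\mathcal{P})$ of Definition \ref{dRW-Box+ii}, and track this decomposition through inversion of $[\alpha_S(\pi)]$. The remaining, and principal, task is the combinatorial matching of indexing data: inverting $[\alpha_S(\pi)]=[t_0]\cdots[t_r]$ shifts the $[0,r]$-exponents of the monomials $[t]^{k}$ by arbitrary integers, so that the canonical representatives become precisely the Laurent weights $k$ with $k([0,r])\subset p^{-n}\mathbb{Z}_{\ge 0}$ and $[0,r]\not\subset\Supp k$ occurring in the statement; simultaneously the formal symbols $d\log[t_i]$ for the pole indices $i\in I_{-\infty}\subset[0,r]$, coming from the log structure $N_S$, become over $R_S^{\Box}$ (where $[t_i]$ is a genuine unit) the honest logarithmic differentials $d[t_i]/[t_i]$, and these reassemble into the trailing zero-weight intervals $I_{\rho_2+1},\dots,I_l$ of the partition of all of $[0,d]$ used to define the Laurent basic Witt differentials $\epsilon'(\xi,k,\mathcal{P})$. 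Under this dictionary $\epsilon(\xi,k,\mathcal{P})$ maps to $\epsilon'(\xi,k',\mathcal{P}')$, the two indexing sets biject compatibly with the $V^{\mu(k)}W_{n-\mu(k)}(S)$-module structure, and the localized decomposition becomes exactly $\bigoplus_{(k,\mathcal{P})}V^{\mu(k)}W_{n-\mu(k)}(S)\cdot\epsilon'(\xi,k,\mathcal{P})$; this is the assertion that $e$ is an isomorphism, yielding both existence and uniqueness of the claimed expansion. The hardest point is this last bijection---checking that the equivalence $d\log[t_i]=d[t_i]/[t_i]$ for units correctly converts the pole-formalism of Definition \ref{dRW-Box+ii} into the zero-weight-interval formalism of the Laurent basic Witt differentials, with no basis elements gained or lost and no collision in the normalization of exponents.
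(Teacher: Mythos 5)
Your overall skeleton---realize the Laurent complex as a localization of the polynomial one, invoke the \'etale invariance of Proposition \ref{et-bc-ldRW} together with the compatibility of $W_n$ with localization, then match bases combinatorially---is exactly the skeleton of the paper's proof. However, you localize at the wrong element, and this is fatal for the statement you are asked to prove. You invert $\alpha_S(\pi)=t_0\cdots t_r$, so that $t_0,\dots,t_r$ become units; the paper instead localizes $R_{S,+}^{\Box}$ at the non-zero-divisors $[t_{r+1}],\dots,[t_d]$, i.e.\ the intended ring is $R_S^{\Box}=S[t_0,\dots,t_r,t_{r+1}^{\pm 1},\dots,t_d^{\pm 1}]/(t_0\cdots t_r-\alpha_S(\pi))$, as recalled at the beginning of Section \ref{LdRWC-padicCisom}. (The displayed definition in the subsection on log $p$-basic elements, which you followed, puts the inversions on the wrong block of variables; the paper is internally inconsistent here, but the two readings cannot both be right.) The statement itself decides between them: its indexing set consists of weights with $k([0,r])\subset p^{-n}\mathbb{Z}_{\geq 0}$ and $[0,r]\not\subset\Supp k$ but with arbitrary sign on the remaining indices, and its summands are the $W_n(S)$-modules $V^{\mu(k)}W_{n-\mu(k)}(S)$; both features force the inverted variables to be $t_{r+1},\dots,t_d$. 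The same reading is required by the application in Lemma \ref{WnD-decomp} and Proposition \ref{lambdani-isom}, where this basis is matched against $\mathcal{W}_n^i(D_\infty)$ built from $\mathcal{A}=A_{\inf}[X_0,\dots,X_r,X_{r+1}^{\pm 1},\dots,X_d^{\pm 1}]/(X_0\cdots X_r-[(p^\flat)^q])$.

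Concretely, two things break under your localization. First, negative exponents on $[r+1,d]$ can never arise, since under your reading those variables are not inverted; so your dictionary produces no basis elements matching the summands of $e$ where $k$ takes negative values outside $[0,r]$. Second, your normalization step---shifting the $[0,r]$-exponents via the relation $t_0\cdots t_r=\alpha_S(\pi)$---costs negative powers of $\alpha_S(\pi)$ in the coefficients; since $\alpha_S(\pi)$ is in general not a unit of $S$ (certainly not for $S=\mathcal{O}'$, $\pi=p^q$), the resulting decomposition has coefficients in $W_{n-\mu(k)}(S[1/\alpha_S(\pi)])$ rather than $W_{n-\mu(k)}(S)$. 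Already in degree $0$ one sees the problem: $W_n(R_{S,+}^{\Box}[1/\alpha_S(\pi)])$ contains $[\alpha_S(\pi)]^{-1}$, which lies in no sum of terms $V^{\mu(k)}(\eta[t]^{p^{\mu(k)}k})$ with $\eta\in W_{n-\mu(k)}(S)$ and $k\geq 0$ on $[0,r]$; so the proposition as stated is false for the ring you use. In the correct setup the $t_i$ with $i\in[0,r]$ never become units (their $d\log[t_i]$ come from the log structure $N_S$ both before and after localization, and the pole formalism persists unchanged), while the genuinely new phenomena---negative exponents and $d\log[t_i]=d[t_i]/[t_i]$ factors, regrouped into the trailing zero-weight intervals via Lemma \ref{partition-basis}---occur on $[r+1,d]$, exactly as in the paper's rewriting of $[t_{r+1}\cdots t_d]^{-j}\epsilon(\xi_{k,\mathcal{P}},k,\mathcal{P})$ in terms of the polynomial basis of Proposition \ref{dRW-RBox+-Basis}.
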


\begin{proof}
The $W_n (R_S^{\Box})$-module $W_n \Omega_{(R_S^{\Box}, N_S) / (S, M_S)}^l$ turns out to be the localization of the $W_n (R_S^{\Box})$-module $W_n \Omega_{(R_{S, +}^{\Box}, N_S) / (S, M_S)}^l$ at the non zero-divisors $[t_{r+1}],\ldots,[t_d]$ by Proposition \ref{et-bc-ldRW} and the fact that $W_n$ commutes with localizations. So we have 
\[
W_n \Omega_{(R_S^{\Box}, N_S) / (S, M_S)}^l = \bigcup_{j \in \mathbb{Z}_{\geq 0}} [t_{r+1} \cdots t_d]^{-j} W_n \Omega_{(R_{S, +}^{\Box}, N_S) / (S, M_S)}^l.
\]
Then elements $[t_{r+1} \cdots t_d]^{-j} \epsilon (\xi_{k, \mathcal{P}}, k, \mathcal{P})$ can be written as $W_{n - \mu(k)} (S)$-linear combinations of the elements $\epsilon (\xi_{k', \mathcal{P}'}, k', \mathcal{P}') \prod_{i \in J} d\log[t_i]$ where $k' \colon [0, d] \to \mathbb{Z}[1/p]$ is the function such that $k'(i) = k(i)$ if $i \in [0, r]$ and $k'(i) = k(i) - j$ if $i \in [r+1, d]$, and $J \subset k'^{-1} \{0\} \cap [r+1, d]$ by the proof of Proposition \ref{dRW-RBox+-Basis} (cf. \cite[Proposition 2.11, Lemma 2.12]{LZ03}). The part of $\prod_{i \in J} d\log[t_i]$ is transferred to the form $(d\log \prod_{i \in I_{\rho_2 + 1}} [t_i]) \cdots (d\log \prod_{i \in I_l} [t_i])$ by $\mathbb{Z}$-linear combinations by Lemma \ref{partition-basis}. Conversely, for each $k$ we take a sufficiently large integer $N$. Then $[t_{r+1} \cdots t_d]^N \epsilon' (\xi_{k, \mathcal{P}}, k, \mathcal{P})$ can be written as $W(S)$-linear combinations of the elements $\epsilon (\xi_{k', \mathcal{P}'}, k', \mathcal{P}')$ where $k' \colon [0, d] \to \mathbb{Z}_{\geq 0} [1/p]$ is the function such that $k'(i) = k(i)$ if $i \in [0, r]$ and $k'(i) = k(i) + j$ if $i \in [r+1, d]$ by the above argument, so the assertion follows.
\end{proof}

The following lemma is used in the above proof. 

\begin{lemma}
\label{partition-basis}
Let $A$ be a ring, $M$ be a free $A$-module of rank $m$ which has a basis $\{e_1,\ldots,e_m\}$ and $\mathcal{P} = (I_0, I_1,\ldots,I_r)$ be a partition of $[1, m]$. Then the free $A$-module $\bigwedge^r M$ has a basis of the elements of the form 
\[
(\sum_{i \in I_1} e_i) \wedge \cdots \wedge (\sum_{i \in I_r} e_i),
\]
which just corresponds to the partition $\mathcal{P}$. 
\end{lemma}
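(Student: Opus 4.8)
The plan is to deduce the statement from a \emph{unitriangular change of basis} on $\bigwedge^{r} M$ starting from its standard exterior basis. Write the standard basis of $\bigwedge^{r}M$ as $\{e_{S}:=e_{j_{1}}\wedge\cdots\wedge e_{j_{r}}\}$, indexed by the $r$-element subsets $S=\{j_{1}<\cdots<j_{r}\}$ of $[1,m]$; there are $\binom{m}{r}$ of these, which is the rank of $\bigwedge^{r}M$. For an ordered partition $\mathcal{P}=(I_{0},I_{1},\ldots,I_{r})$ with $I_{1},\ldots,I_{r}$ nonempty I set $f_{s}:=\sum_{i\in I_{s}}e_{i}$ and $\omega_{\mathcal{P}}:=f_{1}\wedge\cdots\wedge f_{r}$. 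The first step is to fix the relevant family of partitions and its bijection with $r$-subsets: by Definition \ref{dRW-Box+i} the blocks are intervals for the fixed order $\preceq_{k}$, which after relabelling we take to be the natural order, so $I_{1}<\cdots<I_{r}$ are consecutive intervals and $\mathcal{P}$ is determined by the $r$-subset $S(\mathcal{P})$ of right endpoints (the $\preceq_{k}$-largest index of each block); letting the endpoints range over all $r$-subsets gives exactly $\binom{m}{r}$ partitions.

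Next I would expand each $\omega_{\mathcal{P}}$ by multilinearity of the wedge product:
\[
\omega_{\mathcal{P}}=\sum_{(i_{1},\ldots,i_{r})}e_{i_{1}}\wedge\cdots\wedge e_{i_{r}},
\]
the sum running over all transversals $i_{s}\in I_{s}$. Since the blocks are pairwise disjoint, each transversal consists of $r$ distinct indices, so after sorting (with the Koszul sign) every term becomes $\pm e_{S'}$ with $S'=\{i_{1},\ldots,i_{r}\}$, and $e_{S'}$ occurs in $\omega_{\mathcal{P}}$ precisely when $S'$ is a transversal of $(I_{1},\ldots,I_{r})$. The key observation is that $S(\mathcal{P})$ is the \emph{maximal} transversal in the componentwise order and occurs with coefficient $+1$ (its entries are already sorted, one per interval, each the block maximum), while every other transversal is componentwise strictly smaller because it chooses a non-maximal element from some interval.

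The third step is to choose a total order on $r$-subsets refining the componentwise order (for instance colexicographic) and read off that the matrix expressing $\{\omega_{\mathcal{P}}\}$ in the basis $\{e_{S}\}$ is triangular with diagonal entries $+1$. A unitriangular matrix is invertible over an arbitrary commutative ring $A$, so $\{\omega_{\mathcal{P}}\}$ is a basis of $\bigwedge^{r}M$, with $\omega_{\mathcal{P}}$ corresponding to $\mathcal{P}$. Inverting this matrix is exactly the passage used in Proposition \ref{dRW-RBox-Basis}: it rewrites each product $\prod_{i\in J}d\log[t_{i}]$ of single $d\log$'s as a $\mathbb{Z}$-linear combination of the block products $\bigl(d\log\prod_{i\in I_{s}}[t_{i}]\bigr)=\sum_{i\in I_{s}}d\log[t_{i}]$.

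I expect the only real subtlety to be bookkeeping rather than substance: pinning down the total order so that triangularity holds literally, and tracking the Koszul signs so that the diagonal entries are genuinely the \emph{unit} $+1$ and not merely nonzero. Because $A$ is an arbitrary (possibly non-reduced) ring, invertibility must come from unitriangularity — there is no room to divide by determinants or to invert non-units — so this ordering and sign verification is the heart of the argument, while everything else is formal multilinear algebra.
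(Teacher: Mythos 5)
Your overall strategy --- expand each $\omega_{\mathcal{P}}$ in the standard basis and exhibit a unitriangular change-of-basis matrix, which is invertible over any commutative ring --- is sound, and it is more explicit than the paper's own proof (the paper just observes that the two generating sets have the same cardinality $\binom{m}{r}$ and that each generates the other, then uses the standard fact that a generating set of size equal to the rank of a finite free module is a basis). But the combinatorial bookkeeping that you yourself call ``the heart of the argument'' is wrong, and as written the triangularity claim fails. The map $\mathcal{P} \mapsto S(\mathcal{P})$ sending a partition to its set of right endpoints (block maxima) is \emph{not} a bijection onto the $r$-subsets of $[1,m]$: since $I_0$ is the (possibly empty) initial block, the last block $I_r$ always ends at $m$, so every $S(\mathcal{P})$ contains $m$ (and there are only $\binom{m-1}{r-1}$ such subsets); moreover $S(\mathcal{P})$ does not record where $I_0$ ends. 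Concretely, for $m=2$, $r=1$ the two partitions $(\emptyset,\{1,2\})$ and $(\{1\},\{2\})$ both have $S(\mathcal{P})=\{2\}$, while $\omega_{(\emptyset,\{1,2\})}=e_1+e_2$ and $\omega_{(\{1\},\{2\})}=e_2$ have the same componentwise-maximal term $e_2$. With two rows assigned the same ``diagonal'' column, no ordering of rows and columns can realize your leading-term recipe, so the claimed unitriangular shape is not established.

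The repair is to dualize the bookkeeping: index $\mathcal{P}$ by the set $L(\mathcal{P})$ of \emph{left} endpoints (block minima). This genuinely is a bijection onto all $r$-subsets of $[1,m]$ (which also yields the count $\binom{m}{r}$), and since the blocks are increasing consecutive intervals, every transversal $(i_1,\ldots,i_r)$ with $i_s\in I_s$ is automatically increasing --- so no Koszul signs arise at all --- and satisfies $i_s\geq \min I_s$ for every $s$, with equality for all $s$ only for the transversal of block minima. Hence $\omega_{\mathcal{P}}=e_{L(\mathcal{P})}+\sum_{S'} c_{S'}e_{S'}$, where $c_{S'}\in\{0,1\}$ and each $S'$ occurring is componentwise strictly larger than $L(\mathcal{P})$; with respect to any total order refining the componentwise order, the matrix is then unitriangular with inverse defined over $\mathbb{Z}$, and the rest of your argument goes through verbatim, including the $\mathbb{Z}$-integrality of the change of basis used in Proposition \ref{dRW-RBox-Basis}.
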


\begin{proof}
It is seen by the coincidence of the number of the generators $\{e_{i_1} \wedge \cdots \wedge e_{i_r} \}$ and $\{ (\sum_{i \in I_1} e_i) \wedge \cdots \wedge (\sum_{i \in I_r} e_i) \}$, and each element in either of the generators is generated by those in the other one. 
\end{proof}

For $n \in \mathbb{Z}$, we define 
\[
\mathrm{Fil}^i W_n \Omega^i \coloneqq
\begin{cases}
0 & (i \geq n), \\ 
\mathrm{ker}(R^{n-i} : W_n \Omega^\ast \to W_i \Omega^\ast) & (0 \leq i < n), \\ 
W_n \Omega^\ast & (i < 0),
\end{cases}
\]
and we also define
\begin{align*}
\mathrm{gr}^i W_n \Omega^l &\coloneqq \mathrm{Fil}^i W_n \Omega^l / \mathrm{Fil}^{i+1} W_n \Omega^l, \\
ZW_n \Omega^l &\coloneqq \mathrm{ker}(d \colon W_n \Omega^l \to W_n \Omega^{l+1}), \\
BW_n \Omega^l &\coloneqq dW_n \Omega^{l-1},
\end{align*}
in which we omit subscriptions of $\Omega$. As for the total log de Rham-Witt complex $W\Omega^\ast$, the above notions are similarly defined. 

\begin{corollary}
\label{FilWLambda}
For $i \geq 0$ and $1 \leq m \leq n$, 
\begin{align*}
\mathrm{Fil}^{n-m} W_n \Omega_{(R_S^{\Box}, N_S) / (S, M_S)}^l
&= V^{n-m} W_m \Omega_{(R_S^{\Box}, N_S) / (S, M_S)}^l + dV^{n-m} W_m \Omega_{(R_S^{\Box}, N_S) / (S, M_S)}^{l-1}.
\end{align*}
\end{corollary}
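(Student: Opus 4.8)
Unwinding the definition, the left-hand side is $\mathrm{Fil}^{n-m}W_n\Omega^l = \mathrm{ker}(R^m \colon W_n\Omega^l \to W_{n-m}\Omega^l)$, where $\Omega$ abbreviates $\Omega_{(R_S^{\Box}, N_S)/(S, M_S)}$ as above; so the plan is to prove the two inclusions between this kernel and $V^{n-m}W_m\Omega^l + dV^{n-m}W_m\Omega^{l-1}$. The inclusion $\supseteq$ is formal, and I would dispatch it first from the procomplex relations. The identity $R^m V^{n-m} = 0$ (the image description of the kernel of iterated restriction, valid already on the Witt vectors $W_m(S)$, where the first $n-m$ components of an element of $V^{n-m}W_m(S)$ vanish, and inherited by the de Rham--Witt complex via $RV = VR$) gives $V^{n-m}W_m\Omega^l \subseteq \mathrm{ker}(R^m)$; and since $R$ is a morphism of differential graded algebras, $R^m dV^{n-m} = dR^m V^{n-m} = 0$, whence $dV^{n-m}W_m\Omega^{l-1} \subseteq \mathrm{ker}(R^m)$ as well.

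For the reverse inclusion I would work through the explicit basis furnished by Proposition \ref{dRW-RBox-Basis}, in the spirit of the filtration computations of Langer--Zink \cite[Proposition 2.17]{LZ03} and Illusie \cite{Ill79}. Write an element $\omega$ with $R^m\omega = 0$ as its unique expansion $\sum_{(k,\mathcal{P})}\epsilon'(\xi_{k,\mathcal{P}}, k, \mathcal{P})$ into log basic Witt differentials. The operators $R$, $V$, $d$ carry log basic Witt differentials into combinations of log basic Witt differentials by the formulas recalled in \cite[Section 4]{Mat15}, and crucially $R$ preserves the indexing pair $(k,\mathcal{P})$ while merely restricting the Witt-vector coefficient $\xi_{k,\mathcal{P}} \in V^{\mu(k)}W_{n-\mu(k)}(S)$. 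Thus the vanishing $R^m\omega = 0$ can be tested summand by summand and reduces, on each coefficient module, to the classical Witt-vector identity $\mathrm{ker}(R^m \colon W_n(S) \to W_{n-m}(S)) = V^{n-m}W_m(S)$. Combined with the projection formula $V(F(a)b) = aV(b)$, this forces each surviving coefficient to acquire Verschiebung depth at least $n-m$; reassembling through the isomorphism $e$ of Proposition \ref{dRW-RBox-Basis} then places $\omega$ in $V^{n-m}W_m\Omega^l + dV^{n-m}W_m\Omega^{l-1}$, according to whether the leading Verschiebung factor of the relevant basic differential sits outside or inside the differential $d$.

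The main obstacle is precisely this last bookkeeping: matching the two summands $V^{n-m}W_m\Omega^l$ and $dV^{n-m}W_m\Omega^{l-1}$ to the three shapes of log basic Witt differentials in Definition \ref{dRW-Box+ii}, and controlling the Frobenius factors $F^{-\tau(I_j)}d[t]^{p^{\tau(I_j)}k_{I_j}}$ attached to the intervals with $\mathrm{ord}_p k_{I_j} \geq 0$. These factors are built from Teichm\"uller lifts and powers of $F$, both of which commute with $R$, so restriction lowers the filtration level of a basic differential only through its Verschiebung factors $V^{\mu(I_j)}(-)$ and $dV^{\mu(I_j)}(-)$; one must verify that a differential whose leading factor is of the form $V^{\mu}(-)$ (resp.\ $dV^{\mu}(-)$, resp.\ purely of $F$-type with integral weight) contributes to $V^{n-m}W_m\Omega^l$ (resp.\ to $dV^{n-m}W_m\Omega^{l-1}$, resp.\ again to $V^{n-m}W_m\Omega^l$ once its Teichm\"uller coefficient is forced into $V^{n-m}W_m(S)$). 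This is the log analogue of the computation underlying \cite[Proposition 2.17]{LZ03} and \cite[Section 4]{Mat15}, and carrying it out uniformly in the weight $k$ and the partition $\mathcal{P}$ is where the real work lies.
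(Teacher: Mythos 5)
Your proposal is correct and follows essentially the same route as the paper: the inclusion $\supset$ is dispatched formally, and the inclusion $\subset$ is reduced, via the unique expansion into log basic Witt differentials of Proposition \ref{dRW-RBox-Basis} (with the kernel condition tested coefficient-wise, forcing $\xi \in V^{n-m}W_m(R_S^{\Box})$), to exactly the case analysis on shapes that you describe. The bookkeeping you defer as ``the real work'' is resolved in the paper by two short identities, namely $\epsilon'(V^{n-m}\eta, k, \mathcal{P}) = V^{n-m}\epsilon'(\eta, p^{n-m}k, \mathcal{P})$ for the case $I_0 \neq \emptyset$ or $k$ integral, and $\epsilon'(\xi, k, (\emptyset, I_1,\ldots,I_l)) = d\,\epsilon'(\xi, k, (I_1,\ldots,I_l))$ for the case $I_0 = \emptyset$ with $k$ non-integral, which reduces to the first case; this matches your three-case division, with your first and third cases merged.
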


\begin{proof}
The relation $\supset$ is clear. The kernel of $R^m \colon W_n \Lambda^l \to W_m \Lambda^l$ is generated by the elements $\epsilon'(\xi, k, \mathcal{P} = (I_0, I_1,\ldots,I_l))$ with $\xi \in V^{n-m} W_m (R_S^{\Box})$ by Proposition \ref{dRW-RBox-Basis}. Hence, it suffices to show each $\epsilon'$ in the kernel is in the right hand side of the claim of Proposition \ref{dRW-RBox-Basis}. If $I_0 \neq \emptyset$ or $k$ is integral, 
\[
\epsilon'(\xi = V^{n-m} \eta, k, \mathcal{P}) = V^{n-m} \epsilon'(\eta, p^{n-m} k, \mathcal{P}) \in V^{n-m} W\Omega_{(R_S^{\Box}, N_S) / (S, M_S)}^l.
\]
If $I_0 = \emptyset$ and $k$ is not integral, 
\[
\epsilon'(\xi, k, \mathcal{P} = (\emptyset, I_1,\ldots,I_l)) = 
d\epsilon'(\xi, k, (I_1,\ldots,I_l)). 
\]
By the first case, $d\epsilon'(\xi, k, (I_1,\ldots,I_l))$ here lies in $dV^{n-m} W\Omega_{(R_S^{\Box}, N_S) / (S, M_S)}^{l-1}$. 
\end{proof}

\subsection{Cartier isomorphisms for log de Rham-Witt complexes of the special fiber}
\label{LdRWC-Cisom}

In the following, let $k$ be a perfect field in the setting of the previous argument of Lemma \ref{chart-ssalg}. 

We prepare for the proof of logarithmic Cartier isomorphisms for special fibers. Without any description, we equip the algebras of special fibers with the log structure induced by those of mixed characteristic. Recall that the $p$-adically \'etale morphisms $R'^{\Box} \coloneqq R_{\mathcal{O}'}^{\Box} \to R$ of log $p$-adic formal algebras over $\mathcal{O}_C$ have models $R'^{\Box} \to R'$ over a discrete valuation subring $\mathcal{O}' \subset \overline{W(k)}$. We put $k' \coloneqq \mathcal{O}' / \mathfrak{m}'$, where $\mathfrak{m}'$ is the maximal ideal of $\mathcal{O}'$. The morphism $(k', Q_{\mathcal{O}'}) \to (R_{k'}^{\Box}, Q'^{\Box} \coloneqq Q_{R'^{\Box}})$ of log rings are log smooth of Cartier type as a morphism of log schemes by \cite[2.13]{HK94}, so that we have the Cartier isomorphism \cite[Theorem 4.12]{Kat88}, \cite[2.12]{HK94}
\[
C^{-1} \colon \Omega_{(R_{k'}^{\Box}, N_{\mathcal{O}'})^{(p)} / (k', M_{\mathcal{O}'})}^n \cong H^n (\Omega_{(R_{k'}^{\Box}, N_{\mathcal{O}'}) / (k', M_{\mathcal{O}'})}^\ast),
\]
where $n \geq 0$ and $(R_{k'}^{\Box}, N_{\mathcal{O}'})^{(p)}$ is the base change of $(R_{k'}^{\Box}, N_{\mathcal{O}'})$ for the absolute Frobenius $\mathcal{F}_{(k', M_{\mathcal{O}'})}$ of $(k', M_{\mathcal{O}'})$, characterized by 
\[
C^{-1} \colon a\prod_{i=1}^{n} d\log(h^\ast (b_i)) \mapsto g^\ast (a) \prod_{i=1}^{n} d\log(b_i),
\]
where $a \in (R_{k'}^{\Box})^{(p)}$, $b_i \in Q'^{\Box}$, $h$ is the base change of $\mathcal{F}_{k'}$ on $(R_{k'}^{\Box})$, and $g$ is the relative Frobenius $\mathcal{F}_{R_{k'}^{\Box} / k'}$ as sections of affine schemes. Similarly, we have Cartier isomorphisms over an extension $\mathcal{O}/\mathcal{O}'$ of valuation rings since the base change for the morphism $Q_{\mathcal{O}'} \to Q_{\mathcal{O}}$ of log structures on $\mathcal{O}$ preserves the module of log differentials. 
In this section, we fix a valuation ring $\mathcal{O}/\mathcal{O}'$ with residue field $k_{\mathcal{O}}$. 
For simplicity, we put $W_n \Omega^l \coloneqq W_n \Omega_{(R_{k_{\mathcal{O}}}^{\Box}, N_{\mathcal{O}}) / (k_{\mathcal{O}}, M_{\mathcal{O}})}^l$. If $n = 1$, we put $\Omega^l \coloneqq W_1 \Omega^l$.
We can naturally identify $\Omega_{(R_{k'}^{\Box}, N_{\mathcal{O}'}) / (k', M_{\mathcal{O}'})}^n$ with its Frobenius twist $\Omega_{(R_{k'}^{\Box}, N_{\mathcal{O}'})^{(p)} / (k', M_{\mathcal{O}'})}^n$ via the natural maps
\[
(R_{k'}^{\Box}, N_{\mathcal{O}'}) \to 
(R_{k'}^{\Box}, N_{\mathcal{O}'})^{(p)},
\]
which is the base change of the absolute Frobenius of $(k', M_{\mathcal{O}'})$, in the following argument. 

For $n \geq 1$, we define submodules $B_n \Omega^l \subset Z_n \Omega^l$ of $\Omega^l$ as follows :
\begin{align*}
&B_0 \Omega^l = 0, \qquad Z_0 \Omega^l = \Omega^l, \\ 
&B_1 \Omega^l = \mathrm{im}(d \colon \Omega^{l-1} \to \Omega^l), \qquad Z_1 \Omega^l = \mathrm{ker}(d \colon \Omega^l \to \Omega^{l+1}), \\ 
&C^{-1} (B_n \Omega^l) = B_{n+1} \Omega^l / B_1 \Omega^l, \\ 
&C^{-1} (Z_n \Omega^l) = Z_{n+1} \Omega^l / B_1 \Omega^l.
\end{align*}
For $m \geq 1$, repeatedly composing $C^{-1}$, we have the composite of Cartier inverses
\[
C^{-m} \colon \Omega^l \to Z_n \Omega^l / B_n \Omega^l.
\]
The map is an isomorphism. 

\begin{lemma}[cf.\ {\cite[Proposition I.3.3]{Ill79}}]
\label{Lambda-C-1}
For $n \geq 1$ and $l \geq 0$, the morphism $F \colon W_{n+1} \Omega^l \to W_n \Omega^l$ induces 
\[
F \colon W_n \Omega^l \to W_n \Omega^l / dV^{n-1} \Omega^{l-1}.
\]
Moreover, if $n=1$, the map is the composition of the Cartier isomorphism 
\[
C^{-1} \colon \Omega^l \to H^l (\Omega^{\ast})
\]
and natural injection $H^l (\Omega^\ast) \to \Omega^l / d\Omega^{l-1}$. We call the composite Cartier inverse $C^{-1} \colon \Omega^l \to \Omega^l / d\Omega^{l-1}$.
\end{lemma}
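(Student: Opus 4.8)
The plan is to prove the two assertions separately: first that $F$ descends to the asserted quotient, then that for $n=1$ the descended map is the inverse Cartier operator. Since the restriction $R\colon W_{n+1}\Omega^l \to W_n\Omega^l$ is surjective (clear from the explicit generators of Proposition \ref{dRW-RBox-Basis}, which are $R$-images of the corresponding level-$(n+1)$ generators), it suffices to show that the composite $W_{n+1}\Omega^l \xrightarrow{F} W_n\Omega^l \twoheadrightarrow W_n\Omega^l/dV^{n-1}\Omega^{l-1}$ annihilates $\ker R$; the induced map $\bar F$ on $W_n\Omega^l$ is then uniquely forced by the universal property of the quotient.

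For the factorization I would first identify $\ker(R\colon W_{n+1}\Omega^l \to W_n\Omega^l) = \mathrm{Fil}^n W_{n+1}\Omega^l$, which by Corollary \ref{FilWLambda} (with level $n+1$ and $m=1$, applied to the special-fibre algebra $R^{\Box}_{k_{\mathcal{O}}}$, i.e.\ the case $\pi=0$, $S=k_{\mathcal{O}}$) equals $V^n\Omega^l + dV^n\Omega^{l-1}$. It then remains to evaluate $F$ on the two summands. On $dV^n\Omega^{l-1}$ the procomplex relation $FdV=d$ gives $F(dV^n\omega)=dV^{n-1}\omega \in dV^{n-1}\Omega^{l-1}$, which is zero in the quotient. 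On $V^n\Omega^l$ the relation $FV=p$ gives $F(V^n\omega)=pV^{n-1}\omega$; since $V$ is additive it commutes with multiplication by the integer $p$, and $p\omega=0$ for $\omega\in\Omega^l=W_1\Omega^l$ because $\Omega^l$ is a module over the characteristic-$p$ ring $R^{\Box}_{k_{\mathcal{O}}}$, whence $pV^{n-1}\omega=V^{n-1}(p\omega)=0$. Thus $F(\ker R)\subset dV^{n-1}\Omega^{l-1}$, producing the desired $\bar F$.

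For $n=1$ I would compute $\bar F\colon \Omega^l \to \Omega^l/d\Omega^{l-1}$ on the $S$-module generators of $\Omega^l$ furnished by the log $p$-basic elements (Lemma \ref{Lambda-basis}, Proposition \ref{dRW-RBox-Basis}), which have the shape $\omega=(\prod_i t_i^{k_i})\,d\log t_{i_1}\cdots d\log t_{i_l}$. Lifting $\omega$ to $W_2\Omega^l$ via Teichmüller representatives $(\prod_i [t_i]^{k_i})\,d\log[t_{i_1}]\cdots d\log[t_{i_l}]$ (whose image under $R$ is $\omega$), the multiplicativity of $F$ together with $F[t_i]=[t_i^p]$ and the relation $F\,\partial(m)=\partial(m)$ (which fixes each $d\log[t_i]$) compute $F$ of the lift, read in $W_1\Omega^l=\Omega^l$, to equal $(\prod_i t_i^{pk_i})\,d\log t_{i_1}\cdots d\log t_{i_l}$. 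This is exactly the value on $\omega$ of the explicit inverse Cartier operator recalled before the lemma (\cite[Theorem 4.12]{Kat88}, \cite{HK94}), once $\Omega^l$ is identified with its Frobenius twist; as both $\bar F$ and $C^{-1}$ are additive and agree on these generators, they coincide. Finally each such value is closed, since $d(\prod_i t_i^{pk_i})$ carries a factor of $p$ and hence vanishes in characteristic $p$, so $\bar F$ factors through $H^l(\Omega^\ast)=Z^l/B^l \hookrightarrow \Omega^l/d\Omega^{l-1}$, as asserted.

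I expect the only genuine bookkeeping obstacle to be the $n=1$ identification: one must match $\bar F$ with $C^{-1}$ uniformly over all log $p$-basic generators while tracking the Frobenius-twist identification $(R^{\Box}_{k'},N)^{(p)}\cong(R^{\Box}_{k'},N)$ and the logarithmic factors via $F\,\partial=\partial$. The factorization statement itself is a short consequence of Corollary \ref{FilWLambda} and the relations $FV=p$ and $FdV=d$, the key point being the vanishing $F(V^n\Omega^l)=0$ in characteristic $p$.
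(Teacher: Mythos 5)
Your factorization step is exactly the paper's own (one-line) proof, spelled out: the paper deduces the lemma from Corollary \ref{FilWLambda} and $FdV=d$, and your chain --- surjectivity of $R$, $\ker(R)=\mathrm{Fil}^{n}W_{n+1}\Omega^{l}=V^{n}\Omega^{l}+dV^{n}\Omega^{l-1}$ by Corollary \ref{FilWLambda} with $m=1$, then $F(dV^{n}\omega)=dV^{n-1}\omega$ and $F(V^{n}\omega)=pV^{n-1}\omega=V^{n-1}(p\omega)=0$ in characteristic $p$ --- is precisely the intended content. The $n=1$ identification with $C^{-1}$, which the paper does not write out at all, is also carried out along the expected lines (Teichm\"uller lifts, $F[t_i]=[t_i]^{p}$, $F$-invariance of $d\log[t_i]$, comparison with Kato's characterization of $C^{-1}$), so your proposal supplies detail rather than diverging from the paper.

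One step in the $n=1$ part should be tightened: from ``both $\bar F$ and $C^{-1}$ are additive and agree on these generators, they coincide'' you cannot conclude, because the log $p$-basic elements are only a $k_{\mathcal{O}}$-module basis and neither map is $k_{\mathcal{O}}$-linear; both are Frobenius-semilinear, $\bar F(c\omega)=c^{p}\bar F(\omega)$ (since $F[c]=[c^{p}]$) and likewise $C^{-1}(c\omega)=c^{p}C^{-1}(\omega)$ by Kato's formula. Two additive maps agreeing on a $k$-basis need not be equal (compare the identity with a Frobenius-semilinear map on a one-dimensional $k$-vector space when $k\neq\mathbb{F}_p$; here $k_{\mathcal{O}}$ is the residue field of $\mathcal{O}_C$, far from $\mathbb{F}_p$). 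The fix is immediate and lies inside your own computation: either record the semilinearity of both maps, after which agreement on a $k_{\mathcal{O}}$-basis does suffice, or run the identical Teichm\"uller computation on elements $a\,d\log t_{i_1}\cdots d\log t_{i_l}$ with arbitrary $a\in R^{\Box}_{k_{\mathcal{O}}}$ (using $F[a]=[a^{p}]$), which gives agreement on an additive spanning set of $\Omega^{l}$. A second, cosmetic point: for $i\in[r+1,d]$ the classes $d\log[t_i]$ are not values of $\partial$ on the monoid $N_S$, so their $F$-invariance follows from $F([t_i]^{-1}d[t_i])=[t_i]^{-p}[t_i]^{p-1}d[t_i]=d\log[t_i]$ via the axiom $Fd\lambda([b])=\lambda([b])^{p-1}d\lambda([b])$, not from $F\partial=\partial$.
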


\begin{proof}
By Corollary \ref{FilWLambda} and the formula $FdV = d$, the assertion follows.
\end{proof}

Similarly, for $m \geq 1$, we can see that the map $F^m : W_{m+1} \Omega^l \to \Omega^l$ induces the map $C^{-m} \colon \Omega^l \to Z_m \Omega^l / B_m \Omega^l$. We calculate three kernels of the morphisms $p^i \colon W_n \Omega^l \to W_n \Omega^l$ (Lemma \ref{kerpi}),  $d \colon W_n \Omega^l \to W_n \Omega^{l+1}$ (Lemma \ref{kerd}), and $F^i \colon W_{n+i} \Omega^l \to W_n \Omega^l$ (Lemma \ref{kerFi}) of the log de Rham-Witt complexes using the method of \cite[Proposition I.3.4, Proposition I.3.11, Proposition I.3.21]{Ill79}. 

\begin{lemma}
\label{kerpi}
For $i \geq 0$, $n \geq 1$ and $l \geq 0$, we have 
\[
\ker(p^i \colon W_n \Omega^l \to W_n \Omega^l) = \mathrm{Fil}^{n-i} W_n \Omega^l.
\]
\end{lemma}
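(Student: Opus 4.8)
The plan is to follow the method of Illusie \cite[Proposition I.3.4]{Ill79}, reducing the statement to the degree-zero (Witt-vector) case by means of the explicit normal form of Proposition \ref{dRW-RBox-Basis}. The key point is that the isomorphism $e \colon \bigoplus_{(k,\mathcal{P})} V^{\mu(k)} W_{n-\mu(k)}(S) \xrightarrow{\sim} W_n\Omega^l$ is $W_n(S)$-linear, so that multiplication by $p^i$ --- which is the same as multiplication by the element $p^i \in W_n(S)$ --- acts summand by summand, via $p \cdot V^{\mu}(\eta) = V^{\mu}(F^{\mu}(p)\eta) = V^{\mu}(p\eta)$ (using the projection formula and $F(p) = p$). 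Hence, identifying each summand $V^{\mu(k)}W_{n-\mu(k)}(S)$ with $W_{n-\mu(k)}(S)$, the operator $p^i$ becomes ordinary multiplication by $p^i$, and one obtains
\[
\ker\bigl(p^i \colon W_n\Omega^l \to W_n\Omega^l\bigr) = \bigoplus_{(k,\mathcal{P})} V^{\mu(k)}\,\ker\bigl(p^i \colon W_{n-\mu(k)}(S)\to W_{n-\mu(k)}(S)\bigr).
\]

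I would first dispose of the boundary cases. For $i = 0$ both sides are $0$ (as $\mathrm{Fil}^n W_n\Omega^l = 0$). For $i \ge n$ the claim is that $p^n W_n\Omega^l = 0$ and $\mathrm{Fil}^{n-i}W_n\Omega^l = W_n\Omega^l$; the vanishing $p^n W_n\Omega^l = 0$ follows at once from the displayed decomposition, since every coefficient lies in some $W_{m}(S)$ with $m \le n$ and $p^m W_m(S) = 0$. This also yields the easy inclusion $\mathrm{Fil}^{n-i}\subseteq \ker p^i$ directly: by Corollary \ref{FilWLambda} one has $\mathrm{Fil}^{n-i}W_n\Omega^l = V^{n-i}W_i\Omega^l + dV^{n-i}W_i\Omega^{l-1}$, and since $p$ commutes with $V$ (as above) and with $d$ (multiplication by the integer $p$ is additive and $d$ is additive), $p^i$ carries this into $V^{n-i}(p^i W_i\Omega^l) + dV^{n-i}(p^i W_i\Omega^{l-1})$, which vanishes because $p^i W_i\Omega^{\bullet} = 0$ by the same decomposition applied at level $i$.

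For the remaining, essential inclusion $\ker p^i \subseteq \mathrm{Fil}^{n-i}$, I would use the definition $\mathrm{Fil}^{n-i}W_n\Omega^l = \ker\bigl(R^i \colon W_n\Omega^l \to W_i\Omega^l\bigr)$ together with the fact that the normal form of Proposition \ref{dRW-RBox-Basis} is compatible with the restriction maps $R$, so that $R^i$ also acts summand by summand as $R^i \colon W_{n-\mu(k)}(S)\to W_{n-i-\mu(k)}(S)$ on coefficients. Granting this, the whole statement reduces, summand by summand, to the classical Witt-vector identity
\[
\ker\bigl(p^i \colon W_m(S)\to W_m(S)\bigr) = V^{m-i}W_i(S) = \ker\bigl(R^i \colon W_m(S)\to W_{m-i}(S)\bigr),
\]
valid for the perfect residue field $S = k_{\mathcal{O}}$ (where $W_m(S) = W(S)/p^m$ and $p^j W_m(S) = V^j W_{m-j}(S)$). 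Applying $V^{\mu(k)}$ to this identity gives $V^{\mu(k)}V^{(n-\mu(k))-i}W_i(S) = V^{n-i}W_i(S)$ on each summand, simultaneously for $\ker p^i$ and for $\ker R^i$, which yields the desired equality of the two submodules of $W_n\Omega^l$.

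The main obstacle I anticipate is justifying that $R^i$ --- equivalently the filtration $\mathrm{Fil}^\bullet$ --- is computed coefficient-wise in the basis of Proposition \ref{dRW-RBox-Basis}; this amounts to checking that the basic Witt differentials $\epsilon'(\xi,k,\mathcal{P})$ restrict to $\epsilon'(R\xi,k,\mathcal{P})$ under $R$, and reconciling this with the two-term description of Corollary \ref{FilWLambda}, where the $dV^{n-i}$-part interacts both with the form degree and with the pole (i.e.\ $F^{-\tau}$) contributions of non-integral weights. Should this prove awkward to control uniformly, I would fall back on Illusie's inductive argument on $n$: apply $R$ and the induction hypothesis to obtain $\omega\in\mathrm{Fil}^{n-i-1}$, write $\omega = V^{n-i-1}a + dV^{n-i-1}b$, and use $p^i\omega = 0$ together with the relations $FV = p$, $FdV = d$ and $Vd = p\,dV$ to improve the conclusion to $\mathrm{Fil}^{n-i}$ by one step; here the delicate input is the injectivity of the graded piece $\mathrm{gr}^{n-i-1}W_n\Omega^l$, which is again supplied by Proposition \ref{dRW-RBox-Basis}.
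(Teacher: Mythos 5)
Your proposal is correct, and its main route is genuinely different in structure from the paper's, although both ultimately rest on Proposition \ref{dRW-RBox-Basis}. The paper extracts from that proposition a single consequence: since $p\,\mathrm{Fil}^{n-1}W_n\Omega^l=0$ by Corollary \ref{FilWLambda}, multiplication by $p$ factors as $\tilde{p}\circ R$ with $\tilde{p}\colon W_{n-1}\Omega^l\to W_n\Omega^l$ injective, and it then proves the hard inclusion $\ker(p^i)\subseteq\mathrm{Fil}^{n-i}$ by a purely formal induction on $i$: if $p^{i+1}x=0$, the inductive hypothesis gives $px=\tilde{p}(Rx)\in\mathrm{Fil}^{n-i}=\ker(R^i)$, and the relation $R^i\tilde{p}=\tilde{p}R^i$ together with the injectivity of $\tilde{p}$ forces $R^{i+1}x=0$, i.e.\ $x\in\mathrm{Fil}^{n-i-1}$. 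You instead compute $\ker(p^i)$ and $\ker(R^i)=\mathrm{Fil}^{n-i}$ simultaneously, summand by summand in the basis of basic Witt differentials, and match them against the classical identity $\ker(p^i\colon W_m(k_{\mathcal{O}})\to W_m(k_{\mathcal{O}}))=V^{m-i}W_i(k_{\mathcal{O}})=\ker(R^i)$ over the perfect residue field; this buys an explicit description of the kernel, but costs the extra verification you flag, namely that $R$ acts coefficient-wise on the basis. That verification is genuine but routine: $R$ is a morphism of differential graded algebras commuting with $F$, $V$, $d$ and Teichm\"uller lifts, so it sends $\epsilon'(\xi,k,\mathcal{P})$ to $\epsilon'(R\xi,k,\mathcal{P})$ factor by factor (the $F^{-\tau}d[t]^{p^{\tau}k_I}$ and $d\log[t_i]$ factors are literally preserved), and no interaction between the pole part and the form degree arises. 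Note finally that the injectivity of $\tilde{p}$, which the paper quotes from Proposition \ref{dRW-RBox-Basis} without detail, would itself be checked by exactly the kind of coefficient-wise argument you carry out, so your route makes explicit what the paper leaves implicit; conversely, your fallback plan (the Illusie-style induction) is in essence the paper's actual proof.
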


\begin{proof}
We have $p(\mathrm{Fil}^{n-1} W_n \Omega^l) = 0$ by Corollary \ref{FilWLambda} and the induced morphism $\tilde{p} \colon W_{n-1} \Omega^l \to W_n \Omega^l$ is injective by Proposition \ref{dRW-RBox-Basis}. The inclusion $\mathrm{Fil}^{n-i} W_n \Omega^l \subset \ker(p^i \colon W_n \Omega^l \to W_n \Omega^l)$ is obvious. We show the opposite inclusion by induction on $i$. For $i = 0$, it is obvious. If $p^{i+1} x = 0$ for $x \in W_n \Omega^l$, we have $px \in \mathrm{Fil}^{n-i} W_n \Omega^l$ by the assumption of the induction. Then $x \in \mathrm{Fil}^{n-i-1}$ since the following diagram
\[\xymatrix{
W_n \Omega^l \ar[d]^R \ar[rd]^p \\ 
W_{n-1} \Omega^l \ar[d]^{R^{n-i}} \ar[r]^{\tilde{p}} & W_n \Omega^l \ar[d]^{R^{n-i}} \\ 
W_{i-1} \Omega^l \ar[r]^{\tilde{p}} & W_i \Omega^l,
}\]
is commutative and the maps $\tilde{p}$ in the diagram are injective.
\end{proof}

\begin{lemma}
\label{p-Fil1-quot}
For $n \geq 1$, the natural projection 
\[
W_{n+1} \Omega^{\ast} / pW_{n+1} \Omega^{\ast} \to 
W_n \Omega^{\ast} / pW_n \Omega^{\ast}
\]
is a quasi-isomorphism. Moreover, the natural projection 
\[
W_n \Omega^\ast / pW_n \Omega^\ast \to
W_n \Omega^\ast / \mathrm{Fil}^1 W_n \Omega^\ast
\]
is a quasi-isomorphism.
\end{lemma}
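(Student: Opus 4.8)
The plan is to deduce both assertions from a single acyclicity statement, which is the logarithmic analogue of Illusie's treatment of the de Rham--Witt complex modulo $p$. First I record two clean reductions. By the very definition of the filtration, $\mathrm{Fil}^1 W_n\Omega^\ast = \ker(R^{n-1}\colon W_n\Omega^\ast\to W_1\Omega^\ast = \Omega^\ast)$, and $R^{n-1}$ is a surjective morphism of complexes (each $R$ commutes with $d$ and is surjective); hence the second projection of the lemma is the canonical identification
\[
W_n\Omega^\ast/\mathrm{Fil}^1 W_n\Omega^\ast\xrightarrow{\ \sim\ }\Omega^\ast .
\]
Since $\Omega^\ast$ is a complex of $R^{\Box}_{k_{\mathcal{O}}}$-modules and $k_{\mathcal{O}}$ has characteristic $p$, multiplication by $p$ is zero on $\Omega^\ast$, so $R^{n-1}(p\,\omega)=p\,R^{n-1}(\omega)=0$ for every $\omega$; therefore $pW_n\Omega^\ast\subseteq\mathrm{Fil}^1 W_n\Omega^\ast$ and the projection $\pi_n\colon W_n\Omega^\ast/p\to\Omega^\ast$ is well defined. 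Finally, since $R\bmod p$ is a surjection of complexes, the first statement is equivalent to the acyclicity of its kernel; and because $\pi_{n+1}=\pi_n\circ(R\bmod p)$, once this acyclicity is established the second statement follows for all $n$ by induction (two-out-of-three), the base case $n=1$ being the identity $W_1\Omega^\ast/p=\Omega^\ast$.

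Thus everything reduces to proving the acyclicity of the kernel complex
\[
K^\ast\coloneqq\ker\bigl(R\bmod p\colon W_{n+1}\Omega^\ast/p\to W_n\Omega^\ast/p\bigr).
\]
Since $R$ is surjective with $\ker R=\mathrm{Fil}^{n}W_{n+1}\Omega^\ast$ (the case $i=n$ of the definition of the filtration), a short diagram chase gives the degreewise description $K^l\cong\mathrm{Fil}^{n}W_{n+1}\Omega^l/(\mathrm{Fil}^{n}W_{n+1}\Omega^l\cap pW_{n+1}\Omega^l)$. Corollary \ref{FilWLambda} identifies the numerator explicitly as $\mathrm{Fil}^{n}W_{n+1}\Omega^l=V^{n}\Omega^l+dV^{n}\Omega^{l-1}$, and Proposition \ref{dRW-RBox-Basis} shows this sum is direct; since the differential of $\mathrm{Fil}^{n}W_{n+1}\Omega^\ast$ sends $V^{n}a$ to $dV^{n}a$ and annihilates the $dV^{n}$-summand, the cohomology of the numerator alone is $\{V^{n}a : dV^{n}a=0\}$ in each degree.

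The heart of the matter, and the step I expect to be the principal obstacle, is to show that the subgroup $\mathrm{Fil}^{n}W_{n+1}\Omega^\ast\cap pW_{n+1}\Omega^\ast$ cuts out exactly this cohomology, so that $K^\ast$ becomes acyclic. This is where the Cartier isomorphism is indispensable: iterating $FdV=d$ gives $F^{n}dV^{n}=d$, so that $dV^{n}a=0$ already forces $da=0$, while the exact kernel of $dV^{n}$ on forms is governed by the subgroups $B_m\Omega^\ast\subseteq Z_m\Omega^\ast$ and the composite Cartier inverses $C^{-m}\colon\Omega^l\xrightarrow{\ \sim\ }Z_m\Omega^l/B_m\Omega^l$ recorded before Lemma \ref{Lambda-C-1}. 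Writing $p=\tilde p\,R$ as in the proof of Lemma \ref{kerpi} and expanding on the log $p$-basic Witt differentials of Proposition \ref{dRW-RBox-Basis}, one matches $\mathrm{Fil}^{n}W_{n+1}\Omega\cap pW_{n+1}\Omega$ against these Cartier-theoretic kernels; this identification is the log analogue of Illusie's computation of the graded pieces of the de Rham--Witt complex \cite[I.3]{Ill79}. The accompanying bookkeeping with $F$, $V$ and $d$ is routine once the Cartier isomorphism has been brought to bear.
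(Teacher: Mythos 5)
Your two reductions are sound and agree with the (partly implicit) structure of the paper's own proof: the ``Moreover'' statement does follow from the first statement by composing the projections $W_m\Omega^{\ast}/p \to W_{m-1}\Omega^{\ast}/p$ down to $W_1\Omega^{\ast}=\Omega^{\ast}$, and the first statement is indeed equivalent to acyclicity of the kernel complex $K^{\ast}\cong \mathrm{Fil}^{n}W_{n+1}\Omega^{\ast}/(\mathrm{Fil}^{n}W_{n+1}\Omega^{\ast}\cap pW_{n+1}\Omega^{\ast})$. But your intermediate claim that Proposition \ref{dRW-RBox-Basis} makes the sum $\mathrm{Fil}^{n}W_{n+1}\Omega^{l}=V^{n}\Omega^{l}+dV^{n}\Omega^{l-1}$ \emph{direct} is false. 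Already for a single coordinate $t$ one has, in $W_{2}\Omega^{1}$,
\[
V(t^{p-1}\,dt)=VF(d[t])=p\,d[t]=d(p[t])=dV(t^{p}),
\]
and $p\,d[t]\neq 0$ because $d[t]\notin \mathrm{Fil}^{1}W_{2}\Omega^{1}=\ker(p)$ (Lemma \ref{kerpi}); hence $V^{n}\Omega^{l}\cap dV^{n}\Omega^{l-1}\neq 0$. The basis of Proposition \ref{dRW-RBox-Basis} is adapted to the filtration, but the images of $V^{n}$ and $dV^{n}$ are not spanned by complementary sets of basis elements, so your identification of the cohomology of the numerator with $\{V^{n}a:\,dV^{n}a=0\}$ is at best correct after a further quotient.

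More seriously, the step you yourself call the heart of the matter --- that the subgroup $\mathrm{Fil}^{n}W_{n+1}\Omega^{\ast}\cap pW_{n+1}\Omega^{\ast}$ kills all cohomology of the numerator --- is never actually proved: it is deferred to ``matching against Cartier-theoretic kernels'' and ``routine bookkeeping.'' That is precisely the non-routine content of the lemma, and the paper's argument for it is short but indispensable. Given a cocycle of $K^{l}$, use Corollary \ref{FilWLambda} to represent a lift, modulo the boundaries $dV^{n}\Omega^{l-1}$, as $\tilde{x}=V^{n}y$ with $y\in\Omega^{l}$; the cocycle condition gives $dV^{n}y\in pW_{n+1}\Omega^{l+1}$, hence $dy=F^{n}dV^{n}y\in pF^{n}W_{n+1}\Omega^{l+1}=0$ in characteristic $p$; surjectivity of the Cartier operator (Lemma \ref{Lambda-C-1}) together with $FdV=d$ shows every closed form lies in $FW_{2}\Omega^{l}$, so $y=F(z)$ for some $z\in W_{2}\Omega^{l}$; and then
\[
\tilde{x}=V^{n}F(z)=pV^{n-1}(z)\in \mathrm{Fil}^{n}W_{n+1}\Omega^{l}\cap pW_{n+1}\Omega^{l},
\]
so the class vanishes in $K^{l}$. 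Note that only the $m=1$ Cartier isomorphism is needed here (through $Z\Omega^{l}=FW_{2}\Omega^{l}$), not the composite inverses $C^{-m}$ or the groups $B_{m}\subseteq Z_{m}$, and no expansion in log basic Witt differentials enters at this point. Without this chain of identities ($F^{n}dV^{n}=d$, Cartier surjectivity, $V^{n}F=pV^{n-1}$), your proposal has a genuine gap.
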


\begin{proof}
We have the morphism of exact sequences 
\[\xymatrix{
0 \ar[r] & \mathrm{gr}^n W\Omega^l \ar[r] \ar[d]^0 & W_{n+1} \Omega^l \ar[r] \ar[d]^p & W_n \Omega^l \ar[r] \ar[d]^p & 0 \\
0 \ar[r] & \mathrm{gr}^n W\Omega^l \ar[r] & W_{n+1} \Omega^l \ar[r] & W_n \Omega^l \ar[r] & 0.
}\]
By Lemma \ref{kerpi}, we have the long exact sequence 
\[
0 \to \mathrm{gr}^{n-1} W\Omega^l \to \mathrm{gr}^n W\Omega^l \to 
W_{n+1} \Omega^l / p \to W_{n+1} \Omega^l / p \to 0,
\]
where the second morphism is induced from the morphism $\tilde{p}$ in the proof of Lemma \ref{kerpi}. Thus, it suffices to show the complex 
\[
\mathrm{gr}^n W\Omega^\ast / \tilde{p} \mathrm{gr}^{n-1} W\Omega^\ast = \mathrm{gr}^n W\Omega^\ast / p \mathrm{gr}^n W\Omega^\ast = \mathrm{Fil}^n W_{n+1} \Omega^\ast / p\mathrm{Fil}^n W_{n+1} \Omega^\ast
\]
is acyclic. If $x \in \mathrm{Fil}^n W_{n+1} \Omega^l / p\mathrm{Fil}^n W_{n+1} \Omega^l$ such that $dx \in p W_{n+1} \Omega^{l+1}$, we show 
\[
x \in p \mathrm{Fil}^n W_{n+1} \Omega^l / (p \mathrm{Fil}^n W_{n+1} \Omega^l \cap d\mathrm{Fil}^n W_{n+1} \Omega^{l-1}).
\]
We can take a lift $\tilde{x}$ of $x$ on $\mathrm{Fil}^n W_{n+1} \Omega^l$ and write $\tilde{x} = V^n y$ where $y \in \Omega^l$. Then we have $dy = F^n d V^n y = 0$ and so there exists $z \in W_2 \Omega^l$ such that $F(z) = y$. Thus we have 
\[
\tilde{x} = V^n y = V^n F (z) = pV^{n-1} (z) \in \tilde{p} (\mathrm{Fil}^{n-1} W_n \Omega^l) = p(\mathrm{Fil}^n W_{n+1} \Omega^l)
\]
by $VF = p$ for the case of Witt rings of an $\mathbb{F}_p$-algebra. Therefore, the claim is verified. 
\end{proof}

\begin{lemma}
\label{kerd}
For $n \geq 1$, $i \geq 0$ and $l \geq 0$, we have
\[
\mathrm{ker}(F^i d \colon W_{n+i} \Omega^l \to W_n \Omega^{l+1}) = F^n W_{2n+i} \Omega^l.
\]
In particular, we have
\[
ZW_n \Omega^l = F^n W_{2n} \Omega^l.
\]
\end{lemma}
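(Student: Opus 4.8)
The plan is to prove the displayed equality by the two inclusions, the inclusion $\supseteq$ being a short computation with the operators and the inclusion $\subseteq$ the substantive part, carried out by induction on $n$ following the method of \cite[Proposition I.3.11]{Ill79}. The ``in particular'' assertion $ZW_n\Omega^l = F^n W_{2n}\Omega^l$ is the case $i=0$, so it is enough to treat the general statement. For $\supseteq$, I would use the relations $FdV=d$ and $FV=VF=p$ valid on the log de Rham--Witt complex, which formally yield $dF = pFd$ and hence $dF^n = p^n F^n d$. Thus for $y \in W_{2n+i}\Omega^l$ one gets
\[
F^i d(F^n y) = F^i\, p^n F^n dy = p^n F^{n+i} dy \in p^n W_n\Omega^{l+1},
\]
and by Lemma \ref{kerpi} (with the exponent equal to $n$) the map $p^n$ annihilates $W_n\Omega^{l+1}$, so $F^i d(F^n y)=0$. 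This proves $F^n W_{2n+i}\Omega^l \subseteq \ker(F^i d)$.

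For $\subseteq$ I would induct on $n$, the case $n=0$ being trivial since $W_0\Omega=0$. Given $x \in W_{n+i}\Omega^l$ with $F^i dx = 0$, the fact that $R$ commutes with $F$ and $d$ gives $F^i d(Rx) = R(F^i dx) = 0$, so the inductive hypothesis yields $Rx \in F^{n-1}W_{2n-2+i}\Omega^l$. Since by Corollary \ref{FilWLambda} one has $\ker R = \mathrm{Fil}^{n+i-1}W_{n+i}\Omega^l = V^{n+i-1}\Omega^l + dV^{n+i-1}\Omega^{l-1}$, the problem reduces to two tasks: lifting the relation for $Rx$ to an element $x_1 \in F^n W_{2n+i}\Omega^l$ with $Rx_1 = Rx$, and then showing that the remainder $x-x_1 \in \ker R$, still lying in $\ker(F^i d)$, belongs to $F^n W_{2n+i}\Omega^l$. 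The second task is where the Cartier isomorphism enters: by Lemma \ref{Lambda-C-1} the map $F\colon W_2\Omega^l \to \Omega^l$ realizes $C^{-1}$ modulo $d\Omega^{l-1}$, and together with $d\Omega^{l-1} = F(dV\Omega^{l-1}) \subseteq F W_2\Omega^l$ (from $FdV=d$) this identifies $F W_2\Omega^l$ with $Z\Omega^l = \ker(d\colon \Omega^l\to\Omega^{l+1})$, which is exactly the mechanism needed to absorb the $dV^{n+i-1}\Omega^{l-1}$ summand of $\ker R$.

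The main obstacle is the coupling between the kernel-of-$d$ statement being proved and the corresponding image-of-$F$ statement: producing $x_1$ requires knowing that $Rx = F^{n-1}z$ actually lies in $\operatorname{im}(F^n)$ modulo $\ker F^{n-1}$, i.e.\ that closedness under $F^i d$ forces the Witt coefficient $z$ into the image of $F$, which is a Cartier-type condition on the iterated cycles and boundaries $Z_\bullet, B_\bullet$. I do not expect this to close by formal manipulation of $R,F,V,d$ alone; I would resolve it using the explicit log basic Witt differentials $\epsilon'(\xi,k,\mathcal{P})$ of Proposition \ref{dRW-RBox-Basis}, on which $F,V,d,R$ act by transparent rules on the weight $k$, the partition $\mathcal{P}$ and the coefficient $\xi$. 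With these normal forms both $F^n W_{2n+i}\Omega^l$ and $\ker(F^i d)$ can be described termwise, so one can check directly that the $V^{n+i-1}\Omega^l$-part and the $dV^{n+i-1}\Omega^{l-1}$-part of $\ker R \cap \ker(F^i d)$ each lie in $F^n W_{2n+i}\Omega^l$, thereby closing the induction. This is the semistable analogue of the Illusie--Langer--Zink computation, and the explicit basis is precisely what makes the bootstrapping effective.
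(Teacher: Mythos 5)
Your inclusion $F^nW_{2n+i}\Omega^l \subseteq \ker(F^id)$ is correct and essentially coincides with the paper's argument (both rest on $dF^n = p^nF^nd$ together with Lemma \ref{kerpi}). The problem is the reverse inclusion: what you give is a reduction, not a proof, and the reduction pushes the entire content of the lemma into steps you never carry out. Concretely, in your induction on $n$ the case $n=1$ already requires the full statement: the inductive hypothesis only returns $Rx \in F^0W_i\Omega^l = W_i\Omega^l$, so your task (a) (produce $x_1 \in F^nW_{2n+i}\Omega^l$ with $Rx_1 = Rx$) is exactly the assertion being proved, one level down. Your task (b) likewise contains a sub-problem of full strength: writing $\ker R = V^{n+i-1}\Omega^l + dV^{n+i-1}\Omega^{l-1}$ by Corollary \ref{FilWLambda}, the summand $dV^{n+i-1}b$ is indeed harmless, since $dV^{n+i-1}b = F^n(dV^{2n+i-1}b)$, but for the other summand one computes $F^id(V^{n+i-1}a) = dV^{n-1}a$, so one must show that $dV^{n-1}a = 0$ forces $V^{n+i-1}a \in F^nW_{2n+i}\Omega^l$ --- again a Cartier-type statement, and invoking the case $i=0$ of the lemma at level $n$ here would be circular. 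You acknowledge this obstacle and propose to settle it by a termwise computation with the log basic Witt differentials of Proposition \ref{dRW-RBox-Basis}; that is a plausible Langer--Zink-style program, but none of it is executed (how $F^id$ acts on each type of basic Witt differential, and what $\ker(F^id)$ and $F^nW_{2n+i}\Omega^l$ look like weight-by-weight), and that computation is where all the mathematical content lies. As written, the proof has a genuine gap.

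For comparison, the paper closes this point without any basis computation and without inducting on the level $n$: it inducts on the $F$-divisibility exponent $r$, proving $x \in F^rW_{n+i+r}\Omega^l$ for $0 \leq r \leq n$. Writing $x = F^ry$, Lemma \ref{kerpi} gives $F^{i+r}dy \in \mathrm{Fil}^1W_n\Omega^{l+1}$, hence $C^{-(i+r)}d\overline{y} = 0$ and thus $d\overline{y} = 0$ by injectivity of the inverse Cartier operator; then Lemma \ref{p-Fil1-quot} (the mod-$p$ quasi-isomorphisms, which your argument never uses) identifies the class of $y$ modulo $p$ with a cohomology class of $\Omega^{\ast}$, the level-one Cartier isomorphism yields $y = Fa + pb + dc$, and the identity $Fa + pb + dc = F(a + Vb + dVc)$ (from $p = FV$ and $d = FdV$) upgrades $r$ to $r+1$. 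If you want to salvage your route, the cleanest fix is to drop the induction on $n$ and adopt this bootstrapping; otherwise you must actually supply the termwise description of $\ker(F^id)$ on log basic Witt differentials.
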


\begin{proof}
We shall show the inclusion $\supset$. Take an element $x \in W_{2n+i} \Omega^l$. Then we have
\[
dF^n (x) = F^n d V^n F^n (x) = F^n d (V^n (1) x) = F^n (V^n (1) dx) = p^n F^n dx.
\]
Hence, we have $F^i dF^n (x) = F^i p^n F^n dx = p^n F^{n+i} dx$. Since $F^{n+i} dx \in W_n \Omega^{l+1}$ and $W_n \Omega^{l+1}$ is killed by $p^n$, we have $F^i d F^n (x) = 0$ and the inclusion $\supset$ is proved. 

We shall show the opposite inclusion $\subset$. 
We take an element $x \in W_{n+i} \Omega^l$ satisfying $F^i dx = 0$.
We claim
$x \in F^r W_{n+i+r} \Omega^l$ 
for $0 \leq r \leq n$. We prove the claim by induction on $r$. The case $r = 0$ is obvious. We assume the claim is proved for $r < n$. By assumption, we may write $x = F^r y$ for some $y \in W_{n+i+r} \Omega^l$. Then we have $F^i dx = F^i d(F^r y) = p^r F^{i+r} dy = 0$. By Lemma \ref{kerpi}, we have $F^{i+r} dy \in \mathrm{Fil}^{n-r} W_n \Omega^{l+1} \subset \mathrm{Fil}^1 W_n \Omega^{l+1}$. Let $\overline{y}$ be the image of $y$ by $R^{n+i+r-1} \colon W_{n+i+r} \Omega^l \to \Omega^l$. Since $C^{-(i+r)} d\overline{y} = 0$, we have $d\overline{y} = 0$. By Lemma \ref{p-Fil1-quot}, the $l$-th cohomology class given by $y \ \mathrm{mod} \ pW_{n+i+r} \Omega^l$ of the complex $W_{n+i+r} \Omega^{\ast} / pW_{n+i+r} \Omega^{\ast}$ is generated by $F$ by the Cartier isomorphism. So we can write $y = Fa + pb + dc$ for some $a \in W_{n+i+r+1} \Omega^l$, $b \in W_{n+i+r} \Omega^l$ and $c \in W_{n+i+r} \Omega^{l-1}$. Since 
\[
y = Fa + pb + dc = F(a + Vb + dVc)
\in FW_{n+i+r+1}\Omega^l,
\]
we have $x = F^r y \in F^{r+1} W_{n+i+r+1} \Omega^l$, and the claim is proved for $r+1$. Hence the claim is proved for every $r$ with $0 \leq r \leq n$. From the case $r = n$ of the claim, we have $x \in F^n W_{2n+i} \Omega^l$.
\end{proof}

\begin{lemma}
\label{kerFi}
For $i \geq 0$, $n \geq 1$ and $l \geq 0$, we have 
\[
\mathrm{ker} (F^i \colon W_{n+i} \Omega^l \to W_n \Omega^l) = V^i W_n \Omega^l.
\]
\end{lemma}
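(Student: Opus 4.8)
The plan is to adapt Illusie's computation of Frobenius kernels (\cite[Proposition I.3.21]{Ill79}) to the logarithmic setting, exactly as the preceding Lemmas \ref{kerpi}--\ref{kerd} adapt his earlier results to $W_n\Omega^\ast$. I prove the equality $\ker(F^i \colon W_{n+i}\Omega^l \to W_n\Omega^l) = V^i W_n\Omega^l$ by the two inclusions separately. For $\supseteq$, the idea is to check that $F^i$ annihilates the generators of $V^i W_n\Omega^l$ by a direct manipulation of the procomplex relations $FV = p$ and $VF = p$, both of which are available here because $R_{k_{\mathcal{O}}}^{\Box}$ is an $\mathbb{F}_p$-algebra (this is the same input used in Lemma \ref{p-Fil1-quot}). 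Combining these relations with Lemma \ref{kerpi}, which identifies the kernel of a power of $p$ on $W_n\Omega^l$ with a step of the $V$-filtration, and with Corollary \ref{FilWLambda}, which expresses that filtration in terms of $V$ and $dV$, one sees that the relevant iterate of $F$ applied to a Verschiebung lands in a filtration level killed by the pertinent power of $p$, hence vanishes.

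For the reverse inclusion $\subseteq$, I would argue by induction on $i$, the engine being the mod-$p$ Cartier isomorphism. The base case $i=1$ is the heart of the matter: given $x \in W_{n+1}\Omega^l$ with $Fx = 0$, I reduce modulo $p$ and invoke Lemma \ref{p-Fil1-quot}, which shows that $W_{n+1}\Omega^\ast/p$ has the same cohomology as $\Omega^\ast$ and that $F$ realizes the Cartier isomorphism $C^{-1}$ of Lemma \ref{Lambda-C-1}. This forces $x$ to lie in the image of $V$ up to a differential, after which the explicit description of Proposition \ref{dRW-RBox-Basis} pins down the class exactly. The inductive step factors $F^i = F^{i-1}\circ F$ and combines the case $i=1$ with the induction hypothesis, using Lemma \ref{kerd} (the computation of $\ker(F^i d)$) to control the contribution of the differential $d$, so that an $F^i$-closed class is recognised as a genuine iterated Verschiebung rather than a $dV$-term.

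The main obstacle will be the bookkeeping in this induction: one must track how the truncation level, the $V$-filtration of Corollary \ref{FilWLambda}, and the Cartier operator interact when $F$ and $V$ are iterated, and verify that no spurious $dV$-contributions survive in the final answer. This is precisely where the explicit log basic Witt differentials of Proposition \ref{dRW-RBox-Basis} are indispensable: they allow membership in $V^i W_n\Omega^l$ to be decided coefficient-by-coefficient, reducing the delicate homological step to the combinatorics of weights and partitions already established in Section \ref{LdRWC-LdRWc}, in the spirit of Illusie's original argument and its logarithmic refinement by Matsuue (\cite[Proposition 4.3]{Mat15}).
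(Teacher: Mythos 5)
Your proposal founders on the ``easy'' inclusion, and the failure is instructive: the inclusion $V^i W_n \Omega^l \subseteq \ker(F^i \colon W_{n+i}\Omega^l \to W_n\Omega^l)$ is \emph{false} whenever $i < n$. From the procomplex relation $FV = p$ one gets $F^i V^i = p^i$ as an endomorphism of $W_n\Omega^l$, and $p^i$ does not annihilate $W_n\Omega^l$ for $i < n$ (Lemma \ref{kerpi} identifies $\ker p^i$ with the proper submodule $\mathrm{Fil}^{n-i} W_n\Omega^l$). Concretely, already in degree $l=0$ with $i=1$, $n \geq 2$: $F(V(1)) = p\cdot 1 = V(1) \neq 0$ in $W_n(R^{\Box}_{k_{\mathcal{O}}})$. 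No manipulation with Corollary \ref{FilWLambda} can repair this, since the obstruction $p^i x$ ranges over all of $p^i W_n\Omega^l$. What is true --- and what the paper's own proof actually establishes --- is the statement with the indices transposed on the right-hand side: $\ker(F^i \colon W_{n+i}\Omega^l \to W_n\Omega^l) = V^n W_i\Omega^l$, the log analogue of \cite[Proposition I.3.21]{Ill79}. Indeed, the paper's first display reads $\ker F^i \subseteq \ker p^i = \mathrm{Fil}^n W_{n+i}\Omega^l = V^n W_i\Omega^l + dV^n W_i\Omega^{l-1}$, and its descending induction terminates at $\ker F^i \subseteq V^n W_i\Omega^l$; the printed right-hand side $V^i W_n\Omega^l$ is a misprint, harmless downstream because the sole application (Proposition \ref{k-logdRW}) has $i = n$, where the two coincide. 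For the corrected statement the easy inclusion does hold: for $x \in W_i\Omega^l$ and $i \leq n$ one has $F^i V^n x = p^i V^{n-i} x = V^{n-i}(p^i x) = 0$ since $p^i$ kills $W_i\Omega^l$, and for $i > n$ one has $F^i V^n x = p^n F^{i-n} x = 0$ since $F^{i-n}x \in W_n\Omega^l$ is killed by $p^n$.

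Your strategy for the hard inclusion also diverges structurally from the paper's, and as written it aims at the wrong module. The paper does not induct on $i$: after the containment above, it removes the $dV$-term by a single descending induction on the filtration level, proving $(\ker F^i) \cap (V^n W_i\Omega^l + \mathrm{Fil}^m W_{n+i}\Omega^l) \subseteq V^n W_i\Omega^l + \mathrm{Fil}^{m+1} W_{n+i}\Omega^l$ for $n \leq m \leq n+i-1$, applied to elements $z = V^n x + dV^m y$ with $F^i dV^m y = 0$. The two cases of that step use precisely the tools you name --- Lemma \ref{kerd} (via $dF^n = p^n F^n d$) when $i \leq m$, and the injectivity of $C^{-1}$ together with Lemma \ref{p-Fil1-quot} (writing $y = Fa + b$ with $b \in \mathrm{Fil}^1$) when $i > m$ --- but they are applied to the differential term $dV^m y$, not to $x$ itself, and Proposition \ref{dRW-RBox-Basis} enters only through Lemma \ref{kerpi} and Corollary \ref{FilWLambda}, not through any coefficient-by-coefficient analysis of basic Witt differentials. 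If you replace your target by $V^n W_i\Omega^l$ and reorganize your induction on $i$ as this filtration descent, your ingredients do suffice; as it stands, however, the proposal establishes neither inclusion of the identity it asserts.
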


\begin{proof}
In the case of $i = 0$ is trivial, so we assume $i \geq 1$. We have 
\[
\mathrm{ker} \ F^i \subset \mathrm{ker} \ p^i =  
\mathrm{Fil}^n W_{n+i} \Omega^l = 
V^n W_i \Omega^l + dV^n W_i \Omega^{l-1}
\]
by Corollary \ref{FilWLambda} and Lemma \ref{kerpi}. Then it suffices to show that we prove 
\[
(\mathrm{ker} \ F^i) \cap (V^n W_i \Omega^l + \mathrm{Fil}^{m} W_{n+i} \Omega^l) \subset 
V^n W_i \Omega^l + \mathrm{Fil}^{m+1} W_{n+i} \Omega^l
\]
for $n \leq m \leq n+i-1$. We take $x \in W_i \Omega^l$, $y \in W_{n+i-m} \Omega^{l-1}$ and $z = V^n x + dV^m y$ such that $F^i z = F^i dV^m y = 0$. If $i \leq m$, we have $dV^{m-i} y = 0$. It follows that $V^{m-i} y \in F^n W_{2n} \Omega^{l-1}$ by Lemma \ref{kerd}. So we have 
\[
dV^m y \in dV^i F^n W_{2n} \Omega^{l-1} \subset p^n W_{n+i} \Omega^l \subset V^n W_i \Omega^l
\]
since $dF^n = p^n F^n$. If $i > m$, we have $F^{i-m} d = 0$. Let $\overline{y}$ be the image of $y$ in $\Omega^{l-1}$. It follows $d\overline{y} = 0$ from $C^{-(i-m)} d\overline{y} = 0$ and the injectivity of $C^{-1}$. Then there is $a \in W_{n+i-m+1} \Omega^{l-1}$ and $b \in \mathrm{Fil}^1 W_{n+i-m} \Omega^{l-1}$ such that $y = Fa + b$ by Lemma \ref{p-Fil1-quot} and usual log Cartier isomorphism. Since $dV^m b \in \mathrm{Fil}^{m+1} W_{n+i} \Omega^{l-1}$ by the commutativity of $R$ and $V$, we have 
\[
dV^m y = VFdV^{m-1} y + dV^m b \in V^n W_i \Omega^l + \mathrm{Fil}^{m+1} W_{n+i} \Omega^l. 
\]
\end{proof}

We prove $p$-adic Cartier isomorphism of the base $k_{\mathcal{O}}$. 

\begin{proposition}
\label{k-logdRW}
For $n \geq 1$ and $l \geq 0$, There exists a unique isomorphism 
\[
C^{-n} \colon W_n \Omega^l \to H^l (W_n \Omega^\ast)
\]
such that the following diagram is commutative, 
\[\xymatrix{
W_{2n} \Omega^l \ar[r] \ar[d]^{F^n} & W_n \Omega^l \ar[d]^{C^{-n}} \\
ZW_n \Omega^l \ar[r] & H^l (W_n \Omega^\ast). 
}\]
Here the horizontal morphisms are the natural projections. 
\end{proposition}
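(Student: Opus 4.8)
The plan is to produce $C^{-n}$ as the factorization of the given diagram and then verify it is an isomorphism using the three kernel computations already at hand. First I would observe that the top arrow $R^n\colon W_{2n}\Omega^l\to W_n\Omega^l$ is surjective and has kernel $\mathrm{Fil}^n W_{2n}\Omega^l$ by definition of the filtration (as $n<2n$). Hence it suffices to show that the composite $W_{2n}\Omega^l\xrightarrow{F^n}ZW_n\Omega^l\to H^l(W_n\Omega^\ast)$ annihilates $\ker(R^n)$, for then it descends uniquely along $R^n$ to a map $C^{-n}\colon W_n\Omega^l\to H^l(W_n\Omega^\ast)$, and the required diagram commutes tautologically; uniqueness is forced by the surjectivity of $R^n$.

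For the well-definedness I would invoke Corollary \ref{FilWLambda} in the form $\mathrm{Fil}^n W_{2n}\Omega^l = V^n W_n\Omega^l + dV^n W_n\Omega^{l-1}$ and compute $F^n$ on the two summands. The procomplex relations give $F^nV^n = p^n$ and $F^n dV^n = d$ on the relevant truncations (both by iterating $FV=p$, resp.\ $FdV=d$, and using that $F$ is a graded ring map, so commutes with multiplication by $p$). Combined with Lemma \ref{kerpi}, which yields $p^n=0$ on $W_n\Omega^l$ since $\mathrm{Fil}^{0}W_n\Omega^l=W_n\Omega^l$, one gets
\[
F^n\big(\mathrm{Fil}^n W_{2n}\Omega^l\big)=p^nW_n\Omega^l + dW_n\Omega^{l-1}=BW_n\Omega^l,
\]
whose image in $H^l(W_n\Omega^\ast)=ZW_n\Omega^l/BW_n\Omega^l$ vanishes. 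This is exactly the factorization needed.

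It remains to prove $C^{-n}$ is bijective. Surjectivity is immediate from Lemma \ref{kerd}, since $ZW_n\Omega^l=F^nW_{2n}\Omega^l$ means every cohomology class is the image of some $F^n\tilde x$, i.e.\ equals $C^{-n}(R^n\tilde x)$. For injectivity I would take $x$ with $C^{-n}(x)=0$, lift it to $\tilde x\in W_{2n}\Omega^l$ with $R^n\tilde x=x$, so that $F^n\tilde x=dc$ for some $c\in W_n\Omega^{l-1}$. Rewriting $dc=F^n(dV^n c)$ via $F^ndV^n=d$ gives $F^n(\tilde x - dV^n c)=0$, whence $\tilde x - dV^n c\in V^nW_n\Omega^l$ by Lemma \ref{kerFi} (the case $i=n$). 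Thus $\tilde x=V^nw+dV^nc$ lies in $\mathrm{Fil}^nW_{2n}\Omega^l$, and applying $R^n$, which kills $\mathrm{Fil}^n$, yields $x=0$.

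The essential mathematical content is precisely the identities $ZW_n\Omega^l=F^nW_{2n}\Omega^l$ and $\ker(F^n)=V^nW_n\Omega^l$, established earlier by the Illusie--Raynaud method in Lemmas \ref{kerd} and \ref{kerFi}; once these are granted the proposition is a formal diagram chase. The only delicate point to keep straight is the bookkeeping of truncation levels in the relations $F^nV^n=p^n$ and $F^ndV^n=d$ against the filtration description of Corollary \ref{FilWLambda}, so I expect the sole place requiring genuine care is matching these operator identities with the correct source and target truncations rather than any deeper obstruction.
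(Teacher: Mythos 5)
Your proposal is correct and follows essentially the same route as the paper's proof: well-definedness and uniqueness via $F^n(\mathrm{Fil}^n W_{2n}\Omega^l) = F^n(V^n W_n\Omega^l + dV^n W_n\Omega^{l-1}) \subset BW_n\Omega^l$ from Corollary \ref{FilWLambda}, surjectivity from Lemma \ref{kerd}, and injectivity by the identical manipulation $F^n(\tilde{x} - dV^n c) = 0$ followed by Lemma \ref{kerFi}. The only cosmetic difference is that you spell out the operator identities $F^nV^n = p^n = 0$ and $F^ndV^n = d$, which the paper leaves implicit.
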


\begin{proof}
The uniqueness follows from  
\[
F^n (\mathrm{Fil}^n W_{2n} \Omega^l) = F^n (V^n W_n \Omega^l + dV^n W_n \Omega^{l-1}) \subset BW_n \Omega^l
\]
by Corollary \ref{FilWLambda} and surjectivity follows from Lemma \ref{kerd}. For injectivity, set $x \in W_{2n} \Omega^l$ and $y \in W_n \Omega^{l-1}$ such that $F^n x = dy$. Then we have $F^n (x - dV^n y) = 0$. Hence $x - dV^n y \in V^n W_n \Omega^l$ by Lemma \ref{kerFi}. So we have $x \in \mathrm{Fil}^n W_{2n} \Omega^l$ and the injectivity follows. 
\end{proof}

\subsection{$p$-adic Cartier isomorphisms}
\label{LdRWC-padicCisom}

We use the notations in the section \ref{sec;AinfCoh}.

We shall define the algebras 
\begin{align*}
\mathcal{A} &\coloneqq A_{\inf} [X_0, \ldots, X_r, X_{r+1}^{\pm 1}, \ldots, X_d^{\pm 1}] /(X_0 \cdots X_r - [({p^{\flat}})^q]), \\ 
\mathcal{A}_{\infty} &\coloneqq \varinjlim_m A_{\inf} [X_0^{1/p^m}, \ldots, X_r^{1/p^m}, X_{r+1}^{\pm 1/p^m}, \ldots, X_d^{\pm 1/p^m}]/(\prod_{i=0}^r X_i^{1/p^m} - [({p^{\flat}})^{q/p^m}]).
\end{align*}
Recall that $\Delta'$ denote the free abelian subgroup of $\Delta$ which is generated by $\delta_i, \ 1 \leq i \leq d$. We define actions of $\Delta'$ on $\mathcal{A}$ (resp. $\mathcal{A}_{\infty}$) as those of $A(R)^{\Box}$ and $A_{\inf} (R_{\infty}^{\Box})$. We set $D \coloneqq R\Gamma (\Delta', \mathcal{A})$ and $D_{\infty} \coloneqq R\Gamma (\Delta', \mathcal{A}_{\infty})$. The $(p, \mu)$-adic completion of $\mathcal{A}, \mathcal{A}_{\infty}$ is respectively $A(R)^{\Box}, A_{\inf} (R_{\infty}^{\Box})$ by definition. For a $\mathbb{Z}_p$-algebra $S$ with a prelog structure $\alpha_S : M_S \to S$ and a fixed $\pi \in M_S$, we recall the algebra 
\[
R_S^{\Box} = R_{S, \pi}^{\Box} \coloneqq S [t_0, \ldots, t_r, t_{r+1}^{\pm 1}, \ldots, t_d^{\pm 1}] /(t_0 \cdots t_r - \alpha_S(\pi)).
\]
with the prelog structure $N_S \to R_S^{\Box}$. We mainly consider the case $S = \mathcal{O}'$ and $\pi = p^q$. 
We define the object 
\[
\mathcal{W}_n^i (D_{\infty}) \coloneqq 
H^i (L\eta_{\mu} D_{\infty} \otimes_{A_{\inf}, \tilde{\theta}_n}^L A_{\inf} / \tilde{\xi}_n)
\]
for $n \geq 1$ and $i \in \mathbb{Z}$. Here $L\eta$ is the d\'ecalage functor as defined in \cite[Section 6]{BMS18}. There is a decomposition 
\[
\eta_{\mu} D_{\infty} \cong 
\eta_{[\epsilon] - 1} K_{\mathcal{A}_{\infty}} (\delta_1 - 1,\ldots,\delta_d - 1) \cong 
\bigoplus_{\substack{k \colon \{0,\ldots,d\} \to \mathbb{Z} [1/p] \\ 
k([0, r]) \subset \mathbb{Z}_{\geq 0} [1/p] \\ 
[0, r] \not\subset \Supp k}}K_{A_{\inf}} (\delta_1 - 1,\ldots,\delta_d - 1)
\]
by \cite[Lemma 7.3]{BMS18} and \cite[Lemma 7.5]{BMS18}. For $a \in \mathbb{Z}\left[ \tfrac{1}{p} \right] \backslash \mathbb{Z}$, we have $[\epsilon]^a - 1$ divides $[\epsilon] - 1$. This implies that 
\[
\eta_{\mu} D \cong 
\eta_{\mu} K_{\mathcal{A}} (\delta_1 - 1,\ldots,\delta_d - 1) \to 
\eta_{\mu} K_{\mathcal{A}_{\infty}} (\delta_1 - 1,\ldots,\delta_d - 1) \cong 
\eta_{\mu} D_{\infty}
\]
is a quasi-isomorphism by \cite[Lemma 7.9]{BMS18}. Moreover, we have the following lemma.
\begin{lemma}
\label{WnD-Koszul}
There is a natural quasi-isomorphism
\[
[\epsilon] \textrm{-} \Omega_{\mathcal{A}_{\infty} / A_{\inf}}^{\ast} \to L\eta_{\mu} D_{\infty}
\]
and the natural map
\[
(L\eta_{\mu} D_{\infty}) \otimes_{A_{\inf}, \tilde{\theta}_n}^L A_{\inf} / \tilde{\xi}_n \to 
L\eta_{[\zeta_{p^n}] - 1} (D \otimes_{A_{\inf}, \tilde{\theta}_n}^L A_{\inf} / \tilde{\xi}_n)
\]
is a quasi-isomorphism.
\end{lemma}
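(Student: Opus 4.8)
The plan is to mirror the proof of Lemma \ref{ALambda-WnLambda}, since $\mathcal{A}$ and $\mathcal{A}_{\infty}$ are exactly the framed coordinate rings underlying $A(R^{\Box})$ and $A_{\inf}(R_{\infty}^{\Box})$ before $(p,\mu)$-adic completion. For the first quasi-isomorphism, I would start from the identification $D_{\infty} = R\Gamma(\Delta', \mathcal{A}_{\infty}) = K_{\mathcal{A}_{\infty}}(\delta_1 - 1, \ldots, \delta_d - 1)$, valid because $\Delta'$ is free abelian of rank $d$. Decomposing $\mathcal{A}_{\infty}$ into its $\Delta'$-weight eigenspaces (as in the displayed decomposition preceding the lemma), on the weight-$k$ summand each $\delta_i - 1$ acts as a scalar $[\epsilon^{a}] - 1 \in A_{\inf}$ for the appropriate exponent $a = a_i(k)$. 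Applying \cite[Lemma 7.9]{BMS18} summand by summand then computes $L\eta_{\mu}$: for an integral weight the scalar is divisible by $\mu = [\epsilon] - 1$ with quotient the corresponding $q$-derivative, so $L\eta_{\mu}$ yields the Koszul complex of the operators $\tfrac{\delta_i - 1}{[\epsilon] - 1}$, which is $[\epsilon]\textrm{-}\Omega_{\mathcal{A}_{\infty}/A_{\inf}}^{\ast}$; for a non-integral weight the divisibility $[\epsilon]^{a} - 1 \mid [\epsilon] - 1$ recorded above makes that summand $L\eta_{\mu}$-acyclic, consistently with the already-established quasi-isomorphism $\eta_{\mu} D \xrightarrow{\sim} \eta_{\mu} D_{\infty}$. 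Reassembling the summands, with \cite[Corollary 6.5]{BMS18} to pass between group cohomology and its d\'ecalage, exhibits the first map as a quasi-isomorphism.

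For the second quasi-isomorphism I would argue as in the second half of the proof of Lemma \ref{ALambda-WnLambda}, the content being that $L\eta$ commutes with the derived base change along $\tilde{\theta}_n \colon A_{\inf} \to W_n(\mathcal{O})$, under which $\mu \mapsto [\zeta_{p^n}] - 1$ and $\tilde{\xi}_n \mapsto 0$. Using $\eta_{\mu} D \xrightarrow{\sim} \eta_{\mu} D_{\infty}$ it suffices to treat $D$. By \cite[5.16]{Bha18} the desired isomorphism $(L\eta_{\mu} D) \otimes_{A_{\inf}}^{L} A_{\inf}/\tilde{\xi}_n \cong L\eta_{[\zeta_{p^n}] - 1}(D \otimes_{A_{\inf}}^{L} A_{\inf}/\tilde{\xi}_n)$ holds once the cohomology of $D \otimes_{A_{\inf}}^{L} A_{\inf}/\mu$ is shown to carry no nonzero $\tilde{\xi}_n$-torsion. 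Since $p^n \equiv \tilde{\xi}_n \pmod{\mu}$ by \cite[Proposition 3.17]{BMS18}, this reduces to the absence of $p^n$-torsion, which I would extract from the natural injection of the Koszul cohomology $H^i(K_{\mathcal{A}}(\delta_1 - 1, \ldots, \delta_d - 1) \otimes_{A_{\inf}}^{L} A_{\inf}/\mu)$ into $H_{\cont}^i(\Delta', \mathcal{A}_{\infty}/\mu)$, the latter being free of nonzero $p^n$-torsion by \cite[Proposition 3.19]{CK19}.

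The step I expect to be the main obstacle is precisely this torsion-freeness input in the second part: authorizing the interchange of $L\eta$ with reduction modulo $\tilde{\xi}_n$ rests on the vanishing of $p^n$-torsion in the relevant cohomology, which in the semistable situation is supplied by \cite[Proposition 3.19]{CK19} rather than by the smooth computation of \cite{BMS18}. A secondary point requiring care is that $\mathcal{A}$ and $\mathcal{A}_{\infty}$ are the uncompleted polynomial-type coordinate rings, so I must verify that the weight decomposition, the $L\eta$-Koszul formulas, and the cited torsion statements transport unchanged from the completed rings $A(R^{\Box})$, $A_{\inf}(R_{\infty}^{\Box})$ to this level; this is harmless, since every operator in sight respects the weight grading and $L\eta$ commutes with the direct sums and filtered colimits that define $\mathcal{A}_{\infty}$.
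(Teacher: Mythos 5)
Your proposal is correct and takes essentially the same approach as the paper: the paper's own proof is a one-line reference to Lemma \ref{ALambda-WnLambda} and \cite[Proposition 11.8]{BMS18}, and your argument --- the $\Delta'$-weight decomposition with \cite[Lemmas 7.3, 7.5, 7.9]{BMS18} for the first map, and \cite[5.16]{Bha18} together with the $p^n$-torsion-freeness supplied by \cite[Proposition 3.19]{CK19} and $p^n \equiv \tilde{\xi}_n \pmod{\mu}$ for the second --- is exactly what that reference unpacks to. The only cosmetic difference is that the paper's Lemma \ref{ALambda-WnLambda} verifies the torsion-freeness hypothesis on the complementary summand $N_{\infty}$, whereas you verify it on the integral-weight summand itself; both are weight-graded direct summands of the same cohomology, so either check suffices.
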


\begin{proof}
We can prove the assertion as in Lemma \ref{ALambda-WnLambda} and \cite[Proposition 11.8]{BMS18}.
\end{proof}

\begin{lemma}[cf. {\cite[Lemma 11.9]{BMS18}}]
\label{WnD-decomp}
For $n \geq 0$, there is an isomorphism of $W_n (\mathcal{O})$-modules 
\[
\mathcal{W}_n^i (D_{\infty}) \cong 
\bigoplus_{\substack{k \colon \{0,\ldots,d\} \to p^{-n} \mathbb{Z} \\ k([0, r]) \subset \mathbb{Z}_{\geq 0} [1/p] \\ 
[0, r] \not\subset \Supp k}} (W_{n - \mu(k)} (\mathcal{O}))^{\binom{d}{i}},
\]
where $k$ runs over weights without poles which have the image $p^{-n} \mathbb{Z}$, and $\mu(k)$ is as in Definition \ref{dRW-Box+i}. Moreover, $\mathcal{W}_n^i (D_{\infty})$ is $p$-torsion-free.
\end{lemma}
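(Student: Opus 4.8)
The plan is to follow the strategy of \cite[Lemma 11.9]{BMS18}, computing the cohomology one weight at a time and then matching the resulting truncation levels with the combinatorics of Definition~\ref{dRW-Box+i}. By Lemma~\ref{WnD-Koszul} one may compute $\mathcal{W}_n^i(D_\infty)$ either from $L\eta_\mu D_\infty$ or from $L\eta_{[\zeta_{p^n}]-1}(D\otimes^L_{A_{\inf},\tilde{\theta}_n}A_{\inf}/\tilde{\xi}_n)$, and I would start from the decomposition
\[
\eta_\mu D_\infty \cong \bigoplus_k K_{A_{\inf}}(\delta_1-1,\ldots,\delta_d-1)
\]
displayed before Lemma~\ref{WnD-Koszul}, reading it so that $k$ ranges over the weights without poles with $k([0,r])\subset\mathbb{Z}_{\geq 0}[1/p]$ and $[0,r]\not\subset\Supp k$, and on the weight-$k$ summand each $\delta_j-1$ acts by the scalar $[\epsilon^{c_j(k)}]-1\in A_{\inf}$, where $c_j(k)=k_j$ for $j\in[r+1,d]$ and $c_j(k)=k_j-k_0$ for $j\in[1,r]$. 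The non-negativity and support constraints on $k$ are precisely the shadow of the relation $X_0\cdots X_r=[(p^{\flat})^q]$ and of the non-invertibility of $X_0,\ldots,X_r$. Since $L\eta$, derived base change, and $H^i$ all commute with this direct sum (as in the use of commutativity of $L\eta$ with filtered colimits in the proof of Proposition~\ref{comp-WrLambda}), it suffices to analyse one summand at a time.

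For a fixed weight $k$, the task is to compute $H^i$ of $L\eta_\mu$ of the scalar Koszul complex $K_{A_{\inf}}([\epsilon^{c_1(k)}]-1,\ldots,[\epsilon^{c_d(k)}]-1)$ after the derived reduction along $\tilde{\theta}_n$ modulo $\tilde{\xi}_n$. The key arithmetic inputs are the divisibilities $[\epsilon^{a/p^s}]-1=(\text{unit})\cdot\varphi^{-s}(\mu)$ for $p\nmid a$ together with $\mu=\xi_s\,\varphi^{-s}(\mu)$, which let one rewrite the décalage of each weight-$k$ complex exactly as in \cite[Lemma 7.9, Lemma 11.9]{BMS18}; after reduction via $\tilde{\theta}_n$ the relevant scalars become elements of $W_n(\mathcal{O})$ of the shape $([\zeta_{p^n}]^{p^s}-1)/([\zeta_{p^n}]-1)$, whose cokernels on $W_n(\mathcal{O})$ are the truncations $W_{n-s}(\mathcal{O})$. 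I expect this to exhibit, for each weight $k$, a cohomology which is free of rank $\binom{d}{i}$ over $W_{n-\mu(k)}(\mathcal{O})$ in degree $i$, where $\mu(k)=\max_i\max(0,-\mathrm{ord}_p k_i)$ is the denominator exponent of Definition~\ref{dRW-Box+i}; the $d$ Koszul directions contribute the $\binom{d}{i}$ via an exterior-algebra pattern indexed by partitions, with the degenerate case $\mu(k)=n$ giving $W_0(\mathcal{O})=0$ so that weights of denominator larger than $p^n$ drop out. The reindexing that turns the integral monomial weights of $\mathcal{A}$ into the $p^{-n}\mathbb{Z}$-valued de Rham--Witt weights of the statement is governed by the Frobenius $\varphi$ built into $\tilde{\theta}_n$, and should be read off against the basis of Proposition~\ref{dRW-RBox-Basis}.

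Reassembling the summands then yields the asserted isomorphism, and $p$-torsion-freeness is immediate because each $W_{n-\mu(k)}(\mathcal{O})$ is $p$-torsion-free. The hard part will be the single-weight computation of the second paragraph rather than the formal manipulations with $L\eta$: in the smooth torus case of \cite[Lemma 11.9]{BMS18} the operators are $[\epsilon^{k_j}]-1$ with $k_j$ ranging over $\mathbb{Z}[1/p]$ directly, whereas here the directions $j\in[1,r]$ carry the twisted scalars $[\epsilon^{k_j-k_0}]-1$ coming from the semistable action of $\Delta'$, and the admissible weights are cut out by the relation $X_0\cdots X_r=[(p^{\flat})^q]$. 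I would need to verify that this twisting does not disturb the denominator bookkeeping, i.e.\ that the truncation level extracted from the Koszul décalage of the twisted complex still equals the combinatorial $\mu(k)$ of Definition~\ref{dRW-Box+i}, and that the exterior-algebra degeneration producing the rank $\binom{d}{i}$ survives the twist. Establishing this compatibility is where the real work lies.
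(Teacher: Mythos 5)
Your proposal is, in outline and in substance, the paper's own proof: the paper decomposes $L\eta_{\mu} D_{\infty}$ into integral-weight summands $K_{A_{\inf}}([k_1-k_0]_q,\ldots,[k_r-k_0]_q,[k_{r+1}]_q,\ldots,[k_d]_q)$ with $[m]_q \coloneqq ([\epsilon]^m-1)/([\epsilon]-1)$, base changes along $\tilde{\theta}_n$ to Koszul complexes over $W_n(\mathcal{O})$ with entries $([\zeta_{p^n}^{c_j}]-1)/([\zeta_{p^n}]-1)$, applies \cite[Lemma 7.10 (ii)]{BMS18} to write the $i$-th cohomology of each summand as $\mathrm{Ann}_{W_n (\mathcal{O})}(x_k)^{\binom{d-1}{i}} \oplus (W_n (\mathcal{O})/x_k)^{\binom{d-1}{i-1}}$ for the minimal entry $x_k$, identifies both pieces with $W_{n-\mu(p^{-n}k)}(\mathcal{O})$ via \cite[Corollary 3.18]{BMS18}, and finishes by renaming $p^{-n}k$ as $k$; $p$-torsion-freeness is read off exactly as you say.

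The only point where your write-up stops short of a proof is the compatibility you defer to the end, and it is worth closing because it is much easier than you suggest: it is a one-line ultrametric argument using precisely the constraint $[0,r]\not\subset\Supp k$, and the paper dispatches it in a single sentence (``each element in the bracket \dots is divisible by \dots, and at least one element is equal to this element up to a unit''). Indeed, write $c_j=k_j-k_0$ for $j\in[1,r]$ and $c_j=k_j$ for $j\in[r+1,d]$. If $k_0=0$ there is nothing to prove. If $k_0\neq 0$, the support constraint forces $k_{j_0}=0$ for some $j_0\in[1,r]$, so $c_{j_0}=-k_0$ and $\mathrm{ord}_p c_{j_0}=\mathrm{ord}_p k_0$; combined with $\mathrm{ord}_p(k_j-k_0)\geq\min(\mathrm{ord}_p k_j,\mathrm{ord}_p k_0)$, with equality whenever the two valuations differ, this yields $\min_{j\in[1,d]}\mathrm{ord}_p c_j=\min_{j\in[0,d]}\mathrm{ord}_p k_j$. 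Hence the minimal-valuation entry of the twisted Koszul complex computes exactly the $\mu(k)$ of Definition \ref{dRW-Box+i}, and the rank count $\binom{d-1}{i}+\binom{d-1}{i-1}=\binom{d}{i}$ goes through untouched. One small correction to your bookkeeping: weights whose denominator exceeds $p^n$ are killed already at the d\'ecalage stage by \cite[Lemma 7.9]{BMS18} (they are the non-integral weights, which do not survive $L\eta_{\mu}$ and hence never reach the reduction mod $\tilde{\xi}_n$), whereas the vanishing $W_0(\mathcal{O})=0$ accounts only for the weights of denominator exactly $p^n$.
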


\begin{proof}
By the above argument and \cite[Lemma 7.9]{BMS18}, we have
\begin{align*}
&L\eta_{\mu} D_{\infty} \\ 
\cong &\bigoplus_{\substack{k \colon \{0,\ldots,d\} \to \mathbb{Z} \\ k([0, r]) \subset \mathbb{Z}_{\geq 0} \\ [0, r] \not\subset \Supp k} } \bigg( \bigotimes_{j=1}^{r} (A_{\inf} (\mathcal{O}) \cdot X^a \xrightarrow{[k_j - k_0]_q}
A_{\inf} (\mathcal{O}) \cdot X^a) \\
& \qquad \qquad \qquad \qquad \otimes \bigotimes_{j = r+1}^{d} (A_{\inf} (\mathcal{O}) \cdot X^a \xrightarrow{[k_j]_q} A_{\inf} (\mathcal{O}) \cdot X^a) \bigg) \\ 
= &\bigoplus_{\substack{k \colon \{0,\ldots,d\} \to \mathbb{Z} \\ k([0, r]) \subset \mathbb{Z}_{\geq 0} \\ [0, r] \not\subset \Supp k} }K_{A_{\inf} (\mathcal{O})} ([k_1 - k_0]_q,\ldots,[k_r - k_0]_q, [k_{r+1}]_q,\ldots,[k_d]_q),
\end{align*}
where $[n]_q \coloneqq ([\epsilon]^n - 1)/([\epsilon] - 1)$ for $n \in \mathbb{Z}$. Through the base change along $\tilde{\theta}_n \colon A_{\inf} (\mathcal{O}) \to W_n (\mathcal{O})$, we have 
\begin{align*}
&L\eta_{\mu} D_{\infty} / \tilde{\xi}_n \\
\cong &\bigoplus_{\substack{k \colon \{0,\ldots,d\} \to \mathbb{Z} \\ k([0, r]) \subset \mathbb{Z}_{\geq 0} \\ [0, r] \not\subset \Supp k}} K_{W_n (\mathcal{O})} \left( \dfrac{[\zeta_{p^n}^{k_1 - k_0}] - 1}{[\zeta_{p^n}] - 1},\ldots,\dfrac{[\zeta_{p^n}^{k_r - k_0}] - 1}{[\zeta_{p^n}] - 1}, \dfrac{[\zeta_{p^n}^{k_{r+1}}] - 1}{[\zeta_{p^n}] - 1},\ldots,\dfrac{[\zeta_{p^n}^{k_d}] - 1}{[\zeta_{p^n}] - 1} \right).
\end{align*}
Each element in the bracket in the right hand side is divisible by $\tfrac{[\zeta_{p^n}^{\mu(p^{-n} k)}] - 1}{[\zeta_{p^n}] - 1}$, and at least one element is equal to this element up to a unit of $W_n (\mathcal{O})$. 
By \cite[Lemma 7.10 (ii)]{BMS18}, the $i$-th cohomology of the Koszul complex is isomorphic to
\[
\mathrm{Ann}_{W_n (\mathcal{O})} \left( \dfrac{[\zeta_{p^n}^{\mu(p^{-n} k)}] - 1}{[\zeta_{p^n}] - 1}\right)^{\binom{d-1}{i}} \oplus 
\left( W_n (\mathcal{O}) / \dfrac{[\zeta_{p^n}^{\mu(p^{-n} k)}] - 1}{[\zeta_{p^n}] - 1} W_n (\mathcal{O}) \right)^{\binom{d-1}{i-1}}.
\]
Moreover, the module is isomorphic to $W_{n - \mu(p^{-n} k)} (\mathcal{O})^{\binom{d}{i}}$ by \cite[Corollary 3.18]{BMS18}. The assertion follows by renaming $p^{-n} k$ by $k$.
\end{proof}

We set 
\[
\mathcal{W}_n^i (D_{\infty})_{\mathrm{pre}} \coloneqq H^i (D_{\infty} \otimes_{A_{\inf} (\mathcal{O}), \tilde{\theta}_n}^L W_n (\mathcal{O})).
\]
Following \cite[11.1.1]{BMS18}, we can construct the following natural maps of graded $W_n (\mathcal{O})$-modules
\begin{align*}
R & \colon \mathcal{W}_{n+1}^\ast (D_{\infty})_{\mathrm{pre}} \to \mathcal{W}_n^\ast (D_{\infty})_{\mathrm{pre}}, \\ 
F & \colon \mathcal{W}_{n+1}^\ast (D_{\infty})_{\mathrm{pre}} \to \mathcal{W}_n^\ast (D_{\infty})_{\mathrm{pre}}, \\ 
V & \colon \mathcal{W}_n^\ast (D_{\infty})_{\mathrm{pre}} \to \mathcal{W}_{n+1}^\ast (D_{\infty})_{\mathrm{pre}}, \\ 
d & \colon \mathcal{W}_n^i (D_{\infty})_{\mathrm{pre}} \to \mathcal{W}_n^{i+1} (D_{\infty})_{\mathrm{pre}}
\end{align*}
induced by $\tilde{\theta}_n (\xi)^n \varphi_D^{-1}$, the canonical projection, the multiplication by $\varphi^{n+1} (\xi)$, and the Bockstein differential respectively. By \cite[Proposition 11.2]{BMS18}, $\mathcal{W}_n^i (D_{\infty})_{\mathrm{pre}}$ satisfies some axioms of $F$-$V$-procomplexes. The module $\mathcal{W}_n^i (D_{\infty})$ turns out to be a submodule of $\mathcal{W}_{n}^i (D_{\infty})_{\mathrm{pre}}$ by \cite[Proposition 11.5 (ii)]{BMS18} and the $p$-torsion-freeness of $\mathcal{W}_n^i (D_{\infty})$ proved in Lemma \ref{WnD-decomp}. We shall equip $\mathcal{W}_n^{\ast} (D_{\infty})$ with the structure of a log $F$-$V$-procomplex as a submodule of $\mathcal{W}_n^{\ast} (D_{\infty})_{\mathrm{pre}}$. Thanks to the constructions of 
\cite[Proposition 11.5]{BMS18}, it suffices to show the following lemma in order to equip $\mathcal{W}_n^i (D_{\infty})$ the structure of a log $F$-$V$-procomplex.

\begin{lemma}
\label{lambdanast}
\begin{enumerate}
\item There is a unique collection of $W_n (\mathcal{O})$-algebra morphisms $\lambda_n \colon W_n (R_{\mathcal{O}}^{\Box}) \to \mathcal{W}_n^0 (D_{\infty})$ which satisfy $\lambda_n ([t_j]) = X_j$ for $j \in [0, d]$, $Fd\lambda_{n+1} ([b]) = \lambda_n ([b])^{p-1} d\lambda_n ([b])$ for $b \in W_n (R_{\mathcal{O}}^{\Box})$ and which commute with the maps $F$, $V$ and $R$. 
\item There is a collection of natural morphisms $\partial_n \colon Q_{R_{\mathcal{O}}^{\Box}} \to \mathcal{W}_n^1 (D_{\infty})$ of monoids which satisfy $F\partial_{n+1} = \partial_n$ and which commutes with $R$. 
\end{enumerate}
In particular, 
there are natural maps of log $F$-$V$-procomplexes 
\[
\lambda_n^\ast \colon W_n \Omega_{(R_{\mathcal{O}}^{\Box}, Q_{R_{\mathcal{O}}^{\Box}})/(\mathcal{O}, Q_{\mathcal{O}})}^\ast \to \mathcal{W}_n^\ast (D_{\infty}).
\]
\end{lemma}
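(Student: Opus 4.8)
The plan is to deduce the whole statement from the initiality of the logarithmic de Rham--Witt complex. By \cite[Proposition 3.5]{Mat15} the complex $W_n \Omega_{(R_{\mathcal{O}}^{\Box}, Q_{R_{\mathcal{O}}^{\Box}})/(\mathcal{O}, Q_{\mathcal{O}})}^\ast$ is the \emph{initial} log $F$-$V$-procomplex for $(R_{\mathcal{O}}^{\Box}, Q_{R_{\mathcal{O}}^{\Box}})/(\mathcal{O}, Q_{\mathcal{O}})$, so once $\mathcal{W}_n^\ast(D_{\infty})$ carries the structure of such a procomplex the morphism $\lambda_n^\ast$ exists and is unique. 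The underlying non-logarithmic $F$-$V$-procomplex structure (the operators $R, F, V, d$, the graded multiplication, and the relation $Fd\lambda_{n+1}([b]) = \lambda_n([b])^{p-1} d\lambda_n([b])$) is produced exactly as in \cite[Proposition 11.2]{BMS18} and \cite[Proposition 11.5]{BMS18}, using that $\mathcal{W}_n^i(D_{\infty})$ sits inside the $p$-torsion-free module $\mathcal{W}_n^i(D_{\infty})_{\mathrm{pre}}$. Hence the real content is to supply the two pieces of data $\lambda_n$ and $\partial_n$ and to verify the genuinely logarithmic axioms.

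For (1) I would define $\lambda_n$ by the recipe of \cite[Proposition 11.5]{BMS18}, sending the Teichm\"uller lifts $[t_j]$ of the coordinates to the classes $X_j \in \mathcal{W}_n^0(D_{\infty})$. The only point not present in the smooth case is well-definedness against the semistable relation: by multiplicativity of Teichm\"uller lifts $[t_0]\cdots[t_r] = [t_0\cdots t_r] = [\alpha_S(\pi)]$, and this must be matched, under the structure maps and the base change along $\tilde{\theta}_n$, with the defining relation $X_0\cdots X_r = [(p^{\flat})^q]$ of $\mathcal{A}$. This is the sole new input beyond \cite{BMS18} for part (1) and follows from the compatibility of $\tilde{\theta}_n$ with Teichm\"uller lifts of the $p$-power elements fixed in the setup of Section \ref{sec;AinfCoh}. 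Uniqueness holds because $W_n(R_{\mathcal{O}}^{\Box})$ is generated over $W_n(\mathcal{O})$ by the $[t_j]$ together with $V$ and $F$, so a $W_n(\mathcal{O})$-algebra map commuting with $F,V,R$ is determined by its values on the $[t_j]$, while the identity $Fd\lambda_{n+1}([b]) = \lambda_n([b])^{p-1}d\lambda_n([b])$ is checked as in \emph{loc.\ cit.}

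For (2), which I expect to be the main obstacle, I would build $\partial_n$ as a logarithmic $d\log$-map. For the invertible coordinates $r < j \le d$ one sets $\partial_n(t_j) = \lambda_n([t_j])^{-1}\, d\lambda_n([t_j])$. For the remaining generators, and to exhibit $\partial_n$ as a monoid homomorphism $Q_{R_{\mathcal{O}}^{\Box}} \to \mathcal{W}_n^1(D_{\infty})$ into the additive group, I would use the Koszul/group-cohomology description $\mathcal{W}_n^1(D_{\infty}) \subset \mathcal{W}_n^1(D_{\infty})_{\mathrm{pre}} = H^1(D_{\infty} \otimes_{A_{\inf}, \tilde{\theta}_n}^L W_n(\mathcal{O}))$ with $D_{\infty} = R\Gamma(\Delta', \mathcal{A}_{\infty})$: the class $\partial_n(t_j)$ is represented by the continuous $1$-cocycle on $\Delta'$ recording the logarithm of the $[\epsilon]$-multiplier by which each $\delta_i$ acts on $X_j$. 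Since $\delta(uv)/(uv) = (\delta u/u)(\delta v/v)$, this is manifestly additive in $t_j$, hence extends to all of the chart $N_S = \mathbb{N}^{r+1}\sqcup_{\mathbb{N}} M_S$ and to $Q_{R_{\mathcal{O}}^{\Box}}$. The decisive semistable input is the logarithmic relation: because $\Delta$ is cut out by $\epsilon_0\cdots\epsilon_r = 1$, the monomial $X_0\cdots X_r$ is $\Delta'$-invariant, so $\sum_{i=0}^r \partial_n(t_i) = \partial_n(t_0\cdots t_r) = \partial_n(\alpha_S(\pi)) = 0$. This simultaneously makes $\partial_n$ well defined on the chart, verifies the log-derivation axiom that elements coming from the base $(\mathcal{O}, Q_{\mathcal{O}})$ have trivial $d\log$, and gives $d\partial_n = 0$ since $d\log$-classes are Bockstein-closed.

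It remains to check $F\partial_{n+1} = \partial_n$ and commutation with $R$. As $F$ and $R$ on $\mathcal{W}_n^\ast(D_{\infty})_{\mathrm{pre}}$ are the canonical projection and restriction maps, this reduces to the compatibility of the fixed system $([\zeta_{p^n}])_n$ under the transition maps, exactly parallel to the verification of $Fd\lambda_{n+1}([b]) = \lambda_n([b])^{p-1}d\lambda_n([b])$ in \cite[Proposition 11.5]{BMS18}. Granting these, $\mathcal{W}_n^\ast(D_{\infty})$ is a log $F$-$V$-procomplex and initiality furnishes the canonical morphism $\lambda_n^\ast$. The hard part is constructing $\partial_n$ as an honest monoid map respecting the relation $t_0\cdots t_r = \alpha_S(\pi)$ and its behaviour under $F$, since this is precisely where the log structure and the constraint defining $\Delta$ interact and where the argument genuinely departs from the smooth case of \cite{BMS18}.
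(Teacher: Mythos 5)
Your proposal is correct and follows essentially the same route as the paper: reduce to equipping $\mathcal{W}_n^\ast(D_\infty)$ with a log $F$-$V$-procomplex structure and invoke initiality, take the non-log structure from \cite[Propositions 11.2, 11.5]{BMS18}, identify $\mathcal{W}_n^0(D_\infty) = (\mathcal{A}_\infty/\tilde{\xi}_n)^{\Delta} = W_n(R_{\mathcal{O}}^{\Box})$ (via the Laurent case of Lemma \ref{convsum-W}) for part (1), and construct $\partial_n$ as a $d\log$ of Teichm\"uller monomials through the Koszul/weight description for part (2), with the semistable relation handled exactly by the $\Delta$-invariance of $X_0\cdots X_r = [(p^{\flat})^q]$. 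The one place where the paper's mechanism is sharper than your ``manifestly additive'' claim is that additivity and well-definedness of $x \mapsto d[x]/[x]$ inside $\mathcal{W}_n^1(D_\infty)$ are obtained from the Leibniz rule for the Bockstein differential together with the injectivity of multiplication by the monomials $X^k$ on the direct-sum decomposition of Lemma \ref{WnD-decomp}; the naive $q$-deformed multiplier cocycles are not additive before the reduction modulo $\tilde{\xi}_n$, so some such argument is genuinely needed.
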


\begin{proof}
For the first claim, we have 
\[
\mathcal{W}_n^0 (D_{\infty}) = H^0 ((L\eta_{\mu} D_{\infty})/\tilde{\xi}_n) = H^0 (L\eta_{[\zeta_{p^r}] - 1} (D_{\infty} / \tilde{\xi}_n)) = H^0 (D_{\infty}/\tilde{\xi}_n) = (\mathcal{A}_{\infty} / \tilde{\xi}_n)^\Delta,
\]
where the second equation by Lemma \ref{WnD-Koszul}, which can be similarly proved in the case of derived pullbacks are naturally quasi-isomorphic to the usual pullback, and the third equation by \cite[Lemma 6.4]{BMS18} since $H^0 (D_{\infty}/\tilde{\xi}_n) = (\mathcal{A}_{\infty} / \tilde{\xi}_n)^\Delta$ is $p$-torsion free. Then we have the natural identity $(\mathcal{A}_{\infty} / \tilde{\xi}_n)^\Delta = W_n (R_{\mathcal{O}}^{\Box})$ by Lemma \ref{convsum-W} of the Laurent case $R_S^{\Box}$ and the same argument as \cite[Lemma 11.11]{BMS18}. It follows that commutativity satisfies by the constructions of $F, V, R$.  

For the second claim, we recall that the differentials on $\mathcal{W}_n^{\ast} (D_{\infty})$ are defined by the Bockstein differential of the following distinguished triangle : 
\[
L\eta_{\mu} D_{\infty} / \tilde{\xi}_n \xrightarrow{\xi_n} 
L\eta_{\mu} D_{\infty} / \tilde{\xi}_n^2 \to 
L\eta_{\mu} D_{\infty} / \tilde{\xi}_n \to. 
\]
We shall calculate the image of a differential with Koszul complexes
\begin{align*}
&L\eta_{\mu} D_{\infty} / \tilde{\xi}_n \\
\cong &\bigoplus_{\substack{k \colon \{0,\ldots,d\} \to p^{-n} \mathbb{Z} \\ k([0, r]) \subset \mathbb{Z}_{\geq 0} [1/p] \\ [0, r] \not\subset \Supp k}} K_{W_n (\mathcal{O})} \left( \dfrac{[\zeta_{p^n}^{k_1 - k_0}] - 1}{[\zeta_{p^n}] - 1},\ldots,\dfrac{[\zeta_{p^n}^{k_r - k_0}] - 1}{[\zeta_{p^n}] - 1}, \dfrac{[\zeta_{p^n}^{k_{r+1}}] - 1}{[\zeta_{p^n}] - 1},\ldots,\dfrac{[\zeta_{p^n}^{k_d}] - 1}{[\zeta_{p^n}] - 1} \right).
\end{align*}
and let $X^k$ be the monomial of the direct summand of $H^0 (L\eta_{\mu} D_{\infty})$, which is corresponded to an integral Laurent weight without poles $k$. The image of $X^k$ by the differential $H^0 (L\eta_{\mu} D_{\infty}) \to H^1 (L\eta_{\mu} D_{\infty})$ is also in the direct summand corresponded to $k$. The decomposition in Lemma \ref{WnD-decomp} assures that the multiplication by the monomial $X^k$ on $\mathcal{W}_n^i (D_{\infty})$ is injective. So we define a morphism of monoids $\partial_n \colon N_{\mathcal{O}} \to \mathcal{W}_n^1 (D_{\infty})$ as $x \mapsto d[x]/[x]$, and it extends to the morphism $Q_{R_{\mathcal{O}}^{\Box}} \to \mathcal{W}_n^1 (D_{\infty})$ by the equation $N_{\mathcal{O}} \oplus \mathcal{O}^{\ast} = Q_{R_{\mathcal{O}}^{\Box}}$
The map is well-defined and satisfies $F\partial_{n+1} = \partial_n$. We can see the equation 
\[
\partial_1 (X_i) = \dfrac{1}{\zeta_p - 1} ((0,\ldots,0,d\log \zeta_{p^n},0,\ldots,0))_n,
\]
where the $i$-th element is nonzero, via the identification $\xi/\xi^2 = T_p (\Omega_{\mathcal{O}/\mathbb{Z}_p}^1)$ by the construction of $\partial_1$. The last claim follows by the argument of \cite[Proposition 11.5]{BMS18}.
\end{proof}
We prove the $k_{\mathcal{O}}$-analog of the main theorem. For this purpose, we prove the following lemma along the argument of \cite[Proposition 10.14]{BMS18}. 
\begin{lemma}
\label{WLambda-perfdbasechange}
For a homomorphism $h \colon (A, M_A) \to (A', M_{A'})$ of perfectoid rings with prelog structures, where $h$ induces an isomorphism $M_A \cong M_{A'}$, the natural map
\[
W_n \Omega_{(R_A^{\Box}, N_A) / (A, M_A)}^i \otimes_{W_n (A)}^L W_n (A') \to 
W_n \Omega_{(R_{A'}^{\Box}, N_{A'}) / (A', M_{A'})}^i
\]
is a quasi-isomorphism for every $i \in \mathbb{N}$. 
\end{lemma}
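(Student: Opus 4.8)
The plan is to reduce the statement to a term-by-term computation using the explicit basis of Proposition \ref{dRW-RBox-Basis}, following the strategy of \cite[Proposition 10.14]{BMS18}. First I would observe that, since $h$ induces an isomorphism $M_A \cong M_{A'}$, the charts agree: $N_A = \mathbb{N}^{r+1} \sqcup_{\mathbb{N}} M_A \cong \mathbb{N}^{r+1} \sqcup_{\mathbb{N}} M_{A'} = N_{A'}$, so the set of Laurent weights $k$ without poles and of partitions $\mathcal{P} = (I_0,\ldots,I_l)$ of $[0,d]$ that indexes the basic Witt differentials depends only on $(r,d,n)$ and is literally the same for $A$ and for $A'$. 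By Proposition \ref{dRW-RBox-Basis} we therefore obtain compatible direct-sum decompositions
\[
W_n \Omega_{(R_A^{\Box}, N_A)/(A, M_A)}^i \cong \bigoplus_{(k,\mathcal{P})} V^{\mu(k)} W_{n-\mu(k)}(A), \qquad W_n \Omega_{(R_{A'}^{\Box}, N_{A'})/(A', M_{A'})}^i \cong \bigoplus_{(k,\mathcal{P})} V^{\mu(k)} W_{n-\mu(k)}(A'),
\]
and the natural map in question carries the $(k,\mathcal{P})$-summand of the source to that of the target, sending $\epsilon'(\xi,k,\mathcal{P})$ to $\epsilon'(h(\xi),k,\mathcal{P})$. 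Since the derived tensor product commutes with direct sums, it suffices to prove that for each $0 \le s \le n$ the natural map $V^s W_{n-s}(A) \otimes_{W_n(A)}^L W_n(A') \to V^s W_{n-s}(A')$ is a quasi-isomorphism.

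For this I would use the short exact sequence of $W_n(A)$-modules $0 \to V^s W_{n-s}(A) \to W_n(A) \xrightarrow{R^{n-s}} W_s(A) \to 0$ and its analogue over $A'$. Applying $-\otimes_{W_n(A)}^L W_n(A')$ produces a distinguished triangle with outer terms $V^s W_{n-s}(A)\otimes^L_{W_n(A)} W_n(A')$ and $W_s(A)\otimes^L_{W_n(A)} W_n(A')$ and middle term $W_n(A')$. By functoriality the induced map $W_n(A') \to W_s(A)\otimes^L_{W_n(A)} W_n(A')$ is the base change of $R^{n-s}$. Since $R^{n-s}\colon W_n(A') \to W_s(A')$ is surjective with kernel $V^s W_{n-s}(A')$, the desired quasi-isomorphism follows at once once we establish
\[
W_s(A)\otimes_{W_n(A)}^L W_n(A') \cong W_s(A')
\]
concentrated in degree $0$.

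This last isomorphism I would obtain by passing to $A_{\inf}$. Recall from \cite[Lemma 3.12]{BMS18} that $\tilde{\theta}_m$ has kernel $(\tilde{\xi}_m)$, so $W_m(A) = A_{\inf}(A)/\tilde{\xi}_m$ and $W_m(A') = A_{\inf}(A')/\tilde{\xi}_m$ for all $m$; moreover each $\tilde{\xi}_m = \varphi(\xi)\cdots\varphi^m(\xi)$ is a nonzerodivisor in both $A_{\inf}(A)$ and $A_{\inf}(A')$, since $\xi$ is a nonzerodivisor and $\varphi$ is an automorphism for perfectoid rings. As $\tilde{\xi}_n$ is a nonzerodivisor in $A_{\inf}(A')$, resolving $A_{\inf}(A)/\tilde{\xi}_n$ by $[A_{\inf}(A)\xrightarrow{\tilde{\xi}_n} A_{\inf}(A)]$ gives $W_n(A') \cong W_n(A)\otimes_{A_{\inf}(A)}^L A_{\inf}(A')$ in degree $0$. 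Then by associativity of the derived tensor product,
\[
W_s(A)\otimes_{W_n(A)}^L W_n(A') \cong W_s(A)\otimes_{A_{\inf}(A)}^L A_{\inf}(A') \cong A_{\inf}(A')/\tilde{\xi}_s = W_s(A'),
\]
where the middle isomorphism again uses that $\tilde{\xi}_s$ is a nonzerodivisor in $A_{\inf}(A')$; the $A_{\inf}(A)$-module structure on $W_s(A)$ is the one induced through $\tilde{\theta}_s$, which is compatible with its $W_n(A)$-module structure. This is concentrated in degree $0$ and compatible with restriction, completing the reduction.

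The main obstacle is the bookkeeping of the first step rather than the homological input: one must verify that the basis of Proposition \ref{dRW-RBox-Basis} is genuinely functorial along $h$ and that the base-change map respects the resulting decomposition, in particular that the identification of each summand with $V^{\mu(k)}W_{n-\mu(k)}(-)$ is natural in the perfectoid ring. Once this compatibility is secured, the essential content is the clean nonzerodivisor computation over $A_{\inf}$, which is exactly where the perfectoid hypothesis enters.
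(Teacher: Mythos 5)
Your proposal follows the same overall route as the paper: both start from the direct-sum decomposition of Proposition \ref{dRW-RBox-Basis}, note that the index set of weights and partitions is the same for $A$ and $A'$ (because $M_A \cong M_{A'}$) and that the natural map respects the summands, and thereby reduce to a base-change statement for truncated Witt vectors of perfectoid rings. The paper finishes by citing \cite[Remark 3.19]{BMS18}, which identifies the ideal $V^{\mu(k)}W_{n-\mu(k)}(A) \subset W_n(A)$ with $W_{n-\mu(k)}(A)$ (with $W_n(A)$ acting on the latter through $F^{\mu(k)}$), together with \cite[Lemma 3.13]{BMS18} for the resulting Tor-independence; you instead re-prove the Witt-vector statement by hand, via the exact sequence $0 \to V^sW_{n-s}(A) \to W_n(A) \xrightarrow{R^{n-s}} W_s(A) \to 0$ and the presentation of truncated Witt vectors as quotients of $A_{\inf}$. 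That is a legitimate, essentially self-contained substitute for the two citations.

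However, your $A_{\inf}$-computation conflates two different module structures, and as written one step is false. The maps $\tilde{\theta}_m$ are compatible with the Witt vector \emph{Frobenius}, not with restriction: by their definition through $A_{\inf}(A) \cong \varprojlim_F W_r(A)$ one has $F^{n-s}\circ\tilde{\theta}_n = \tilde{\theta}_s$, whereas $R^{n-s}\circ\tilde{\theta}_n = \tilde{\theta}_s\circ\varphi^{-(n-s)}$. Hence the $A_{\inf}(A)$-module structure on $W_s(A)$ that restricts, along $\tilde{\theta}_n$, to the $R^{n-s}$-structure occurring in your short exact sequence is the one through $\tilde{\theta}_s\circ\varphi^{-(n-s)}$, with annihilator $(\varphi^{n-s}(\tilde{\xi}_s))$ --- not, as you assert, the one through $\tilde{\theta}_s$ with annihilator $(\tilde{\xi}_s)$; the latter restricts to the $F^{n-s}$-structure. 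With your choice, the Koszul computation proves the base change for the Frobenius-twisted module (in effect the statement the paper extracts from \cite[Remark 3.19]{BMS18} and \cite[Lemma 3.13]{BMS18}), which is not the statement your distinguished-triangle argument needs. The repair is immediate and keeps your architecture intact: run the computation with the de Rham specializations $\theta_m = \tilde{\theta}_m\varphi^m$, which \emph{are} restriction-compatible, $R^{n-s}\circ\theta_n = \theta_s$, and whose kernels $(\xi_m)$ are likewise generated by nonzerodivisors; alternatively, keep $\tilde{\theta}_m$ but replace $\tilde{\xi}_s$ by $\varphi^{n-s}(\tilde{\xi}_s)$ throughout, which is again a nonzerodivisor since $\varphi$ is an automorphism. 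A second point you pass over: you treat $\tilde{\xi}_m$ as an element of $A_{\inf}(A')$, but what the argument needs is that its \emph{image} under $A_{\inf}(h)$ generates the kernel of the corresponding specialization map of $A'$. This is not automatic from \cite[Lemma 3.12]{BMS18} applied to $A'$: in the paper's main application $A' = k_{\mathcal{O}}$, the roots of unity map to $1$, so $\mu$ maps to $0$ and the defining formulas for $\xi_m$ and $\tilde{\xi}_m$ degenerate over $A'$. It does hold, because a map of perfectoid rings carries a generator of $\ker\theta$ (a distinguished element) to a generator; but this is precisely the nontrivial content of the lemma \cite[Lemma 3.13]{BMS18} that the paper cites, so in a self-contained proof it must be proved or cited rather than assumed.
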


\begin{proof}
We see that $W_n \Omega_{(R_A^{\Box}, N_A) / (A, M_A)}^i$ (resp.\ $W_n \Omega_{(R_{A'}^{\Box}, N_{A'}) / (A', M_{A'})}^i$) is naturally isomorphic to the direct sum of $V^{\mu(k)} W_{n - \mu(k)} (A)$ (resp.\ $V^{\mu(k)} W_{n - \mu(k)} (A')$) as a $W_n (A)$ (resp.\ $W_n (A')$)-module by Proposition \ref{dRW-RBox-Basis}. By \cite[Remark 3.19]{BMS18}, we have natural isomorphisms $V^{\mu(k)} W_{n - \mu(k)} (A) \cong W_{n - \mu(k)} (A)$ (resp.\ $V^{\mu(k)} W_{n - \mu(k)} (A') \cong W_{n - \mu(k)} (A')$) of $W_n (A)$ (resp.\ $W_n (A')$)-modules. So the natural map
\[
W_n \Omega_{(R_A^{\Box}, N_A) / (A, M_A)}^i \otimes_{W_n (A)}^L W_n (A') \to 
W_n \Omega_{(R_{A'}^{\Box}, N_{A'}) / (A', M_{A'})}^i
\]
is a quasi-isomorphism by \cite[Lemma 3.13]{BMS18}. 
\end{proof}

\begin{proposition}[cf. {\cite[Lemma 11.16]{BMS18}}]
\label{k-logdRWC}
There is an isomorphism of differential graded algebras 
\[
\alpha \colon \mathcal{W}_n^\ast (D_{\infty})_{k_{\mathcal{O}}} \coloneqq 
\mathcal{W}_n^\ast (D_{\infty}) \otimes_{W_n (\mathcal{O})} W_n (k) \cong 
W_n \Omega_{(R_{k_{\mathcal{O}}}^{\Box}, N_{\mathcal{O}})/(k_{\mathcal{O}}, Q_{\mathcal{O}})}^\ast,
\]
where $k_{\mathcal{O}}^{\Box}, R_{k_{\mathcal{O}}}^{\Box}$ are respectively equipped with the log structures associated to $\mathbb{N} \to k_{\mathcal{O}}, \ 1 \mapsto 0$ and $\mathbb{N}^{r+1} \sqcup_{\mathbb{N}} \mathbb{N} \to R_{k_{\mathcal{O}}}, \ (e_i, 0) \mapsto t_i, \ (0, 1) \mapsto 0$ for a fixed base $\{e_i\}$ of $\mathbb{N}^{r+1}$. 
\end{proposition}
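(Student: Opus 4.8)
The plan is to construct the comparison map by the universal property of the log de Rham--Witt complex and then to prove it is an isomorphism by a dévissage in $n$ resting on the Cartier isomorphisms available on both sides. First I would produce the map. By Lemma \ref{lambdanast} the collection $\mathcal{W}_n^\ast(D_\infty)$ carries the structure of a log $F$-$V$-procomplex for $(R_{\mathcal{O}}^{\Box}, Q_{R_{\mathcal{O}}^{\Box}})/(\mathcal{O}, Q_{\mathcal{O}})$, together with the base maps $\lambda_n$ and $\partial_n$. Base changing this structure along $W_n(\mathcal{O}) \to W_n(k)$ endows $\mathcal{W}_n^\ast(D_\infty)_{k_{\mathcal{O}}}$ with the structure of a log $F$-$V$-procomplex for $(R_{k_{\mathcal{O}}}^{\Box}, N_{\mathcal{O}})/(k_{\mathcal{O}}, Q_{\mathcal{O}})$, where the relation $X_0 \cdots X_r = [(p^{\flat})^q]$ specialises to $t_0 \cdots t_r = 0$ since $p^q \mapsto 0$ in $k_{\mathcal{O}}$, matching the prescribed chart. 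Since $W_n \Omega_{(R_{k_{\mathcal{O}}}^{\Box}, N_{\mathcal{O}})/(k_{\mathcal{O}}, Q_{\mathcal{O}})}^\ast$ is the initial such object, there is a unique morphism of log $F$-$V$-procomplexes $\beta \colon W_n \Omega_{(R_{k_{\mathcal{O}}}^{\Box}, N_{\mathcal{O}})/(k_{\mathcal{O}}, Q_{\mathcal{O}})}^\ast \to \mathcal{W}_n^\ast(D_\infty)_{k_{\mathcal{O}}}$, compatible with $R$, $F$, $V$ and $d$. I would take $\alpha$ to be its inverse, so that everything reduces to showing $\beta$ is an isomorphism.

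Next I would treat the base case $n=1$. Here $\beta_1$ is a map $\Omega_{(R_{k_{\mathcal{O}}}^{\Box}, N_{\mathcal{O}})/(k_{\mathcal{O}}, Q_{\mathcal{O}})}^\ast \to \mathcal{W}_1^\ast(D_\infty)_{k_{\mathcal{O}}}$, and it is an isomorphism because it is compatible with the classical logarithmic Cartier isomorphism (\cite[Theorem 4.12]{Kat88}) for the log-smooth morphism $(k_{\mathcal{O}}, Q_{\mathcal{O}}) \to (R_{k_{\mathcal{O}}}^{\Box}, N_{\mathcal{O}})$ on the target and with the Hodge--Tate comparison of Theorem \ref{CKisom} (together with Lemma \ref{galg-tildeLambda}) on the source. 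Concretely, Lemma \ref{WnD-decomp} for $n=1$ identifies $\mathcal{W}_1^i(D_\infty)_{k_{\mathcal{O}}}$ with a free module of rank $\binom{d}{i}$ per integral weight, matching $\bigwedge^i$ of the log differentials, while Lemma \ref{lambdanast} pins down $\lambda_1([t_j]) = X_j$ and the formula for $\partial_1$ so that the Bockstein differential corresponds to $d$ and $d\log$ on the de Rham side.

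For the inductive step I would assume $\beta_{n-1}$ is an isomorphism and compare the short exact sequences determined by the restriction $R$ on both sides, whose kernels are the canonical filtration pieces $\mathrm{Fil}^{n-1}$. On the source, Corollary \ref{FilWLambda} gives $\mathrm{Fil}^{n-1} W_n \Omega^l = V^{n-1}\Omega^l + dV^{n-1}\Omega^{l-1}$, and the kernel computations of Lemmas \ref{kerpi}, \ref{kerd} and \ref{kerFi} describe the resulting graded piece purely in terms of the degree-one object $\Omega^\ast$; the analogous description on the $\mathcal{W}$-side follows from the direct sum decomposition of Lemma \ref{WnD-decomp}, whose summands with $\mu(k) = n-1$ are exactly the $W_1$-summands. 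Since $\beta$ intertwines $V^{n-1}$ and $dV^{n-1}$, the induced map on graded pieces is determined by $\beta_1$ and is therefore an isomorphism by the base case (the kernels of $V^{n-1}$ and $dV^{n-1}$ matching on both sides by the same kernel lemmas). The five lemma then yields that $\beta_n$ is an isomorphism, completing the induction.

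The main obstacle, relative to the smooth case of \cite[Lemma 11.16]{BMS18}, will be the finer combinatorial bookkeeping of the semistable setting: matching the log basic Witt differentials $\epsilon'(\xi, k, \mathcal{P})$ of Proposition \ref{dRW-RBox-Basis}, with their intervals, the pd-type factors $F^{-\tau}d[t]^{p^{\tau} k_I}$ and the $d\log[t_i]$ factors governed by the partition $\mathcal{P}$, against the Koszul cohomology summands of Lemma \ref{WnD-decomp} and Lemma \ref{WnD-Koszul}, and verifying that $\beta$ respects this indexing by the weight $k$ so that it is compatible with the canonical filtration. A secondary technical point is to justify that the base change $-\otimes_{W_n(\mathcal{O})} W_n(k)$ is compatible with these explicit presentations; here Lemma \ref{WLambda-perfdbasechange}, applied to the reduction $\mathcal{O}\to k_{\mathcal{O}}$ of perfectoid rings, which induces the identity on the chart monoid $\mathbb{N}$, lets me commute the base change past the module decompositions of both sides.
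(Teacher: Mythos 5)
Your proposal takes a genuinely different route from the paper, and it contains a genuine gap in the inductive step. For orientation: the paper does not prove this proposition via the universal map at all. It observes that Tor-independence (Lemma \ref{WnD-decomp} together with \cite[Lemma 3.13]{BMS18}) gives $\mathcal{W}_n^\ast (D_{\infty})_{k_{\mathcal{O}}} \cong H^\ast (L\eta_{\mu} D_{\infty} \otimes_{A_{\inf} (\mathcal{O})}^L W_n (k_{\mathcal{O}}))$, that under $[\epsilon] \mapsto 1$ the $q$-de Rham/Koszul presentation specializes to the log de Rham complex of the lift $R_{W_n (k_{\mathcal{O}})}^{\Box}$ over $W_n (k_{\mathcal{O}})$, and that the higher Cartier isomorphism $C^{-n}$ of Proposition \ref{k-logdRW} (with \cite[Theorem 7.9]{Mat15}) identifies $H^i$ of that complex with $W_n \Omega^i$ of the special fiber; multiplicativity and compatibility with $d$ are then checked directly. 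The map you call $\beta$ is exactly the paper's $\overline{\lambda}_n^i$, and in the paper the fact that it is an isomorphism is Proposition \ref{lambdani-isom}, \emph{deduced from} the present proposition rather than used to prove it. So your plan amounts to proving Proposition \ref{lambdani-isom} first, by hand, and inverting; that is a legitimate alternative architecture, but only if the induction actually closes.

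It does not, as written. Corollary \ref{FilWLambda} and the kernel computations of Lemmas \ref{kerpi}, \ref{kerd}, \ref{kerFi} are proved in the paper \emph{only} for the log de Rham--Witt complex $W_n \Omega^\ast$ of the special fiber; there are no analogues for $\mathcal{W}_n^\ast (D_{\infty})_{k_{\mathcal{O}}}$, and they do not follow from Lemma \ref{WnD-decomp}, which is a bare module-theoretic decomposition of each $\mathcal{W}_n^i (D_{\infty})$ separately and says nothing about how $R$, $V^{n-1}$, $dV^{n-1}$ act with respect to it. Concretely, to run your five lemma you need: (i) surjectivity of $R \colon \mathcal{W}_n^l (D_{\infty})_{k_{\mathcal{O}}} \to \mathcal{W}_{n-1}^l (D_{\infty})_{k_{\mathcal{O}}}$; (ii) the identification $\ker (R) = V^{n-1} \mathcal{W}_1^l + dV^{n-1} \mathcal{W}_1^{l-1}$ on the $\mathcal{W}$-side, i.e.\ the analogue of Corollary \ref{FilWLambda}; and (iii) a matching of the kernels of $(V^{n-1}, dV^{n-1})$ on both sides so that the map on graded pieces is controlled by $\beta_1$. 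None of (i)--(iii) is supplied by the results you cite; establishing them is precisely the fine structure of $R$, $F$, $V$, $d$ relative to the weight decomposition that \cite[Section 11]{BMS18} develops in the smooth case, and deducing them instead from the isomorphism you are trying to prove would be circular. A secondary, fixable imprecision: in the base case $n = 1$, Theorem \ref{CKisom} (the Hodge--Tate comparison over $\mathcal{O}_C$ for the completed pro-\'etale complexes) is not the relevant input; what identifies $\mathcal{W}_1^\ast (D_{\infty})_{k_{\mathcal{O}}}$ with $\Omega_{(R_{k_{\mathcal{O}}}^{\Box}, N_{\mathcal{O}})/(k_{\mathcal{O}}, Q_{\mathcal{O}})}^\ast$ is the specialization $[\epsilon] \mapsto 1$ of the Koszul complex together with the classical log Cartier isomorphism (\cite[Theorem 4.12]{Kat88}, or Proposition \ref{k-logdRW} with $n = 1$) and the explicit formulas for $\lambda_1$ and $\partial_1$ from Lemma \ref{lambdanast} --- which is to say, the base case already secretly runs the paper's argument, and the paper's insight is that this argument works uniformly for all $n$, making the induction unnecessary.
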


\begin{proof}
By Lemma \ref{WnD-decomp} and \cite[Lemma 3.13]{BMS18}, the cohomology group $\mathcal{W}_n^i (D_{\infty})$ and $W_n (k)$ are Tor-independent over $W_n (\mathcal{O})$. So by \cite[Tag 061Z]{SP}, we have 
\[
\mathcal{W}_n^i (D_{\infty})_{k_{\mathcal{O}}} = H^i (L\eta_{\mu} D_{\infty} \otimes_{A_{\inf} (\mathcal{O})}^L W_n (k_{\mathcal{O}})).
\]
We may identify $L\eta_{\mu} D_{\infty} \otimes_{A_{\inf} (\mathcal{O})}^L W_n (k_{\mathcal{O}})$ with the log de Rham complex $\Omega_{(R_{k_{\mathcal{O}}}^{\Box}, N_{\mathcal{O}})/(k_{\mathcal{O}}, Q_{\mathcal{O}})}^{\ast}$ by the presentation of $q$-de Rham complex as in the proof of Lemma \ref{lambdanast} and $A_{\inf} (\mathcal{O}) \ni [\epsilon] \mapsto 1 \in W_n (k_{\mathcal{O}})$. We also identify $\Omega_{(R_{W_n (k_{\mathcal{O}})}^{\Box}, N_{\mathcal{O}}) / (W_n (k_{\mathcal{O}}), Q_{\mathcal{O}})}^{\ast}$ with the log de Rham-Witt complex $W_n \Omega_{(R_{k_{\mathcal{O}}}^{\Box}, N_{\mathcal{O}})/(k_{\mathcal{O}}, Q_{\mathcal{O}})}^{\ast}$ by \cite[Theorem 7.9]{Mat15}, and $F^n$ induces the Cartier isomorphism 
\[
C^{-n} \colon W_n \Omega_{(R_{k_{\mathcal{O}}}^{\Box}, N_{\mathcal{O}})/(k_{\mathcal{O}}, Q_{\mathcal{O}})}^{\ast} \to H^i (W_n \Omega_{(R_{k_{\mathcal{O}}}^{\Box}, N_{\mathcal{O}})/(k_{\mathcal{O}}, Q_{\mathcal{O}})}^\ast)
\]
by Lemma \ref{k-logdRW}. let $C^n$ denote the inverse of $C^{-n}$. For the natural isomorphisms 
\begin{multline*}
\mathcal{W}_n^i (D_{\infty})_{k_{\mathcal{O}}} \to 
H^i (L\eta_{\mu} D_{\infty} \otimes_{A_{\inf} (\mathcal{O})}^L W_n (k_{\mathcal{O}})) \to 
H^i (\Omega_{(R_{W_n (k_{\mathcal{O}})}^{\Box}, N_{\mathcal{O}})/(W_n (k_{\mathcal{O}}), Q_{\mathcal{O}})}^{\ast}) \\
\to 
W_n \Omega_{(R_{k_{\mathcal{O}}}^{\Box}, N_{\mathcal{O}})/(k_{\mathcal{O}}, Q_{\mathcal{O}})}^i
\end{multline*}
the structures of multiplication are compatible by the fact that $C^n$ is a morphism of graded rings and \cite[Lemma 7.5]{BMS18}. 

It remains to show that $C^n$ preserves both of the differential structures. The differential on $H^i (\Omega_{(R_{W_n (k_{\mathcal{O}})}^{\Box}, Q_{\mathcal{O}})/(W_n (k_{\mathcal{O}}), M_{\mathcal{O}})}^{\ast})$ is induced from the Bockstein differential in the triangle 
\[
L\eta_{\mu} D_{\infty} \otimes_{A_{\inf} (\mathcal{O})}^L W_n (k_{\mathcal{O}}) \xrightarrow{p^n} 
L\eta_{\mu} D_{\infty} \otimes_{A_{\inf} (\mathcal{O})}^L W_{2n} (k_{\mathcal{O}}) \to 
L\eta_{\mu} D_{\infty} \otimes_{A_{\inf} (\mathcal{O})}^L W_n (k_{\mathcal{O}}).
\]
Then we have 
\[
dF^n ([t_j]) = p^{-n} d([t_j]^{p^n}) = [t_j]^{p^n - 1} d([t_j]) = F^n d([t_j])
\]
for $j \in [0, d]$ and obviously $dF^n ([a]) = 0$ for $a \in k$. Thus $dF^n (x) = F^n d(x)$ for any $x \in W_n (k_{\mathcal{O}})$ by the Leibniz rule. 
\end{proof}

\begin{proposition}
\label{lambdani-isom}
For $n \geq 1$ and $i \geq 0$, the map 
\[
\lambda_n^i \colon W_n \Omega_{(R_{\mathcal{O}}^{\Box}, N_{\mathcal{O}}) / (\mathcal{O}, Q_{\mathcal{O}})}^i \to \mathcal{W}_n^i (D_{\infty})
\]
in Lemma \ref{lambdanast} is an isomorphism. 
\end{proposition}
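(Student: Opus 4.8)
The plan is to compare the two sides summand-by-summand with respect to a natural weight grading, reduce to the special fibre handled by Proposition \ref{k-logdRWC}, and then descend by Nakayama's lemma.

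First I would observe that both $W_n \Omega^i_{(R_{\mathcal{O}}^{\Box}, N_{\mathcal{O}})/(\mathcal{O}, Q_{\mathcal{O}})}$ and $\mathcal{W}_n^i(D_{\infty})$ carry a grading indexed by weights $k \colon \{0,\ldots,d\} \to p^{-n}\mathbb{Z}$ (with $k([0,r]) \subset p^{-n}\mathbb{Z}_{\geq 0}$ and $[0,r] \not\subset \Supp k$): on the de Rham--Witt side this comes from the basis of log basic Witt differentials $\epsilon'(\xi,k,\mathcal{P})$ of Proposition \ref{dRW-RBox-Basis}, and on the $A_{\inf}$ side from the monomial decomposition by $X^k$ underlying Lemma \ref{WnD-decomp}. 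Since $\lambda_n^{\bullet}$ is $W_n(\mathcal{O})$-linear, multiplicative, commutes with $F$, $V$, $R$ and $d$, and sends $[t_j] \mapsto X_j$ by Lemma \ref{lambdanast}, it preserves these gradings; the fact that the Bockstein differential stays in a fixed weight summand was already noted in the proof of Lemma \ref{lambdanast}. For each fixed weight $k$ the two $k$-graded pieces are, by Proposition \ref{dRW-RBox-Basis} and Lemma \ref{WnD-decomp} respectively, finite free modules over $W_{n-\mu(k)}(\mathcal{O})$ (using $V^{\mu(k)} W_{n-\mu(k)}(\mathcal{O}) \cong W_{n-\mu(k)}(\mathcal{O})$ as in the proof of Lemma \ref{WLambda-perfdbasechange}), so $\lambda_n^i$ restricts to a $W_{n-\mu(k)}(\mathcal{O})$-linear map of finite free modules.

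Next I would reduce modulo $W(\mathfrak{m})$. Base-changing along $W_n(\mathcal{O}) \to W_n(k_{\mathcal{O}})$ and invoking the Tor-independence established in the proof of Proposition \ref{k-logdRWC}, the map $\lambda_n^{\bullet} \otimes_{W_n(\mathcal{O})} W_n(k_{\mathcal{O}})$ is a map of log $F$-$V$-procomplexes out of the initial object $W_n\Omega^{\bullet}_{(R_{k_{\mathcal{O}}}^{\Box}, N_{\mathcal{O}})/(k_{\mathcal{O}}, Q_{\mathcal{O}})}$. Composing with the isomorphism $\alpha$ of Proposition \ref{k-logdRWC} produces an endomorphism of this initial procomplex which fixes the base $W_n(R_{k_{\mathcal{O}}}^{\Box})$ and is compatible with the Cartier operators $F^n$; by the universal property it is the identity, so $\lambda_n^{\bullet} \otimes W_n(k_{\mathcal{O}})$ is inverse to $\alpha$ and in particular is an isomorphism on each weight-$k$ summand.

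Finally I would descend. As $\mathcal{O}$ is a valuation ring with maximal ideal $\mathfrak{m}$, one has $1 + W_n(\mathfrak{m}) \subset W_n(\mathcal{O})^{\times}$, so $W_n(\mathfrak{m})$ lies in the Jacobson radical of $W_n(\mathcal{O})$. For each weight $k$, $\lambda_n^i$ restricts to a map of finite free $W_{n-\mu(k)}(\mathcal{O})$-modules that is an isomorphism modulo $W_{n-\mu(k)}(\mathfrak{m})$; Nakayama's lemma forces surjectivity, and since the reduction is an isomorphism the two modules have equal finite rank, so the surjection is an isomorphism. Assembling these over all $k$ gives the claim. The main obstacle I expect is the first step: checking cleanly that $\lambda_n^{\bullet}$ is \emph{strictly} graded for the two a priori differently-indexed bases, and that the reduction of $\lambda_n^{\bullet}$ is identified with $\alpha^{-1}$ in a way compatible with the full procomplex structure so that the initiality argument applies; the Nakayama descent itself is then routine.
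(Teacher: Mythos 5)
Your proposal follows essentially the same route as the paper's proof: a weight decomposition of both sides compatible with $\lambda_n^i$ (the paper obtains it from the natural $\Delta'[1/p]$-actions as in \cite[Theorem 11.13, Lemma 11.14]{BMS18}, together with Proposition~\ref{dRW-RBox-Basis} and Lemma~\ref{WnD-decomp}), reduction modulo $W_n(\mathfrak{m})$ using Lemma~\ref{WLambda-perfdbasechange} and Nakayama's lemma applied to the finite summands $W_{n-\mu(k)}(\mathcal{O})$, and identification of the reduced map $\overline{\lambda}_n^i$ with the inverse of the isomorphism $\alpha$ of Proposition~\ref{k-logdRWC}.

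The one point of divergence is the step you yourself flag as the main obstacle, and there the paper's argument is worth adopting. Your plan invokes initiality of $W_n\Omega^{\ast}_{(R^{\Box}_{k_{\mathcal{O}}}, N_{\mathcal{O}})/(k_{\mathcal{O}}, Q_{\mathcal{O}})}$ in the category of log $F$-$V$-procomplexes, which requires the composite $\alpha\,\overline{\lambda}_n$ to be a morphism in that category, i.e.\ compatible with $R$, $F$, $V$, the base maps and $\partial$; but Proposition~\ref{k-logdRWC} only constructs $\alpha$ as an isomorphism of differential graded algebras (via the Cartier isomorphism), so this compatibility is exactly what would remain to be checked. The paper sidesteps it: $\alpha\,\overline{\lambda}_n$ is a morphism of differential graded algebras which is the identity in degree $0$, and $W_n\Omega^{\ast}_{(R^{\Box}_{k_{\mathcal{O}}}, N_{\mathcal{O}})/(k_{\mathcal{O}}, Q_{\mathcal{O}})}$ is generated in degree $0$ as a log differential graded $R^{\Box}_{k_{\mathcal{O}}}/k_{\mathcal{O}}$-algebra, so the composite is the identity and $\overline{\lambda}_n = \alpha^{-1}$ is an isomorphism; no $F$-$V$-compatibility of $\alpha$ is needed. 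With that substitution (or after carrying out the compatibility check), the rest of your outline --- strict gradedness of $\lambda_n^{\bullet}$, Tor-independence for the base change, and Nakayama plus equal finite rank to promote surjectivity to bijectivity --- matches the paper's proof.
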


\begin{proof}
We may easily see that $W_n {\Omega}_{(R_{\mathcal{O}}^{\Box}, N_{\mathcal{O}}) / (\mathcal{O}, Q_{\mathcal{O}})}^i$ is also a log $F$-$V$-complex for $(R, Q_R)/(\mathcal{O}_C, Q_{\mathcal{O}_C})$. We have a natural isomorphism $W_n {\Omega}_{(R_{\mathcal{O}}^{\Box}, N_{\mathcal{O}}) / (\mathcal{O}, Q_{\mathcal{O}})}^i \cong W_n \Omega_{(R_{\mathcal{O}}^{\Box}, Q_{R_{\mathcal{O}}}) / (\mathcal{O}, Q_{\mathcal{O}})}^i$ of differential graded algebras by universality. With the similar argument as the proof of \cite[Theorem 11.13, Lemma 11.14]{BMS18}, $\mathcal{W}_n^\ast (D_{\infty})$ and $W_n {\Omega'}_{(R_{\mathcal{O}}^{\Box}, N_{\mathcal{O}}) / (\mathcal{O}, Q_{\mathcal{O}})}^{\Box}$ are equipped with natural actions of $\Delta'[1/p]$ as the restrictions of those of $\Delta$, and we have a natural direct sum decompositions 
\[
\mathcal{W}_n^i (D_{\infty}) = \bigoplus_{\substack{k \colon \{0,\ldots,d\} \to \mathbb{Z}[1/p] \\ k([0, r]) \subset \mathbb{Z}_{\geq 0} [1/p] \\ [0, r] \not\subset \Supp k}} M_k, \qquad W_n {\Omega'}_{(R_{\mathcal{O}}^{\Box}, N_{\mathcal{O}}) / (\mathcal{O}, Q_{\mathcal{O}})}^i  = \bigoplus_{\substack{k \colon \{0,\ldots,d\} \to \mathbb{Z}[1/p] \\ k([0, r]) \subset \mathbb{Z}_{\geq 0} [1/p] \\ [0, r] \not\subset \Supp k}} N_k
\] 
such that 
\begin{enumerate}
\item The decomposition is compatible with the action of $\Delta'[1/p]$. 
\item Each $M_k$ is isomorphic to a finite direct sum of copies of $W_{n - \mu(k)} (\mathcal{O})$. 
\item The decompositions are compatible with $\lambda_n^i$. 
\end{enumerate}
Here $k$ runs over weights without poles which have image $p^{-n} \mathbb{Z}$ by Corollary \ref{dRW-RBox-Basis} and Lemma \ref{WnD-decomp}. We can see that $\lambda_n^i$ is an isomorphism if and only if 
\begin{align*}
\overline{\lambda}_n^i \colon W_n \Omega_{(R_{k_{\mathcal{O}}}, N_{\mathcal{O}}) / (k_{\mathcal{O}}, Q_{\mathcal{O}})}^\ast \cong 
&W_n \Omega_{(R_{\mathcal{O}}, N_{\mathcal{O}}) / (\mathcal{O}, Q_{\mathcal{O}})}^\ast \otimes_{W_n (\mathcal{O})} W_n (k_{\mathcal{O}}) \\ 
\xrightarrow{\lambda_n^i \otimes_{W_n (\mathcal{O})} W_n (k_{\mathcal{O}})} 
&\mathcal{W}_n^i (D_{\infty}) \otimes_{W_n (\mathcal{O})} W_n (k_{\mathcal{O}}) \eqqcolon \mathcal{W}_n^i (D_{\infty})_{k_{\mathcal{O}}},
\end{align*}
where the first isomorphism is from Lemma \ref{WLambda-perfdbasechange}, is an isomorphism by Nakayama's lemma. Then the map $\alpha \overline{\lambda}_n^i$ where $\alpha$ is as in Proposition \ref{k-logdRWC} is the identity in degree $0$. It follows $\overline{\lambda}_n^i$ is an isomorphism since $W_n \Omega_{(R_{k_{\mathcal{O}}}, N_{\mathcal{O}}) / (k_{\mathcal{O}}, Q_{\mathcal{O}})}^\ast$ is generated in degree $0$ as a log differential graded $R_{k_{\mathcal{O}}}^{\Box} / k_{\mathcal{O}}$-algebra. 
\end{proof}

We prove the main theorem. We assume $K = C$ in the following argument. Recall that $R$ is equipped with the log structure $Q_R$. 

\begin{theorem}
\label{O-logdRWC}
Assume a $p$-adically complete algebra $R$ over $\mathcal{O}_C$ admits an $p$-adically \'etale coordinate map over $R^{\Box}$. For any $n \geq 1$ and $i \in \mathbb{Z}$, there is a natural isomorphism of $W_n (R)$-modules 
\[
H^i (\widetilde{W_n \Omega}_R) \cong 
\varprojlim_m W_n \Omega_{(R/p^m, Q_R)/(\mathcal{O}_C/p^m, Q_{\mathcal{O}_C})}^i \{ -i\} \eqqcolon 
W_n \Omega_{(R, Q_R)/(\mathcal{O}_C, Q_{\mathcal{O}_C})}^{i, \cont} \{ -i\}.
\]
Moreover, for $n = 1$, the map is compatible with the isomorphism in Theorem \ref{CKisom}. 
\end{theorem}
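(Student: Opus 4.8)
The plan is to follow the strategy of \cite[Section 11]{BMS18}: reduce to the framed affine case $R = R^{\Box}$ and there identify $H^i(\widetilde{W_n \Omega}_R^{\Box})$ with the module $\mathcal{W}_n^i(D_{\infty})$ analyzed above, after which Proposition \ref{lambdani-isom} delivers the log de Rham--Witt complex. First I would invoke Proposition \ref{comp-WrLambda} to replace $\widetilde{W_n \Omega}_R$ by the explicit $\widetilde{W_n \Omega}_R^{\Box} = L\eta_{[\zeta_{p^n}]-1} R\Gamma_{\cont}(\Delta, W_n(R_{\infty}))$, so that only $H^i(\widetilde{W_n \Omega}_R^{\Box})$ must be computed. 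Both sides of the asserted isomorphism are compatible with the $p$-adically \'etale base change supplied by the coordinate map: the left-hand side by Lemma \ref{WnLambda-et-basechange} and the right-hand side by Proposition \ref{et-bc-ldRW}. Since by Lemma \ref{chart-ssalg} this base change carries the chart log structure to $Q_R$, I may assume $R = R^{\Box}$ throughout.

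For $R = R^{\Box}$ I would first identify $H^i(\widetilde{W_n \Omega}_{R^{\Box}}^{\Box})$ with $\mathcal{W}_n^i(D_{\infty})$ up to the Breuil--Kisin--Fargues twist. Lemma \ref{ALambda-WnLambda} gives a quasi-isomorphism $A\Omega_{R^{\Box}}^{\Box} \otimes_{A_{\inf}, \tilde{\theta}_n}^L W_n(\mathcal{O}) \to \widetilde{W_n \Omega}_{R^{\Box}}^{\Box}$, while Lemma \ref{WnD-Koszul} shows that reducing $L\eta_{\mu} D_{\infty}$ modulo $\tilde{\xi}_n$ computes the same d\'ecalage; together these yield $H^i(\widetilde{W_n \Omega}_{R^{\Box}}^{\Box}) \cong \mathcal{W}_n^i(D_{\infty})$. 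The twist $\{-i\}$ enters exactly as in Theorem \ref{CKisom}: each Bockstein differential that builds the degree-$i$ part passes through the identification $\tilde{\xi}/\tilde{\xi}^2 \cong \mathcal{O}\{1\}$ of Lemma \ref{BKF-fund}, so the $i$-th cohomology is twisted by $\mathcal{O}\{-i\}$. Applying Proposition \ref{lambdani-isom} then gives $\mathcal{W}_n^i(D_{\infty}) \cong W_n \Omega_{(R_{\mathcal{O}}^{\Box}, N_{\mathcal{O}})/(\mathcal{O}, Q_{\mathcal{O}})}^i$.

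It remains to match this algebraic complex with the continuous limit appearing in the statement and to verify the $n = 1$ compatibility. Comparing the explicit basis of Proposition \ref{dRW-RBox-Basis} with the completed decomposition of Lemma \ref{WnD-decomp} identifies $W_n \Omega_{(R_{\mathcal{O}}^{\Box}, N_{\mathcal{O}})/(\mathcal{O}, Q_{\mathcal{O}})}^i$ with its $p$-adic completion $\varprojlim_m W_n \Omega_{(R^{\Box}/p^m, Q_{R^{\Box}})/(\mathcal{O}_C/p^m, Q_{\mathcal{O}_C})}^i$, and naturality of every map used globalizes the result via Proposition \ref{comp-WrLambda}. For $n = 1$ all constructions specialize: $\mathcal{W}_1^i(D_{\infty})$ is the cohomology of $L\eta_{\zeta_p - 1}$ of the Hodge--Tate complex and $\lambda_1^i$ reduces to the map on log differential forms, so the isomorphism coincides with that of Theorem \ref{CKisom} by construction. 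The main obstacle is the second paragraph: tracking the d\'ecalage, the $(p,\mu)$-adic completions, and the Breuil--Kisin--Fargues twist simultaneously so that the comparison $H^i(\widetilde{W_n \Omega}_{R^{\Box}}^{\Box}) \cong \mathcal{W}_n^i(D_{\infty})\{-i\}$ is both correct and natural; once this is in place, Proposition \ref{lambdani-isom} finishes the argument.
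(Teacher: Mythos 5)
Your outline follows the paper's broad strategy (reduce to the framed case, identify the cohomology with $\mathcal{W}_n^\ast(D_\infty)$, invoke Proposition \ref{lambdani-isom}), but the central step of your second and third paragraphs is wrong as stated, and the error is exactly where the real work of the proof lies. The identification $H^i(\widetilde{W_n \Omega}_{R^{\Box}}^{\Box}) \cong \mathcal{W}_n^i(D_{\infty})$ is off by a $p$-adic completion: $\mathcal{W}_n^i(D_{\infty})$ is built from the \emph{non-completed} algebra $\mathcal{A}_{\infty}$ (so by Lemma \ref{WnD-decomp} it is an honest direct sum of modules $W_{n-\mu(k)}(\mathcal{O})$ over infinitely many weights), whereas $\widetilde{W_n \Omega}_{R^{\Box}}^{\Box}$ is computed from the $(p,\mu)$-adically complete $A_{\inf}(R_{\infty}^{\Box})$; what is true, and what the paper proves using the $p$-torsion-freeness of Lemma \ref{WnD-decomp} together with an $R^1\varprojlim$-vanishing, is $H^i(\widetilde{W_n \Omega}_{R^{\Box}}) \cong (\mathcal{W}_n^i(D_{\infty}))^{\wedge}$. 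Your attempted repair --- that Proposition \ref{dRW-RBox-Basis} and Lemma \ref{WnD-decomp} ``identify $W_n\Omega^i_{(R_{\mathcal{O}}^{\Box}, N_{\mathcal{O}})/(\mathcal{O},Q_{\mathcal{O}})}$ with its $p$-adic completion'' --- is false: that module is a non-finitely-generated direct sum and is not $p$-adically complete. The genuinely missing ingredient is the claim that the $p$-adic completion of $W_n\Omega^i_{(R_0, Q_{R_0})/(\mathcal{O}, Q_{\mathcal{O}})}$ (for an algebraic model $R_0$ of $R$) is naturally $W_n\Omega^{i,\cont}_{(R,Q_R)/(\mathcal{O}_C, Q_{\mathcal{O}_C})}$, i.e.\ the log de Rham--Witt analogue of \cite[Corollary 10.10]{BMS18}. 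This is not formal: the paper proves it by showing that the kernel of $W_n\Omega^{\ast} \to W_n\Omega^{\ast}_{(R/p^m,Q_R)/(\mathcal{O}_C,Q_{\mathcal{O}_C})}$ equals the differential graded ideal generated by $W_n(p^mR)$ (an argument using the universal property of the log de Rham--Witt complex), and then that the two chains of ideals $([p^s]W_n\Omega^i)_s$ and $(W'_n\Omega^i_{((R,Q_R),p^s)})_s$ are intertwined via \cite[Lemma 10.1]{BMS18} and the Leibniz rule. Without this, your map does not even extend to the continuous complex appearing in the statement.

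The second gap is naturality. Your reduction ``I may assume $R = R^{\Box}$ throughout'' cannot produce the asserted \emph{natural} isomorphism, because Proposition \ref{lambdani-isom} and the log $F$-$V$-procomplex structure on $\mathcal{W}_n^\ast(D_\infty)$ are defined through the framing ($\lambda_n([t_j]) = X_j$); also note that Proposition \ref{et-bc-ldRW} applies to \'etale ring maps, not to the $p$-adically \'etale coordinate map $R^{\Box}\to R$, so the base change must pass through an algebraic model $R_{\mathcal{O}}^{\Box}\to R_0$ and its completion, which feeds back into the first gap. The paper avoids this by constructing the comparison map canonically for every $R$ before any framed computation: it proves $H^i(\widetilde{W_n\Omega}_R)$ is $p$-torsion free (via the exact sequence (\ref{O-logdRWC-es-Ninfty}) and derived $p$-adic completeness), constructs $\partial\colon Q_R \to H^1(\widetilde{W_n\Omega}_R)$ and the identification $H^0(\widetilde{W_n\Omega}_R)\cong W_n(R)$, thereby endowing $H^{\ast}(\widetilde{W_n\Omega}_R)$ with a log $F$-$V$-procomplex structure, and obtains the map from the initial-object property of $W_n\Omega^{\ast}$; the framing and Proposition \ref{lambdani-isom} are used only afterwards, to check that this canonical map is an isomorphism. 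Your proposal omits this construction, so both the naturality and the $n=1$ compatibility with Theorem \ref{CKisom} (which the paper checks on the elements $d\log t_i$ through Lemma \ref{lambdanast}) remain unproved.
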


\begin{proof}
We shall locally construct the map. So we can assume $N_{\mathcal{O}_C} \to \Gamma(\Spf(R), Q_{\Spf(R)})$ is a small chart by \cite[Lemma A.9]{Kos20}. We prove the theorem along the proof of \cite[Theorem 11.1]{BMS18}. 

We claim that the cohomology module
\[
H^i (\widetilde{W_n \Omega}_R) \cong H^i (L\eta_{[\zeta_{p^n}] - 1} (R\Gamma_{\proet} (\mathfrak{X}_C^{\ad}, \mathbb{A}_{\inf, \mathfrak{X}_K^{\ad}}) / \tilde{\xi}_n))
\]
is $p$-torsion free. We show the claim along the argument of \cite[Lemma 9.7 (i)]{BMS18} and \cite[Proposition 3.19]{CK19}. 
We firstly consider the case of $H^i (L\eta_{[\zeta_{p^n}] - 1} R\Gamma_{\cont} (\Delta, A_{\inf} (R_{\infty}^{\Box})))$. We have the decomposition
\[
A_{\inf} (R_{\infty}^{\Box}) \cong 
{\widehat{\bigoplus}}_{\substack{k \colon \{0,\ldots,d\} \to \mathbb{Z}[1/p] \\ k([0, r]) \subset \mathbb{Z}_{\geq 0} [1/p] \\ [0, r] \not\subset \Supp k}} A_{\inf} \cdot X_0^{k_0} \cdots X_d^{k_d}.
\]
Reducing the map by $\tilde{\theta}_n : A_{\inf} (\mathcal{O}_C) \to W_n (\mathcal{O}_C)$, we have a similar decomposition of $W_n (R_{\infty}^{\Box})$. Then  $L\eta_{[\zeta_{p^n}] - 1} R\Gamma_{\cont} (\Delta, W_n (\mathcal{O}_C) \cdot X_0^{k_0} \cdots X_d^{k_d})$ is acyclic if $p^n k_i \not\in \mathbb{Z}$ for some $i \in [0, d]$. Otherwise it is calculated as the following Koszul complex
\[
K_{W_n (\mathcal{O})} \left( \dfrac{[\zeta_{p^n}^{k_1 - k_0}] - 1}{[\zeta_{p^n}] - 1},\ldots,\dfrac{[\zeta_{p^n}^{k_r - k_0}] - 1}{[\zeta_{p^n}] - 1}, \dfrac{[\zeta_{p^n}^{k_{r+1}}] - 1}{[\zeta_{p^n}] - 1},\ldots,\dfrac{[\zeta_{p^n}^{k_d}] - 1}{[\zeta_{p^n}] - 1} \right).
\]
The complex is $p$-torsion free by the proof of Lemma \ref{WnD-decomp}. Consequently, by \cite[Lemma 3.6]{CK19}, the module $H^i (L\eta_{[\zeta_{p^n}] - 1} R\Gamma_{\cont} (\Delta, A_{\inf} (R_{\infty}^{\Box})))$ turns out to be $p$-torsion free.

For the case of $R$, 
by \cite[Tag 061Z]{SP} and the claim for $R^{\Box}$, we have the following natural isomorphisms
\begin{align}
\label{O-logdRWC-HiLeta}
&H^i (L\eta_{\mu} R\Gamma_{\cont} (\Delta, A_{\inf} (R_{\infty}^{\Box}) / \tilde{\xi}_n)) \otimes_{A_{\inf} (\mathcal{O}_C)} A_{\inf} (\mathcal{O}_C) / p^m \notag \\ 
\cong &H^i (L\eta_{\mu} R\Gamma_{\cont} (\Delta, A_{\inf} (R_{\infty}^{\Box}) / \tilde{\xi}_n) \otimes_{A_{\inf} (\mathcal{O}_C)}^L A_{\inf} (\mathcal{O}_C) / p^m) \notag \\ 
\cong &H^i (L\eta_{\mu} R\Gamma_{\cont} (\Delta, A_{\inf} (R_{\infty}^{\Box}) / \tilde{\xi}_n) \otimes_{\mathbb{Z}}^L \mathbb{Z} / p^m)
\end{align}
for each $i \in \mathbb{Z}$ and $m \in \mathbb{N}$.
Each cohomology module of $L\eta_{\mu} R\Gamma_{\cont} (\Delta, A_{\inf} (R_{\infty}^{\Box}) / \tilde{\xi}_n))$ is $A_{\inf} (R_{\infty}^{\Box}) / \tilde{\xi}_n$-module. 
The map $A(R^{\Box}) / \tilde{\xi}_n \to A(R) / \tilde{\xi}_n$ is $p$-adically \'etale, so we can take an \'etale $A'$ over $A(R^{\Box}) / \tilde{\xi}_n$.
We have the following natural isomorphisms
\begin{align}
\label{O-logdRWC-HiLeta-basechange}
&H^i (L\eta_{\mu} R\Gamma_{\cont} (\Delta, A_{\inf} (R_{\infty}^{\Box}) / \tilde{\xi}_n) \otimes_{\mathbb{Z}}^L \mathbb{Z} / p^m) \otimes_{A(R^{\Box})} A(R) \notag \\ 
\cong &H^i (L\eta_{\mu} R\Gamma_{\cont} (\Delta, A_{\inf} (R_{\infty}^{\Box}) / \tilde{\xi}_n) \otimes_{\mathbb{Z}}^L \mathbb{Z} / p^m) \otimes_{A(R^{\Box}) / \tilde{\xi}_n} A(R) / \tilde{\xi}_n \notag \\ 
\cong &H^i ((L\eta_{\mu} R\Gamma_{\cont} (\Delta, A_{\inf} (R_{\infty}^{\Box}) / \tilde{\xi}_n) \otimes_{A(R^{\Box}) / \tilde{\xi}_n} A(R) / \tilde{\xi}_n) \otimes_{\mathbb{Z}}^L \mathbb{Z} / p^m) \notag \\ 
\cong &H^i ((L\eta_{\mu} R\Gamma_{\cont} (\Delta, A_{\inf} (R_{\infty}^{\Box}) / \tilde{\xi}_n) \otimes_{A(R^{\Box}) / \tilde{\xi}_n} A') \otimes_{\mathbb{Z}}^L \mathbb{Z} / p^m) \notag \\ 
\cong &H^i (q\textrm{-}\Omega_{A' / W_n (\mathcal{O}_C)} \otimes_{\mathbb{Z}}^L \mathbb{Z} / p^m) \notag \\
\cong &H^i (q\textrm{-}\Omega_{A(R) / W_n (\mathcal{O}_C)} \otimes_{\mathbb{Z}}^L \mathbb{Z} / p^m) \notag \\
\cong &H^i (L\eta_{\mu} R\Gamma_{\cont} (\Delta, A_{\inf} (R_{\infty}) / \tilde{\xi}_n) \otimes_{\mathbb{Z}}^L \mathbb{Z} / p^m).
\end{align}
The second isomorphism is given by the fact that $A_{\inf} (R^{\Box}) / \tilde{\xi}_n \to A_{\inf} (R) / \tilde{\xi}_n$ is $p$-adically \'etale.
The fifth isomorphism follows from the isomorphism
\[
A' \otimes_{\mathbb{Z}}^L \mathbb{Z} / p^m \cong A(R) \otimes_{\mathbb{Z}}^L \mathbb{Z} / p^m.
\]
The fourth and last isomorphisms follow from descriptions of the complexes as $q$-de Rham complexes
as in Lemma \ref{ALambda-WnLambda}.
By (\ref{O-logdRWC-HiLeta}) and (\ref{O-logdRWC-HiLeta-basechange}), we have the following exact sequence
\begin{multline}
\label{O-logdRWC-es-Ninfty}
0 \to H^i (L\eta_{\mu} R\Gamma_{\cont} (\Delta, A_{\inf} (R_{\infty}) / \tilde{\xi}_n) \otimes_{\mathbb{Z}}^L \mathbb{Z} / p^m)[p] \\ 
\to H^i (L\eta_{\mu} R\Gamma_{\cont} (\Delta, A_{\inf} (R_{\infty}) / \tilde{\xi}_n) \otimes_{\mathbb{Z}}^L \mathbb{Z} / p^m) \\
\to H^i (L\eta_{\mu} R\Gamma_{\cont} (\Delta, A_{\inf} (R_{\infty}) / \tilde{\xi}_n) \otimes_{\mathbb{Z}}^L \mathbb{Z} / p^{m-1})
\to 0.
\end{multline}
Since $L\eta_{\mu} R\Gamma_{\cont} (\Delta, A_{\inf} (R_{\infty}) / \tilde{\xi}_n)$ is derived $p$-adically complete, we also have the natural isomorphism
\begin{equation}
\label{O-logdRWC-Hiisom}
H^i (L\eta_{\mu} R\Gamma_{\cont} (\Delta, A_{\inf} (R_{\infty}) / \tilde{\xi}_n)) \cong 
\varprojlim_m (H^i (L\eta_{\mu} R\Gamma_{\cont} (\Delta, A_{\inf} (R_{\infty}) / (\tilde{\xi}_n, p^m)) \otimes_{\mathbb{Z}}^L \mathbb{Z}/p^m))
\end{equation}
by (\ref{O-logdRWC-es-Ninfty}) and \cite[Tag 0D6K]{SP}. 
Then 
\[
H^i (L\eta_{\mu} R\Gamma_{\cont} (\Delta, A_{\inf} (R_{\infty}) / \tilde{\xi}_n)) \cong H^i (L\eta_{[\zeta_{p^n}] - 1} R\Gamma_{\cont} (\Delta, W_n (R_{\infty})))
\]
turns out to be $p$-torsion free by taking limit of (\ref{O-logdRWC-es-Ninfty}) for $m$. 

Next, we shall give a morphism of monoids $\partial : Q_R \to H^1 (\widetilde{W_n \Omega}_R)$, which gives a structure of log differential graded algebra to $H^{\ast} (\widetilde{W_n \Omega}_R)$. Taking a $p$-adically \'etale morphism $R^{\Box} \to R$, it suffices to construct a morphism $Q_{R^{\Box}} \to H^1 (\widetilde{W_n \Omega}_R)$ by Lemma \ref{chart-ssalg} and Lemma \ref{chart-ssfalg}. The morphism can be constructed as in Lemma \ref{lambdanast} (2). 
We also have 
\[
H^0 (\widetilde{W_n \Omega}_R) \cong H_{\proet}^0 (\mathfrak{X}_C^{\ad}, W_n (\widehat{\mathcal{O}}_X^+)) = W_n (R)
\]
for $X = \Spa(R[1/p], R)$. Then we may naturally equip $H^\ast (\widetilde{W_n \Omega}_R)$ with the structure of a log $F$-$V$-procomplex as in Lemma \ref{lambdanast}. 

Then we shall show that the target of the universal morphism $W_n \Omega_{(R, Q_R)/(\mathcal{O}_C, Q_{\mathcal{O}_C})}^{i} \to H^i (\widetilde{W_n \Omega}_R)$ is $p$-adically complete, so it extends to the morphism $W_n \Omega_{(R, Q_R)/(\mathcal{O}_C, Q_{\mathcal{O}_C})}^{i, \cont} \to H^i (\widetilde{W_n \Omega}_R)$ and we shall also show the last map is an isomorphism. We remark that the map $W_n \Omega_{(R_{\mathcal{O}}^{\Box}, Q_{R_{\mathcal{O}}^{\Box}})/(\mathcal{O}, Q_{\mathcal{O}})}^{i} \to H^i (\widetilde{W_n \Omega}_{R^{\Box}})$ is compatible with the map $W_n \Omega_{(R, Q_R)/(\mathcal{O}_C, Q_{\mathcal{O}_C})}^{i, \cont} \to H^i (\widetilde{W_n \Omega}_R)$. We choose an \'etale morphism $R_{\mathcal{O}}^{\Box} \to R_0$ of smooth $\mathcal{O} = \overline{W(k)}$-algebras whose $p$-adic completion is $R^{\Box} \to R$. 
Then we have a morphism 
\begin{equation}
\label{O-logdRWC-naturalmap}
(W_n \Omega_{(R_{\mathcal{O}}^{\Box}, Q_{R_{\mathcal{O}}}) / (\mathcal{O}, Q_{\mathcal{O}})}^i \otimes_{W_n (R_{\mathcal{O}}^{\Box})} W_n (R_0))^{\wedge} \to 
(\mathcal{W}_n^i (D_{\infty}) \otimes_{W_n (R_{\mathcal{O}}^{\Box})} W_n (R_0))^{\wedge},
\end{equation}
where $(-)^\wedge$ is the derived $p$-adic completions by Proposition \ref{lambdani-isom}.
We note that the map $W_n (R_{\mathcal{O}}) \to W_n (R_0^{\Box})$ is flat by \cite[Theorem 10.4]{BMS18}. 

For the module of the right hand side of the map (\ref{O-logdRWC-naturalmap}), to show that the module is isomorphic to
\[
(H^i (\widetilde{W_n \Omega}_{R^{\Box}}) \otimes_{W_n (R_{\mathcal{O}}^{\Box})} W_n (R_0))^{\wedge},
\]
it suffices to show $(\mathcal{W}_n^i (D_{\infty}))^{\wedge} \cong H^i (\widetilde{W_n \Omega}_{R^{\Box}})$. In fact, we have $(L\eta_{\mu} D_{\infty})^{\wedge}$ is naturally quasi-isomorphic to $\widetilde{W_n \Omega}_{R^{\Box}}$ by commutativity of derived $p$-adic completions and $L\eta$ (\cite[Lemma 6.20]{BMS18}), $R\Gamma_{\cont}$ (\cite[Tag 0A07]{SP}). 
The isomorphism $H^i ((L\eta_{\mu} D_{\infty} / \tilde{\xi}_n)^{\wedge}) \cong (H^i (L\eta_{\mu} D_{\infty} / \tilde{\xi}_n))^{\wedge}$ is shown by the fact that each cohomology of $L\eta_{\mu} D_{\infty} / \tilde{\xi}_n$ is $p$-torsion free by Lemma \ref{WnD-decomp} and the vanishing of 
\[
R^1 \varprojlim H^i ((L\eta_{\mu} D_{\infty} / \tilde{\xi}_n) \otimes_{\mathbb{Z}}^L \mathbb{Z}/p^m ) \cong 
R^1 \varprojlim (H^i (L\eta_{\mu} D_{\infty} / \tilde{\xi}_n) \otimes_{\mathbb{Z}} \mathbb{Z}/p^m) 
\]
by \cite[Tag 061Z]{SP}.
The module of the right hand side of the map (\ref{O-logdRWC-naturalmap}) is also isomorphic to $H^i (\widetilde{W_n \Omega}_{R^{\Box}}) \widehat{\otimes}_{W_n (R^{\Box})} W_n (R)$ by the fact that $W_n (\widehat{A})$ is naturally isomorphic to the $p$-adic completion of $W_n (A)$ for a ring $A$, whose $p$-adic completion is $\widehat{A}$, which is seen by \cite[Corollary 10.2, Lemma 10.3]{BMS18}.
Hence, the module of right hand side is isomorphic to $H^i (\widetilde{W_n \Omega}_R)$ by Lemma \ref{WnLambda-et-basechange} and the flatness of $W_n (R_{\mathcal{O}}^{\Box}) \to W_n (R_0)$. 

Since the modules we consider are $p$-torsion free, we can use usual $p$-adic completions instead of derived $p$-adic completions of the modules. We can see that the left hand side of the map (\ref{O-logdRWC-naturalmap}) is isomorphic to the $p$-adic completion of $W_n \Omega_{(R_0, Q_{R_0})/(\mathcal{O}, Q_{\mathcal{O}})}^i$
by Proposition \ref{et-bc-ldRW}. We claim that the $p$-adic completion of $W_n \Omega_{(R_0, Q_{R_0})/(\mathcal{O}, Q_{\mathcal{O}})}^i$ is naturally isomorphic to 
$W_n \Omega_{(R_0, Q_{R_0})/(\mathcal{O}, Q_{\mathcal{O}})}^{i, \cont}$ and also
$W_n \Omega_{(R, Q_R)/(\mathcal{O}, Q_{\mathcal{O}})}^{i, \cont}$ by the log de Rham-Witt case of \cite[Corollary 10.10]{BMS18} for semistable formal schemes.
We explain a proof of the claim. For $m \in \mathbb{N}$, let $W_n \Omega_{((R, Q_R), p^m)/(\mathcal{O}_C, Q_{\mathcal{O}_C})}^{\ast}$ (resp.\ $W'_n \Omega_{((R, Q_R), p^m)/(\mathcal{O}_C, Q_{\mathcal{O}_C})}^{\ast}$) be the kernel 
\[
\ker(W_n \Omega_{(R, Q_R)/(\mathcal{O}_C, Q_{\mathcal{O}_C})}^{\ast} \to W_n \Omega_{(R/p^m, Q_R)/(\mathcal{O}_C, Q_{\mathcal{O}_C})}^{\ast})
\]
(resp.\ the log differential graded ideal of $W_n \Omega_{(R, Q_R)/(\mathcal{O}_C, Q_{\mathcal{O}_C})}^{\ast}$, which is generated by $W_n (p^m R)$ as a differential graded module). It follows that the quotient 
\[
W_n \Omega_{(R, Q_R)/(\mathcal{O}_C, Q_{\mathcal{O}_C})}^{\ast} / W'_n \Omega_{((R, Q_R), p^m)/(\mathcal{O}_C, Q_{\mathcal{O}_C})}^{\ast}
\]
inherits the structure of log $F$-$V$-procomplex over $(R/p^m, Q_R)/(\mathcal{O}_C, Q_{\mathcal{O}_C})$ from the structure of $W_n \Omega_{(R, Q_R)/(\mathcal{O}_C, Q_{\mathcal{O}_C})}^{\ast}$. There are the natural surjection 
\[
\pi \colon W_n \Omega_{(R, Q_R)/(\mathcal{O}_C, Q_{\mathcal{O}_C})}^{\ast} / W'_n \Omega_{((R, Q_R), p^m)/(\mathcal{O}_C, Q_{\mathcal{O}_C})}^{\ast} \to W_n \Omega_{(R/p^m, Q_R)/(\mathcal{O}_C, Q_{\mathcal{O}_C})}^{\ast}
\]
and the universal map 
\[
u \colon W_n \Omega_{(R/p^m, Q_R)/(\mathcal{O}_C, Q_{\mathcal{O}_C})}^{\ast} \to  W_n \Omega_{(R, Q_R)/(\mathcal{O}_C, Q_{\mathcal{O}_C})}^{\ast} / W'_n \Omega_{((R, Q_R), p^m)/(\mathcal{O}_C, Q_{\mathcal{O}_C})}^{\ast}
\]
of log $F$-$V$-procomplexes. The composition $\pi u$ is the identity map and the composition $u \pi$ is also the identity map since the natural surjection
\[
p \colon W_n \Omega_{(R, Q_R)/(\mathcal{O}_C, Q_{\mathcal{O}_C})}^{\ast} \to W_n \Omega_{(R, Q_R)/(\mathcal{O}_C, Q_{\mathcal{O}_C})}^{\ast} / W'_n \Omega_{((R, Q_R), p^m)/(\mathcal{O}_C, Q_{\mathcal{O}_C})}^{\ast}
\]
is the same as the composition $u \pi p$ by universality. So we have $W_n \Omega_{((R, Q_R), p^m)/(\mathcal{O}_C, Q_{\mathcal{O}_C})}^{\ast} = W'_n \Omega_{((R, Q_R), p^m)/(\mathcal{O}_C, Q_{\mathcal{O}_C})}^{\ast}$. So it suffices to show the following chains of ideals 
\[
([p^s]W_n \Omega_{(R, Q_R)/(\mathcal{O}_C, Q_{\mathcal{O}_C})}^i)_{s \geq 1} \qquad (W'_n \Omega_{((R, Q_R), p^s)/(\mathcal{O}_C, Q_{\mathcal{O}_C})}^i)_{s \geq 1}
\]
of $W_n \Omega_{(R, Q_R)/(\mathcal{O}_C, Q_{\mathcal{O}_C})}^i$ are intertwined for $i \in \mathbb{N}$. The inclusion 
\[
[p^s]W_n \Omega_{(R, Q_R)/(\mathcal{O}_C, Q_{\mathcal{O}_C})}^i \subset W'_n \Omega_{((R, Q_R), p^s)/(\mathcal{O}_C, Q_{\mathcal{O}_C})}^i
\]
is obvious. On the other hand, if we take an element $\alpha \in W'_n \Omega_{((R, Q_R), p^m)/(\mathcal{O}_C, Q_{\mathcal{O}_C})}^i$, it is written by a finite sum of elements of the form $\omega = a_0 da_1 \cdots da_i$ such that $a_j \in W_n (p^m R)$ for some $j \in [1, i]$. We have the inclusion $[p^t]W_n(R) \subset W_n (p^m R)$ for sufficiently large $t$ by \cite[Lemma 10.1]{BMS18}. Then the Leibniz rule assures $\alpha \in [p^t]W_n \Omega_{(R, Q_R)/(\mathcal{O}_C, Q_{\mathcal{O}_C})}^i$ and the inclusion 
\[
[p^t]W_n \Omega_{(R, Q_R)/(\mathcal{O}_C, Q_{\mathcal{O}_C})}^i \supset W'_n \Omega_{((R, Q_R), p^s)/(\mathcal{O}_C, Q_{\mathcal{O}_C})}^i.
\]
Consequently, the claim is proved by the equation 
\[
W_n \Omega_{(R/p^m, Q_R)/(\mathcal{O}_C, Q_{\mathcal{O}_C})}^{\ast} = W_n \Omega_{(R/p^m, Q_R)/(\mathcal{O}_C / p^m, Q_{\mathcal{O}_C})}^{\ast}.
\]

The BKF twist part $\{-i\}$ is reconstructed by the Bockstein differential for  
\[
0 \to \tilde{\xi}_n A_{\inf} / \tilde{\xi}_n^2 A_{\inf} \to A_{\inf} / \tilde{\xi}_n^2 A_{\inf} \to A_{\inf} / \tilde{\xi}_n A_{\inf} \to 0
\]
and the natural isomorphism 
\[
\bigotimes^i (\tilde{\xi}_n A_{\inf} / \tilde{\xi}_n^2 A_{\inf}) \cong \tilde{\xi}_n^i A_{\inf} / \tilde{\xi}_n^{i+1} A_{\inf} \cong \mathcal{O}_C \{i\}.
\]
For compatibility with Theorem \ref{CKisom} in the case $n = 1$, it suffices to check that the basis elements $d\log t_1,\ldots,d\log t_d$ in $\Omega_{(R, Q_R)/(\mathcal{O}_C, Q_{\mathcal{O}_C})}^{1, \mathrm{cont}}$ in the case of $R = R^{\Box}$ are compatible with Theorem \ref{CKisom} because those generate $\Omega_{(R, Q_R)/(\mathcal{O}_C, Q_{\mathcal{O}_C})}^{i, \cont} \{ -i\}$ as a projective limit of log differential graded $R/\mathcal{O}_C$-algebras. It is seen by the argument of the proof of Lemma \ref{lambdanast} (2).
\end{proof}


\end{document}